\documentclass[12pt,a4paper]{article}

\usepackage{amsfonts, amssymb, amsmath, amsthm}

\makeatletter
\renewcommand{\mod}[1]{\allowbreak\mkern 12mu{\operator@font mod}\,\,#1}
\makeatother

\allowdisplaybreaks 
\usepackage{bbold} 
\usepackage{mathtools}
\usepackage[utf8]{inputenc} 
\usepackage{comment} 
\usepackage[usenames,dvipsnames,svgnames,table]{xcolor}
\usepackage[margin=0.75in]{geometry} 
\usepackage{fancyhdr} 
\usepackage{epigraph} 
\usepackage{floatrow} 
\usepackage{afterpage}
\usepackage{longtable}

\usepackage{makecell}

\usepackage{tikz} 
\usepackage{tkz-euclide} 
\usepackage{tikz-cd} 
\usepackage{tkz-graph}
\usetikzlibrary{arrows}
\usetikzlibrary{arrows.meta} 
\usetikzlibrary{bending} 
\usetikzlibrary{calc}
\usetikzlibrary{matrix}
\usetikzlibrary{intersections}
\usetikzlibrary{patterns}
\usetikzlibrary{math} 
\usetikzlibrary{shapes} 
\usetikzlibrary{shapes.geometric} 
\usetikzlibrary{through,backgrounds}

\usepackage{dynkin-diagrams}

\usetikzlibrary{lindenmayersystems}
\pgfdeclarelindenmayersystem{cayley}{
  \rule{F -> F [ R [F] [+F] [-F] ]}
  \symbol{R}{
    \pgflsystemstep=0.5\pgflsystemstep
  }
}

\usepackage{pgfplots}
\usepgfplotslibrary{polar}
\pgfplotsset{compat=newest}
\usepackage{nicematrix} 
\usepackage{capt-of} 
\usepackage[tableposition=top]{caption}
\usepackage{subcaption}
\usepackage[shortlabels]{enumitem}
\usepackage[ruled,linesnumbered]{algorithm2e} 
\usepackage{titlesec} 
\usepackage{booktabs} 
\usepackage{multicol} 
\usepackage[hypertexnames=false]{hyperref} 
\makeatletter
\usepackage{etoolbox}
\patchcmd\Hy@EveryPageBoxHook{\Hy@EveryPageAnchor}{\Hy@hypertexnamestrue\Hy@EveryPageAnchor}{}{\fail}
\makeatother
\usepackage[nameinlink]{cleveref}

\usepackage[backend=biber,style=numeric,sorting=nty,hyperref=true,
maxbibnames=99]{biblatex} 

\usepackage{makeidx} 
\makeindex 
\usepackage[totoc]{idxlayout}

\usepackage{braket} 
\renewcommand\Set[1]{%
 \csname Set \endcsname{\mskip-\medmuskip#1\mskip-\medmuskip}} 

\usepackage{libertine} 
\usepackage[libertine]{newtxmath} 

\DeclareMathAlphabet{\mathcal}{OMS}{cmsy}{m}{n}

\definecolor{codegreen}{rgb}{0,0.6,0}
\definecolor{codegray}{rgb}{0.5,0.5,0.5}
\definecolor{codepurple}{rgb}{0.58,0,0.82}
\definecolor{backcolour}{rgb}{0.95,0.95,0.92}

\makeatletter%
\@ifclassloaded{article}%
{\fancyhf{}

}
\makeatother%

\makeatletter%
\@ifclassloaded{book}%
{\renewcommand\tableofcontents{%
\chapter*{\contentsname
\@mkboth{%
\MakeUppercase\contentsname}{}}
\@starttoc{toc}%
}}
\makeatother%
\makeatletter%
\@ifclassloaded{book}%
{\titleformat{\part}[display]{\centering\normalfont\sffamily\Huge\bfseries\color{black}}
{\partname \space \thepart}{20pt}{\Huge}}%
\makeatother%

\titleformat{\section}{\normalfont\sffamily\Large\bfseries\color{black}}
{\thesection}{1em}{}
\titleformat{\subsection}{\normalfont\sffamily\large\bfseries\color{black}}
{\thesubsection}{1em}{}
\titleformat{\subsubsection}
{\normalfont\sffamily\normalsize\bfseries\color{black}}
{\thesubsubsection}{1em}{}
\titleformat{\paragraph}{\normalfont\sffamily}
{\paragraph \theparagraph}{1em}{}

\makeatletter%
\@ifclassloaded{book}%
{\titleformat{\chapter}[frame]
{\normalfont}{\filright\enspace \@chapapp~\thechapter\enspace}
{8pt}{\Large\bfseries\sffamily\filcenter}
\titlespacing*{\chapter}
{0pt}{0pt}{20pt}}
\makeatother%

\usepackage{thmtools}
\usepackage[most]{tcolorbox}
\tcbuselibrary{theorems,skins,breakable,xparse} 

\def\boxnumber{section}

\newtheoremstyle{notation}
{\topsep}   
{\topsep}   
{\normalfont}  
{0pt}       
{\scshape} 
{.}         
{5pt plus 1pt minus 1pt} 
{}          

\theoremstyle{notation}

\newtheorem{definition}{Definition}[\boxnumber]

\newtheorem{theorem}{Theorem}[\boxnumber]
\newtheorem{lemma}[theorem]{Lemma}
\newtheorem{corollary}[theorem]{Corollary}

\makeatletter%
\renewenvironment{proof}[1][\proofname]{%
 \par\pushQED{\qed}\normalfont%
 \topsep6\p@\@plus6\p@\relax%
 \trivlist\item[\hskip\labelsep\itshape\sffamily#1\@addpunct{.}]
 \ignorespaces%
}{%
 \popQED\endtrivlist\@endpefalse%
}%
\makeatother%

\hypersetup{
 colorlinks=true,
 linktoc=all,
 citecolor=black,
 filecolor=black,
 linkcolor=black, 
 urlcolor=black
}

\newcommand*{\fullref}[1]{\textsf{\hyperref[{#1}]{\autoref*{#1}}}}
\tikzset{>={Stealth[inset=0pt, angle'=30]}}
\tikzcdset{arrow style=tikz}

\newcommand{\bF}{\mathbb{F}}

\newcommand{\bZ}{\mathbb{Z}}

\newcommand{\cI}{\mathcal{I}}

\newcommand{\cL}{\mathcal{L}}

\newcommand{\cP}{\mathcal{P}}

\let\originalleft\left
\let\originalright\right
\renewcommand{\left}{\mathopen{}\mathclose\bgroup\originalleft}
\renewcommand{\right}{\aftergroup\egroup\originalright}

\newcommand{\paren}[1]{\left( #1 \right)}

\newcommand{\ang}[1]{\left\langle #1 \right\rangle}

\newcommand{\defi}{\vcentcolon=}

\newcommand{\idgp}{\vmathbb{1}}

\newcommand{\subgp}{\leq}
\newcommand{\propsubgp}{<}
\newcommand{\nsubgp}{\trianglelefteq}

\newcommand{\soc}[1]{\textrm{soc}(#1)}

\newcommand{\divides}{\mid}

\newcommand{\field}{\mathbb{K}}
\newcommand{\Centraliser}[2]{\textbf{C}_{#1}\left(#2\right)}

\newcommand{\Normaliser}[2]{\textbf{N}_{#1}(#2)}

\newcommand{\tenk}[1]{\otimes}

\newcommand{\restr}[2]{{
\left.\kern-\nulldelimiterspace 
#1 
\vphantom{\big|}
\right|_{#2} 
}}
\newcommand{\corestr}[2]{{
\left.\kern-\nulldelimiterspace 
#1 
\vphantom{\big|}
\right|^{#2} 
}}
\newcommand{\birestr}[3]{{
\left.\kern-\nulldelimiterspace 
#1 
\vphantom{\big|}
\right|_{#2}^{#3} 
}}

\def\GL{\mathsf{GL}}

\def\SU{\mathsf{SU}}

\def\PSL{\mathsf{PSL}}
\def\PSU{\mathsf{PSU}}
\def\PGU{\mathsf{PGU}}
\def\PSP{\mathsf{PSp}}

\def\SP{\mathsf{Sp}}

\def\PGL{\mathsf{PGL}}

\newcommand{\GLn}[2]{\GL_{#1}(#2)}

\newcommand{\SPn}[2]{\SP_{#1}(#2)}
\newcommand{\SUn}[2]{\SU_{#1}(#2)}

\newcommand{\PGLn}[2]{\PGL_{#1}({#2})}
\newcommand{\PSLn}[2]{\PSL_{#1}({#2})}
\newcommand{\PSUn}[2]{\PSU_{#1}({#2})}
\newcommand{\PGUn}[2]{\PGU_{#1}({#2})}
\newcommand{\PSPn}[2]{\PSP_{#1}({#2})}

\newcommand{\Sz}[1]{\prescript{2}{}{\mathrm{B}_2}\paren{#1}}
\newcommand{\Ree}[1]{\prescript{2}{}{\mathrm{G}_2}\paren{#1}}
\newcommand{\LargeRee}[1]{\prescript{2}{}{\mathrm{F}_4}\paren{#1}}

\def\Aut{\textsf{Aut}}
\def\Inn{\textsf{Inn}}
\def\Out{\textsf{Out}}

\newcommand{\ds}{\displaystyle}

\usepackage{pdfpages}

\renewcommand{\epsilon}{\varepsilon}

\Crefname{lstlisting}{Listing}{Listings}

\usepackage{authblk}

\author[]{Vishnuram Arumugam}
\affil[]{\small Center for the Mathematics of Symmetry and Computation,
  The University of Western Australia, Perth 6009, Australia\\
\href{mailto:Vishnuram.Arumugam@uwa.edu.au}{Vishnuram.Arumugam@uwa.edu.au}}

\title{\sc Large Ree groups of Lie type acting point and line-primitively on 
finite generalised quadrangles}

\begin{document}
\hypersetup{linkcolor=WildStrawberry}
\pagenumbering{arabic}
\setcounter{page}{1}
\pagestyle{plain}

\maketitle
\vspace*{-0.5cm}

\abstract{
Suppose we have a finite thick 
generalised quadrangle whose automorphism group $G$ acts
primitively on both the set of points and the set of lines. Then $G$ must be
almost simple. Furthermore, $\soc{G}$ cannot be isomorphic to $\Sz{2^{2m+1}}$
or $\Ree{3^{2m+1}}$, where $m$ is a positive integer. 
In this paper, we show that $\soc{G}$ cannot be isomorphic to 
$\LargeRee{2^{2m+1}}$. 
}

\section{Introduction}
A generalised polygon is a type of incidence structure introduced by Jacques
Tits (1959) 
\cite{tits} to realise groups of Lie type as symmetries (more precisely,
automorphism groups) of geometric objects. Let $n$ be a positive integer. 
A generalised $n$-gon is an incidence 
structure whose incidence graph is a bipartite graph with diameter $n$ and 
girth $2n$. We say that a generalised
$n$-gon has order $(s,t)$ if every line has $s+1$ points incident with it and
every point is incident with $t+1$ lines. Furthermore, a generalised $n$-gon 
is said to be thick if it has order $(s,t)$ where $s,t > 1$. Feit and Higman
showed in \cite{feit} that thick generalised $n$-gons exist if and only
if $n \in \{3,4,6,8\}$. Our focus from now will only be on thick generalised
polygons. Therefore, we refer to a thick generalised polygon as simply a 
generalised polygon. The examples of 
generalised polygons that arise from 
groups of Lie type via the construction of Tits are called \emph{classical}. 
Since then, many non-classical examples of projective planes (generalised 
$3$-gons) and generalised quadrangles ($4$-gons) have been found 
\cite[Section 3.7]{maldy}. In 
the case of generalised hexagons ($6$-gons) and generalised octagons
($8$-gons), the only known examples are the classical ones. \\

Many attempts have been made to construct new
examples. These attempts involve introducing various symmetry conditions on
the automorphism groups and analysing the possible groups acting on
generalised polygons with these symmetry conditions. Buekenhout and Van
Maldeghem (1994) \cite{buek} 
showed that if a group acts distance-transitively on a generalised
$n$-gon ($n \geqslant 4)$, then in fact, it must act point-primitively on that 
generalised $n$-gon. 
We update \Cref{tab:genhex,tab:genquad} from \cite{vishnuram}, including recent work by 
Ou and Lu \cite{meiziou} on the action of $\PSPn{4}{q}$ on generalised quadrangles. 
Here, we let $\Gamma$ be a
generalised $n$-gon (generalised hexagon or octagon in \Cref{tab:genhex} and
generalised quadrangle in \Cref{tab:genquad}) and $G \subgp \Aut(\Gamma)$. 
For a point $\alpha$ in $\Gamma$, we write $G_{\alpha}$ to denote the 
point-stabiliser of $\alpha$ in $G$. Also, $q$ is assumed to be a prime power. 
Finally, the column of assumptions refers to the action of $G$ on $\Gamma$.

\begin{longtable}{llll}
\caption{Summary of results on generalised hexagons and octagons \cite[Table
1]{vishnuram}.}
\label{tab:genhex}
\endfirsthead
\endhead
\toprule
\thead{Assumptions} & \thead{Conclusion} & \thead{Year} & \thead{Ref.} \\
\midrule
\makecell{Point-primitive, \\ line-primitive \\ and flag-transitive} & 
$G$ is almost simple of Lie type & 2008 & \cite{schneider} \\
\midrule
Point-primitive & $G$ is almost simple of Lie type & 2017 & \cite{bamberg} \\
\midrule
\makecell{Point-primitive and \\ 
$\soc{G} \cong \PSLn{n}{q}$ for $n \geqslant 2$} &
$G_{\alpha}$ acts irreducibly on $V = \bF_q^n$ & 2021 & \cite{glasbyoct} \\
\midrule
\makecell{Point-primitive and \\ 
$\soc{G} \cong \Sz{2^{2m+1}}$, \\ $\Ree{3^{2m+1}}$ or 
$\LargeRee{2^{2m+1}}$} & \makecell{$\soc{G} \cong \LargeRee{2^{2m+1}}$ and \\
$\Gamma$ is the classical generalised octagon or its dual} & 2016 & 
\cite{morgan} \\
\bottomrule
\end{longtable}

\begin{longtable}{llll}
\caption{Summary of results on generalised quadrangles \cite[Table
2]{vishnuram}.}
\label{tab:genquad}
\endfirsthead
\endhead
\toprule
\thead{Assumptions} & \thead{Conclusion} & \thead{Year} & \thead{Ref.} \\
\midrule
\makecell{Point-primitive and \\ line-primitive} & $G$ is almost simple &
2012 & \cite{bgmrs} \\
\midrule
\makecell{Point-primitive, \\ line-primitive \\ and flag-transitive} & 
$G$ is almost simple of Lie type & 2012 & \cite{bgmrs} \\
\midrule
\makecell{Point-primitive, \\ flag-transitive and \\ $\soc{G} \cong A_n$ with
$n \geqslant 5$} & \makecell{$G \subgp S_6$ and \\ 
$\Gamma$ is the symplectic quadrangle $W(2)$} & 2012  & \cite{bgmrs} \\
\midrule
Point-primitive & \makecell{$\soc{G}$ is not isomorphic to a sporadic group} 
& 2021 & \cite{bambergevans} \\
\midrule
\makecell{Point-primitive, \\ line-primitive and \\ 
$\soc{G} \cong \PSLn{2}{q}$ for $q \geqslant 4$} &
\makecell{$q = 9$ and \\ $\Gamma$ is the symplectic quadrangle $W(2)$} & 
2023 & \cite{fenglu} \\
\midrule
\makecell{Point-primitive and \\ 
line-primitive} & \makecell{$\soc{G}$ is not isomorphic to $\PSUn{3}{q}$ \\ 
for $q \geqslant 3$} & 2024 & \cite{luzhangzou} \\
\midrule
\makecell{Point-primitive and \\ line-primitive} & \makecell{$\soc{G}$ is not
isomorphic to \\ $\Sz{2^{2m+1}}$ or $\Ree{3^{2m+1}}$} & 2025 & 
\cite{vishnuram} \\
\midrule
\makecell{Point-primitive, \\ line-primitive and \\ 
$\soc{G} \cong \PSPn{4}{q}$ for $q \geqslant 4$} &
\makecell{$\Gamma$ is the symplectic quadrangle $W(3,q)$} & 
2026 & \cite{meiziou} \\
\bottomrule
\end{longtable}

The main theorem in this paper (\Cref{thm:main}) is a continuation of the work
in \cite{vishnuram}, where the authors showed that if an almost simple group
$G$ acts primitively on both the set of points and the set of lines of a
generalised quadrangle, then its socle, $\soc{G}$, cannot be isomorphic to a
Suzuki or a small Ree group. In this paper, we show that $\soc{G}$ cannot be
isomorphic to a large Ree group. Both \cite{vishnuram} and this paper are
motivated by the work of Morgan and Popiel \cite{morgan}, where they showed
that if $G$ acts primitively on the set of points of a generalised hexagon or
octagon $\Gamma$ with $\soc{G} \cong \Sz{2^{2m+1}}$, $\Ree{3^{2m+1}}$ or 
$\LargeRee{2^{2m+1}}$, then the only possibility is that $\soc{G} \cong
\LargeRee{2^{2m+1}}$ and $\Gamma$ is the classical generalised octagon (up to
duality).

\begin{theorem}
\label{thm:main}
Let $m$ be a positive integer. An almost simple group with socle isomorphic to 
$\LargeRee{2^{2m+1}}$ cannot act primitively on both the set of
points and the set of lines of a generalised quadrangle.
\end{theorem}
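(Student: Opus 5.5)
The argument will be by contradiction and will follow the strategy of \cite{vishnuram}. Suppose $G$ is almost simple with $T=\soc{G}\cong\LargeRee{q}$, $q=2^{2m+1}$, and $G$ acts primitively on the points $\cP$ and lines $\cL$ of a thick generalised quadrangle $\Gamma$ of order $(s,t)$. Then the stabilisers $G_\alpha$ and $G_\ell$ are maximal subgroups of $G$. We have
\[
|\cP|=(s+1)(st+1),\qquad |\cL|=(t+1)(st+1),
\]
and the standard restrictions apply: Higman's inequality $s\le t^2$, $t\le s^2$, and the divisibility condition $(s+t)\mid st(s+1)(t+1)$. Since $|\cP|=|G:G_\alpha|$ and $|\cL|=|G:G_\ell|$ are both divisible by $st+1$, this number divides $\gcd(|G:G_\alpha|,|G:G_\ell|)$. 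We also use the list of maximal subgroups of $\LargeRee{q}$ and its almost simple extensions, due to Malle. These are the maximal parabolics $[q^{10}]:(\SPn{4}{q}\times(q-1))$ and $[q^{11}]:\GLn{2}{q}$, the subfield groups $\LargeRee{q_0}$, and $\SPn{4}{q}:2$, $\SUn{3}{q}:2$, $\PGUn{3}{q}:2$, $\Sz{q}\wr 2$, $(\Sz{q}\times\Sz{q}):2$, together with several torus normalisers of shapes $(q\pm\sqrt{2q}+1)^2:(4\circ\GL_2(3))$, $(q^2+1)^2:\dots$, $(q+1)^2:\GL_2(3)$, $(q^2-q+1):12$, $(q^2\pm\sqrt{2q^3}+q\pm\sqrt{2q}+1):12$, and ${}^2F_4(2)$ when $q=2$.

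\textbf{Step 1: arithmetic.} For each ordered pair of maximal subgroups $(H,K)=(G_\alpha,G_\ell)$, up to the outer automorphisms (field automorphisms of odd order $2m+1$), write the indices as polynomials in $q$. The exponent of the outer factor contributes at most a factor of $2m+1$, which is small. From $|\cP|$ and $|\cL|$ we recover $s,t$: the ratio $(s+1)/(t+1)=|\cL|^{-1}|\cP|$ is determined, and $st+1$ divides the gcd of the indices. In most cases $\gcd(|T:H|,|T:K|)$ is a product of cyclotomic-type factors, for example $\Phi_8=q^2+1$, $\Phi_{24}$-factors, $q\pm1$ and $q^2-q+1$. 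Comparing the size $(s+1)(st+1)\le (s+1)^3$ with the size of the gcd forces $st+1$ to be much smaller than $|\cP|^{2/3}$, and hence a contradiction with $t\le s^2$, which gives $|\cP|\le (s+1)(s^3+1)$. Concretely, I will show $|\cP|\le (\text{gcd})^{c}$ fails for the relevant $c$. This kills the pairs in which the indices have a small common part, including all pairs with a subfield or torus normaliser against a parabolic, since the parabolic index is odd while the torus-normaliser index is divisible by a high power of $2$. The case $q=2$ (the Tits group) and small $q$ will be checked directly.

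\textbf{Step 2: equal-size or equal-class cases.} If $H$ and $K$ are conjugate in $\Aut(T)$, or have the same order, then $s=t$. Then $|\cP|=(s+1)(s^2+1)$ and $2s\mid s^2(s+1)^2$. I will solve $(s+1)(s^2+1)=|G:H|$ for each $H$. Since $|G:H|$ is close to a polynomial in $q$ of degree $\deg|T|-\deg|H|$, this degree must be divisible by $3$, and the leading behaviour pins down $s$ as approximately $q^{d/3}$. Reducing modulo $q$ or modulo $q-1$ (via $p$-adic valuations: $|\cP|$ is coprime to $s$, and so on) then gives a contradiction. For parabolics the index is odd, while $(s+1)(s^2+1)$ is odd only if $s$ is even; I then use $2$-adic and $(q-1)$-adic information.

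\textbf{Step 3: remaining cases, which I expect to be the main obstacle.} The likely survivors of pure arithmetic are pairs of large subgroups with indices sharing many cyclotomic factors, for example the two parabolics (which have indices $(q^6+1)(q^3+1)(q^4-1)/(q-1)$-type and similar, both with many common factors), or $\SPn{4}{q}:2$ versus $\SUn{3}{q}:2$. For these I will use geometric arguments as in \cite{vishnuram}. The point stabiliser $G_\alpha$ acts on the $t+1$ lines through $\alpha$ and on the $s(t+1)$ points collinear with $\alpha$, so some suborbit lengths of $G$ on $\cP$ must sum appropriately. Thus $s(t+1)$ must be a sum of subdegrees of $G$ on $\cP$, and $t+1$ must be a sum of orbit lengths of $G_\alpha$ on $\cL$, that is, a sum of indices $|G_\alpha:G_\alpha\cap G_\ell^g|$. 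Using the known subdegrees of the parabolic actions (the generalised octagon geometry gives these explicitly as powers of $q$ times small factors), one checks that no combination matches. Alternatively, using a fixed point of a Sylow $2$-subgroup: a $2$-element fixing many points of $\Gamma$ forces a substructure, and Benson-type congruences $(1+t)(st+1)\dots$ on the number of fixed elements of automorphisms can give the contradiction.
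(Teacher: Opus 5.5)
Your framework matches the paper: maximal stabilisers, $|\cP|=(s+1)(st+1)$ and $|\cL|=(t+1)(st+1)$, Higman's inequality, and $s=t$ when the orders agree. Your treatment of the parabolics, writing $s(t+1)$ as a sum of subdegrees, is exactly how the paper kills them.

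The gap is Step~1. It does not do what you claim, so almost everything lands in Step~3, which is not yet an argument.

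\textbf{The size criterion is misstated.} A lower bound on $st+1$ comes from $s\le t^2$, not $t\le s^2$. It only gives $st+1\gtrsim|\cP|^{3/5}$, and this is attained up to constants when $s=t^2$, so ``$st+1$ much smaller than $|\cP|^{2/3}$'' yields no contradiction. The clean version is $st+1\ge(|\cP||\cL|/2)^{1/3}$, which follows from $(s+1)(t+1)\le 2(st+1)$.

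\textbf{Even corrected, the criterion only removes pairs involving a parabolic}, whose odd index keeps the gcd small, and those pairs already fall to the subdegree argument. For every pair of non-parabolic maximal subgroups, the two indices share a $2$-part of size roughly $q^8$ or more, together with large cyclotomic factors. For $(\SUn{3}{q}:C_2,\SPn{4}{q}:C_2)$ one gets $\gcd(|\cP|,|\cL|)=\frac12q^8(q^6+1)\approx q^{14}/2$, while $(|\cP||\cL|/2)^{1/3}$ is only about $q^{34/3}$. The same happens for $(\SUn{3}{q}:C_2,\Sz{q}\wr C_2)$, for $(\Sz{q}\wr C_2,\SPn{4}{q}:C_2)$, for the subfield pairs, and for pairs of two torus normalisers.

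\textbf{Step~3 has no working tool for these pairs.} Sums of $G_\alpha$-orbit lengths on $\cL$ would need the double cosets $G_\alpha\backslash G/G_L$ for non-parabolic pairs, uniformly in $q$, which you do not have. The Benson-type congruence is left unspecified.

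The missing idea is how the paper exploits the ratio $(s+1)/(t+1)$, which you already note is determined. Write $s+1=kx$ and $t+1=ky$, where $x/y$ is $|\cP|/|\cL|$ in lowest terms (\Cref{lem:coprime}). One shows $k$ is even and splits on $k\bmod 4$.
\begin{itemize}
\item If $4\mid k$, then $st+1\equiv 2\pmod 4$, so the whole $2$-part of the index sits in $s+1$. Then $|\cP|>(s+1)^2(t+1)/2$ exceeds the upper bound for the index.
\item If $k\equiv 2\pmod 4$, then $st+1$ carries almost all of the $2$-part, so $st+1\equiv 0$ modulo a high power of $q$ (or $q_0$). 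Solving this congruence digit by digit fixes the low base-$q$ digits of $k$ and forces $k$ to be large, again contradicting the index bound.
\end{itemize}
The paper supplements this with three further tools:
\begin{itemize}
\item the inequality $|\cL|^4<|\cP|^5$ (\Cref{lem:modhigman});
\item the divisibility $s+t\mid st(s+1)(t+1)$ combined with gcd bounds;
\item for subfield pairs and subfield-versus-torus pairs, the fixed sub-quadrangle of an element whose conjugacy class does not split. For example, $u_5$ gives point and line counts that are both powers of $2$, which is impossible by \Cref{lem:suzupts}.
\end{itemize}

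Smaller issues:
\begin{itemize}
\item In Step~2, ``the degree must be divisible by $3$'' is unjustified, since $s$ need not be a polynomial in $q$. The paper instead uses $2^a\mid(s+1)(s^2+1)\Rightarrow 2^{a-1}\mid s+1$, hence $|\cP|>2^{3a-5}$ (\Cref{lem:bound}).
\item Your list of maximal subgroups is wrong in several places. The Levi factor of the first parabolic is $\Sz{q}\times C_{q-1}$. The groups $(q^2+1)^2{:}\cdots$ and $(q^2-q+1){:}12$ are not maximal. $\Sz{q}\wr C_2$ is listed twice. The case $q=2$ does not occur. Craven's $\PGLn{2}{13}$ for $q=8$ is missing.
\item Once $T$ is transitive, $|G:G_\alpha|=|T:T_\alpha|$ exactly, so there is no stray factor $2m+1$. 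What you do need, in order to replace $G$ by $T$, is that $\Out(T)$ is cyclic and that there are no novelties.
\end{itemize}
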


\section{Incidence Geometry}
\label{chap:incidence}
We collect some preliminary definitions and results from 
incidence geometry and group theory.

\subsection{Definitions and Results}
An incidence geometry is a triple $(\cP,\cL,\cI)$ where $\cP$ is
called the point-set, $\cL$ is called the line-set and $\cI$ is called the
incidence relation. We refer to the elements of $\cP$, $\cL$ and $\cI$ as
points, lines and flags, respectively. A point 
$\alpha \in \cP$ is \emph{incident} to a line $L \in \cL$ if $(\alpha,L) \in 
\cI$. Two points $\alpha,\beta \in \cP$ are \emph{collinear} if there
exists a line $L \in \cL$ such that $(\alpha,L) \in \cI$ and $(\beta,L) \in 
\cI$. Finally, the \emph{dual} of an incidence structure $(\cP,\cL,\cI)$ is an
incidence structure $(\cP^{\prime},\cL^{\prime},\cI^{\prime})$ where
the set of points $\cP^{\prime} = \cL$, the set of lines $\cL^{\prime} = \cP$ 
and the incident pair $(L,\alpha) \in \cI^{\prime}$ precisely when 
$(\alpha,L) \in \cI$. \\

We may also view an incidence structure as a bipartite graph. More precisely, 
given an incidence structure $\Gamma = (\cP,\cL,\cI)$, we construct a graph 
called the
\emph{incidence graph} of $\Gamma$ in the following way: we take the vertex
set to be $\cP \cup \cL$ and we join an edge between $\alpha \in \cP$ and 
$L \in \cL$ if $(\alpha,L) \in \cI$. Note the disjointness of $\cP$ 
and $\cL$ ensures that the incidence graph does not contain loops and that
this graph is indeed a bipartite graph with the two parts being $\cP$ and
$\cL$. Finally, an
\emph{automorphism} $\sigma$ of $\Gamma$ is a bijection 
$\sigma: \cP \cup \cL \rightarrow \cP \cup \cL$ that maps points to points
and lines to lines and $\sigma$ preserves the incidence relation,
\emph{i.e.}, if $\alpha \in \cP$ is incident to $L \in \cL$ then
$\alpha^{\sigma}$ is incident to $L^{\sigma}$. The collection of all 
automorphisms of $\Gamma$ is called the \emph{automorphism group} and it is 
denoted by $\Aut(\Gamma)$. \\
While we have defined generalised polygons via this incidence graph, we will
often think of them as incidence geometries. For more
information on incidence structures and generalised polygons, we refer to
\cite{kiss,payne,maldy}. \\

From here on, a generalised quadrangle is assumed to be finite, \emph{i.e.}, 
it has a finite set of points and set of lines. Furthermore, a generalised 
quadrangle with order $(s,t)$ has $(s+1)(st+1)$ points and $(t+1)(st+1)$ 
lines \cite{payne}. \\ 

From this, we can obtain the number of points that are collinear to a given
point and the number of points non-collinear to a given point. 
\begin{lemma}[Number of Collinear and Non-Collinear Points]
\label{cor:numcol}
Suppose $\Gamma = (\cP,\cL,\cI)$ is a generalised 
quadrangle with order $(s,t)$ 
and let $\alpha \in \cP$. Then the number of points collinear to $\alpha$ is
given by $s(t+1)$ and the number of points non-collinear to $\alpha$ is 
$s^2t$. 
\end{lemma}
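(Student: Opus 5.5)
The plan is to count collinear points directly from the axioms of a generalised quadrangle of order $(s,t)$, and then obtain the non-collinear count by subtracting from the total number of points $(s+1)(st+1)$ recalled just before the statement.

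\textbf{Step 1: collinear points.} Fix $\alpha \in \cP$. There are exactly $t+1$ lines $L_0,\dots,L_t$ incident with $\alpha$, and each carries $s$ points other than $\alpha$. I claim these $(t+1)s$ points are pairwise distinct. Suppose some $\beta \neq \alpha$ lay on two distinct lines $L_i, L_j$ through $\alpha$. Then $\alpha, L_i, \beta, L_j$ would form a closed path of length $4$ in the incidence graph. This is impossible, since a generalised quadrangle has girth $8$ (equivalently, two distinct points lie on at most one common line). Conversely, any point collinear with $\alpha$ lies on a common line with $\alpha$, and that line must be one of the $L_i$. Hence exactly $s(t+1)$ points are collinear with $\alpha$, not counting $\alpha$ itself.

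\textbf{Step 2: non-collinear points.} The points not collinear with $\alpha$, with $\alpha$ excluded, number
\[
(s+1)(st+1) - 1 - s(t+1) = s^2t + st + s + 1 - 1 - st - s = s^2t .
\]

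\textbf{Possible obstacle.} The only delicate point is the convention about whether $\alpha$ counts as collinear with itself. The stated formulas make sense precisely when $\alpha$ is excluded from both counts, so I will state that convention explicitly. Everything else is routine once the girth condition is invoked to exclude repeated points in Step 1.
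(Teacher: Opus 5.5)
Your proposal is correct and follows the paper's argument exactly: count the $s$ further points on each of the $t+1$ lines through $\alpha$, then subtract $s(t+1)+1$ from $|\cP| = (s+1)(st+1)$. Your use of the girth-$8$ condition to show that no point other than $\alpha$ lies on two lines through $\alpha$ is a step the paper leaves implicit.
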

\begin{proof}
There are $t+1$ lines incident to $\alpha$ and each of these lines contain $s$
points other than $\alpha$. Thus, the number of points collinear to $\alpha$
is $s(t+1)$. Excluding the points collinear to $\alpha$ and $\alpha$ itself, 
we obtain number of points non-collinear to $\alpha$, which is given by
$|\cP|-s(t+1)-1 = (s+1)(st+1)-s(t+1)-1 = s^2t$. 
\end{proof}

The following lemma provides conditions on the order $(s,t)$ of a generalised
polygon. 
\begin{lemma}[{Conditions on the Order \cite[Theorems 1.7.1 and 1.7.2]{maldy}}]
\label{lem:condsonst}
Consider a finite generalised quadrangle 
with order $(s,t)$. Then the following statements hold: 
\begin{enumerate}[(a)] 
\item \label{item:higman} (Higman's Inequality) $s \leqslant t^2$ and 
$t \leqslant s^2$; and 
\item \label{item:higman2} $s+t$ divides $st(s+1)(t+1)$.
\end{enumerate}
\end{lemma}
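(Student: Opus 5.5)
The plan is to prove both parts by counting in the collinearity graph. For part (a) we double count with a variance (Cauchy--Schwarz) argument. For part (b) we use the spectrum of the collinearity graph and the integrality of eigenvalue multiplicities. By duality (the dual of a generalised quadrangle of order $(s,t)$ has order $(t,s)$), it suffices in (a) to prove $t \leqslant s^2$. We also use the point count $(s+1)(st+1)$ and \Cref{cor:numcol} freely.

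\textbf{Part (a).} If $s = 1$ the inequality $t\leqslant s^2$ is exactly the statement to be shown, and the count below still applies. Fix two non-collinear points $x,y$. Let $T = \{x,y\}^{\perp}$ be the set of points collinear with both. Each of the $t+1$ lines through $x$ contains exactly one point collinear with $y$, so $|T| = t+1$.

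Let $V$ be the set of points collinear with neither $x$ nor $y$ (and different from both). By inclusion--exclusion, using \Cref{cor:numcol},
\[ d \defi |V| = (s+1)(st+1) - 2 - 2s(t+1) + (t+1) = s^2t - st - s + t. \]
For $z \in V$, let $t_z$ be the number of points of $T$ collinear with $z$. We count two sums.
\begin{itemize}
\item $\sum_{z\in V} t_z = (t+1)(t-1)s$. Indeed, each $u \in T$ lies on $t+1$ lines, two of which are $ux$ and $uy$. The remaining $t-1$ lines each contribute $s$ points, and these lie in $V$ by the absence of triangles.
\item $\sum_{z\in V} t_z(t_z-1) = (t+1)t(t-1)$. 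Distinct $u,u' \in T$ are non-collinear, since otherwise $x,u,u'$ would form a triangle. Each such pair therefore has exactly $t+1$ common neighbours, two of which are $x$ and $y$, and the others lie in $V$.
\end{itemize}
Applying Cauchy--Schwarz in the form $d \sum t_z^2 \geqslant (\sum t_z)^2$ and substituting gives a polynomial inequality in $s,t$. Dividing out the obvious positive factors should reduce it to $(s-1)(s^2 - t) \geqslant 0$, or an equivalent form, which yields $t \leqslant s^2$ whenever $s>1$.

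The main obstacle is this algebraic simplification. I expect the factorisation to work out but would verify it carefully, since the signs and factors are easy to mishandle.

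\textbf{Part (b).} The collinearity graph is strongly regular with parameters
\[ \bigl(v,k,\lambda,\mu\bigr) = \bigl((s+1)(st+1),\ s(t+1),\ s-1,\ t+1\bigr). \]
Its adjacency matrix $A$ therefore satisfies $A^2 = kI + \lambda A + \mu(J - I - A)$. The non-principal eigenvalues are the roots of
\[ r^2 - (\lambda-\mu) r - (k-\mu) = 0, \]
namely $s-1$ and $-t-1$. Solving $f + g = v - 1$ and $k + f(s-1) + g(-t-1) = \operatorname{tr} A = 0$ gives the multiplicity of $s-1$ as
\[ f = \frac{st(s+1)(t+1)}{s+t}. \]
Since $f$ is a multiplicity, it is an integer, so $s+t$ divides $st(s+1)(t+1)$. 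This step is routine once the parameters are checked.
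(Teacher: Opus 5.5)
Your proposal is correct. The paper gives no proof of this lemma and only cites \cite[Theorems 1.7.1 and 1.7.2]{maldy}. Your argument is the classical self-contained one (as in \cite{payne}): a Cauchy--Schwarz count on the points collinear with neither of two non-collinear points gives Higman's inequality, and integrality of the multiplicity of the eigenvalue $s-1$ of the collinearity graph gives the divisibility condition. It costs some length but makes the lemma independent of the citation.

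The algebraic step you flagged does close. From $\sum_{z} t_z = s(t^2-1)$ and $\sum_{z} t_z^2 = (t^2-1)(s+t)$ one gets
\[ d\sum_{z\in V} t_z^2-\Bigl(\sum_{z\in V} t_z\Bigr)^2 = t\,(t^2-1)\,(s-1)\,(s^2-t), \]
so $t\leqslant s^2$ follows once $s,t>1$.

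One correction: your sentence about $s=1$ is wrong. For $s=1$ the displayed quantity vanishes identically, so it gives no information. The conclusion also genuinely fails there: the dual grid of order $(1,t)$ is a generalised quadrangle with $t>1=s^2$. Likewise, dividing by $t^2-1$ and the duality step for $s\leqslant t^2$ both need $t>1$. So for part (a) you should explicitly invoke the paper's standing convention that generalised quadrangles are thick ($s,t\geqslant 2$). Part (b) needs no such hypothesis.
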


The following three lemmata provide some useful bounds. We note in
particular, \Cref{lem:bound} is an analogue of \cite[Lemma 2]{morgan} for 
generalised quadrangles.

\begin{lemma}[Bound on the Number of Points]
\label{lem:bound}
Let $a,s,t > 1$ be positive integers. 
\begin{enumerate}[(i)]
\item \label{item:s} If $2^a$ divides $(s+1)(s^2+1)$, then $(s+1)(s^2+1) >
2^{a-1}(2^{2a-4}+1)$. 
\item \label{item:st} If $2^a$ divides $s+1$ and $s \leqslant t^2$,
then $(s+1)(st+1) > 2^a(2^{(3a-3)/2} + 1)$. 
\end{enumerate}
\end{lemma}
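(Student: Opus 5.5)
The plan is to turn each divisibility hypothesis into a lower bound on $s$ (and on $t$), and then substitute that bound into the monotone expressions $(s+1)(s^2+1)$ and $(s+1)(st+1)$.

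For (i), I first note that $s$ must be odd. If $s$ were even, $(s+1)(s^2+1)$ would be odd, so it could not be divisible by $2^a$ with $a>1$. For odd $s$ we have $s^2 \equiv 1 \pmod 4$, so $s^2+1 \equiv 2 \pmod 4$. Hence $s^2+1$ contributes exactly one factor of $2$, and therefore $2^{a-1}$ divides $s+1$. This gives $s \geqslant 2^{a-1}-1$ and also $s \geqslant 3$, since $s>1$ is odd. For $a \geqslant 4$ I substitute $s \geqslant 2^{a-1}-1$ and obtain
\[
(s+1)(s^2+1) \geqslant 2^{a-1}\bigl(2^{2a-2}-2^a+2\bigr).
\]
Comparing with the target,
\[
\bigl(2^{2a-2}-2^a+2\bigr)-\bigl(2^{2a-4}+1\bigr) = 2^a\bigl(3\cdot 2^{a-4}-1\bigr)+1 > 0 .
\]
The cases $a=2,3$ need separate handling, because the bound $s \geqslant 2^{a-1}-1$ is too weak there (it fails to be strict at $a=2$). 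Instead I use $s \geqslant 3$: then $(s+1)(s^2+1) \geqslant 40$, which exceeds both $2\cdot 2=4$ and $4\cdot 5=20$. This small-case check is the only delicate point.

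For (ii), the hypothesis $2^a \divides s+1$ gives $s+1 \geqslant 2^a$, so $s \geqslant 2^a-1$. From $s \leqslant t^2$ we get $t \geqslant s^{1/2}$, and hence $st \geqslant s^{3/2} \geqslant (2^a-1)^{3/2}$. Since $a \geqslant 2$, we have $2^a-1 > 2^{a-1}$, so $(2^a-1)^{3/2} > (2^{a-1})^{3/2} = 2^{(3a-3)/2}$. Therefore $st+1 > 2^{(3a-3)/2}+1$. Multiplying by $s+1 \geqslant 2^a$ yields the strict inequality $(s+1)(st+1) > 2^a\bigl(2^{(3a-3)/2}+1\bigr)$, as claimed.
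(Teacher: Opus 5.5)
Your proof is correct and follows essentially the same route as the paper: the parity of $s$ and $s^2+1\equiv 2 \pmod 4$ force $2^{a-1}\mid s+1$ in (i), and in (ii) $s>2^{a-1}$ is combined with $s\leqslant t^2$ (your $st\geqslant s^{3/2}$ is the paper's pair of bounds $s>2^{a-1}$, $t>2^{(a-1)/2}$ in one step). The only difference is in (i): the paper uses the cruder bound $s>2^{a-2}$ (strict because $s$ is odd and $s>1$), which gives the inequality at once and avoids your separate check for $a=2,3$; that check is in any case only needed for $a=2$, since your difference formula is already positive at $a=3$.
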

\begin{proof}
We show (i). First, we claim that $s$ is odd. 
Indeed, if $s$ is even, then both $s+1$ and $s^2+1$
are odd contrary to the assumption that $(s+1)(s^2+1)$ is divisible by $2^a$.
Furthermore, writing $s = 2k+1$, we see that $s^2+1 = 4k^2+4k+2 \equiv 2 
\pmod 4$. Since $4$ does not divide $s^2+1$, we conclude that $2^{a-1}$
divides $s+1$. Therefore, $2^{a-1} \leqslant s+1$ and so, $2^{a-2} \leqslant 
s$. But since $s$ is odd, we can write $2^{a-2} < s$. From this, we conclude 
that 
\[
(s+1)(s^2+1) > 2^{a-1}(2^{2a-4}+1). 
\] 

Now for (ii), notice that $s$ is odd since $2^a$ divides $s+1$. Next, 
we observe that $2^{a} \leqslant s+1$ and so, $2^{a-1} \leqslant
s$. But since $s$ is odd, it follows that $2^{a-1} < s$. Also, we have that
$2^{a-1} < s \leqslant t^2$ by Higman's inequality (\Cref{lem:condsonst}
\ref{item:higman}), whence, 
$2^{(a-1)/2} < t$. From this, we deduce that 
\[
(s+1)(st+1) > 2^a(2^{a-1} 2^{(a-1)/2} + 1) = 2^a(2^{(3a-3)/2} + 1). \qedhere
\]
\end{proof}
 
\begin{lemma}
\label{lem:s2t}
Let $s$ and $t$ be integers greater than $1$ satisfying Higman's inequality
for generalised quadrangles, \emph{i.e.}, $s \leqslant t^2$ and 
$t \leqslant s^2$. Then 
\begin{enumerate}[(i)]
\item $s(t+1) < s^2t$; and \label{lem:s2ti}
\item $\ds \paren{\frac{2}{3}}^{4/3} 
\paren{s(t+1)}^{4/3} \leqslant s^2 t \leqslant \paren{s(t+1)}^{5/3}$.
\label{lem:s2tii}
\end{enumerate}
\end{lemma}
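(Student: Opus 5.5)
Both parts are elementary inequalities in $s$ and $t$, and the only input is Higman's inequality from \Cref{lem:condsonst}. For (i), divide by $s$: the claim becomes $t+1 < st$. This is equivalent to $1 < t(s-1)$, which holds because $s \geqslant 2$ and $t \geqslant 2$ give $t(s-1) \geqslant 2$.

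For the upper bound in (ii), compare $s^2t$ with $(s(t+1))^{5/3}$. Since $s(t+1) > st$, it suffices to show $s^2 t \leqslant (st)^{5/3}$. Raising to the third power, this is $s^6t^3 \leqslant s^5t^5$, that is, $s \leqslant t^2$, which is exactly one half of Higman's inequality.

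For the lower bound, replace $t+1$ by a multiple of $t$. Since $t \geqslant 2$, we have $t+1 \leqslant \tfrac{3}{2}t$, so $\tfrac{2}{3}\, s(t+1) \leqslant st$. Hence
\[
\paren{\frac{2}{3}}^{4/3}\paren{s(t+1)}^{4/3} \leqslant (st)^{4/3},
\]
and it remains to show $(st)^{4/3} \leqslant s^2t$. Cubing, this is $s^4t^4 \leqslant s^6t^3$, that is, $t \leqslant s^2$, the other half of Higman's inequality.

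The only point needing care is choosing the constant: the factor $(2/3)^{4/3}$ comes precisely from the bound $t+1 \leqslant \tfrac32 t$, which uses $t \geqslant 2$. With that choice each part of (ii) reduces directly to one half of Higman's inequality, so no step is expected to be a real obstacle.
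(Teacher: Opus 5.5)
Your proof is correct and takes essentially the same approach as the paper. Both parts of (ii) go through $(st)^{4/3}$ and $(st)^{5/3}$, each reduced to one half of Higman's inequality, with $t \geqslant \tfrac{2}{3}(t+1)$ supplying the constant. The paper writes the reduction as $s^2t = s^{2-k}s^k t$ with $k = 2/3$ and $k = 1/3$ instead of cubing, which is the same computation.
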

\begin{proof}
Since $s,t 
\geqslant 2$, we find that $1/(t+1) \leqslant 1/3$ and so, $-1/(t+1) \geqslant
-1/3$. Also, $2s/3 > 1$. Therefore, 
\[
\frac{st}{t+1} = s\paren{1-\frac{1}{t+1}} \geqslant s\paren{1-\frac{1}{3}}
\geqslant \frac{2s}{3} > 1.
\]
Consequently, $st > t+1$ and hence, $s^2 t > s(t+1)$. \\

To prove the second inequality, observe that, by Higman's inequality
(\Cref{lem:condsonst} \ref{item:higman}), 
\[
s^2 t = s^{2-k} s^k t \geqslant s^{2-k} t^{k/2} t = s^{2-k} t^{1+k/2},
\]
where $k > 0$. Now, substituting $k = 2/3$, we obtain 
$s^2 t \geqslant (st)^{4/3}$. Since $t \geqslant 2$, we have that $3t
\geqslant 2t + 2 = 2(t+1)$. Hence, $t \geqslant 2(t+1)/3$. Therefore,
\[
s^2 t \geqslant (st)^{4/3} \geqslant \paren{s \cdot \frac{2}{3} (t+1)}^{4/3} =
\paren{\frac{2}{3}}^{4/3} \paren{s(t+1)}^{4/3}.
\]
Similarly, 
\[
s^2 t = s^{2-k} s^k t \leqslant s^{2-k} t^{2k+1},
\]
by Higman's inequality (\Cref{lem:condsonst} \ref{item:higman}). 
Substituting $k = 1/3$, we obtain 
\[
s^2 t \leqslant (st)^{5/3} \leqslant (s(t+1))^{5/3}. \qedhere
\]
\end{proof}

The next result provides some inequalities relating the number of points
and the number of lines. 
\begin{lemma}
\label{lem:modhigman}
Let $\Gamma = (\cP,\cL,\cI)$ be a generalised quadrangle of order $(s,t)$
where $s,t \geqslant 2$. Then 
\begin{enumerate}[(i)]
\item \label{item:modhigman1} 
$|\cP| > (s+1)^2(t+1)/2$ and $|\cL| > (t+1)^2(s+1)/2$; and 
\item \label{item:modhighman2} 
$|\cP|^4 < |\cL|^5$ and $|\cL|^4 < |\cP|^5$.
\end{enumerate}
\end{lemma}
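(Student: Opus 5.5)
Part (i) is a direct computation from $|\cP|=(s+1)(st+1)$, and part (ii) follows by combining Higman's inequality (\Cref{lem:condsonst} \ref{item:higman}) with elementary estimates. By duality (swapping the roles of $s$ and $t$, and of $\cP$ and $\cL$), it is enough to prove the first inequality in each part. The second then follows because $|\cL|=(t+1)(st+1)$ and Higman's inequality is symmetric in $s$ and $t$.

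For (i), we cancel the common factor $s+1$ from $|\cP|=(s+1)(st+1)$. The claim reduces to $st+1>(s+1)(t+1)/2$, that is, $2st+2>st+s+t+1$. This is equivalent to $(s-1)(t-1)>0$, which holds because $s,t\geqslant 2$.

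For (ii), we take fourth and fifth powers and cancel $(st+1)^4$. The inequality $|\cP|^4<|\cL|^5$ becomes
\[
(s+1)^4<(t+1)^5(st+1).
\]
We split $(s+1)^4=(s+1)^2\cdot(s+1)^2$ and bound each factor separately.

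\begin{enumerate}[(a)]
\item Since $s\leqslant t^2$, we have $s+1\leqslant t^2+1<(t+1)^2$. Hence $(s+1)^2<(t+1)^4$.
\item We claim $(s+1)^2\leqslant (t+1)(st+1)$. Expanding both sides, this reads $s^2+2s+1\leqslant st^2+st+t+1$. Compare term by term:
\begin{itemize}
\item $s^2\leqslant st^2$, because $s\leqslant t^2$;
\item $2s\leqslant st$, because $t\geqslant 2$;
\item the remaining term $t$ on the right is positive.
\end{itemize}
\end{enumerate}

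Multiplying (a) and (b) gives $(s+1)^4<(t+1)^4\cdot(t+1)(st+1)=(t+1)^5(st+1)$, which is the required inequality. The inequality is strict because (a) is strict.

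The only step requiring any care is choosing this split of $(s+1)^4$ so that Higman's inequality absorbs exactly the right powers. Once the split is fixed, everything else is routine algebra.
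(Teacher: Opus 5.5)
Your proof is correct. Part (i) is the same computation as the paper's. For part (ii) you reduce to the same inequality $(s+1)^4<(t+1)^5(st+1)$ as the paper does, but you prove it by a different route.

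\textbf{The paper's route.} It substitutes the worst case $t\geqslant s^{1/2}$ to get $(st+1)(t+1)^5\geqslant (s^{3/2}+1)(s^{1/2}+1)^5$. It then proves the one-variable inequality $(x^{3/2}+1)(x^{1/2}+1)^5>(x+1)^4$ for $x\geqslant 1$ by computing the derivative in Mathematica and checking $f(1)=48>0$.

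\textbf{Your route.} You split $(s+1)^4=(s+1)^2\cdot(s+1)^2$. You bound one factor by $(t+1)^4$ using $s\leqslant t^2$, and the other by $|\cL|=(t+1)(st+1)$ using a term-by-term comparison.

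\textbf{Comparison.} Your argument is fully elementary and needs no calculus or computer verification. It also makes the mechanism visible through the two intermediate facts $(s+1)^2<(t+1)^4$ and $(s+1)^2<|\cL|$. The paper's argument uses only $t\geqslant\sqrt{s}$ and $s\geqslant 1$. Yours additionally uses $t\geqslant 2$ in the step $2s\leqslant st$. That is harmless under the lemma's hypotheses, and it can be removed: compare $2s$ with $t(s+1)\geqslant \sqrt{s}(s+1)\geqslant 2s$ instead.
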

\begin{proof}
The proof of (i) can be found in \cite[Lemma 2.2 (1)]{luzhangzou}. We include
it here for completeness. Since $s,t > 1$, it follows that $st-s-t+1 = 
(s-1)(t-1) > 0$. Hence, $st+1 > s+t$. Thus, 
\[
st+1 > \frac{1}{2} (st+1) + \frac{1}{2} (s+t) = \frac{(s+1)(t+1)}{2}.
\]
From which, it immediately follows that $|\cP| = (s+1)(st+1) >
(s+1)^2(t+1)/2$. By duality, we also have $|\cL| = (t+1)^2(s+1)/2$. \\

We prove (ii). Consider the function $f(x) =
(x^{3/2}+1)(x^{1/2}+1)^5-(x+1)^4$ defined over the positive real numbers.
We show that $f(x) > 0$ for every $x \geqslant 1$. 
Using Mathematica (see \Cref{sec:derivative}), we find that the 
derivative of $f$ is given by 
\[
f^{\prime}(x) = 6+\frac{5+x(33+16x^{1/2} + 55x + 36x^{3/2} +
35x^2)}{2x^{1/2}} > 0
\]
for every $x > 0$. This proves that $f$ is strictly increasing for every $x >
0$. Next, we note that $f(1) = 48 > 0$. This proves that $f(x) > 0$ for every
$x \geqslant 1$. Thus, 
\begin{equation}
(x^{3/2}+1)(x^{1/2}+1)^5 > (x+1)^4 \label{ineq:xx}
\end{equation}
for every $x \geqslant 1$. Using Higman's inequality (\Cref{lem:condsonst}
\ref{item:higman}) and \eqref{ineq:xx}, we find that 
\[
(st+1)(t+1)^5 \geqslant (s^{3/2} + 1)(s^{1/2}+1)^5 > (s+1)^4. 
\]
Therefore, 
\[
|\cL| = (t+1)(st+1) > \paren{\frac{s+1}{t+1}}^4 =
\paren{\frac{|\cP|}{|\cL|}}^4.
\]
Multiplying through by $|\cL|^4$, we obtain $|\cP|^4 < |\cL|^5$. By duality,
we also obtain $|\cL|^4 < |\cP|^5$. 
\end{proof}
We note that 
\Cref{lem:modhigman} \ref{item:modhighman2} provides 
slightly sharper bounds than the ones found in \cite[Lemma 2.2 (2)]{luzhangzou}.
The bounds in that paper are equivalent to 
$|\cL|^4 < 2|\cP|^5$ and $|\cP|^4 < 2|\cL|^5$. \\

The next number theoretic lemma concerns the solutions of $s$ and $t$ given a
certain number of points and lines in a generalised quadrangle.  
\begin{lemma}
\label{lem:suzupts}
Let $a,b,s,t$ be positive integers and $p$ be a prime. If  
\begin{equation}
(s+1)(st+1) = p^a \ \textrm{and} \ (t+1)(st+1) = p^b,
\label{eqn:suzupts}
\end{equation}
then $p = 2$. Furthermore, we have: $(s = 1 \ \textrm{and} \ b = a/2+1)$ or
$(t = 1 \ \textrm{and} \ a = b/2+1)$.
\end{lemma}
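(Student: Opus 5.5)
Since $s+1$, $t+1$ and $st+1$ are all at least $2$ and their products are powers of $p$, unique factorisation makes each of them a power of $p$. So I will write $s+1 = p^x$, $t+1 = p^y$ and $st+1 = p^z$ with $x,y,z \geqslant 1$ (when $s,t \geqslant 1$, so that $st+1 \geqslant 2$). Then $a = x+z$ and $b = y+z$.

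The first step is to expand
\[
st+1 = (p^x-1)(p^y-1)+1 = p^{x+y} - p^x - p^y + 2 = p^z .
\]
I will then reduce this modulo $p^{\min(x,y)}$, splitting into cases by which of $x,y$ is smaller.

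In the case $x \leqslant y$, reducing modulo $p^x$ gives $p^z \equiv 2 \pmod{p^x}$. Suppose $z \geqslant x$. Then $p^x \divides 2$, so $p=2$ and $x=1$, which gives $s=1$. Suppose instead $z < x$. Then $p^z \leqslant p^{x-1}$, whereas $st+1 \geqslant s+1 = p^x$ because $t \geqslant 1$, a contradiction. So $z \geqslant x$ always holds, and we get $p=2$ and $s=1$.

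With $s=1$ and $p=2$, the equations become $st+1 = t+1$, so $z = y$. Hence $a = 1+y$ and $b = 2y$, which gives $b/2 + 1 = y+1 = a$. This is the relation $a = b/2+1$, but here $s=1$ rather than $t=1$. So either the statement's pairing of the two alternatives is swapped, or I am mislabelling. I will check it directly on a small example: $s=1$, $t=1$ gives $p^a = 2\cdot 2 = 4$, so $a = 2$, and $b = 2$. Then $b/2+1 = 2 = a$, while $a/2+1 = 2 = b$. Both relations hold, so this example does not distinguish the labelling.

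A second check with $s=1$, $t=3$: $(s+1)(st+1) = 2\cdot 4 = 8$, so $a=3$, and $(t+1)(st+1) = 4\cdot 4 = 16$, so $b=4$. Then $a = b/2+1 = 3$. So the case $s=1$ gives $a = b/2+1$. I will prove the correct pairing ($s=1$ with $a=b/2+1$, and $t=1$ with $b=a/2+1$), and note that it is equivalent up to the evident symmetry between $s$ and $t$. The case $y \leqslant x$ follows symmetrically by swapping the roles of $s$ and $t$, giving $p=2$, $t=1$ and $b = a/2+1$.

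The main obstacle is the inequality step that rules out $z < \min(x,y)$. It relies on $st+1 \geqslant \max(s,t)+1$, which in turn requires $s,t \geqslant 1$; this holds because $s$ and $t$ are positive integers.
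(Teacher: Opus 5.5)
Your argument is correct, and so is your pairing: $s=1$ forces $t+1=2^{a-1}=2^{b/2}$, i.e.\ $a=b/2+1$, and $t=1$ forces $b=a/2+1$. Your example $(s,t)=(1,3)$, with $(a,b)=(3,4)$, shows that the statement as typeset is false, not merely ``equivalent up to symmetry''. Indeed, the swap $s\leftrightarrow t$, $a\leftrightarrow b$ carries the typeset version to itself, so symmetry cannot turn it into yours. The paper's own proof derives exactly your pairing: it treats $t=1$ and obtains $b=a/2+1$, then says $s=1$ gives $a=b/2+1$. The later use in \Cref{lem:q0first} needs only the unpaired conclusion ``$b=a/2+1$ or $a=b/2+1$'', so nothing downstream is affected.

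Your route is organised differently from the paper's. The paper first isolates the prime. It notes that $\gcd(s+1,t+1,st+1)$ divides $t(s+1)-(st+1)=t-1$, hence divides $2$, so $p=2$. It then assumes $s+1,t+1\geqslant 4$ and gets $st+1\equiv 2\pmod 4$. This forces the power of two $st+1$ to equal $2$, contradicting $st+1>s+1>2$.

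You instead use a single congruence $p^z\equiv 2\pmod{p^{\min(x,y)}}$, combined with the size bound $st+1\geqslant\max(s,t)+1$ to force $z\geqslant\min(x,y)$. This gives $p^{\min(x,y)}\mid 2$, hence $p=2$ and $\min(s,t)=1$ at once. The paper's mod-$4$ step is essentially your congruence specialised to $p=2$ and $\min(x,y)\geqslant 2$.

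The gcd trick identifies the prime without any expansion. Your version is more uniform, and the case split $x\leqslant y$ versus $y\leqslant x$ tells you directly which of $s,t$ equals $1$. That keeps the correspondence between ``$s=1$'' and ``$a=b/2+1$'' transparent, which is precisely where the typeset statement went wrong.
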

\begin{proof}
Let $d \defi \gcd(s+1,t+1,st+1)$. Then $d$ divides $t(s+1)-(st+1) = t-1$.
Therefore, $d$ divides $t+1-(t-1) = 2$. Hence, $d = 1$ or $d = 2$. Since 
$s+1,t+1,st+1 \geqslant 2$, it follows that $p$ divides $s+1,t+1$ and $st+1$, 
and so $p$ divides $d$. From which, we deduce that $d = 2$ and $p = 2$. Now,
suppose that $s+1 \neq 2$ and $t+1 \neq 2$. Then $s+1 \equiv 0 \pmod 4$ and
$t+1 \equiv 0 \pmod 4$. Thus, $st+1 \equiv (-1)(-1)+1 \equiv 2 \pmod 4$ and 
so, $st+1=2$. 
This is a contradiction since $st+1 > s+1 > 2$. Therefore, $s+1 = 2$
or $t+1 = 2$. Without loss of generality, 
say $t = 1$. Substituting for $t$ and $p$ in \eqref{eqn:suzupts}, we obtain 
$(s+1)^2 = 2^a$ and $2(s+1) = 2^b$. 
Hence, $s+1 = 2^{a/2} = 2^{b-1}$, from which, we obtain $b = a/2+1$. If we 
consider the case where $s = 1$, then by the same argument, we obtain $a =
b/2+1$.
\end{proof}

The following lemma gives divisibility conditions that will be used later
along with \Cref{lem:condsonst} \ref{item:higman2} to provide bounds on the 
parameters of a generalised quadrangle. 

\begin{lemma}[Modified Divisibility Condition I]
\label{lem:moddivcond}
Let $A, s$ and $t$ be integers and $d \defi \gcd(s,t)$ such that $s+t$ 
divides $Ast$. Then 
$s+t$ divides $Ad^2$ and $A(s-t)^2$. Furthermore, if $A = 4a$ for some 
integer $a$, then $s+t$ divides $a(s-t)^2$. 
\end{lemma}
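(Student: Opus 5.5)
The plan is to work modulo $s+t$ and use the congruence $s \equiv -t \pmod{s+t}$ to turn the hypothesis $s+t \mid Ast$ into statements about squares. No earlier result of the paper is needed; the argument is elementary divisibility.

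First, since $s \equiv -t \pmod{s+t}$, we have $Ast \equiv -At^2 \pmod{s+t}$. The hypothesis $s+t \mid Ast$ therefore gives $s+t \mid At^2$. Symmetrically, from $t \equiv -s$ we get $Ast \equiv -As^2$, so $s+t \mid As^2$. Hence $s+t$ divides $\gcd(As^2, At^2) = |A|\gcd(s^2,t^2) = |A|d^2$. This uses the standard fact $\gcd(s^2,t^2)=\gcd(s,t)^2$, which follows from prime factorisations. So $s+t \mid Ad^2$.

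Next, for the second claim, expand $A(s-t)^2 = As^2 - 2Ast + At^2$. Each of the three terms was just shown to be divisible by $s+t$, so $s+t \mid A(s-t)^2$.

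Finally, suppose $A = 4a$. Use the identity $(s-t)^2 = (s+t)^2 - 4st$, which gives
\[
a(s-t)^2 = a(s+t)^2 - 4ast = a(s+t)^2 - Ast .
\]
The first term is obviously divisible by $s+t$, and the second is divisible by hypothesis, so $s+t \mid a(s-t)^2$.

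There is no real obstacle. The only point needing care is the last claim: one should not divide the conclusion $s+t \mid 4a(s-t)^2$ by $4$, since $\gcd(s+t,4)$ may be nontrivial. Instead one uses the identity above, which absorbs the factor $4$ directly into the hypothesis $s+t \mid 4ast$.
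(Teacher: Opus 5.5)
Your proof is correct, and your second and third steps match the paper's. The paper also gets $s+t \mid As^2$ from $As(s+t)-Ast=As^2$ (your congruence $s\equiv -t$), expands $A(s-t)^2$, and treats $A=4a$ via $a(s+t)^2-4ast=a(s-t)^2$.

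Where you differ is the first claim. The paper argues that any common divisor of $s+t$ and $s$ divides $t$, hence $d$ (and likewise with $t$), and then simply asserts $s+t\mid Ad^2$. To make that step airtight one needs the fact that $n\mid xy$ implies $n\mid \gcd(n,x)\,y$, applied twice: $s+t\mid Ast \Rightarrow s+t\mid Adt \Rightarrow s+t\mid Ad^2$. The paper leaves this step implicit.

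You instead first prove $s+t\mid As^2$ and $s+t\mid At^2$, then take the gcd using $\gcd(s^2,t^2)=d^2$. This buys economy and full rigour: one pair of congruences yields both the first and second claims, and the only outside fact is the standard identity for gcds of squares. The paper's route has the mild advantage of showing directly that only $d$, the part of $s$ and $t$ shared with $s+t$, matters. The cost is an unstated lemma.
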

\begin{proof}
Suppose $k$ is an integer that divides $s+t$ and $s$. Then $0 \equiv s+t 
\equiv t \pmod k$. Thus, $k$ divides $t$ and consequently, $k$ divides $d$. 
Similarly, any integer $k$ that divides $s+t$ and $t$ must necessarily divide 
$d$. Hence, $s+t$ divides $Ad^2$. \\

Note that $s+t$ divides $As(s+t)-Ast = As^2+Ast-Ast = As^2$. Similarly, $s+t$
divides $At^2$. Therefore, $s+t$ divides $As^2-2Ast+At^2 = A(s-t)^2$. Now,
writing $A = 4a$, we see that $s+t$ divides $a(s+t)^2-4ast = a(s^2+2st+t^2) -
4ast = a(s-t)^2$. 
\end{proof}

Using \Cref{lem:moddivcond} and some further assumptions, we obtain another 
(stronger) divisibility condition on the parameters of a generalised 
quadrangle.
\begin{lemma}[Modified Divisibility Condition II]
\label{lem:moddivcond2}
Suppose $s$ and $t$ are the parameters of a thick generalised quadrangle.
Define $d \defi \gcd(s,t)$ and $k \defi \gcd(s+1,t+1)$. Suppose $k \equiv 2 
\pmod 4$. Write $s+1 = kx$ and $t+1 = ky$. Then $s+t$ divides $4xyd^2$.
\end{lemma}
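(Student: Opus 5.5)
The plan is to start from the classical divisibility condition, \Cref{lem:condsonst} \ref{item:higman2}, which gives that $s+t$ divides $st(s+1)(t+1)$. Substituting $s+1 = kx$ and $t+1 = ky$, this becomes
\[
s+t \ \text{divides} \ k^2 xy\, st.
\]
The goal is then to strip the factor $k^2$ down to $4$, and afterwards replace $st$ by $d^2$ using \Cref{lem:moddivcond}.

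The first step is to control $\gcd(s+t,k)$. Since $k$ divides both $s+1$ and $t+1$, it divides their sum $s+t+2$. Hence $s+t \equiv -2 \pmod k$, and so $\gcd(s+t,k)$ divides $2$.

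Now use the hypothesis $k \equiv 2 \pmod 4$ and write $k = 2k'$ with $k'$ odd. Any common divisor of $s+t$ and $k'$ divides $\gcd(s+t,k)$, which divides $2$. Since $k'$ is odd, such a divisor must be $1$, so $\gcd(s+t,k') = 1$ and therefore $\gcd(s+t,k'^2) = 1$. Because $k^2 = 4k'^2$, we have
\[
s+t \ \text{divides} \ k'^2 \cdot (4xy\,st),
\]
and coprimality with $k'^2$ gives that $s+t$ divides $4xy\,st$.

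Finally, apply \Cref{lem:moddivcond} with $A = 4xy$. Its hypothesis is exactly that $s+t$ divides $Ast$, and its first conclusion is that $s+t$ divides $Ad^2 = 4xyd^2$, which is the claim.

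The only delicate point is the coprimality bookkeeping in the second step. There the assumption $k \equiv 2 \pmod 4$ is precisely what makes the odd part $k'$ coprime to $s+t$. Without it, higher powers of $2$ in $k^2$ could not be cancelled: for instance, if $4 \mid k$ then $k^2$ contributes at least $2^4$, and only $\gcd(s+t,k) \mid 2$ is available, which does not remove the extra factors of $2$.
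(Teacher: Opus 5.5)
Your proposal is correct and follows essentially the same route as the paper. Both arguments start from $s+t \mid k^2xy\,st$, use $s+t \equiv -2 \pmod{k}$ to see that $s+t$ can share only a factor $2$ with $k$, reduce to $s+t \mid 4xy\,st$, and finish with \Cref{lem:moddivcond} applied to $A = 4xy$. Your version, which writes $k = 2k'$ with $k'$ odd and cancels $k'^2$ by coprimality, is a slightly cleaner phrasing of the paper's prime-by-prime argument.
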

\begin{proof}
By \Cref{lem:condsonst} \ref{item:higman2}, $s+t$ divides $st(s+1)(t+1) =
stk^2 xy$. Next, we observe that if $p$ is a prime that divides $s+t$ and 
$k$, then $p = 2$. 
This is because $s+1 = kx \equiv 0 \pmod p$ and $t+1 = ky \equiv 0 \pmod
p$ and so, $s+t \equiv -2 \pmod p$. Since $s+t \equiv 0 \pmod p$, we find that
$p = 2$. Now, because $k \equiv 2 \pmod 4$, the highest power of $2$ that
divides $k$ is $2$. Hence, $s+t$ divides $st \cdot 4xy$. Applying 
\Cref{lem:moddivcond} with $A \defi 4xy$, we deduce that $s+t$ divides 
$Ad^2 = 4xyd^2$. 
\end{proof}

\section{Group Actions}
\label{chap:groupactions}

\subsection{Schur--Zassenhaus Theorem}
In our study of groups of Lie type, we will come across subgroups that are
semidirect products of two groups with coprime order. A classical result that
will be useful in studying these subgroups is the Schur--Zassenhaus Theorem. 
\begin{theorem}[{Schur--Zassenhaus Theorem \cite[Section 3B]{isaacsfin}}]
Let $G$ be a group with a normal subgroup $N$ such that $\gcd(|N|,|G/N|) = 1$.
Then there exists a subgroup $K \subgp G$ such that $G = N \rtimes K$,
\emph{i.e.}, $G = NK$ and $N \cap K = \idgp$. We call $K$ a \emph{complement}
of $N$ in $G$. Moreover, all complements of $N$ in $G$ are conjugate to $K$.  
\end{theorem}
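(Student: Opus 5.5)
We prove existence by induction on $|G|$, reducing to the case where $N$ is abelian, and then treat conjugacy separately. Write $n \defi |N|$ and $m \defi |G/N|$, so that $\gcd(n,m)=1$.

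\emph{Existence, reduction to abelian $N$.} Let $p$ be a prime dividing $n$ and let $P$ be a Sylow $p$-subgroup of $N$. Since $p \nmid m$, $P$ is also a Sylow $p$-subgroup of $G$. The Frattini argument gives $G = N\,\Normaliser{G}{P}$, so $\Normaliser{G}{P}/\Normaliser{N}{P} \cong G/N$ has order $m$.
\begin{itemize}
\item If $\Normaliser{G}{P} < G$, apply induction to $\Normaliser{N}{P} \nsubgp \Normaliser{G}{P}$, whose orders are still coprime. A complement there has order $m$ and meets $N$ trivially, so it complements $N$ in $G$.
\item Otherwise $P \nsubgp G$. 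Let $Z \defi Z(P)$, which is nontrivial and characteristic in $P$, hence normal in $G$. Apply induction to $N/Z \nsubgp G/Z$ to obtain $H/Z$ of order $m$. Then $Z \nsubgp H$ with $|H| = |Z|m$, which is a coprime pair.
\begin{itemize}
\item If $H < G$, induction gives a complement $K$ of $Z$ in $H$. This $K$ has order $m$ and is the required complement.
\item If $H=G$, then $N=Z$ is abelian.
\end{itemize}
\end{itemize}

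\emph{Existence, abelian case.} This is the main computational step. Let $T$ be a transversal of $N$ in $G$. For $g \in G$ and $t \in T$ write $tg = n_{t,g}\,(t\cdot g)$ with $t \cdot g \in T$. For two transversals $S,T$ define $d(S,T) \defi \prod_{N s = N t} s t^{-1} \in N$; this is well defined because $N$ is abelian. One checks two identities:
\begin{itemize}
\item $d(S,T)\,d(T,U) = d(S,U)$;
\item $d(Sg,Tg) = g^{-1} d(S,T)\, g$.
\end{itemize}
Since $\gcd(n,m)=1$, $m$ is invertible modulo $n$. Fix $T$ and set $K \defi \{ g \in G : d(Tg,T)=1 \}$. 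The two identities show that $K$ is a subgroup. Using the $m$-th root of $d(Tg,T)$, every coset $Ng$ contains an element of $K$, and $K \cap N = \idgp$ because $d(Tn,T) = n^m$ for $n \in N$. Hence $K$ is a complement. Equivalently, one can phrase this as the vanishing of $H^2(G/N,N)$, which is killed by both $m$ and $n$.

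\emph{Conjugacy.} By Feit--Thompson, one of $N$ and $G/N$ has odd order and is therefore solvable; alternatively one could restrict the statement to this case.
\begin{itemize}
\item For abelian $N$, the same averaging argument shows that any two complements are conjugate by an element of $N$. This is the vanishing of $H^1$.
\item If $N$ is solvable, induct on $|N|$ through a minimal normal (elementary abelian) subgroup $A \subgp N$. Two complements $K_1,K_2$ become conjugate modulo $A$ by induction. The abelian case applied inside $AK_1$ then finishes.
\item If $G/N$ is solvable, let $K_1,K_2$ be complements and take a minimal normal $p$-subgroup $\bar{Q}$ of $G/N$. Let $Q_i$ be the Sylow $p$-subgroups of $NQ_i \cap K_i$; these are Sylow in $NQ$. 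Conjugate them to coincide, then pass to $\Normaliser{G}{Q}$ and use induction together with the Frattini argument.
\end{itemize}

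I expect the main obstacle to be the conjugacy statement when $G/N$ is the solvable side. That is where the bookkeeping is most delicate, and where citing Feit--Thompson is unavoidable in full generality.
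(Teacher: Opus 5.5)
The paper gives no proof of this theorem; it imports it from Isaacs. Your outline is the standard textbook argument, essentially the one in the cited source: a Sylow/Frattini reduction to abelian $N$, a Gasch\"utz-type transversal argument in the abelian case, and Feit--Thompson to make $N$ or $G/N$ solvable for conjugacy. The reduction step is correct as written.

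The transversal invariant, however, uses the wrong convention and fails as written. With $d(S,T)=\prod_{Ns=Nt}st^{-1}$ and $Sg=\{sg : s\in S\}$ you get $(sg)(tg)^{-1}=st^{-1}$. Hence $d(Sg,Tg)=d(S,T)$ rather than $g^{-1}d(S,T)g$, and $d(Tn,T)=\prod_{t\in T}tnt^{-1}$ rather than $n^m$. In fact $g\mapsto d(Tg,T)=\prod_t n_{t,g}$ is then exactly the transfer $G\to N$, and your $K$ is its kernel. That kernel need not be a complement: for $G=S_3$, $N=A_3$ it is all of $G$.

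The fix is to use $d(S,T)=\prod_{Ns=Nt}t^{-1}s$, or to keep $st^{-1}$ and act on the left. Then $d(Sg,Tg)=g^{-1}d(S,T)g$ and $d(Tn,T)=n^m$ both hold. The identity $d(S,T)d(T,U)=d(S,U)$ follows from $u^{-1}s=(u^{-1}t)(t^{-1}s)$, with both factors in the abelian group $N$. With this change, your closure, $m$-th root and $K\cap N=1$ steps go through verbatim. So does abelian conjugacy: $N$ acts regularly on the classes of transversals modulo $d=1$, and each complement is the stabiliser of its own class.

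Your $G/N$-solvable conjugacy sketch is garbled (``Sylow subgroups of $NQ_i\cap K_i$'') and stops one step short. Let $M\nsubgp G$ be the preimage of $\bar Q$ and set $Q_i=M\cap K_i$. Dedekind gives $|Q_i|=|M:N|$, a power of $p$ with $p\nmid |N|$, so $Q_i$ is Sylow in $M$. After conjugating $K_1$ by an element of $M$, you may assume $Q_1=Q_2=Q$, which both $K_i$ normalise. The Frattini argument gives $G=N\,\Normaliser{G}{Q}$.

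If $Q\nsubgp G$, though, ``passing to $\Normaliser{G}{Q}$'' reduces nothing. You must apply induction to $\Normaliser{G}{Q}/Q$, with normal subgroup $\Normaliser{N}{Q}Q/Q$ and complements $K_i/Q$. This group is smaller because $Q\neq 1$, its orders are coprime, and its quotient is isomorphic to $K_1/Q$, hence solvable.

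Likewise, in the solvable-$N$ case, $A$ must be a minimal normal subgroup of $G$ contained in $N$, not merely of $N$, so that you can pass to $G/A$.

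Finally, ``unavoidable'' overstates the role of Feit--Thompson: it is only that no proof avoiding it is known. It is also not needed for this paper. The paper only uses conjugacy through \Cref{cor:conj}, with $N$ cyclic, where your abelian-case argument suffices.
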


\begin{corollary}
\label{lem:schur}
Let $G = N \rtimes K$ where $\gcd(|N|,|K|) = 1$. Suppose $M \subgp G$
with $|M| = |K|$. Then $M = K^x$ for some $x \in G$. 
\end{corollary}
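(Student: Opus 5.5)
We derive the corollary directly from the Schur--Zassenhaus Theorem, applied to a suitable subgroup of $G$ rather than to $G$ itself. Write $G = N \rtimes K$ with $\gcd(|N|,|K|) = 1$, so that $N \nsubgp G$, $G = NK$, $N \cap K = \idgp$, $|G| = |N||K|$ and $G/N \cong K$. Let $M \subgp G$ with $|M| = |K|$.

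First we show that $M \cap N = \idgp$. The order $|M \cap N|$ divides both $|M| = |K|$ and $|N|$, which are coprime, so it equals $1$. Consequently
\[
|MN| = \frac{|M||N|}{|M \cap N|} = |K||N| = |G|.
\]
Since $N$ is normal, $MN$ is a subgroup of $G$, and therefore $MN = G$. Thus $M$ is also a complement of $N$ in $G$: we have $G = NM$ with $N \cap M = \idgp$.

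It remains to apply the conjugacy part of the Schur--Zassenhaus Theorem to $G$ and its normal subgroup $N$. The hypothesis holds because $\gcd(|N|,|G/N|) = \gcd(|N|,|K|) = 1$. Both $K$ and $M$ are complements of $N$ in $G$, so the theorem gives $x \in G$ with $M = K^x$.

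There is no real obstacle here. The only points needing care are the order-counting step showing $M \cap N = \idgp$ and $MN = G$, and checking that the coprimality hypothesis of the Schur--Zassenhaus Theorem follows from $G/N \cong K$. The conjugacy statement holds in full generality because one of $N$ or $G/N$ has odd order (Feit--Thompson) and is therefore solvable; we simply cite \cite[Section 3B]{isaacsfin} as the paper does.
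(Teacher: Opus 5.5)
Your proof is correct and matches the paper's argument exactly: coprimality gives $M \cap N = 1$, counting orders gives $MN = G$ so that $M$ is a complement of $N$, and the conjugacy part of the Schur--Zassenhaus Theorem applied to $G$ with its normal subgroup $N$ yields $M = K^x$. The only blemish is your opening sentence, which says the theorem is applied to a proper subgroup of $G$; you in fact (correctly) apply it to $G$ itself.
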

\begin{proof}
Observe that $N \cap M = \idgp$ since they have coprime orders. Thus,
$|NM| = |N||M|/|N \cap M| = |N||K| = |G|$. Hence,
$M$ is a complement of $N$ in $G$. By the Schur--Zassenhaus Theorem,
all complements of $N$ are conjugate and so, $M = K^x$ for some $x \in G$. 
\end{proof}

\begin{corollary}
\label{cor:conj}
Let $G = N \rtimes K$ where $\gcd(|N|,|K|) = 1$ and suppose $g \in G$ such
that $|g|$ divides $|K|$. Then $g \in K^h$ for some $h \in G$. In particular,
the number of conjugacy classes of elements with order $|g|$ in $G$ is at most 
the number of conjugacy classes of elements with order $|g|$ in $K$.
\end{corollary}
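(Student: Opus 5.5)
We prove the statement in two steps: first we place $g$ in some complement of $N$, and then we compare conjugacy classes. For the first step, let $H \defi \ang{g}$, so $|H| = |g|$ divides $|K|$. Since $|N|$ and $|K|$ are coprime, so are $|N|$ and $|H|$, and therefore $N \cap H = \idgp$.

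Consider the subgroup $NH \subgp G$. It has $N$ as a normal subgroup, and $NH/N \cong H/(N\cap H) \cong H$. So $NH = N \rtimes H$ with $\gcd(|N|,|H|)=1$.

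On the other side, put $K_0 \defi NH \cap K$. By Dedekind's modular law, since $N \subgp NH$, we have
\[
NH = NH \cap NK = N(NH \cap K) = NK_0 .
\]
As $N \cap K_0 = \idgp$, it follows that $|K_0| = |NH|/|N| = |H|$. Thus $K_0$ and $H$ are both complements of $N$ in $NH$. The Schur--Zassenhaus Theorem, or equivalently \Cref{lem:schur} applied to $NH = N \rtimes K_0$ and $M = H$, gives $h \in NH \subgp G$ with $H = K_0^h \subgp K^h$. Hence $g \in K^h$.

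For the count, let $g_1,\dots,g_r$ be representatives of the $G$-conjugacy classes of elements of order $|g|$. By the first step, each $g_i$ is $G$-conjugate to some element $k_i \in K$ of the same order. Distinct $G$-classes give elements $k_i$ that are not $G$-conjugate, so they are certainly not $K$-conjugate. The map sending each $G$-class to the $K$-class of $k_i$ is therefore injective into the set of $K$-classes of elements of order $|g|$, which gives $r \leqslant$ that number.

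The only step that needs care is justifying that $K_0$ is a genuine complement of $N$ in $NH$, so that Schur--Zassenhaus applies inside $NH$ rather than in $G$. This is exactly what the Dedekind identity provides.
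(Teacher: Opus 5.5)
Your proof is correct and follows the paper's route: both pass to $N\langle g\rangle$, show that $K\cap N\langle g\rangle$ is a complement of $N$ there, and apply Schur--Zassenhaus inside that subgroup. The only differences are cosmetic. You obtain the complement property from Dedekind's modular law, whereas the paper uses the order formula together with $K\cdot N\langle g\rangle = G$. Your argument also covers $g=1$ without a separate case.
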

\begin{proof}
If $g = 1$, then the statement holds since there is only one conjugacy class
containing $1$. So let us suppose that $g \neq 1$. Now, observe that $g$ acts 
on $N$ by conjugation. 
Since $|g|$ divides $|K|$ and $g \neq 1$, it follows that $g \notin N$. 
Hence, we have the subgroup 
$M \defi N \rtimes \ang{g} \subgp G$. Note that $KM =
K(N\rtimes \ang{g}) = KN\ang{g} = G$ and $|G| = |K||M|/|K \cap M|$. Thus, 
\[
|K \cap M| = \frac{|K||M|}{|G|} = \frac{|K||N||\ang{g}|}{|G|} =
\frac{|K||N||g|}{|K||N|} = |g|,
\]
and so, $K \cap M$ is a complement of $N$ in $M$. 
By the Schur--Zassenhaus Theorem, all complements of $N$ in $M$ are 
conjugate. Therefore, $K \cap M = \ang{g}^x$ for some $x \in M$. 
Thus, $\ang{g} \subgp K^{x^{-1}}$. 
The number of conjugacy classes of elements with order $|g|$ is at most the
number in $K$ because any element of $G$ with order $|g|$ is conjugate (in
$G$) to an element of $K$. 
\end{proof}

\subsection{Fixed Point Theory}
Our investigation involves groups of Lie type acting on generalised
polygons. The following lemma provides a formula for calculating the
number of fixed points of an element with respect to a group action. 
\begin{lemma}[Formula for the Number of Fixed Points]
\label{lem:fixpts}
Let $G$ be a finite group acting transitively on a set $\Omega$. Let $\alpha
\in \Omega$ and $g \in G$. Then the number of fixed points of $g$, denoted 
$\pi(g)$, is given by
\begin{equation}
\pi(g) = \frac{|\Omega| |g^G \cap G_{\alpha}|}{|g^G|}.
\label{eqn:fixpts}
\end{equation}
\end{lemma}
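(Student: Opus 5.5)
The plan is to identify $\Omega$ with the coset space $G/G_\alpha$ and double count the set of pairs
\[
S \defi \Set{(\beta,h) \in \Omega \times g^G : \beta^h = \beta}.
\]
Throughout, $G_\alpha$ denotes the stabiliser of $\alpha$, the action is transitive, and $g^G$ denotes the conjugacy class of $g$ in $G$.

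First count $S$ by fixing the element. Every $h \in g^G$ is conjugate to $g$, say $h = g^x$. The fixed points of $g^x$ are exactly the images under $x$ of the fixed points of $g$: if $\beta^g = \beta$, then $(\beta^x)^{x^{-1}gx} = \beta^x$. Hence each element of $g^G$ fixes exactly $\pi(g)$ points, and
\[
|S| = |g^G|\,\pi(g).
\]

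Next count $S$ by fixing the point. For $\beta \in \Omega$, the number of $h \in g^G$ fixing $\beta$ is $|g^G \cap G_\beta|$. By transitivity, $\beta = \alpha^y$ for some $y \in G$, and then $G_\beta = G_\alpha^{y}$. Since $g^G$ is closed under conjugation, conjugation by $y$ gives a bijection
\[
g^G \cap G_\alpha \longrightarrow g^G \cap G_\alpha^{y}.
\]
So every point contributes the same amount, and
\[
|S| = |\Omega|\,|g^G \cap G_\alpha|.
\]

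Equating the two counts and dividing by $|g^G| \geqslant 1$ yields \eqref{eqn:fixpts}.

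The only point needing care, and hence the main obstacle, is the invariance step: showing that both $|\mathrm{Fix}(h)|$ and $|g^G \cap G_\beta|$ are constant. The first uses that $h$ ranges over a single conjugacy class; the second uses transitivity together with the fact that $g^G$ is a union of conjugates. Once both are in place, the double count is routine.
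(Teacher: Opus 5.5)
The paper states this lemma without proof, treating it as a standard fact, so there is no argument of its own to compare against. Your double count of $S$ is a correct and complete proof. Two small remarks:
\begin{enumerate}[(a)]
\item The opening identification of $\Omega$ with a coset space is never used and can be dropped.
\item Your first count only shows $\mathrm{Fix}(g)^x \subseteq \mathrm{Fix}(g^x)$. Equality, or at least equality of cardinalities, follows by applying the same observation to $g^x$ and $x^{-1}$, since $(g^x)^{x^{-1}} = g$.
\end{enumerate}

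The other common route does use cosets. Here $g$ fixes $G_\alpha x$ if and only if $xgx^{-1} \in G_\alpha$. The map $x \mapsto xgx^{-1}$ from $G$ onto $g^G$ has fibres of size $|\Centraliser{G}{g}|$. This gives $\pi(g) = |\Centraliser{G}{g}|\,|g^G \cap G_\alpha|/|G_\alpha|$, which rearranges to the stated formula.

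Your version avoids centralisers and coset bookkeeping entirely, and the symmetry of the double count makes the role of transitivity transparent. The coset version has the advantage of producing the intermediate form directly. For $g \in G_\alpha$ with $g^G \cap G_\alpha = g^{G_\alpha}$, that form specialises to $\pi(g) = |\Centraliser{G}{g}|/|\Centraliser{G_\alpha}{g}|$. This is the quantity the paper actually uses, through Lemma~\ref{lem:cgtranss}, when it computes $|\cP_g|$ and $|\cL_g|$.
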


\begin{lemma}[{\cite[Lemma 2.4]{luzhangzou}}]
\label{lem:cgtranss}
Let $G$ be a group acting transitively on a set $\Omega$ and $g$ be a
non-identity element in $G$. Consider a point $\alpha \in \Omega_g$, where
$\Omega_g$ is the set of fixed points of $g$. Then $\Centraliser{G}{g}$ acts
transitively on $\Omega_g$ if and only if $g^G \cap G_{\alpha} =
g^{G_{\alpha}}$, \emph{i.e.}, the conjugacy class of $g$ in $G$ does not split
into multiple conjugacy classes in $G_{\alpha}$. In this case, $|\Omega_g| =
|\Centraliser{G}{g} : \Centraliser{G}{g} \cap G_{\alpha}|$. 
\end{lemma}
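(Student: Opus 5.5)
The plan is to set up an explicit bijection between the $\Centraliser{G}{g}$-orbits on $\Omega_g$ and the $G_{\alpha}$-conjugacy classes contained in $g^G \cap G_{\alpha}$. Both parts of the statement then follow at once: transitivity means there is exactly one orbit, and the size formula comes from orbit--stabiliser. Throughout, I use right actions and write $h^x = x^{-1}hx$. Since $\alpha \in \Omega_g$, we have $g \in G_{\alpha}$, so $g^G \cap G_{\alpha}$ is nonempty and contains $g^{G_{\alpha}}$.

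\textbf{Step 1: define the map.} Every $\beta \in \Omega$ has the form $\beta = \alpha^x$ for some $x \in G$, by transitivity. Then $G_\beta = G_\alpha^x$. So $\beta \in \Omega_g$ if and only if $g \in G_\alpha^x$, which is equivalent to $g^{x^{-1}} = xgx^{-1} \in G_\alpha$. Define
\[
\varphi(\beta) \defi \paren{xgx^{-1}}^{G_\alpha},
\]
the $G_\alpha$-class of $xgx^{-1}$. The element $x$ is determined by $\beta$ only up to $x \mapsto ax$ with $a \in G_\alpha$. This replaces $xgx^{-1}$ by $a(xgx^{-1})a^{-1}$, which lies in the same $G_\alpha$-class, so $\varphi$ is well defined.

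\textbf{Step 2: $\varphi$ is surjective.} Take any $h \in g^G \cap G_\alpha$ and write $h = xgx^{-1}$. Then $\beta = \alpha^x$ lies in $\Omega_g$ and satisfies $\varphi(\beta) = h^{G_\alpha}$.

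\textbf{Step 3: the fibres of $\varphi$ are exactly the $\Centraliser{G}{g}$-orbits.} This is the key step. Suppose $\varphi(\alpha^x) = \varphi(\alpha^y)$. Then $xgx^{-1} = a\,ygy^{-1}a^{-1}$ for some $a \in G_\alpha$, so $c \defi y^{-1}a^{-1}x$ centralises $g$. Since $x = ayc$, we get $\alpha^x = \alpha^{yc} = (\alpha^y)^c$. Conversely, for $c \in \Centraliser{G}{g}$ we have $(yc)g(yc)^{-1} = ygy^{-1}$, so $\varphi$ is constant on $\Centraliser{G}{g}$-orbits.

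\textbf{Step 4: conclude.} By Steps 2 and 3, the number of $\Centraliser{G}{g}$-orbits on $\Omega_g$ equals the number of $G_\alpha$-classes into which $g^G \cap G_\alpha$ splits. Hence $\Centraliser{G}{g}$ is transitive on $\Omega_g$ if and only if $g^G \cap G_\alpha = g^{G_\alpha}$. In that case $\Omega_g$ is the orbit of $\alpha$, and the stabiliser of $\alpha$ in $\Centraliser{G}{g}$ is $\Centraliser{G}{g} \cap G_\alpha$. Orbit--stabiliser then gives $|\Omega_g| = |\Centraliser{G}{g} : \Centraliser{G}{g} \cap G_\alpha|$.

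The only delicate point is Step 3, specifically keeping the conjugation conventions consistent so that the element $c$ really lies in $\Centraliser{G}{g}$. The rest is routine.
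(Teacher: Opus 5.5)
Your proof is correct and complete. The map $\varphi$ is well defined, since changing $x$ to $ax$ with $a\in G_\alpha$ only $G_\alpha$-conjugates $xgx^{-1}$. It hits every $G_\alpha$-class in $g^G\cap G_\alpha$, and its fibres are exactly the $\Centraliser{G}{g}$-orbits on $\Omega_g$. Your conjugation conventions check out: with $c=y^{-1}a^{-1}x$ you do get $c^{-1}gc=g$ and $x=ayc$.

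The paper gives no proof of its own; it quotes \cite[Lemma 2.4]{luzhangzou}. So there is nothing internal to match step by step.

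A natural alternative for finite $G$ is a counting argument using \Cref{lem:fixpts}, which the paper states immediately before. By orbit--stabiliser, $|\alpha^{\Centraliser{G}{g}}| = |\Centraliser{G}{g}|/|\Centraliser{G_\alpha}{g}|$. A direct computation shows this equals $|\Omega|\,|g^{G_\alpha}|/|g^G|$. Meanwhile \Cref{lem:fixpts} gives $|\Omega_g| = |\Omega|\,|g^G\cap G_\alpha|/|g^G|$. Since $\alpha^{\Centraliser{G}{g}}\subseteq\Omega_g$ and $g^{G_\alpha}\subseteq g^G\cap G_\alpha$, transitivity is equivalent to $|g^{G_\alpha}|=|g^G\cap G_\alpha|$, which is exactly the non-splitting condition.

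That route is shorter once the fixed-point formula is available. However, it only compares the size of one orbit with $|\Omega_g|$, and it needs $G$ finite.

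Your bijection proves strictly more. The number of $\Centraliser{G}{g}$-orbits on $\Omega_g$ equals the number of $G_\alpha$-classes into which $g^G\cap G_\alpha$ splits. This holds for arbitrary, possibly infinite, $G$, and the argument never uses the hypothesis $g\neq 1$.
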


\subsection{Action of Normal Subgroups}
The following lemma is an elementary result from permutation group theory that
provides conditions for when a normal subgroup acts transitively. 
\begin{lemma}
\label{lem:soctrans}
Let $G$ be a group acting primitively on a set $\Omega$ and take a point
$\alpha \in \Omega$. Suppose that $N \nsubgp G$ such that $N$ is non-trivial
and $N$ is not contained in $G_{\alpha}$. Then $N$ acts transitively on
$\Omega$. 
\end{lemma}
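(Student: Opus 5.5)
The plan is to argue directly from the definitions of primitivity and transitivity. Since $N \nsubgp G$, the stabiliser $G_{\alpha}$ normalises $N$, so the product $NG_{\alpha}$ is a subgroup of $G$ with
\[
G_{\alpha} \subgp NG_{\alpha} \subgp G .
\]
The first step is to use the standard correspondence for transitive actions: the subgroups of $G$ containing $G_{\alpha}$ correspond to the blocks of imprimitivity containing $\alpha$. Primitivity of $G$ on $\Omega$ is therefore equivalent to $G_{\alpha}$ being a maximal subgroup of $G$. This is where we use that a primitive group is transitive, and we assume $|\Omega|>1$ so that $G_{\alpha} \neq G$. Maximality gives $NG_{\alpha} = G_{\alpha}$ or $NG_{\alpha} = G$.

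The second step excludes the first alternative. If $NG_{\alpha} = G_{\alpha}$, then $N \subgp G_{\alpha}$, which contradicts the hypothesis that $N$ is not contained in $G_{\alpha}$. Hence $NG_{\alpha} = G$.

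The final step converts this factorisation into transitivity of $N$. Take any $\beta \in \Omega$. Since $G$ is transitive, $\beta = \alpha^{g}$ for some $g \in G$. Write $g = hn$ with $h \in G_{\alpha}$ and $n \in N$, which is possible because $G = G_{\alpha}N = NG_{\alpha}$, using normality of $N$ to reorder the product. Then
\[
\beta = \alpha^{hn} = \alpha^{n},
\]
so the $N$-orbit of $\alpha$ is all of $\Omega$, and $N$ is transitive on $\Omega$.

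An alternative for the first two steps is to note that the $N$-orbits form a $G$-invariant partition of $\Omega$, because $N$ is normal. By primitivity these orbits are either singletons or all of $\Omega$. Singletons would force $N$ to fix every point, and in particular $N \subgp G_{\alpha}$, which again contradicts the hypothesis. Either route works, and I do not expect any step to be a real obstacle. The only point needing care is justifying the correspondence between overgroups of $G_{\alpha}$ and blocks, or equivalently that the orbits of a normal subgroup form a system of imprimitivity; both are standard.
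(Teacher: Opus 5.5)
Your proof is correct and follows essentially the same route as the paper: maximality of $G_{\alpha}$ forces $G_{\alpha}N = G$, and the factorisation $g = hn$ gives $\alpha^{g} = \alpha^{n}$. The paper applies this to two arbitrary points $\beta,\gamma$ and produces the element $n_1^{-1}n_2 \in N$ mapping one to the other, whereas you show every point lies in the $N$-orbit of $\alpha$, which amounts to the same thing.
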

\begin{proof}
Since $N$ is not contained in $G_{\alpha}$, it follows that 
$G_{\alpha} \propsubgp G_{\alpha} N \subgp G$. Because $G$ acts 
primitively on $\Omega$, the point-stabiliser $G_{\alpha}$ is
maximal in $G$ and so, $G_{\alpha} N = G$. Now, take two arbitrary points
$\beta, \gamma \in \Omega$. Since $G$ is transitive on $\Omega$, there exist
$g_1, g_2 \in G$ such that $\alpha^{g_1} = \beta$ and $\alpha^{g_2} = \gamma$.
Since $G = G_{\alpha} N$, we can write $g_1 = a_1 n_1$ and $g_2 = a_2 n_2$ for
some $a_1,a_2 \in G_{\alpha}$ and $n_1, n_2 \in N$. Thus, $\beta =
\alpha^{g_1} = \alpha^{a_1 n_1} = \alpha^{n_1}$ and $\gamma = \alpha^{g_2} =
\alpha^{a_2 n_2} = \alpha^{n_2}$. As a result, we find that $\gamma =
\alpha^{n_2} = (\beta^{n_1^{-1}})^{n_2} = \beta^{n_1^{-1} n_2}$. Hence, we
have found an element in $N$ that maps $\beta$ to $\gamma$. Since $\beta$ and
$\gamma$ are arbitrary, it follows that $N$ is transitive on $\Omega$.
\end{proof}
We use \Cref{lem:soctrans} in the case where $G$ is an almost simple group 
acting primitively on a set and $N$ is the socle of $G$. 

\subsection{Groups Acting on Generalised Quadrangles}
We now present some preliminary lemmata regarding groups acting on generalised
quadrangles. 
\begin{lemma}[{\cite[Corollary 2.3]{fenglu}}]
\label{lem:suzsub}
Let $G$ be a group acting on a generalised quadrangle $\Gamma$ and suppose $g
\in G$. 
Let $\Gamma_g = (\cP_g,\cL_g,\cI_g)$ be the fixed substructure of $g$. If 
$|\cP_g| \geqslant 2$, $|\cL_g| \geqslant 2$ and $\Gamma_g$ admits a group of
automorphisms $H$ that is transitive on both points and lines, then
$\Gamma_g$ is a generalised quadrangle of order $(s^{\prime},t^{\prime})$ for
some positive integers $s^{\prime}$ and $t^{\prime}$.
\end{lemma}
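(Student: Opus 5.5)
The plan is to follow the classical argument that the fixed substructure of an automorphism of a generalised quadrangle is "closed" (cf.\ \cite{payne}). Then use the transitivity of $H$ to get constant parameters, and rule out the degenerate values $s^{\prime}=0$ or $t^{\prime}=0$ using $|\cP_g|,|\cL_g|\geqslant 2$.

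\emph{Step 1 (closure).} I first show that $\Gamma_g$ is closed under the quadrangle operations.
\begin{itemize}
\item If $\alpha,\beta\in\cP_g$ are distinct and collinear, the line joining them is unique, because $\Gamma$ has no triangles (girth $8$ of the incidence graph). Since $g$ maps this line to a line through $\alpha^g=\alpha$ and $\beta^g=\beta$, the line is fixed and lies in $\cL_g$.
\item Dually, two distinct concurrent lines of $\cL_g$ meet in a unique point, and that point lies in $\cP_g$.
\item If $\alpha\in\cP_g$ and $L\in\cL_g$ are not incident, the generalised quadrangle axiom gives a unique pair $(\beta,M)$ with $\beta$ incident with $L$ and $M$ incident with both $\alpha$ and $\beta$. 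Applying $g$ produces another such pair, so by uniqueness $\beta\in\cP_g$ and $M\in\cL_g$.
\end{itemize}
Hence $\Gamma_g$, with the induced incidence $\cI_g$, satisfies the axiom of a generalised quadrangle: it has no triangles (inherited from $\Gamma$), and for every non-incident point--line pair the unique connecting pair lies in $\Gamma_g$.

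\emph{Step 2 (constant parameters).} Since $H$ is transitive on $\cP_g$ and on $\cL_g$, every line of $\Gamma_g$ carries the same number $s^{\prime}+1$ of points of $\cP_g$. Likewise every point of $\Gamma_g$ lies on the same number $t^{\prime}+1$ of lines of $\cL_g$. Here $s^{\prime},t^{\prime}\geqslant -1$ a priori, and the remaining work is to show $s^{\prime},t^{\prime}\geqslant 1$.
\begin{itemize}
\item First, $\cI_g\neq\emptyset$. Take any $\alpha\in\cP_g$ and $L\in\cL_g$. Either they are incident, or Step 1 yields a fixed flag $(\beta,L)$. So $s^{\prime},t^{\prime}\geqslant 0$.
\item Now suppose $s^{\prime}=0$, so each line of $\cL_g$ has exactly one fixed point. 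Pick a fixed flag $(\alpha,L)$ and a second fixed point $\beta\neq\alpha$, which exists since $|\cP_g|\geqslant 2$. Then $\beta$ is not on $L$. By Step 1 there is a fixed point on $L$ collinear with $\beta$, and it must be $\alpha$. The line $\alpha\beta$ is then a fixed line with two fixed points, a contradiction. Hence $s^{\prime}\geqslant 1$.
\item The dual argument, using $|\cL_g|\geqslant 2$, gives $t^{\prime}\geqslant 1$.
\end{itemize}
Finiteness is inherited from $\Gamma$, so $\Gamma_g$ is a generalised quadrangle of order $(s^{\prime},t^{\prime})$, possibly non-thick (for example a grid).

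The main obstacle is the bookkeeping in Step 2: the a priori degenerate configurations must all be excluded cleanly, namely the empty incidence and structures in which all fixed lines pass through one fixed point (or the dual). The key is to observe that transitivity of $H$ forces constant parameters, which reduces all these cases to $s^{\prime}=0$ or $t^{\prime}=0$. These are then killed by the single-argument contradiction above.
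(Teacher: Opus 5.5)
Your proof is correct. The paper gives no argument of its own for this lemma; it is quoted from \cite{fenglu}. The standard way to obtain such a statement is to read it off from the Payne--Thas description of the fixed substructure of an arbitrary automorphism of a generalised quadrangle \cite{payne}. That description lists: pairwise non-collinear fixed points or the dual; all fixed lines through one fixed point collinear with every other fixed point, or the dual; a grid or dual grid; or a thick subquadrangle. One then discards the degenerate cases using $|\cP_g|,|\cL_g|\geqslant 2$ together with transitivity, and discards grids with unequal sides using line-transitivity.

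You instead prove only the closure property on which that classification rests (your Step~1) and let $H$ do the case analysis in one stroke. Transitivity gives constant parameters, so every degenerate configuration collapses to $s'=0$ or $t'=0$, and your short collinearity argument disposes of both. The classification gives information about $\Gamma_g$ with no transitivity hypothesis at all. Your route gives a self-contained, elementary proof that uses exactly the hypotheses of the lemma.

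You leave one thing implicit: that the axiomatic definition you verify agrees with the paper's incidence-graph definition (diameter $4$, girth $8$). This is routine once $s',t'\geqslant 1$. You are right to allow non-thick outcomes such as grids, since these are exactly what \Cref{lem:suzupts} has to handle when it is applied to $\Gamma_g$.
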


\begin{lemma}
\label{lem:2trans}
A group $G$ cannot act $2$-transitively on the set of points of a generalised 
quadrangle.
\end{lemma}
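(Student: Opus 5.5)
The plan is to argue by contradiction, using the orbital structure of the point-stabiliser. Suppose $G$ acts $2$-transitively on the point set $\cP$ of a generalised quadrangle $\Gamma$ of order $(s,t)$, and fix $\alpha \in \cP$. Since $G$ consists of automorphisms, $G_\alpha$ preserves collinearity with $\alpha$. It therefore leaves invariant the set of points collinear with $\alpha$ and the set of points non-collinear with $\alpha$.

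By \Cref{cor:numcol}, the collinear set has size $s(t+1)$ and the non-collinear set has size $s^2t$. Both numbers are positive because $s,t \geqslant 2$, so both sets are non-empty. Hence $\cP\setminus\{\alpha\}$ splits into two non-empty $G_\alpha$-invariant subsets. This contradicts $2$-transitivity, which requires $G_\alpha$ to be transitive on $\cP\setminus\{\alpha\}$.

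For the non-emptiness, \Cref{lem:s2t} \ref{lem:s2ti} already gives $s^2t > s(t+1) > 0$, so both sets are non-empty. I would also spell out the invariance: if $\beta$ is collinear with $\alpha$ via a line $L$, then for any $g \in G_\alpha$ the line $L^g$ joins $\alpha^g = \alpha$ and $\beta^g$. The same argument applied to $g^{-1}$ shows that non-collinearity is preserved as well.

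I do not expect a genuine obstacle. The only step needing care is that $G$ acts faithfully by automorphisms, so that incidence is preserved. The argument is then just the observation that a $2$-transitive group has rank $2$, whereas $\Aut(\Gamma)$ preserves a nontrivial relation (collinearity) with both relation and complement non-empty.
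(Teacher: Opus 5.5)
Your argument is correct and is essentially the paper's. The paper takes a collinear pair $(\alpha,\beta)$ and a non-collinear pair $(\gamma,\delta)$, and observes that an element of $G$ mapping one to the other would violate preservation of incidence; this is the ordered-pair form of your observation that $G_\alpha$ preserves the two non-empty sets of points collinear and non-collinear with $\alpha$. Note also that faithfulness is not needed here, only that $G$ acts by automorphisms.
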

\begin{proof}
Suppose $G$ acts $2$-transitively on the set of points of a generalised 
quadrangle $\Gamma = (\cP,\cL,\cI)$. Then consider two collinear points 
$(\alpha,\beta)$ 
and two non-collinear points $(\gamma,\delta)$. By $2$-transitivity, there
exists an element $g \in G$ such that $(\alpha,\beta)^g = (\gamma,\delta)$,
which means that $G$ does not preserve the incidence relation. Therefore, $G$
cannot act $2$-transitively on the set of points of a generalised quadrangle. 
\end{proof}

In \Cref{chap:largeree}, we consider almost simple 
groups of type Suzuki, small Ree and large Ree acting primitively on both the
set of points and the set of lines of a generalised quadrangle. We first show
that the simple group itself acts primitively on the set of points and the set
of lines of the generalised quadrangle. We start by recalling the definition 
of an almost simple group. 

\begin{definition}[Almost Simple Group]
Let $N$ be a non-abelian simple group. A group $G$ is \textsf{almost simple}
(of type $N$) if $N \cong \Inn(N) \nsubgp G \subgp \Aut(N)$. 
\end{definition}
We will often identify $\Inn(N)$ by $N$ and write $N \subgp G$. Note also that
$N$ is the minimal normal subgroup of $G$, and so, the socle of $G$ is $N$.
Next, we recall the notion of a novelty subgroup. This concept will be 
required later when we wish to show that a simple group $N$ acts primitively
on a set.

\begin{definition}[Novelty Subgroup]
\label{novelty subgroup}
Let $G$ be a group with a subgroup $N$ and a maximal subgroup $M$. We call $M$
a \textsf{novelty subgroup} if $M \cap N$ is not maximal in $N$. 
\end{definition}

Now we state and prove the results which allow us to consider the action of a
simple group instead of the action of the almost simple group. 
\begin{lemma}[Reduction to Simple Group]
\label{lem:reductiongen}
Let $G$ be an almost simple group with socle $N$ where the outer
automorphism group is cyclic, \emph{i.e.}, $\Out(N) = \Aut(N)/\Inn(N) \cong
C_n$ for some positive integer $n$. Suppose also that $G$ has no novelty
subgroups. If $G$ acts primitively on a set 
$\Omega$ where $|\Omega|$ is not prime, then $N$ acts primitively on $\Omega$.
\end{lemma}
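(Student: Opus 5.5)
Fix $\alpha \in \Omega$ and write $M \defi G_\alpha$. Primitivity of $G$ means $M$ is maximal in $G$. The plan is to show first that $N$ is transitive on $\Omega$, and then that $N_\alpha = M \cap N$ is maximal in $N$.

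\textbf{Transitivity.} We apply \Cref{lem:soctrans} with the normal subgroup $N$. Since $N$ is non-trivial, it suffices to rule out $N \subgp M$.

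Suppose $N \subgp M$. As $M$ is maximal in $G$, the quotient $M/N$ is a maximal subgroup of $G/N$. But $G/N$ embeds in $\Out(N) \cong C_n$, so $G/N$ is cyclic. Its maximal subgroups are exactly those of prime index, so $|G:M| = |G/N : M/N|$ is a prime. Hence $|\Omega| = |G:M|$ is prime, contradicting the hypothesis. Therefore $N \not\subgp M$, and \Cref{lem:soctrans} shows that $N$ acts transitively on $\Omega$.

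\textbf{Primitivity.} A transitive action of $N$ is primitive exactly when the point stabiliser $N_\alpha = M \cap N$ is maximal in $N$. Since $M$ is maximal in $G$ and $G$ has no novelty subgroups (in the sense of the definition above, with $N$ as the subgroup), $M \cap N$ is maximal in $N$. Hence $N$ acts primitively on $\Omega$.

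The only genuinely substantive step is excluding $N \subgp M$, and this is where the cyclicity of $\Out(N)$ is used. Once that is done, the rest is immediate from the no-novelty hypothesis.
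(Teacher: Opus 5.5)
Your proof is correct and follows essentially the same route as the paper. You rule out $N \le G_\alpha$ because a maximal subgroup of the cyclic quotient $G/N$ has prime index, then get transitivity from the normal-subgroup lemma and maximality of $G_\alpha \cap N$ from the no-novelty hypothesis. Your phrasing via $|G/N|$ (a divisor of $n$) is slightly more careful than the paper's ``$n/k$ must be a prime'', which tacitly takes $|G/N| = n$.
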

\begin{proof}
We follow the argument in \cite{morgan}. 
Take a point $\alpha \in \Omega$. If $N \subgp G_{\alpha}$, then
$G_{\alpha}$ is of the form $N: C_k$ where $k$ divides $n$. Since $G$ acts
primitively on the set of points, $n/k$ must be a prime, $p$. Thus,
$p = |G:G_{\alpha}| = |\Omega|$, which is a contradiction since $|\Omega|$ is
not prime. Therefore, $N$ is not contained in $G_{\alpha}$. 
By \Cref{lem:soctrans}, $N$ acts transitively on
$\Omega$. Since $G$ has no novelty maximal subgroups, it follows that 
$G_{\alpha} \cap N$ is a maximal subgroup of $N$. Thus, $N$ acts primitively 
on $\Omega$. 
\end{proof}

\begin{corollary}[Reduction to Simple Group]
\label{cor:reduction}
Let $G$ be an almost simple group with socle $N$ where $\Out(N)$ is cyclic.
Suppose $G$ has no novelty subgroups. Let $\Gamma = (\cP,\cL,\cI)$ be a 
generalised quadrangle of
order $(s,t)$ where $s,t \geqslant 2$. If $G$ acts primitively on $\cP$
and $\cL$, then $N$ acts primitively on $\cP$ and $\cL$. 
\end{corollary}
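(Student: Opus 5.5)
The plan is to apply \Cref{lem:reductiongen} twice: once with $\Omega = \cP$ and once with $\Omega = \cL$. Its hypotheses on $G$ (socle $N$, $\Out(N)$ cyclic, no novelty subgroups) are exactly those of the corollary. So the only thing left to check is that neither $|\cP|$ nor $|\cL|$ is a prime. By duality it suffices to treat $|\cP|$: the dual of a generalised quadrangle of order $(s,t)$ has order $(t,s)$, and $G$ acts primitively on its points.

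To see that $|\cP|$ is not prime, recall from the preliminaries that $|\cP| = (s+1)(st+1)$. Since $s \geqslant 2$ and $t \geqslant 2$, both factors are at least $3$. Indeed $s+1 \geqslant 3$, and $st+1 \geqslant 5 > 1$. Hence $|\cP|$ is a product of two integers each greater than $1$, so it is composite. In the same way, $|\cL| = (t+1)(st+1)$ with $t+1 \geqslant 3$ and $st+1 \geqslant 5$, so $|\cL|$ is composite as well.

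Now apply \Cref{lem:reductiongen} to the primitive action of $G$ on $\cP$. This shows that $N$ acts primitively on $\cP$. Applying it to the primitive action of $G$ on $\cL$ shows that $N$ acts primitively on $\cL$.

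There is no real obstacle here. The only point that needs care is that the thickness assumption $s,t \geqslant 2$ is what rules out prime point and line counts. Without thickness this can fail: for example, $t=1$ gives $|\cL| = 2(s+1)$, which is still composite, but degenerate orders could in general yield a prime count. The thickness hypothesis is therefore exactly what the argument uses.
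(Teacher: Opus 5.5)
Your proof is correct and is essentially the paper's argument: $|\cP| = (s+1)(st+1)$ and $|\cL| = (t+1)(st+1)$ are composite because $s+1$, $t+1$ and $st+1$ all exceed $2$, so \Cref{lem:reductiongen} applies to both actions. Your closing aside is slightly off, since even for $s,t \geqslant 1$ both factors are at least $2$ and so neither count is ever prime, but it plays no role in the proof.
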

\begin{proof}
Note that $|\cP| = (s+1)(st+1)$ and $|\cL| =
(t+1)(st+1)$ are not prime numbers since $s+1$, $t+1$ and $st+1$ are greater
than $2$. Therefore, by \Cref{lem:reductiongen}, $N$ acts primitively on 
$\cP$ and $\cL$. 
\end{proof}

\section{Large Ree Groups}
\label{chap:largeree}
In this section, we consider Ree groups of type $F_4$ \cite{reef4}, 
which we shall refer to as
large Ree groups. Throughout this section, we denote a large Ree group by 
$\LargeRee{q}$, where $q = 2^{2m+1}$ for some positive integer $m$. 

\subsection{Structure of Large Ree Groups}
In this subsection, we provide some background information on the family of 
large Ree groups. We use this information in \Cref{sec:largereeacts} to 
show that an almost simple group with socle isomorphic
to a large Ree group cannot act primitively on the point-set and 
the line-set of a generalised quadrangle. \\

Our first theorem involves the maximal subgroups of $\LargeRee{q}$. 
Malle (1991) \cite{mallef4} gave a list of maximal subgroups of 
$\LargeRee{q}$. However, Craven (2023) \cite{newf4} found that one maximal 
subgroup was missing in Malle's list. More specifically, Craven found that 
there are three conjugacy classes of maximal subgroups isomorphic to 
$\PGLn{2}{13}$ in $\LargeRee{q}$ when $q = 8$. 
The complete list is provided in the following theorem. 
\begin{theorem}[{Maximal Subgroups of a Large Ree Group \cite{newf4,mallef4}}]
\label{thm:largereemaxs} \index{maximal subgroups of!large Ree group}
Let $G = \LargeRee{q}$. Then a maximal subgroup of $G$ is 
isomorphic to one of the following: 
\begin{enumerate}[(i)]
\item $P_1 \defi [q^{10}] : (\Sz{q} \times C_{q-1})$; 
\item $P_2 \defi [q^{11}] : \GLn{2}{q}$; 
\item $\SUn{3}{q} : C_2$; 
\item $\PGUn{3}{q} : C_2$;
\item $\Sz{q} \wr C_2$; 
\item $\SPn{4}{q} : C_2$;
\item $\LargeRee{q_0}$, where $q_0 = 2^{n_0} > 2$ and $q = q_0^{r_0}$ for some
prime $r_0$; 
\item $C_{q+1}^2 : \GLn{2}{3}$; 
\item $C_{q +\epsilon \sqrt{2q} + 1}^2 : 4S_4$, where $\epsilon \in \{1,-1\}$; 
\item $C_{q^2+q+1+\epsilon\sqrt{2q}(q+1)} : C_{12}$, where $\epsilon \in \{1,-1\}$; and 
\item $\PGLn{2}{13}$ if $q = 8$.
\end{enumerate}
Moreover, there is one conjugacy class of subgroups for each of the cases 
(i)-(x) and three conjugacy classes of subgroups for case (xi). 
\end{theorem}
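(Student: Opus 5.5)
This statement is a classification result quoted from the literature, so I do not intend to reprove it from first principles. The plan is to assemble it from \cite{mallef4} and \cite{newf4}, checking that the two sources combine into exactly the list (i)--(xi) with the stated numbers of conjugacy classes. The overall strategy is the standard trichotomy for maximal subgroups $M$ of a finite exceptional group $G=\LargeRee{q}$, with $q=2^{2m+1}$. Either $M$ is local, i.e.\ it normalises a non-trivial $2$-subgroup or a non-trivial subgroup of odd order; or $M$ is a subfield subgroup; or $M$ is almost simple and not of this kind.

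\textbf{Step 1: local subgroups.} First I handle subgroups normalising a non-trivial $2$-subgroup. By the Borel--Tits theorem, such a subgroup lies in a parabolic subgroup. Since the twisted diagram of type ${}^2F_4$ has rank $2$, there are exactly two maximal parabolics, and these give $P_1$ and $P_2$ with the Levi factors $\Sz{q}\times C_{q-1}$ and $\GLn{2}{q}$ as in (i) and (ii).

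Next I handle subgroups normalising an odd-order subgroup, in particular a maximal torus or a semisimple subsystem subgroup of maximal rank. For these I use the classification of maximal-rank subgroups of twisted groups due to Liebeck, Saxl and Seitz, read off from the Frobenius-twisted Weyl group of $F_4$ with the graph automorphism. This produces the reductive subgroups (iii)--(vi) and the torus normalisers (viii)--(x). The orders of the tori are $(q+1)^2$, $(q\pm\sqrt{2q}+1)^2$ and $q^2+q+1\pm\sqrt{2q}(q+1)$, and the normaliser quotients are $\GLn{2}{3}$, $4S_4$ and $C_{12}$ respectively.

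\textbf{Step 2: subfield subgroups.} Subfield subgroups $\LargeRee{q_0}$ arise from the field automorphisms. Only prime-index subfields with $q_0>2$ give maximal subgroups, because $\LargeRee{2}$ is not simple and its normaliser is not maximal. This gives (vii).

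\textbf{Step 3: almost simple subgroups.} The remaining, and hardest, case is subgroups $M$ that are almost simple and not of Lie type in characteristic $2$ in a way that embeds them in one of the above. Malle treated this case using character theory and the known embeddings. Craven redid it in \cite{newf4} using his methods for exceptional groups: he analyses the action of candidate simple groups $H$ on the $26$-dimensional module and on the adjoint module, and shows that $H$ stabilises a subspace or lies in a positive-dimensional subgroup, except for $\PGLn{2}{13}$ when $q=8$. Counting the classes there via fusion in $\Aut(\LargeRee{8})$ gives three classes. All other classes are unique up to conjugacy, by Lang--Steinberg applied to the corresponding algebraic-group data.

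The main obstacle is Step 3. Ruling out every almost simple candidate is exactly where Malle's original list was incomplete. For this step I would rely wholly on Craven's result rather than attempt an independent verification.
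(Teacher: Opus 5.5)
Like the paper, you ultimately rest everything on \cite{mallef4,newf4}, which is acceptable. The problem is the one argument you supply yourself. In Step 2 you exclude $q_0=2$ ``because $\LargeRee{2}$ is not simple and its normaliser is not maximal''. That inference is false.

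When $2m+1$ is prime, $\LargeRee{2}$ is the centraliser in $G$ of a field automorphism of order $2m+1$, and non-simplicity says nothing about maximality. The Suzuki analogy fails: $\Sz{2}\cong C_5{:}C_4$ is solvable and lies in a torus normaliser, whereas $\LargeRee{2}$ contains the simple Tits group.

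Concretely, take $q=8$, so $|\LargeRee{2}|=2^{12}\cdot 3^3\cdot 5^2\cdot 13$. Compare with the list:
\begin{itemize}
\item $|P_1|$ and $|\Sz{8}\wr C_2|$ are prime to $3$;
\item $|P_2|$ and $|\SUn{3}{8}{:}C_2|=|\PGUn{3}{8}{:}C_2|$ are prime to $5$;
\item the $5$-part of $|\SPn{4}{8}{:}C_2|$ is only $5$;
\item the groups in (viii)--(xi) have order below $2\cdot 10^4$;
\item (vii) is empty for $q=8$ once $q_0>2$ is imposed.
\end{itemize}
So $\LargeRee{2}$ lies in no group on the list, and it must itself be a maximal subgroup of $\LargeRee{8}$. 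The exclusion of $q_0=2$ therefore cannot be proved by any argument: with it the list is incomplete, and (vii) should allow $q_0\geqslant 2$. The printed statement carries the same restriction, and the paper's bare citation does not justify it either.

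Separately, your opening trichotomy is not exhaustive. $\Sz{q}\wr C_2$ normalises no non-trivial $r$-subgroup, is not almost simple, and is not a subfield subgroup. Likewise $\SPn{4}{q}{:}C_2$ is almost simple but belongs to the maximal-rank family, not to ``normalisers of odd-order subgroups''.

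The framework behind Malle's and Craven's work is the Liebeck--Seitz (Borovik) reduction. A maximal subgroup is one of the following:
\begin{itemize}
\item $N_{\bar G}(X)^F$ for a positive-dimensional $F$-stable closed subgroup $X$ of $\bar G=F_4(\overline{\mathbb{F}}_2)$, where $X$ is parabolic, reductive of maximal rank, or reductive of smaller rank;
\item a subfield subgroup;
\item an exotic local subgroup (none occur for ${}^2F_4$);
\item an almost simple group lying in no such $N_{\bar G}(X)^F$, which is the case Craven settles.
\end{itemize}
Your Step 1 effectively treats the parabolic and maximal-rank cases, but the positive-dimensional subgroups of smaller rank also have to be disposed of. Since you are citing the classification anyway, this is an expository repair rather than a substantive gap.
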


In the case where $G$ is a Suzuki or a small Ree group acting transitively on
a set $\Omega$ with a parabolic subgroup as the point-stabiliser, it is
well-known that the action is $2$-transitive. This is not the case with the
large Ree groups. The subdegrees are given by the following lemma.

\begin{lemma}[Subdegrees of Parabolic Maximal Subgroups]
\label{lem:largereesubdegrees}
Let $G = \LargeRee{q}$ act primitively on a set $\Omega$. Consider a point
$\alpha \in \Omega$. If $G_{\alpha} \cong P_1 = [q^{10}] : (\Sz{q} \times
C_{q-1})$, then the subdegrees are 
\[
n_1 \defi q(q^2+1), \ n_2 \defi q^4(q^2+1), \ n_3 \defi q^7(q^2+1) \ 
\textrm{and}  \ n_4 \defi q^{10}.
\] 
If $G_{\alpha} \cong P_2 = [q^{11}] : \GLn{2}{q}$, then the subdegrees are 
\[
n_1 \defi q^2(q+1), \ n_2 \defi q^5(q+1), \ 
n_3 \defi q^8(q+1) \ \textrm{and} \ n_4 \defi q^{11}. 
\]
Moreover, in both cases, we have $n_1 + n_2 + n_3 \leqslant n_4$.
\end{lemma}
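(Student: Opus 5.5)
The plan is to identify the two coset actions with the actions of $\LargeRee{q}$ on the points and on the lines of its classical generalised octagon (the Ree--Tits octagon). I will then read off the subdegrees as the sizes of the distance classes in the incidence graph.

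The group $G=\LargeRee{q}$ has a split $BN$-pair of rank $2$ whose Weyl group is dihedral of order $16$. The two maximal parabolic subgroups are $P_1$ and $P_2$. The associated Tits building is a generalised octagon $\mathcal{O}$ of order $(q,q^2)$, whose point set we identify with $G/P_1$ and whose line set with $G/P_2$. So a primitive action with stabiliser $P_1$ is the action on points of $\mathcal{O}$, and one with stabiliser $P_2$ is the action on points of the dual octagon, of order $(q^2,q)$.

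The first step is to show that the suborbits are exactly the distance classes. By the Bruhat decomposition, the $P_i$-orbits on $G/P_i$ correspond to the double cosets $W_i\backslash W/W_i$, where $W_i$ is the order-$2$ subgroup of the dihedral group $W$ of order $16$ generated by one reflection. A direct count in the dihedral group gives $5$ such double cosets, so $G$ has rank $5$ on $\Omega$. In a generalised octagon the points at distance $0,2,4,6,8$ from $\alpha$ in the incidence graph give $5$ nonempty $G_\alpha$-invariant sets. Since the rank is $5$, each of these sets is a single suborbit; equivalently, $G$ acts distance-transitively.

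The second step counts points at each distance in a generalised octagon of order $(s,t)$. These counts are $s(t+1)$, $s^2t(t+1)$, $s^3t^2(t+1)$ and $s^4t^3$, each obtained by counting non-returning paths in the incidence graph; the unique-path property follows from girth $16$. Substituting $(s,t)=(q,q^2)$ gives $q(q^2+1)$, $q^4(q^2+1)$, $q^7(q^2+1)$ and $q^{10}$. Substituting $(s,t)=(q^2,q)$ gives $q^2(q+1)$, $q^5(q+1)$, $q^8(q+1)$ and $q^{11}$. As a sanity check, each list sums with $1$ to $|G:P_i|$.

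Finally, the inequality is elementary. In the first case,
\[
n_1+n_2+n_3 = q^9+q^7+q^6+q^4+q^3+q < 2q^9 \leqslant q^{10}.
\]
In the second case,
\[
n_1+n_2+n_3 = q^9+q^8+q^6+q^5+q^3+q^2 < 2q^9 \leqslant q^{11}.
\]
Both use $q\geqslant 8$.

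The main obstacle is the first step: pinning down that the parabolic coset geometries really are the point and line sets of the Ree--Tits octagon with the stated order, and justifying the rank $5$. I would cite Tits' construction (or van Maldeghem's book) for the octagon and its order $(q,q^2)$. The rank then follows from the dihedral double coset count, which is routine.
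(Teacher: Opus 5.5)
Your proof is correct, but it is not the paper's route. The paper does not derive the subdegrees at all. It quotes them from Wilson's book (Section 4.9.4) and proves only the inequality, using the crude bound $n_1+n_2+n_3\leqslant 6q^9\leqslant 8q^9$, which is at most $q^{10}$ (respectively $q^{11}$).

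You instead derive them from first principles:
\begin{itemize}
\item You identify $G/P_1$ and $G/P_2$ with the points and lines of the Ree--Tits octagon of order $(q,q^2)$. The indices $|G:P_1|=(1+q)(1+q^3)(1+q^6)$ and $|G:P_2|=(1+q^2)(1+q^3)(1+q^6)$ confirm which parabolic is which.
\item You obtain rank $5$ from the five $W_i$-double cosets in the dihedral group of order $16$.
\item You conclude that the five distance classes are exactly the $G_\alpha$-orbits.
\end{itemize}
This gives a structural explanation of the suborbits: the action is distance-transitive, so ``at distance $2k$ in the octagon'' is precisely the $k$-th suborbit. It also provides a built-in consistency check. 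The cost is that you rely on Tits' identification of the building and on the Bruhat correspondence $P_i\backslash G/P_i\leftrightarrow W_i\backslash W/W_i$, rather than a single table lookup.

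Two small points should be tightened.
\begin{itemize}
\item Passing from $G_\alpha\cong P_1$ (an abstract isomorphism) to the coset action on $G/P_1$ uses the fact that maximal subgroups isomorphic to $P_1$ form a single conjugacy class (\Cref{thm:largereemaxs}). You should say so.
\item The unique-path property coming from girth $16$ only justifies the counts at distances $2,4,6$. An opposite point is joined to $\alpha$ by $t+1$ paths of length $8$. So $s^4t^3$ should be obtained either by dividing the $s^4t^3(t+1)$ non-backtracking walks of length $8$ by $t+1$, or by subtracting the other counts from $(1+s)(1+st)(1+s^2t^2)$. Your sanity check in effect does the latter.
\end{itemize}
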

\begin{proof}
The subdegrees can be found in \cite[Section 4.9.4]{wilson}. We prove just the
inequality. Consider $G_{\alpha} \cong P_1$. Then we find 
$n_1 + n_2 + n_3 = q(q^2 +1) + q^4(q^2 + 1) + q^7(q^2+1) = (q+q^4+q^7)(q^2+1)
\leqslant (q^7+q^7+q^7)(q^2+q^2) = 6q^9 \leqslant 8q^9 \leqslant q^{10} =
n_4$, since $m \geqslant 1$ and $q = 2^{2m+1} \geqslant 2^3 = 8$. 
Now consider $G_{\alpha} \cong P_2$. Then we obtain 
$n_1+n_3+n_3 = q^2(q+1) + q^5(q+1) + q^8(q+1) = (q^2+q^5+q^8)(q+1) \leqslant
(q^8+q^8+q^8)(q+q) = 6q^9 \leqslant 8 q^9 \leqslant q^{11} =
n_4$. 
\end{proof}

Many of the arguments in this section make use of bounds on the number of
points and lines of a generalised quadrangle. For this reason, we have 
\Cref{tab:bounds}. 

\begin{lemma}[Upper and Lower Bounds on the Index]
Let $G = \LargeRee{q}$ and $H$ be a maximal subgroup of 
$G$. Then \Cref{tab:bounds} contains lower and upper bounds for $|G:H|$. 
\afterpage{
\begin{longtable}{lllll}
\caption{Lower and upper bounds for $|G:H|$.}
\label{tab:bounds}
\endfirsthead
\endhead
\toprule
\thead{$i$} & 
\thead{$H$} & \thead{$|G:H|$} & \thead{Lower} & \thead{Upper} \\
\midrule
1&\makecell{$\SUn{3}{q}:C_2$ \\ or $\PGUn{3}{q}:C_2$} & 
$\ds \frac{1}{2} q^9 (q^6+1)(q^2+1)(q-1)$  
& $\ds \frac{q^{18}}{4}$  & $\ds \frac{q^{18}}{2}$ \\
\midrule
2&$\Sz{q} \wr C_2$ & $\ds \frac{1}{2} q^8 (q^4-q^2+1)(q^3+1)(q+1)$  & $\ds
\frac{q^{16}}{2}$ & $q^{16}$ \\
\midrule
3&$\SPn{4}{q} : C_2$ & $\ds \frac{1}{2} q^8 (q^6+1)(q^2-q+1)$ & $\ds
\frac{q^{16}}{4}$ & $\ds
\frac{q^{16}}{2}$ \\
\midrule
4&$\LargeRee{q_0}$ & $\ds
\frac{q^{12}(q^6+1)(q^4-1)(q^3+1)(q-1)}{q_0^{12} 
(q_0^6+1)(q_0^4-1)(q_0^3+1)(q_0-1)}$ & $\ds \frac{1}{2}
\paren{\frac{q}{q_0}}^{26}$ & $\ds 2\paren{\frac{q}{q_0}}^{26}$ \\
\midrule
5&$C_{q+1}^2:\GLn{2}{3}$ & $\ds \frac{1}{48}
q^{12}(q^6+1)(q^2+1)(q-1)^2(q^2-q+1)$ & $\ds \frac{q^{24}}{96}$  & 
$\ds \frac{q^{24}}{48}$ \\
\midrule
6&$C_{q+\epsilon\sqrt{2q}+1}^2 : 4S_4$ & \makecell{$\ds \frac{1}{96} q^{12}
(q-\epsilon\sqrt{2q}+1)^2 (q^4-q^2+1)(q^2-1)$ \\ $\qquad \qquad \qquad \qquad 
(q^3+1)(q-1)$} & $\ds \frac{q^{24}}{384}$ & $\ds \frac{q^{24}}{48}$ \\
\midrule
7&$C_{q^2+q+1+\epsilon\sqrt{2q}(q+1)} : C_{12}$ & \makecell{$\ds \frac{1}{12} q^{12}
(q^2+1)(q^2+q+1-\epsilon\sqrt{2q}(q+1))$ \\ $\qquad \qquad 
\qquad \qquad (q^4-1)(q^3+1)(q-1)$} & $\ds \frac{q^{24}}{24}$ & 
$\ds \frac{q^{24}}{6}$ \\
\bottomrule
\end{longtable}
}
\end{lemma}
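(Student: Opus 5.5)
The plan is to compute $|G:H|$ from the group orders and then bound each factor elementarily, row by row. We use $|G| = q^{12}(q^6+1)(q^4-1)(q^3+1)(q-1)$ and the orders of the maximal subgroups in \Cref{thm:largereemaxs}:
\begin{itemize}
\item $|\SUn{3}{q}:C_2| = |\PGUn{3}{q}:C_2| = 2q^3(q^3+1)(q^2-1)$;
\item $|\Sz{q}\wr C_2| = 2q^4(q^2+1)^2(q-1)^2$;
\item $|\SPn{4}{q}:C_2| = 2q^4(q^4-1)(q^2-1)$;
\item $|C_{q+1}^2:\GLn{2}{3}| = 48(q+1)^2$;
\item $|C_{q+\epsilon\sqrt{2q}+1}^2:4S_4| = 96(q+\epsilon\sqrt{2q}+1)^2$;
\item $|C_{q^2+q+1+\epsilon\sqrt{2q}(q+1)}:C_{12}| = 12(q^2+q+1+\epsilon\sqrt{2q}(q+1))$.
\end{itemize}
Dividing and cancelling gives the closed forms in the third column. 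The cancellations use standard factorisations:
\begin{itemize}
\item $q^4+1 = (q+\sqrt{2q}+1)(q-\sqrt{2q}+1)$;
\item $q^4-q^2+1 = (q^2+q+1+\sqrt{2q}(q+1))(q^2+q+1-\sqrt{2q}(q+1))$;
\item $q^6+1 = (q^2+1)(q^4-q^2+1)$;
\item $q^3+1 = (q+1)(q^2-q+1)$.
\end{itemize}

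For the bounds, write each index as $c\,q^{N}\prod_j f_j(q)$, where each $f_j$ is a polynomial with leading term $q^{d_j}$ and $\sum_j d_j$ equal to the exponent in the table. We then use crude estimates valid for $q \geqslant 8$:
\begin{itemize}
\item $q^d < q^d+\text{lower terms} \leqslant q^d(1+1/q)$ when the lower terms are positive;
\item $q^d(1-1/q) \leqslant q^d - \text{lower terms} < q^d$ when they are negative.
\end{itemize}
Then we check that the product of the correction factors lies in the required interval. For example, in row 1 the product $(q^6+1)(q^2+1)(q-1)$ lies between $q^9/2$ and $q^9$, since $(1+q^{-6})(1+q^{-2})(1-q^{-1}) \in [7/8, 1)$. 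This gives $q^{18}/4 < |G:H| < q^{18}/2$. Rows 2, 3, 5 and 7 are handled the same way; the factor of $2$ (or $4$) of slack in each bound easily absorbs corrections like $(1\pm 1/8)^k$ for small $k$.

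Row 6 is the delicate one, because $q-\epsilon\sqrt{2q}+1$ with $\epsilon=1$ can be as small as $q(1-\sqrt{2/q}) = q/2$ when $q=8$. Squaring this loses a factor of $4$, which is exactly why the lower bound there is $q^{24}/384$ rather than $q^{24}/96$. Here I would bound $q-\sqrt{2q}+1 \geqslant q/2$ and $q+\sqrt{2q}+1 \leqslant 2q$ using $\sqrt{2q}\leqslant q/2$ for $q\geqslant 8$. I would then check that the remaining factors stay within a factor of about $2$ of $q^{12}$.

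Row 4 needs a different argument, since $q_0$ varies. Write $|\LargeRee{x}| = x^{26}g(x)$ with $g(x) = (1+x^{-6})(1-x^{-4})(1+x^{-3})(1-x^{-1})$, so that $|G:H| = (q/q_0)^{26}\, g(q)/g(q_0)$. For $x \geqslant 2$, each factor of $g$ lies in $[1/2, 3/2]$. More precisely, $g(x) \in [\tfrac12, 1+\text{small}]$, because the product $(1+x^{-6})(1+x^{-3}) \leqslant 1+2x^{-3}$ is beaten by $(1-x^{-1})$, giving $g(x)<1$ and $g(x) \geqslant (15/16)(1/2) > 1/2$ once the $+$ terms are included. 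Hence $g(q)/g(q_0) \in (1/2, 2)$. The main obstacle is this careful two-sided control of $g$ (and the bookkeeping in row 6); everything else is routine expansion.
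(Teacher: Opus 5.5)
Your plan fails in row~6, and not only in the bookkeeping: the upper bound $q^{24}/48$ there is false, so no estimate can prove it. With $\epsilon=-1$ the index contains $(q+\sqrt{2q}+1)^2$, and your bound $q+\sqrt{2q}+1\le 2q$ only gives $|G:H|\le 4q^{24}/96=q^{24}/24$. Take $q=8$, $\epsilon=-1$, so $H=C_5^2:4S_4$. Then exactly
\[
|G:H|=\tfrac{1}{96}\,q^{12}\cdot 13^2\cdot 4033\cdot 63\cdot 513\cdot 7=51398403147\cdot 2^{31}\approx 1.12\cdot\frac{q^{24}}{48}.
\]
Over $q\ge 8$, the ratio $96|G:H|/q^{24}$ is largest at this point, about $2.24$. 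So a correct entry would be, e.g., $q^{24}/42$, or the weaker $q^{24}/32$, which is all the paper uses later.

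The lower bound in row~6 is true, but $q-\sqrt{2q}+1\ge q/2$ does not reach it. The remaining factor $(q^4-q^2+1)(q^2-1)(q^3+1)(q-1)/q^{10}$ is about $0.85$ at $q=8$, so you only get about $0.85\,q^{24}/384$. You need to keep the constant term: $q-\sqrt{2q}+1\ge q/2+1$, with equality at $q=8$, gives about $1.33\,q^{24}/384$.

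Also, the identity that cancels $(q+\epsilon\sqrt{2q}+1)^2$ is $q^2+1=(q+\sqrt{2q}+1)(q-\sqrt{2q}+1)$, not $q^4+1=\cdots$. It is applied to the factor $(q^2+1)^2$ of $(q^6+1)(q^4-1)$.

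Two further claims need correcting, though both are repairable.

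\emph{Slack.} There is no ``factor of $2$ of slack'' on one side of rows 1, 2, 3 and 5. The upper bounds in rows 1, 3, 5 and the lower bound in row 2 coincide with the leading term. So the per-factor estimates $q^d(1\pm1/q)$ fail there; at $q=8$ they give only $\tfrac{7}{16}q^{16}$ in row 2. You must pair factors, e.g.\ $(q^4-q^2+1)(q+1)>q^5$, $(q^6+1)(q^2-q+1)<q^8$ and $(q^2+1)(q-1)<q^3$, as you effectively did in row 1.

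\emph{Row 7.} This row is as delicate as row 6. The term $\sqrt{2q}(q+1)$ is a relative correction of size $\sqrt{2/q}$, which is $1/2$ at $q=8$. For $\epsilon=1$, $q=8$ the index is only about $1.03\cdot q^{24}/24$. It goes through using $q^2+q+1-\sqrt{2q}(q+1)\ge q^2/2+q/2+1$, then $(q^2+q+2)(q-1)\ge q^3$ and $(q^2+1)(q^4-1)\ge q^6$.

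Your row 4 argument is right in substance, and more careful than the paper's: the paper's claimed inequality $q^{26}\le q^{12}(q^6+1)(q^4-1)(q^3+1)(q-1)$ is false. However, $(15/16)(1/2)=15/32<1/2$. Simply use $q_0\ge 8$, which gives $g(x)\ge\tfrac78(1-8^{-4})>\tfrac12$.
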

\begin{proof}
We use Mathematica to prove this result. See \Cref{sec:bounds} for the code. 
For $i \in \{1,2,3,5\}$, we let $f(q) = |G:H|$. Then we take 
$g(q) = q^n$ where $n$ is the degree of $f$. 
We use the \textsc{Reduce} command in Mathematica to find real numbers 
$a,b>0$ such that 
$ag(q) \leqslant f(q) \leqslant bg(q)$. For $i = 4$, we let $f(q) =
q^{12}(q^6+1)(q^4-1)(q^3+1)(q-1)$ and $g(q) = q^{26}$. Using the
\textsc{Reduce} command in Mathematica again, we find that $g(q) \leqslant
f(q) \leqslant 2g(q)$. Thus, 
\[
\frac{g(q)}{2g(q_0)} \leqslant \frac{f(q)}{f(q_0)} \leqslant
\frac{2g(q)}{g(q_0)},
\]
which is the desired bound for $i = 4$. Finally, for $i \in \{6,7\}$, we let
$u = \sqrt{2q}$ and write $|G:H|$ as a polynomial in $u$. Letting $f(u) =
|G:H|$ and $g(u) = q^{24} = u^{48}/2^{24}$, we use the \textsc{Reduce} command
in Mathematica to find the desired lower and upper bounds.
\end{proof}

In the following lemma, 
we collect some information about conjugacy classes of 
various elements in $\LargeRee{q}$. This is used when we consider the set 
of fixed points of an element later.
\begin{lemma}[{Conjugacy Classes and Centralisers \cite{largereeccls}}]
\label{lem:largereeccls}
Let $G = \LargeRee{q}$ and $H \subgp G$. Then \Cref{tab:ccls} contains 
the centraliser orders of various elements in $G$. 
The representatives chosen can be found in 
\cite[Tables II and IV]{largereeccls}. For each $g$ in \Cref{tab:ccls}, 
we have $g^G \cap H \neq \varnothing$, 
\emph{i.e.}, the conjugacy class of $g$ in $G$ meets $H$. 
Furthermore, taking $h \in g^G \cap H$, we have that 
$h^G \cap H = h^H$, \emph{i.e.}, 
the conjugacy class of $g$ in $G$ does not split in $H$. 
\begin{longtable}{lllll}
\caption{Conjugacy classes of elements and centraliser orders. 
Here, $\epsilon \in \{1,-1\}$ and 
$q_0 = 2^{n_0} > 2$ and $q = q_0^{r_0}$ for some prime $r_0$.
\label{tab:ccls}} \\
\toprule
\thead{$g$} & \thead{$|\Centraliser{G}{h}|$} 
& $H$ & \thead{$|\Centraliser{H}{h}|$} \\
\midrule
$u_2$ & $q^{10}(q^2-1)$ 
& $\LargeRee{q_0}$ & $q_0^{10}(q_0^2-1)$ \\
& & $C_{q^2+q+1+\epsilon\sqrt{2q}(q+1)} : C_{12}$ & $12$ \\
\midrule
$u_5$ & $q^8$ & $\LargeRee{q_0}$ & $q_0^8$ \\
\midrule
$t_4$ & $q^3(q^2-1)(q^3+1)$ 
& $\LargeRee{q_0}$ & $q_0^3(q_0^2-1)(q_0^3+1)$ \\
 & & $C_{q+\epsilon\sqrt{2q}+1}^2 : 4S_4$ & $12$ \\
\bottomrule
\end{longtable}
\end{lemma}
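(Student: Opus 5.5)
This lemma is essentially a citation to the generic character table of $\LargeRee{q}$ \cite{largereeccls}, so the plan is to extract the centraliser orders directly and then verify the two structural claims, that each class meets $H$ and does not split there, row by row.

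\textbf{Step 1: centralisers in $G$.} I will read off $|\Centraliser{G}{u_2}| = q^{10}(q^2-1)$, $|\Centraliser{G}{u_5}| = q^8$ and $|\Centraliser{G}{t_4}| = q^3(q^2-1)(q^3+1)$ from Tables II and IV of \cite{largereeccls}. Here $u_2$ and $u_5$ are involutions and $t_4$ has order $3$.

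\textbf{Step 2: the subfield subgroup $H \cong \LargeRee{q_0}$.} The classes of $\LargeRee{q}$ containing $u_2$, $u_5$, $t_4$ are parametrised uniformly in $q$, so the same representatives lie in $\LargeRee{q_0}$. Their centralisers there are obtained by replacing $q$ with $q_0$ in the same formulas. The non-splitting claim $h^G \cap H = h^H$ follows by comparing class invariants. Involutions of $\LargeRee{q}$ fall into the classes $u_1,\dots,u_4$ or so, distinguished by their centraliser orders, and these invariants persist under field restriction. Hence distinct $H$-classes map to distinct $G$-classes, and I will argue this by listing the $H$-classes of elements of orders $2$ and $3$.

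\textbf{Step 3: the torus normalisers.} For $H = C_{q^2+q+1+\epsilon\sqrt{2q}(q+1)} : C_{12}$ I will use \Cref{cor:conj}. The cyclic normal subgroup $N$ has odd order coprime to $12$, so every involution of $H$ is conjugate into $C_{12}$, which has a unique involution. Thus $H$ has exactly one class of involutions, and it fuses into one $G$-class, which the table identifies as $u_2$. Its centraliser in $H$ has order $12$ because the involution acts fixed-point-freely on $N$; this holds since $C_{12}$ acts semiregularly on $N\setminus\{1\}$ in the Frobenius-type structure. The same reasoning applies to $C_{q+\epsilon\sqrt{2q}+1}^2 : 4S_4$ with elements of order $3$. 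Since $3 \nmid |N|$, every order-$3$ element is conjugate into $4S_4$, which has a single class of such elements. That element acts fixed-point-freely on $N$, giving centraliser $C_{4S_4}(h)$ of order $12$. The $G$-class is then determined as $t_4$ by comparing element orders and traces in \cite{largereeccls}.

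\textbf{Main obstacle.} The hardest part will be confirming the fusion: that the unique $H$-class lands in the specific $G$-class $u_2$ rather than another involution class, and $t_4$ rather than another order-$3$ class. Fixed-point-freeness of these elements on $N$ also needs checking against the actual semidirect action. I would settle both by consulting how these maximal tori and their normalisers appear in \cite{mallef4,largereeccls}, where these fusions are recorded.
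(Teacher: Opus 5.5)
You follow the paper's overall route: centralisers from Shinoda's tables, field-independent representatives for $\LargeRee{q_0}$, and \Cref{cor:conj} to push elements into the complement. But the fusion statements, which are the real content of the lemma, are not established.

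\textbf{The subfield subgroup.} $u_5$ is not an involution. $\LargeRee{q}$ has only the two involution classes $u_1$ (2A) and $u_2$ (2B), and $q^8$ is the centraliser order of neither. So your plan of listing the $H$-classes of elements of orders $2$ and $3$ never reaches $u_5$. Moreover, ``distinguished by centraliser orders, which persist under field restriction'' does not prove injectivity even for involutions. For $h\in H$ you only know $|\Centraliser{H}{h}|$, and that does not tell you which $G$-class $h$ lies in; you need a representative-level identification.

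The argument that works, and is the paper's, is a count. Shinoda's unipotent representatives are products of the $\alpha_i(1)$, so every unipotent class of $G$ meets $H$. Also $H$ has the same number of unipotent classes. A surjection between finite sets of equal size is a bijection, so no unipotent $G$-class splits in $H$; in particular neither $u_2^G$ nor $u_5^G$ does.

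\textbf{The torus normalisers.} For $H=C_{q^2+q+1+\epsilon\sqrt{2q}(q+1)}:C_{12}$, saying the unique $H$-class of involutions is the one ``the table identifies as $u_2$'' assumes the conclusion, and you defer this identification to the literature. The missing observation is elementary. The involution of $C_{12}$ commutes with an element of order $3$. The 2A class has centraliser order $q^{12}(q^2+1)(q-1)$, which is prime to $3$ since $q\equiv 2 \pmod 3$. So the involution lies in $u_2^G$, whose centraliser order $q^{10}(q^2-1)$ is divisible by $3$. Since $q^2+q+1+\epsilon\sqrt{2q}(q+1)$ divides $q^4-q^2+1$, it is prime to $q^{10}(q^2-1)$. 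This gives $|\Centraliser{H}{h}|=12$ at once, without assuming a Frobenius structure.

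For $t_4$ there is no fusion question: $G$ has a unique class of elements of order $3$ \cite{mallef4}. So the class meets every $H$ of order divisible by $3$, and non-splitting amounts to $H$ having a single class of elements of order $3$. That is immediate for $\LargeRee{q_0}$. For $C_{q+\epsilon\sqrt{2q}+1}^2:4S_4$, however, your claim that $4S_4$ has one such class still needs $h\sim h^{-1}$. The paper gets this by lifting the conjugacy $Kh\sim Kh^{-1}$ in $S_4$ through the normal subgroup $K\cong C_4$, which $h$ centralises because $\Aut(C_4)\cong C_2$. Fixed-point-freeness on $C_{q+\epsilon\sqrt{2q}+1}^2$ holds because $q+\epsilon\sqrt{2q}+1$ divides $q^2+1$, which is prime to $q^3(q^2-1)(q^3+1)$.
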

\begin{proof}
Let $G = \LargeRee{q}$ and $H \cong \LargeRee{q_0}$. 
Observe that the unipotent conjugacy class representatives in 
\cite[Table II]{largereeccls} lie in the base field subgroup since they are all written as products of elements of the form $\alpha_i(1)$. 
Hence, $u_2^G \cap H$ and $u_5^G \cap H$ are non-empty sets. Now, we also observe that $u_2^G$ and $u_5^G$ do not split in $H$ since $H$ has the same 
unipotent classes and all of the unipotent classes meet $H$. Finally, the 
centraliser orders can be found in \cite[Table II]{largereeccls} with the convention 
that our $q$ corresponds to their $q^2$. \\

Next, we consider $t_4$. Note that $t_4$ is a representative of the 
unique conjugacy class of elements of order $3$ in $G$ 
(\cite[Section 1.2]{mallef4}). Thus, both 
$G$ and $H$ have a unique conjugacy class of elements of 
order $3$ and so, the conjugacy class of $t_4$ in $G$ has a non-empty 
intersection with $H$ and it does not split in $G$, \emph{i.e.}, 
we have $t_4^G \cap H = h^H$ for any $h \in H$ that is conjugate to 
$t_4$. 
Finally, the centraliser orders can be found in 
\cite[Tables II and IV]{largereeccls} with the convention that our 
$q$ corresponds to their $q^2$. \\

Let $H = K:L 
\cong C_{q^2+q+1+\epsilon\sqrt{2q}(q+1)} : C_{12}$.
The element $u_2$ is an involution in class $(2B)$ 
\cite[Lemma 8]{f4conjcls}. Note that an involution in $L$ 
centralises an element of order $3$ in $L$. Thus, $H$ contains 
involutions from class $(2B)$. 
Next, notice that $q^2+q+1+\epsilon\sqrt{2q}(q+1)$ is odd and so, 
by \Cref{cor:conj}, the number of
conjugacy classes of involutions in $H$ is at most the number of conjugacy
classes in $L$. Since $L$ has a unique conjugacy class of
involutions, we conclude that $H$ also has only one conjugacy class of
involutions. Therefore, $u_2^G$ does not split in $H$. 
Let $h \in u_2^G \cap H$. 
Since $q^2+q+1+\epsilon\sqrt{2q}(q+1)$ is coprime to 
$q^2-1$, it follows that $\Centraliser{H}{h} \cong C_{12}$, and so, 
$|\Centraliser{G_{H}}{h}| = 12$. \\

Let $H = C_{q+\epsilon\sqrt{2q}+1}^2 : M$, where $M \cong 4S_4$ and let 
$K$ be a normal subgroup of $M$ of order $4$. 
We show that $H$ has a unique conjugacy class of elements of order $3$. Since 
$3$ does not divide $q +\epsilon\sqrt{2q} +1$, the Sylow 
$3$-subgroup of $H$ has 
order $3$. Suppose $h \in M$ is of order $3$. Then every element in $H$ 
is conjugate to $h$ or $h^{-1}$ since all Sylow $p$-subgroups are conjugate. 
We now prove that $h$ and $h^{-1}$ are conjugate in $M$ and consequently,
$h$ and $h^{-1}$ are conjugate in $H$. To this end, we first observe the
following: every element in $K$ centralises $h$. This is because $K \nsubgp
M$ and so, $h$ induces an inner automorphism $\theta_h$ of $K$ via
conjugation. However, $\Aut(K) \cong \Aut(C_4) \cong C_2$. Thus, $\theta_h \in
\Aut(K)$ must be the identity map. Therefore, $h$ centralises every element of
$K$. Next, in $M/K \cong S_4$, we find that $Kh$ is conjugate to 
$Kh^{-1}$. This is because $S_4$ has precisely one conjugacy class of elements
of order $3$ (with representative $(1 \ 2 \ 3)$, for instance). Therefore,
there exists an element $x \in M$ such that $(Kh)^{Kx} = Kh^{-1}$,
\emph{i.e.}, $Kh^x = Kh^{-1}$. Thus, $h^x = kh^{-1}$ for some $k \in K$. From
which, we find that 
\[
k^3 = k^3 h^{-3} = (kh^{-1})^3 = (x^{-1} h x)^3 = x^{-1} h^3 x = 1,
\] 
since $h$ centralises all elements of $K$. Thus, $|k| = 1$ or $3$. We cannot
have $|k| = 3$ as $3$ does not divide $|K| = 4$. Thus, $|k| = 1$. In other
words, $h^x = kh^{-1} = h^{-1}$, \emph{i.e.}, $h$ is conjugate in $M$ to
$h^{-1}$. This proves that all elements in $M$ of order $3$ are conjugate.
Consequently, there is a unique conjugacy class of elements of order $3$ in $H$. 
Since $G$ has a unique conjugacy class of elements of order $3$, it follows that 
the conjugacy class of elements of order $3$ in $G$ does not split in
$H$. 
Finally, since $K$ centralises $h$, it follows that $12$ divides $|\Centraliser{H}{h}|$. 
Now, because $|\Centraliser{G}{h}| = q^3(q^2-1)(q^3+1)$ and 
$q+\epsilon\sqrt{2q}+1$ are coprime, it follows that $|\Centraliser{H}{h}| = 12$.
\end{proof}

\subsection{Divisibility Conditions and Coprimality}
The final few arguments in this section make use of the following divisibility 
conditions. 

\begin{lemma}[Polynomial Divisibility Conditions]
\label{lem:groupdivcond}
Let $q = 2^{2m+1}$ for some integer $m \geqslant 1$ and 
$\epsilon \in \{1,-1\}$. 
Then the following three statements hold: 
\begin{enumerate}[(i)]
\item $q+1$ divides $\ds\frac{1}{48} q^{12}(q^6+1)(q^2+1)(q-1)^2(q^2-q+1)-1$; 
\item $q+\epsilon\sqrt{2q}+1$ divides $\ds\frac{1}{96} q^{12}
(q-\epsilon\sqrt{2q}+1)^2(q^4-q^2+1)(q^2-1)(q^3+1)(q-1)-1$; and 
\item $q^2+q+1+\epsilon\sqrt{2q}(q+1)$ divides $\ds
\frac{1}{12}q^{12}(q^2+1)(q^2+q+1-
\epsilon\sqrt{2q}(q+1))(q^4-1)(q^3+1)(q-1)-1$. 
\end{enumerate}
\end{lemma}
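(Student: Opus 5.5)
The plan is to reduce each index modulo the stated divisor and show that it is congruent to $1$. Write the index as $F(q)/c$ with $c\in\{48,96,12\}$ and $F$ the displayed product. It then suffices to prove that the divisor $D$ divides $F(q)-c$, and to handle any common factor of $D$ and $c$ separately. The common factors are easy to control. In every case $D$ is odd, since $q$ is even and $\sqrt{2q}=2^{m+1}$ is even. Also $q=2^{2m+1}\equiv 2 \pmod 3$.

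\emph{Case (i).} Here $D=q+1$, and $q\equiv -1\pmod D$ gives
\[
q^{12}\equiv 1,\quad q^6+1\equiv 2,\quad q^2+1\equiv 2,\quad (q-1)^2\equiv 4,\quad q^2-q+1\equiv 3 .
\]
Hence $F(q)\equiv 48\pmod{q+1}$. However, $3\mid q+1$, so we cannot simply divide by $48$. Since $q+1$ is odd, it suffices to show $3(q+1)\mid F(q)-48$. Put $x=q+1$ and expand $F(x-1)=48+c_1x+x^2(\cdots)$ with $c_1=F'(-1)$. Because $3\mid x$, the tail is divisible by $3x$, so it remains to check $3\mid F'(-1)$. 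We compute $F'(-1)$ by logarithmic differentiation, $F'(-1)=F(-1)\sum_i g_i'(-1)/g_i(-1)$ over the factors $g_i$. Each term in the sum carries the factor $F(-1)/g_i(-1)$, which contains the factor $3$ coming from $q^2-q+1$, except the term for $q^2-q+1$ itself. That term is $16\cdot(-3)$, which is also divisible by $3$. So $3\mid F'(-1)$, as needed.

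\emph{Case (ii).} Put $u=\sqrt{2q}$, so $D=q+\epsilon u+1$ and $q+1\equiv-\epsilon u\pmod D$. Squaring gives $q^2+2q+1\equiv 2q$, i.e.\ $q^2\equiv -1\pmod D$. Therefore
\[
q^{12}\equiv 1,\qquad q^4-q^2+1\equiv 3,\qquad (q^2-1)(q^3+1)(q-1)\equiv (-2)(1-q)(q-1)=2(q-1)^2=2(q^2-2q+1)\equiv -4q .
\]
We also have $q-\epsilon u+1=D-2\epsilon u\equiv -2\epsilon u$, so $(q-\epsilon u+1)^2\equiv 4u^2=8q$. Multiplying, $F\equiv 8q\cdot 3\cdot(-4q)=-96q^2\equiv 96\pmod D$. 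Finally $\gcd(D,96)=1$: $D$ is odd, and $3\nmid D$ because $q^2\equiv-1$ has no solution modulo $3$. Hence $D$ divides the index minus $1$.

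\emph{Case (iii).} Here $D=q^2+q+1+\epsilon u(q+1)$. From $(q^2+q+1)^2\equiv 2q(q+1)^2\pmod D$ we obtain $D\mid q^4-q^2+1$. Hence $q^6\equiv -1$, $q^{12}\equiv 1$, $q^4\equiv q^2-1$, and
\[
q^2+q+1-\epsilon u(q+1)=D-2\epsilon u(q+1)\equiv -2\epsilon u(q+1).
\]
Substitute these into $F=q^{12}(q^2+1)(\cdots)(q^4-1)(q^3+1)(q-1)$. Then reduce powers of $u$ via $u^2=2q$ and powers of $q$ via $q^4\equiv q^2-1$. The goal is $F\equiv 12\pmod D$. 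For the division by $12$, note that $D\equiv 1 \pmod 3$ because $q^2+q+1\equiv 1$ and $3\mid q+1$. With $D$ odd, this gives $\gcd(D,12)=1$.

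The main obstacle is this last reduction. After substitution the expression still involves $u$, namely $\epsilon u(q+1)(q^2+1)(q^2-2)(q^3+1)(q-1)$ up to sign and a factor of $2$. To eliminate $u$, I would use the congruence $\epsilon u(q+1)\equiv -(q^2+q+1)$ from $D\equiv 0$, so that the product becomes a polynomial in $q$ alone. That polynomial is then reduced modulo $q^4-q^2+1$ and checked to leave $12$ modulo $D$, using $q^4-q^2+1\equiv 0\pmod D$ once more. If the bookkeeping becomes unwieldy, I would instead verify the identity mechanically, as the paper does for its earlier index bounds, by writing everything as a polynomial in $u$ and computing the remainder on division by $D(u)$ with Mathematica.
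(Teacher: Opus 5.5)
Your proposal is correct. In (ii) and (iii) it takes a genuinely different route from the paper; in (i) it is the paper's argument done by hand.

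\textbf{Part (i).} The paper divides $F(q)-48$ by $q+1$ in Mathematica. It then checks the quotient $g(q)$ modulo $3$, using $q\equiv -1$, and modulo $16$. Since $g(q)\equiv g(-1)=F'(-1)\pmod 3$, your product-rule evaluation $F'(-1)=-864$ is exactly that check. Your reduction to $3(q+1)\mid F(q)-48$ also silently needs $16\mid F(q)-48$, which is immediate from $16\mid q^{12}$ and $16\mid 48$.

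\textbf{Part (ii).} The paper performs a polynomial division in $u=\sqrt{2q}$ and then verifies at length that the quotient is divisible by $96$. Your congruence $q^2\equiv -1\pmod D$ collapses this to $F\equiv 96$. Together with $\gcd(D,96)=1$, it makes any inspection of the quotient unnecessary.

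\textbf{Part (iii).} The paper argues group-theoretically. Since $\gcd(k,12)=1$, an element of prime order $p\mid k$ in $H\cap H^g$ generates a subgroup characteristic in $C_k\trianglelefteq H$. By maximality its normaliser is both $H$ and $H^g$. Hence $|H\cap H^g|$ divides $12$ for $g\notin H$, so every nontrivial suborbit has length divisible by $k$, and $k\mid |G:H|-1$.

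Your arithmetic route needs neither maximality nor the normaliser argument. The step you call the main obstacle takes one line. Use $q^2+q+1-\epsilon u(q+1)\equiv 2(q^2+q+1)$ and $(q^2+q+1)(q-1)(q^3+1)=q^6-1$. Then
\[
F\equiv 2(q^6-1)(q^2+1)(q^4-1)\equiv 2\cdot(-2)\cdot(q^4-q^2-2)\equiv 2\cdot(-2)\cdot(-3)=12 \pmod D,
\]
using $q^6\equiv -1$ and $q^4\equiv q^2-1$ modulo $D$.

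\textbf{Comparison.} The paper's suborbit argument explains conceptually why the congruence holds for a maximal subgroup with a cyclic normal Hall subgroup. However, it needs $C_k$ cyclic with $\gcd(k,|H:C_k|)=1$, so it does not directly cover (i) or (ii). Your method is uniform across all three parts and checkable without a computer.
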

\begin{proof}
We use Mathematica to perform various computations and algebraic
manipulations. All the lines in brackets refer to the code in
\Cref{sec:quotient} \Cref{lis:quotient}. 
We show (i). Let $f(q) = q^{12}(q^6+1)(q^2+1)(q-1)^2(q^2-q+1)-48$. Then $f(-1)
= 0$ (see line \ref{line:fminus1}). Hence, $q+1$ divides $f(q)$ as a 
polynomial. Next, we use the
\textsc{PolynomialQuotientRemainder} command in Mathematica to find the 
quotient after dividing $f(q)$ by $q+1$ (line \ref{line:f2}): 
\begin{align*}
g(q) = \frac{f(q)}{q+1} =
q^{23}&-4 q^{22}+9 q^{21}-15 q^{20}+20 q^{19}-23 q^{18}+25
   q^{17}-28 q^{16}+33 q^{15}-39 q^{14} \\ 
   &+44 q^{13}-47 q^{12} +48 q^{11}-48 q^{10}+48 q^9-48 q^8+48 q^7-48 q^6 +48 q^5 \\ 
&-48 q^4+48 q^3-48 q^2+48 q-48.
\end{align*}
Since $q \equiv -1 \pmod 3$, we find that $g(q) \equiv -1-4-20-23-25-28-44-47 
\equiv -192 \equiv 0 \pmod 3$. Moreover, because $q = 2^{2m+1}$, it follows 
that $g(q)$ is divisible by $16$.
Therefore, $g(q)$ is divisible by $48$. This proves that $g(q)/48$ is an
integer and we are done. \\

For (ii), let $u = \sqrt{2q} = 2^{m+1}$. Then $q = u^2/2$ and so, we can
define $f_{\epsilon}(u) = q^{12}(q -\epsilon u+1)^2
(q^4-q^2+1)(q^2-1)(q^3+1)(q-1)-96$ and
$g_{\epsilon}(u) = q+\epsilon u + 1 = u^2/2+\epsilon u +1$. The roots of 
$g_{\epsilon}(u)$ are: $-\epsilon+i$ and $\epsilon - i$ (line \ref{line:f3}). 
Observe that $f_{\epsilon}(-\epsilon + i) = 0$ (line \ref{line:f4}). Since
$f_{\epsilon}(u)$ is a polynomial with real coefficients, 
$f_{\epsilon}(-\epsilon - i) = 0$.
Hence, $g_{\epsilon}(u)$ divides $f_{\epsilon}(u)$ as a polynomial. 
Now, the quotient 
after dividing $f_{\epsilon}(u)$ by $g_{\epsilon}(u)$ is given by (line
\ref{line:f5}): 
\begin{align*}
\frac{f_{\epsilon}(u)}{g_{\epsilon}(u)} = \frac{u^{46}}{2^{23}}& -\epsilon 
\frac{3
   u^{45}}{2^{22}}+\frac{u^{44}}{2^{19}} -\epsilon \frac{5
   u^{43}}{2^{21}}-\frac{3 u^{42}}{2^{21}} +\epsilon \frac{7
   u^{41}}{2^{19}}-\frac{31 u^{40}}{2^{20}} +\epsilon \frac{15
   u^{39}}{2^{19}} \\
   & +\frac{3 u^{38}}{2^{17}} -\epsilon \frac{35
   u^{37}}{2^{18}}+\frac{61 u^{36}}{2^{18}} -\epsilon \frac{5
   u^{35}}{2^{15}}-\frac{35 u^{34}}{2^{17}} +\epsilon \frac{61
   u^{33}}{2^{16}}-\frac{21 u^{32}}{2^{14}} +\epsilon \frac{15
   u^{31}}{2^{15}} \\
   & +\frac{65 u^{30}}{2^{15}} -\epsilon \frac{41
   u^{29}}{2^{13}}+\frac{93 u^{28}}{2^{14}} -\epsilon \frac{5
   u^{27}}{2^{13}}-\frac{11 u^{26}}{2^{10}} +\epsilon \frac{93
   u^{25}}{2^{12}}-\frac{95 u^{24}}{2^{12}}+\frac{3
   u^{22}}{2^6} -\epsilon \frac{3 u^{21}}{2^5} \\ 
   & +\frac{3 u^{20}}{2^5}-\frac{3
   u^{18}}{2^4}+\epsilon \frac{3 u^{17}}{2^3}-\frac{3 u^{16}}{2^3}+\frac{3
   u^{14}}{2^2}-\epsilon \frac{3 u^{13}}{2}+\frac{3 u^{12}}{2}-3 u^{10}
+\epsilon 6
   u^9-6 u^8+12 u^6 \\ 
   &  -\epsilon 24 u^5+24 u^4-48 u^2 +\epsilon 96 u-96.
\end{align*}
Since $u = 2^{m+1}$, the quotient $f_{\epsilon}(u)/g_{\epsilon}(u)$ is an 
integer.
Moreover, we note that $u = 2^{m+1} \equiv (-1)^{m+1} \pmod 3$ and thus,
modulo $3$, we have  
{\small
\begin{align*}
\frac{f_{\epsilon}(u)}{g_{\epsilon}(u)} \equiv{}& \frac{u^{46}}{2^{23}} 
+\frac{u^{44}}{2^{19}} -\epsilon \frac{5
   u^{43}}{2^{21}} +\epsilon \frac{7
   u^{41}}{2^{19}}-\frac{31 u^{40}}{2^{20}} \\
  & \quad \ -\epsilon \frac{35
   u^{37}}{2^{18}}+\frac{61 u^{36}}{2^{18}} -\epsilon \frac{5
   u^{35}}{2^{15}}-\frac{35 u^{34}}{2^{17}} +\epsilon \frac{61
   u^{33}}{2^{16}} \\
  & \quad \ +\frac{65 u^{30}}{2^{15}} -\epsilon \frac{41
   u^{29}}{2^{13}} -\epsilon \frac{5
   u^{27}}{2^{13}}-\frac{11 u^{26}}{2^{10}} -\frac{95 u^{24}}{2^{12}} \\
\equiv{}& (-1)+(-1)-\epsilon (-1)^{m+1} +\epsilon (-1)^{m} - (1) \\ 
& \quad \ -\epsilon (-1)^{m} +(1) -\epsilon (-1)^{m+1} -(1) +\epsilon 
(-1)^{m+1} \\ 
& \quad \ +(1) -\epsilon (-1)^{m+1} -\epsilon (-1)^{m+1} - (-1)-(-1) \\
\equiv{}& \epsilon (-1)^{m} +\epsilon (-1)^{m} \\ 
& \quad \ -\epsilon (-1)^{m} +\epsilon (-1)^{m} -\epsilon (-1)^{m} \\ 
& \quad \ +\epsilon (-1)^{m} +\epsilon (-1)^{m} \\
\equiv{}& 0 \pmod 3.  
\end{align*}
}
Since $u = 2^{m+1}$, it follows that $f_{\epsilon}(u)/g_{\epsilon}(u)$ is 
divisible by $32$.
Therefore, $f_{\epsilon}(u)/(96g_{\epsilon}(u))$ is an integer and we are 
done. \\

Finally, we prove (iii). For this, we consider the subgroup $H = 
C_{q^2+q+1+\epsilon\sqrt{2q}(q+1)} : C_{12}$ in $G = \LargeRee{q}$. Notice that
$k \defi q^2+q+1+\epsilon\sqrt{2q}(q+1)$ is coprime to $12$. Indeed, since 
$q+1 \equiv 0 \pmod 3$, it follows that 
$q^2+q+1+\epsilon\sqrt{2q}(q+1) \equiv (-1)^2+(-1)+1+\epsilon 0 \equiv 1 
\pmod 3$. Hence,
$k$ is coprime to $3$. Furthermore,
$k$ is odd, which shows that it is coprime to $12$.  
Hence, $C_{q^2+q+1+\epsilon\sqrt{2q}(q+1)}$ is characteristic in $H$. Now 
consider
the action of $G$ on the set of right cosets of $G$ in $H$ by right 
multiplication. Denote the set of right cosets by $\Omega$. 
Then the number of points $|\Omega|$ in this action is given by $|G:H|$. Let
$\alpha = H$ and $\beta \in \Omega$ with $\beta \neq \alpha$. Then $G_{\alpha}
= H$ and $G_{\beta} = H^g$ for some $g \in G$. Consider the action of $H$ on
$\Omega$. The suborbit with $\beta$ has length $|H:H_{\beta}| = 
|H:H \cap H^g|$.
We claim that $|H \cap H^g|$ divides $12$. Suppose it does not for the sake of
deriving a contradiction. Then there is a prime $p$ dividing $|H\cap H^g|$
such that $p$ also divides $k$. Hence, there exists an
element $x \in H \cap H^g$ with order $p$. Moreover, the subgroup $\ang{x}$ is
characteristic in $K$ for some $K \cong C_{q^2+q+1+\epsilon\sqrt{2q}(q+1)}$. 
This
means that $\ang{x} \nsubgp H$ because $K \nsubgp H$. Since $H$ is maximal in
$G$ and $G$ is simple, it follows that $\Normaliser{G}{\ang{x}} = H$. By the
same argument, we have that $\Normaliser{G}{\ang{x}} = H^g$, whence, $H =
H^g$, \emph{i.e.}, $|H:H_{\beta}| = |H:H\cap H^g| = 1$. Thus, $H = H_{\beta}$. 
This contradicts the assumption that $\alpha \neq \beta$. Therefore, $|H
\cap H^g|$ divides $12$. Thus, $k$ divides
$|H:H_{\beta}|$ and so, $k$ divides the length of
each suborbit that does not contain $\alpha$. Consequently, 
$k$ divides the disjoint union of these suborbits,
which has total length $|\Omega|-1$. We are done since
\[
|\Omega|-1 = |G:H|-1 =
\frac{1}{12}q^{12}(q^2+1)(q^2+q+1-\epsilon\sqrt{2q}(q+1))(q^4-1)(q^3+1)(q-1)-1. 
\qedhere
\]
\end{proof}

The following number theoretic lemmata are useful in solving for the 
parameters of a generalised quadrangle.
\begin{lemma}
\label{lem:amsans}
Let $a,m$ and $n$ be positive integers and $h \defi \gcd(m,n)$. Write 
$m = hm_0$ and $n = hn_0$. Then the following statements hold.
\begin{enumerate}[(i)]
\item $\gcd(a^m-1,a^n-1) = a^{h}-1$.
\item Suppose (without loss of generality) that $m_0$ is odd. Then 
$\gcd(a^m+1,a^n+1) = \gcd(a^h+1,(-1)^{n_0}+1)$, with the convention that
$\gcd(a^h+1,0) = a^h+1$. 
\item Suppose that $m$ is odd. Then 
\[
\gcd(a^m-1,a^n+1) = \begin{cases}
1 & \textrm{if $a$ is even}, \\
2 & \textrm{if $a$ is odd}. 
\end{cases}
\]
\end{enumerate}
\end{lemma}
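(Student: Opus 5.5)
For (i) I would use the Euclidean algorithm on exponents. The key identity is that for $m > n$,
\[
a^m - 1 = a^{m-n}(a^n-1) + (a^{m-n}-1),
\]
so $\gcd(a^m-1,a^n-1) = \gcd(a^{m-n}-1,a^n-1)$. Iterating this identity mirrors the subtractive Euclidean algorithm on the pair $(m,n)$. It terminates at $\gcd(a^h-1,a^0-1) = \gcd(a^h-1,0) = a^h-1$. An alternative is to note directly that $a^h-1$ divides both numbers. Then, if a prime power $p^e$ divides both, the multiplicative order of $a$ modulo $p^e$ divides both $m$ and $n$, hence divides $h$. This gives divisibility in the other direction. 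I will use the Euclidean-style induction on $m+n$, since it is self-contained.

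For (ii), put $b = a^h$, so the claim becomes $\gcd(b^{m_0}+1,b^{n_0}+1)$ with $\gcd(m_0,n_0)=1$ and $m_0$ odd. Since $m_0$ is odd, $b+1$ divides $b^{m_0}+1$. Let $g$ be the gcd. Then $b^{m_0} \equiv -1$ and $b^{n_0} \equiv -1 \pmod g$, so $b^{2m_0} \equiv b^{2n_0} \equiv 1$. Hence the order of $b$ modulo $g$ divides $\gcd(2m_0,2n_0) = 2$, so $b^2 \equiv 1 \pmod g$.

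I then split according to the parity of $n_0$. If $n_0$ is odd, then $b^{n_0} \equiv b \pmod g$ (using $b^2\equiv 1$), so $g \mid b+1$. Conversely, $b+1$ divides both numbers, giving $g = b+1 = \gcd(b+1,0)$. If $n_0$ is even, then $b^{n_0} \equiv 1$, so $g \mid 2$. Moreover $\gcd(b+1,2)$ divides both numbers, because they have the same parity as $b+1$. So $g = \gcd(b+1,2)$. In both cases this matches $\gcd(a^h+1,(-1)^{n_0}+1)$. The main care point is the bookkeeping of the convention $\gcd(x,0)=x$ and making sure the order argument is stated for every prime power dividing $g$. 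Working with the congruence $b^2 \equiv 1 \pmod g$ directly avoids that subtlety.

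For (iii), let $g = \gcd(a^m-1,a^n+1)$. Then $g$ divides $\gcd(a^{2n}-1,a^m-1) = a^{\gcd(2n,m)}-1$ by (i). Since $m$ is odd, $\gcd(2n,m) = \gcd(n,m) = h$, so $g \mid a^h-1$. But $a^h+1$ divides $a^n+1$ when $n/h$ is odd. More simply, $a^h \equiv 1 \pmod g$ implies $a^n \equiv 1$, so $g$ divides $(a^n+1)-(a^n-1) = 2$. If $a$ is even, both numbers are odd and $g=1$. If $a$ is odd, both are even and $g=2$. I expect no step to be a real obstacle; the only delicate point is the parity and convention handling in (ii).
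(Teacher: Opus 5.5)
Your proposal is correct and is essentially the paper's argument. The paper also runs Euclid on the exponents, but packages it as B\'ezout identities ($mx+ny=h$, $m_0x+n_0y=1$, $mx+2ny=h$, implicitly using that $a$ is a unit modulo the gcd); you instead use the subtractive recursion in (i), the multiplicative order of $b$ in (ii), and quote (i) for $\gcd(m,2n)=h$ in (iii). The only real difference is in (ii): the paper skips your parity split by observing that $b\equiv b^{m_0}\equiv -1 \pmod g$ regardless of $n_0$, so $g=\gcd(b+1,b^{n_0}+1)=\gcd(b+1,(-1)^{n_0}+1)$ immediately.
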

\begin{proof}
To show part (i), let $d \defi \gcd(a^m-1,a^n-1)$. 
Then $a^m \equiv 1 \pmod d$ and $a^n \equiv 1
\pmod d$. By Bezout's Lemma, there exist $x,y \in \bZ$ such that
$mx+ny = \gcd(m,n) = h$. Thus, $a^{h} = a^{mx+ny} = (a^m)^x (a^n)^y \equiv 1
\pmod d$. Therefore, $a^h-1 \equiv 0 \pmod d$, \emph{i.e.}, $d$ divides
$a^h-1$. Next, we observe that $a^m-1 = (a^h)^{m_0} - 1$. From which, we find 
that $a^h-1$ divides $a^m-1$. Similarly, $a^h-1$ divides $a^n-1$. Therefore, 
$a^h-1$ divides $\gcd(a^m-1,a^n-1) = d$. This proves that $d = a^h-1$. \\

We move to (ii). Let $A \defi a^h$ and $d \defi \gcd(a^m+1,a^n+1) =
\gcd(A^{m_0}+1,A^{n_0}+1)$. Then $A^{m_0} \equiv -1 \pmod d$ and $A^{n_0}
\equiv -1 \pmod d$. Therefore, $A^{2m_0} \equiv A^{2n_0} \equiv 1 \pmod d$.
Since $m_0$ and $n_0$ are coprime, by Bezout's Lemma, there exist 
$x,y \in \bZ$ such that
$m_0x+n_0y = 1$. From which, we have $A^2 = A^{2(m_0x+n_0y)} = 
(A^{2m_0})^x (A^{2n_0})^y \equiv 1 \pmod d$. Since $m_0$ is odd, we can write
$m_0 = 2m_1+1$ for some integer $m_1$. Hence, $-1 \equiv A^{m_0} \equiv
A^{2m_1+1} \equiv A^{2m_1} A \equiv A \pmod d$. Thus, $A+1 \equiv 0 \pmod d$,
\emph{i.e.}, $d$ divides $A+1$. Now, 
\begin{align*}
d ={}& \gcd(A^{m_0}+1,A^{n_0}+1) \\
={}& \gcd(A+1,A^{n_0}+1), \ \textrm{since $m_0$ is odd and so, 
$A+1$ divides $A^{m_0}+1$.} \\
={}& \gcd(A+1, A^{n_0}+1 \bmod {A+1}) \\
={}& \gcd(A+1, (-1)^{n_0}+1) \\
={}& \gcd(a^h+1, (-1)^{n_0}+1).
\end{align*}

Finally, we show part (iii). Let $d \defi \gcd(a^m-1,a^n+1)$. Then $a^m \equiv
1 \pmod d$ and $a^n \equiv -1 \pmod d$. So, $a^m \equiv a^{2n} \equiv 1 \pmod
d$. Since $m$ is odd, $\gcd(m,2n) = \gcd(m,n)$. By Bezout's Lemma, there 
exist $x,y \in \bZ$ such that $mx+2ny = \gcd(m,2n) = \gcd(m,n) = h$. Thus, 
$a^h = a^{mx+2ny} = a^{mx} a^{2ny} \equiv 1 \pmod d$. Hence, $a^n = a^{hn_0}
\equiv 1 \pmod d$. This means that $a^n-1 \equiv 0 \pmod d$, \emph{i.e.}, $d$
divides $a^n-1$. But we also know that $d$ divides $a^n+1$. Therefore, $d$
divides $a^n+1-(a^n-1) = 2$. Hence, $d = 1$ or $d = 2$. If $a$ is even then
both $a^m-1$ and $a^n+1$ are odd, and so their greatest common divisor is 
$1$. If $a$ is odd then both $a^m-1$ and $a^n+1$ are even and so, their
greatest common divisor is $2$. 
\end{proof}

\begin{lemma}[{\cite[Lemma 3.3]{Leemans}}]
Let $m$ be a positive integer and $q = 2^{2m+1}$. Then $q+\sqrt{2q}+1$ and
$q-\sqrt{2q}+1$ are coprime.
\end{lemma}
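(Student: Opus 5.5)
The natural route is to set $u \defi \sqrt{2q} = 2^{m+1}$, so that $q = u^2/2$, and use the product identity
\[
(q+u+1)(q-u+1) = (q+1)^2 - u^2 = q^2+2q+1-2q = q^2+1.
\]
Let $d \defi \gcd(q+u+1,\, q-u+1)$. Then $d$ divides the difference $(q+u+1)-(q-u+1) = 2u = 2^{m+2}$, so $d$ is a power of $2$. Since $q$ and $u$ are even, $q+u+1$ is odd, so $d$ is odd. Hence $d = 1$.

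This argument does not even need the product identity: the difference step and the parity step together already settle the claim. The identity $(q+u+1)(q-u+1) = q^2+1$ is still worth recording, because it also shows both factors are positive integers (as $q \geqslant 8$ gives $q - u + 1 > 0$), so the gcd makes sense.

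There is no real obstacle. The only point to check is that $u = \sqrt{2q}$ is an integer, which holds because $2q = 2^{2m+2}$, and that it is even, which holds because $m \geqslant 1$ (indeed $m \geqslant 0$ suffices).
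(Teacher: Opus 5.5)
Your proof is correct and is essentially the paper's argument: $d$ divides the difference $2\sqrt{2q}$, which is a power of $2$, while $d$ is odd, so $d=1$. The product identity $(q+u+1)(q-u+1)=q^2+1$ is a harmless extra that the paper does not use.
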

\begin{proof}
Let $d \defi \gcd(q+\sqrt{2q}+1,q-\sqrt{2q}+1)$. Then $d$ divides
$q+\sqrt{2q}+1-(q-\sqrt{2q}+1) = 2\sqrt{2q}$. However, $d$ is odd and
$2\sqrt{2q}$ is a power of $2$. Therefore, $d$ must be $1$, as required. 
\end{proof}

The following lemma summarises a list of coprime factors that will be used
later. 
\begin{lemma}[Coprime Factors]
\label{lem:coprime}
Let $m$ be a positive integer and $q = 2^{2m+1}$. Suppose that $r > 3$ is a
prime. Let \[
\xi_{q} \defi \begin{cases}
7 & \textrm{if $q \equiv 1 \pmod 7$}, \\
1 & \textrm{if $q \nequiv 1 \pmod 7$}.
\end{cases}
\]
Then the pairs of numbers $x_i(q)$ and $y_i(q)$ in \Cref{tab:coprimenumbers} 
are coprime. 
\begin{longtable}{lll}
\caption{Coprime numbers}
\label{tab:coprimenumbers}
\endfirsthead
\endhead
\toprule
\thead{$i$} & \thead{$x_i(q)$} & \thead{$y_i(q)$} \\
\midrule
$1$ & $(q^2+1)(q-1)$ & $(q^3+1)(q+1)$ \\
\midrule
$2$ & $(q^2+1)(q-1)$ & $q^2-q+1$ \\
\midrule
$3$ & $(q^6+1)(q^2+1)(q-1)$ & $(q^6-q^3+1)(q^4+q^2+1)$ \\
\midrule
$4$ & $(q-1)(q^2+1)(q^6+1)$ &
$(q^4-q^3+q^2-q+1)(q^4+q^3+q^2+q+1)$ \\
& & $\qquad \qquad (q^8+q^7-q^5-q^4-q^3+q+1)$ \\
\midrule
$5$ & $(q-1)(q^2+1)(q^6+1)/\xi_q$ & 
$(q^6-q^5+q^4-q^3+q^2-q+1)(q^6+q^5+q^4+q^3+q^2+q+1)$ \\
& & $\qquad \qquad (q^{12}+q^{11}-q^9-q^8+q^6-q^4-q^3+q+1)/\xi_q$ \\
\midrule
$6$ & $q+1$ & $q^2+1$ \\
\midrule 
$7$ & $(q+1)(q^2-q+1)$ & $(q^2+q+1)(q^4-q^2+1)$ \\
\midrule
$8$ & $q^2+1$ & $(q^2+q+1)(q^4+q^2+1)$ \\
\midrule
$9$ & $(q^r+1)/3$ & $(q^6+1)(q^2+1)(q-1)(q^2-q+1)/3$ \\
\midrule
$10$ & $q^2-q+1$ & $(q^6+1)(q^2+1)(q-1)$ \\
\midrule
$11$ & $q+1$ & $q^4-q^2+1$ \\
\midrule
$12$ & $q^2+1$ & $q^4-q^2+1$ \\
\bottomrule
\end{longtable}
\end{lemma}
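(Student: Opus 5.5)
The plan is to treat all twelve rows uniformly by writing each $x_i(q)$ and $y_i(q)$ as a product of cyclotomic values $\Phi_k(q)$. Examples of the factorisations are $q-1=\Phi_1(q)$, $q+1=\Phi_2(q)$, $q^2+1=\Phi_4(q)$, $q^2-q+1=\Phi_6(q)$ and $q^6+1=\Phi_4(q)\Phi_{12}(q)$. Likewise $q^4\pm q^3+q^2\pm q+1=\Phi_{10}(q),\Phi_5(q)$, the degree-$8$ factor is $\Phi_{30}(q)$, the degree-$6$ factors are $\Phi_{14}(q),\Phi_7(q)$ and the degree-$12$ factor is $\Phi_{42}(q)$. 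We also have $q^6-q^3+1=\Phi_{18}(q)$, $q^4+q^2+1=\Phi_3(q)\Phi_6(q)$, $q^4-q^2+1=\Phi_{12}(q)$, $q^3+1=\Phi_2(q)\Phi_6(q)$, and for the prime $r>3$, $q^r+1=\Phi_2(q)\Phi_{2r}(q)$.

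We then invoke the standard fact about cyclotomic values. Let $p$ be a prime not dividing $q$, and let $e$ be the multiplicative order of $q$ modulo $p$. If $p$ divides both $\Phi_a(q)$ and $\Phi_b(q)$ with $a\neq b$, then $\{a,b\}=\{e,ep^j\}$ for some $j\geqslant 1$, and in that case $p\,\|\,\Phi_{ep^j}(q)$ apart from the exceptional case $p=2$. Since $q$ is a power of $2$, every factor here is odd, so $p$ is odd. If one prefers to stay within the paper, the same conclusions follow pair by pair from \Cref{lem:amsans}. For instance, $\gcd(q^2+1,q^3+1)$ divides $\gcd(q^4-1,q^6-1)=q^2-1$, and $\gcd(q^2+1,q^2-1)$ divides $2$.

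With this, each row reduces to comparing index sets. In rows $1,2,3,6,7,8,10,11,12$, no index on the $x$ side differs from an index on the $y$ side by a factor that is a power of an odd prime. Rows $1,2,3,10$ have $x$-indices $\{1,4\}$ or $\{1,4,12\}$ against $y$-indices from $\{2,6,18,3\}$, where every ratio involves a factor $2$ or is not a prime power. Rows $6,7,8,11,12$ compare $\{2\}$ or $\{4\}$ or $\{2,6\}$ against $\{4\},\{3,12\},\{3,6\},\{12\}$. One subtle point is $3$ versus $6$, which has ratio $2$, and this gives only $p=2$. Hence no odd common prime exists in these rows.

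Rows $4$ and $5$ need the order argument. In row $4$ the only candidate is $\Phi_1(q)$ against $\Phi_5(q)$ with $p=5$, which requires $q\equiv 1\pmod 5$. This is impossible because $q=2^{2m+1}\equiv 2$ or $3\pmod 5$. In row $5$ the candidate is $\Phi_1(q)$ against $\Phi_7(q)$ with $p=7$, which happens exactly when $q\equiv1\pmod 7$. In that case $7\,\|\,\Phi_7(q)$, so dividing $y_5$ by $\xi_q=7$ removes every factor of $7$ from the $y$ side, and $x_5/\xi_q$ may keep other factors of $7$ harmlessly.

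Row $9$ is similar. The only odd overlap is $\Phi_2(q)$ (from $q^r+1$) against $\Phi_6(q)=q^2-q+1$ with $p=3$, because $2r$ against $1,4,12,6$ gives no odd prime-power ratio when $r>3$. Here we must check two things. First, $3$ divides both factors. Second, $v_3(q^2-q+1)=1$, which follows from $q^2-q+1=(q+1)^2-3q$ with $3\mid q+1$. Hence $y_9/3$ is prime to $3$, which suffices.

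I expect the main obstacle to be the careful $p$-adic bookkeeping in rows $5$ and $9$, that is, pinning down exactly which power of $7$ or $3$ is removed. Verifying the cyclotomic identifications of the higher-degree factors is a routine but tedious expansion.
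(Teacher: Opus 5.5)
Your cyclotomic reduction is a legitimate alternative to the paper's row-by-row use of \Cref{lem:amsans} and explicit Bezout identities. However, the blanket claim that disposes of nine rows at once is false in two of them.

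In row $3$ the $x$-side contains $\Phi_1(q)=q-1$ and the $y$-side contains $\Phi_3(q)$, coming from $q^4+q^2+1=\Phi_3(q)\Phi_6(q)$. The ratio $3/1=3$ is a power of an odd prime. In row $12$ you compare $\Phi_4(q)$ with $\Phi_{12}(q)$, and $12/4=3$ again. So, as written, your argument does not exclude $3$ as a common divisor in rows $3$ and $12$.

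Each of these rows needs the same kind of separate check you gave for rows $4$ and $5$. In row $3$, a common factor $3$ of $\Phi_1(q)$ and $\Phi_3(q)$ would force $3\mid q-1$. This fails because $q=2^{2m+1}\equiv 2\pmod 3$. In row $12$, it would force the order of $q$ modulo $3$ to be $4$. That is impossible, since this order divides $2$; equivalently, $q^2+1\equiv 2\pmod 3$. The paper's proof handles both rows explicitly by exactly such a computation modulo $3$.

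A smaller point: the standard fact you quote is misstated. If an odd prime $p$ divides $\Phi_a(q)$ and $\Phi_b(q)$ with $a\neq b$, then $\{a,b\}=\{ep^i,ep^j\}$ with $i\neq j$, not necessarily $\{e,ep^j\}$. For instance, $3$ divides both $\Phi_6(2)=3$ and $\Phi_{18}(2)=57$.

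This slip is harmless for your argument. You only use that $a/b$ is a power of $p$, together with $p\,\|\,\Phi_{ep^j}(q)$ for $j\geqslant 1$. In rows $5$ and $9$ the relevant pairs $(1,7)$ and $(2,6)$ genuinely have the form $(e,ep)$.

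With the two checks modulo $3$ inserted, your proof is complete. It also explains uniformly why only the primes $3$, $5$ and $7$ can ever interfere. Moreover, it avoids the computer-generated Bezout identities the paper uses for the degree-$8$ and degree-$12$ factors.
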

\begin{proof}
We use \Cref{lem:amsans} and the extended Euclidean algorithm for 
polynomials.\footnote{If $f(x)$ and $g(x)$ are polynomials over a field 
$\field$ with greatest common 
divisor $d(x)$, then by Bezout's Lemma for polynomials, there exist 
polynomials $a(x)$ and $b(x)$ over $\field$
such that $a(x) f(x) + b(x) g(x) = d(x)$. The extended Euclidean algorithm
allows us to compute polynomials $a(x)$ and $b(x)$.} We use Mathematica to
perform the extended Euclidean algorithm, see \Cref{sec:eucalg}. 
\begin{enumerate}
\item Follows from \Cref{lem:amsans}. 
\item Follows from \Cref{lem:amsans} since $(q^2-q+1)(q+1) = q^3+1$. 
\item Note that $q^9+1 = (q^3+1)(q^6-q^3+1)$ and $q^9+1$ is coprime to
$(q^6+1)(q^2+1)(q-1)$ by \Cref{lem:amsans}. Next, we observe that $q^6-1 =
(q^2-1)(q^4+q^2+1) = (q^3-1)(q^3+1)$. By \Cref{lem:amsans}, $(q^6+1)(q^2+1)$
is coprime to $(q^3-1)(q^3+1)$ and hence, $(q^6+1)(q^2+1)$ is coprime to
$q^4+q^2+1$. Next, we prove that $q-1$ and $q^4+q^2+1$ are coprime. Suppose
$d = \gcd(q-1,q^4+q^2+1)$. Then $q \equiv 1 \pmod d$. Hence, $q^4+q^2+1 \equiv
3 \pmod d$. However, $d$ divides $q^4+q^2+1$, and so, $3 \equiv 0 \pmod d$,
\emph{i.e.}, $d$ divides $3$. But modulo $3$, we have 
$q-1 = 2^{2m+1}-1 \equiv (-1)^{2m+1}-1 \equiv -2 \nequiv 0 \pmod 3$. This
proves that $d$ must be $1$, whence, $\gcd(q-1,q^4+q^2+1) = 1$. 

\item First, note that since $q^6+1 = (q^2+1)(q^4-q^2+1)$, it suffices to
prove that $(q-1)(q^6+1)$ is coprime to $y_4(q)$. 
Now, observe that $q^5+1 = (q+1)(q^4-q^3+q^2-q+1)$. By \Cref{lem:amsans},
$q-1$ is coprime to $q^5+1$ and hence, $q-1$ is coprime to $q^4-q^3+q^2-q+1$.
From \Cref{lem:amsans}, we also have that $q^6+1$ is coprime to $q^5+1$. Thus,
$q^6+1$ is coprime to $q^4-q^3+q^2-q+1$. Therefore, $(q-1)(q^6+1)$ and
$q^4-q^3+q^2-q+1$ are coprime. Next, let $d = \gcd(q-1,q^4+q^3+q^2+q+1)$. We
show that $d = 1$. Observe that $q \equiv 1 \pmod d$. Thus, $q^4+q^3+q^2+q+1
\equiv 5 \pmod d$. Since $q^4+q^3+q^2+q+1 \equiv 0 \pmod d$, it follows that
$5 \equiv 0 \pmod d$, \emph{i.e.}, $d$ divides $5$. Hence, $d = 1$ or $d = 5$.
If $d = 5$, then $q-1 = 2^{2m+1}-1 = 4^m \cdot 2 - 1 \equiv (-1)^m \cdot 2 - 1
\nequiv 0 \pmod 5$. Hence, $d = 1$. Furthermore, we also have that $q^6+1$ and
$q^5-1$ are coprime by \Cref{lem:amsans}. Since $q^5-1 =
(q-1)(q^4+q^3+q^2+q+1)$, it follows that $q^6+1$ and $q^4+q^3+q^2+q+1$ are
coprime. Finally, we consider $q^8+q^7-q^5-q^4-q^3+q+1$. Let $d =
\gcd(q-1,q^8+q^7-q^5-q^4-q^3+q+1)$. Then $q \equiv 1 \pmod d$ and hence,
$q^8+q^7-q^5-q^4-q^3+q+1 \equiv 1 \pmod d$. Since $d$ divides
$q^8+q^7-q^5-q^4-q^3+q+1$, we find that $1 \equiv 0 \pmod d$. In other words,
$d = 1$. This shows that $\gcd(q-1,q^8+q^7-q^5-q^4-q^3+q+1) = 1$. Next, we 
use the extended Euclidean algorithm for polynomials (\Cref{lis:eucalg} 
line \ref{line:poly1}) to find:
\[
(-q^6+q^3)(q^6+1) + (q^4-q^3+q^2-q+1)(q^8+q^7-q^5-q^4-q^3+q+1) = 1.
\]
Hence, by Bezout's Lemma, $q^6+1$ and $q^8+q^7-q^5-q^4-q^3+q+1$ are coprime.
This proves that $x_4(q)$ and $y_4(q)$ are coprime. 

\item As in the previous case, because $q^6+1 = (q^2+1)(q^4-q^2+1)$, 
it suffices to prove that $(q-1)(q^6+1)/\xi_q$ is coprime to $y_5(q)$. 
Observe that 
$q^7+1 = (q+1)(q^6-q^5+q^4-q^3+q^2-q+1)$. Hence, by \Cref{lem:amsans}, it
follows that $(q-1)(q^6+1)$ and $q^7+1$ are coprime. Next, let $d \defi
\gcd(q-1,q^6+q^5+q^4+q^3+q^2+q+1)$. Then $q \equiv 1 \pmod d$ and so,
$q^6+q^5+q^4+q^3+q^2+q+1 \equiv 7 \pmod d$. This implies that $7 \equiv 0
\pmod d$, \emph{i.e.}, $d = 1$ or $d = 7$. If $d = 7$, then $q \equiv 1 \pmod
7$ and so, $\xi_q = 7$. Consequently, dividing by $7$, we obtain 
$\gcd((q-1)/\xi_q,(q^6+q^5+q^4+q^3+q^2+q+1)/\xi_q) = 1$. If $d = 1$, then $q
\nequiv 1 \pmod 7$ since $q \equiv 1 \pmod 7$ implies that $q-1 \equiv 0 \pmod
7$ and $q^6+q^5+q^4+q^3+q^2+q+1 \equiv 0 \pmod 7$, whence, $7$ divides $d$.
Therefore, in this case, we have $\xi_q = 1$ and
$\gcd((q-1)/\xi_q,(q^6+q^5+q^4+q^3+q^2+q+1)/\xi_q) = 1$. Also, by
\Cref{lem:amsans}, it follows that $q^6+1$ and $q^7-1$ are coprime. Therefore,
$(q-1)(q^6+1)/\xi_q$ and $(q^6+q^5+q^4+q^3+q^2+q+1)/\xi_q$ are coprime.
Next, we let $d \defi \gcd(q-1,q^{12}+q^{11}-q^9-q^8+q^6-q^4-q^3+q+1)$. Then
$q \equiv 1 \pmod d$ and so, $q^{12}+q^{11}-q^9-q^8+q^6-q^4-q^3+q+1 \equiv 1
\pmod d$. Therefore, $d = 1$. Finally, we use the extended Euclidean algorithm
for polynomials (\Cref{lis:eucalg} line \ref{line:poly2}) 
to find:
\[
(-q^{11}+q^9-q^6+q^4)(q^6+1) +
(q^5-q^4+q^2-q+1)(q^{12}+q^{11}-q^9-q^8+q^6-q^4-q^3+q+1) = 1.
\] 
Therefore, by Bezout's Lemma, $q^6+1$ and $q^{12}+q^{11}-q^9-q^8+q^6-q^4-q^3+
q+1$ are coprime. 

\item Follows from \Cref{lem:amsans}.

\item Observe that $q^3+1 = (q+1)(q^2-q+1)$. Also, 
$q^3-1 = (q-1)(q^2+q+1)$ and
$q^6+1 = (q^2+1)(q^4-q^2+1)$. Hence, it suffices to show that $q^3+1$ and
$(q^3-1)(q^6+1)$ are coprime. In fact, this follows precisely from
\Cref{lem:amsans}. 

\item Observe that $q^3-1 = (q-1)(q^2+q+1)$ and $q^6-1 = (q^2-1)(q^4+q^2+1) =
(q^3+1)(q^3-1)$. By \Cref{lem:amsans}, we have that $q^2+1$ and 
$(q^3-1)(q^3+1)$ are coprime. Thus, $q^2+1$ and $(q^2+q+1)(q^4+q^2+1)$ are 
coprime. 

\item Observe that $q^6+1 = (q^2+1)(q^4-q^2+1)$. Therefore, it suffices to
show that $x_8(q)$ is coprime to $(q^6+1)(q-1)(q^2-q+1)/3$. To this end, we
use \Cref{lem:amsans} and see that $q^r+1$ and $(q^6+1)(q-1)$ are coprime. 
Next, observe that $q^3+1 = (q+1)(q^2-q+1)$. Define $d \defi
\gcd(q^r+1,q^2-q+1)$. By \Cref{lem:amsans}, we have
that $k \defi \gcd(q^r+1,q^3+1) = \gcd(q+1,0) = q+1$. Next, we let $\ell \defi
\gcd(q+1,q^2-q+1)$. Note that $3$ divides $\ell$ since $q+1 \equiv (-1)+1
\equiv 0 \pmod 3$ and $q^2-q+1 \equiv (-1)^2-(-1)+1 \equiv 3 \equiv 0 \pmod
3$. We show that $\ell = 3$. Note that $q \equiv -1 \pmod \ell$ and so, 
$q^2-q+1 \equiv (-1)^2-(-1)+1 \equiv 3 \pmod \ell$. So, $3 \equiv 0 \pmod 
\ell$. In 
other words, $\ell$ divides $3$. Now, we know that $d$ divides $k$ and so, $d$
divides $q+1$ and $q^2-q+1$. Therefore, $d$ divides $\ell = 3$. 
Hence, $d = 1$ or $d = 3$. However, $3$ divides $q^r+1$ and $q^2-q+1$. 
Therefore, $d = 3$. 
Dividing by $3$, we obtain $\gcd((q^r+1)/3,(q^2-q+1)/3) = 1$. 

\item Observe that $q^3+1 = (q+1)(q^2-q+1)$ and $q^6+1 = (q^2+1)(q^4-q^2+1)$
Therefore, it suffices to show that $q^3+1$ and $(q^6+1)(q-1)$ are coprime.
Indeed, this follows from \Cref{lem:amsans}.  

\item Observe that $q^6+1 = (q^2+1)(q^4-q^2+1)$. From \Cref{lem:amsans}, we
have that $q+1$ and $q^6+1$ are coprime. Therefore, $q+1$ and $q^4-q^2+1$ are
coprime. 

\item Suppose that $d = \gcd(q^2+1,q^4-q^2+1)$. Then $q^2 \equiv -1 \pmod d$.
Thus, $q^4-q^2+1 \equiv (-1)^2-(-1)+1 \equiv 1+1+1 \equiv 3 \pmod d$. This
means that $d = 1$ or $d = 3$. But we see that $q^2+1 \equiv (-1)^2+1 \equiv 2
\pmod 3$. Therefore, $d = 1$.  \qedhere
\end{enumerate}
\end{proof}

The following lemma provides upper bounds on the greatest common divisor of
various quantities that will be used later to bound $\gcd(s,t)$. 
\begin{lemma}[Bound on Greatest Common Divisor]
\label{lem:boundgcd}
Let $m$ be a positive integer and $q = 2^{2m+1}$. Define the following three
integers 
\[
F_1(q) \defi \frac{1}{2} q^{24} (q^{12}-q^6+1)(q^9+1)(q^3+1)-1, \quad 
F_2(q) \defi \frac{1}{2} q^{24}(q^{18}+1)(q^6-q^3+1)-1 
\]
and 
\[
G(q) \defi q^{24} (q^2-q+1)(q^2+q+1)^2 (q^4-q^2+1) 
(q^6-q^3+1)(q^{12}-q^6+1) -1.
\]
Then we have 
$\gcd(F_i(q),G(q)) \leqslant q$ for $i \in \{1,2\}$.
\end{lemma}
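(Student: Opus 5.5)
We plan to reduce the statement to a polynomial Bezout identity, in the same way the coprimality results in \Cref{lem:coprime} were obtained with the extended Euclidean algorithm. Treat $q$ as an indeterminate $x$. Define
\[
A_1(x) = x^{24}(x^{12}-x^6+1)(x^9+1)(x^3+1), \qquad A_2(x) = x^{24}(x^{18}+1)(x^6-x^3+1)
\]
and $B(x) = G(x)+1$. Then $2F_i(q) = A_i(q)-2$ and $G(q) = B(q)-1$. Let $d \defi \gcd(F_i(q),G(q))$.

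First we record two elementary facts about $d$.
\begin{enumerate}[(a)]
\item $d$ is odd. Indeed $q^{24}$ is even, so $G(q) \equiv -1 \pmod 2$.
\item $d$ is coprime to $q$. Indeed $F_i(q) \equiv -1$ and $G(q) \equiv -1 \pmod{q^{24}/2}$.
\end{enumerate}
In addition, $d$ divides both $A_i(q)-2$ and $B(q)-1$. Hence $d$ divides every integer combination of these two values.

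Next we run the extended Euclidean algorithm in $\mathbb{Q}[x]$ on the pair $A_i(x)-2$ and $B(x)-1$. We expect these polynomials to be coprime over $\mathbb{Q}$, though this is exactly what the computation will confirm or refute. The aim is polynomials $a_i(x),b_i(x)$ and a polynomial $c_i(x)$ of very small degree (ideally a constant or linear) satisfying
\[
a_i(x)\bigl(A_i(x)-2\bigr)+b_i(x)\bigl(B(x)-1\bigr) = c_i(x).
\]
Clearing denominators gives a positive integer $N_i$ with $N_ia_i,N_ib_i \in \bZ[x]$. Evaluating at $x=q$ then shows that $d$ divides $N_i c_i(q)$. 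By (a) and (b) we may discard every power of $2$ in $N_i c_i(q)$, since $q$ is a power of $2$. If $c_i$ is a nonzero constant, $d$ divides the odd part of $N_i c_i$, which is a fixed integer. If $c_i$ is linear, say $c_i(x)=\alpha x+\beta$, the bound becomes $d \leqslant |N_i(\alpha q+\beta)|$ divided by its power of $2$.

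Rather than aiming for $c_i$ constant, it may be better to stop the Euclidean algorithm one step early and keep a remainder that is visibly at most $q$ in size when evaluated. A second simplification is to reduce modulo $d$ before running the algorithm: we may substitute $q^{24}\equiv\ldots$ using $G(q)\equiv 0$, which should lower the degrees considerably. Both variants will be tried in Mathematica, in the same style as \Cref{sec:eucalg}.

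We expect the main obstacle to be arithmetic rather than algebraic. The Bezout denominators $N_i$, essentially the resultant $\operatorname{Res}(A_i-2,B-1)$, will be enormous. The inequality $d\leqslant q$ requires that the odd part of the relevant quantity be at most $q$ for every $q=2^{2m+1}$ with $m \geqslant 1$. The plan is to factor the odd part of $N_i c_i$ explicitly into primes $p$. For each such $p$ we then check, using the periodicity of $2^{2m+1}$ modulo $p$, for which $m$ the prime $p$ can actually divide both $F_i(q)$ and $G(q)$. The product of the surviving primes then gives the bound. A small-$q$ check settles any residual case where the general bound is weaker than $q$, for instance $q=8$.
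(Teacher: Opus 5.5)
Your plan is essentially the paper's proof. The paper also runs the extended Euclidean algorithm to get a constant Bezout combination, and clears denominators to show $\gcd(F_i(q),G(q))$ divides fixed integers whose odd parts ($b_1/2$ and $b_2/32$) are at most $q$ once $q\geqslant 2^{101}$; it then settles the finitely many $q\leqslant 2^{99}$ by direct numerical gcd computation, so your prime-by-prime periodicity analysis (and the early-stopping and reduction variants) is an optional alternative to that last check rather than a needed ingredient.
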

\begin{proof}
We use Mathematica to perform the extended Euclidean algorithm, see 
\Cref{sec:eucalg} \Cref{lis:eucalgbound}. For $i \in \{1,2\}$, we find 
polynomials $u_i(q)$ and $v_i(q)$ with rational coefficients such that 
\[
1 = u_i(q) F_i(q) + v_i(q) G(q).
\]
Clearing denominators (\Cref{lis:eucalgbound} lines \ref{line:F1G1} and
\ref{line:F2G1}), we obtain polynomials $U_i(q)$ and $V_i(q)$ with integer
coefficients such that 
\[
b_1 \defi 4,514,863,691,969,464,490,886,472,694,534 = 
U_1(q) F_1(q) + V_1(q) G(q)
\]
and 
\[
b_2 \defi 24,150,991,970,664,810,056,096 = U_2(q) F_2(q) + V_2(q) G(q).
\]
By Bezout's Lemma, it follows that $\gcd(F_i(q),G(q))$ divides $b_i$. Since
$F_i(q)$ and $G(q)$ are odd, we have that $\gcd(F_1(q), G(q))$ divides $b_1/2$
and $\gcd(F_2(q), G(q))$ divides $b_2/32$, where $2$ and $32$ are the highest
powers of $2$ dividing $b_1$ and $b_2$, respectively (\Cref{lis:eucalgbound}
lines \ref{line:b1} and \ref{line:b2}). Note that for every $q
\geqslant 2^{101}$, we have 
\[
\frac{b_1}{2} = 2,257,431,845,984,732,245,443,236,347,267 \leqslant q
\]
and 
\[
\frac{b_2}{32} = 754,718,499,083,275,314,253 \leqslant q
\]
Thus, for $q \geqslant 2^{101}$, we have that
$\gcd(F_i(q),G(q)) \leqslant q$. Now, for $q \leqslant 2^{99}$, we compute the
$\gcd(F_i(q),G(q))$ numerically (\Cref{lis:eucalgbound} lines 
\ref{line:F1Gfinite}--\ref{line:F1Gfiniteend}) and verify that 
$\gcd(F_i(q),G(q)) \leqslant q$. 
Thus, $\gcd(F_i(q),G(q)) \leqslant q = 2^{2m+1}$ for every positive 
integer $m$. 
\end{proof}

\subsection[Large Ree Groups Acting on GQs]{Large Ree Groups Acting on 
Generalised Quadrangles}
\label{sec:largereeacts}
In this subsection, we prove the following theorem. 
\begin{theorem}
\label{thm:largeree}
Let $q = 2^{2m+1}$ for some positive integer $m$. Then an almost simple group
with socle isomorphic to $\LargeRee{q}$ cannot act primitively on both the set 
of points and the set of lines of a generalised quadrangle.
\end{theorem}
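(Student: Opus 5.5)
We would first reduce to the simple group. Let $G$ be almost simple with $N \defi \soc{G} \cong \LargeRee{q}$, acting primitively on the points $\cP$ and lines $\cL$ of a generalised quadrangle $\Gamma$ of order $(s,t)$. Here $\Out(N) \cong C_{2m+1}$ is cyclic, and by \Cref{thm:largereemaxs} (Malle's list together with Craven's correction) $G$ has no novelty maximal subgroups. So \Cref{cor:reduction} shows that $N$ acts primitively on both $\cP$ and $\cL$. Hence $N_\alpha$ and $N_L$ are maximal subgroups from \Cref{thm:largereemaxs}, with $|\cP|=(s+1)(st+1)=|N:N_\alpha|$ and $|\cL|=(t+1)(st+1)=|N:N_L|$.

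Next we would cut down the possible pairs $(N_\alpha,N_L)$ by size, using \Cref{lem:modhigman}\,\ref{item:modhighman2}, i.e.\ $|\cP|^4<|\cL|^5$ and $|\cL|^4<|\cP|^5$, together with the bounds in \Cref{tab:bounds}. This forces the exponents of $q$ in the two indices to be comparable. For each surviving pair we would then argue as follows.

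\emph{Parabolic case.} Suppose $N_\alpha \in \{P_1,P_2\}$. The collinearity relation is a union of $N_\alpha$-suborbits, so $s(t+1)$ is a sum of some of $n_1,n_2,n_3,n_4$ from \Cref{lem:largereesubdegrees}, and $s^2t$ is the sum of the rest. By \Cref{lem:s2t}\,\ref{lem:s2ti} and the inequality $n_1+n_2+n_3\le n_4$, the suborbit $n_4$ must lie in the non-collinear part. \Cref{lem:s2t}\,\ref{lem:s2tii} then leaves only a couple of options, for example $s(t+1)=n_1+n_2+n_3$ and $s^2t=n_4$. Each option yields explicit equations in $s,t,q$, which we would show have no integer solutions. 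The equation $s^2t=q^{10}$ or $q^{11}$ forces $s,t$ to be powers of $2$, and these cannot satisfy the collinear count.

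\emph{Non-parabolic cases.} We would attack these mainly with arithmetic.
\begin{enumerate}[(a)]
\item When one of the stabilisers is one of the local subgroups in rows 5--7 of \Cref{tab:bounds}, \Cref{lem:groupdivcond} gives $|\cP|\equiv 1$ modulo the relevant torus order $k$.
\item In the subfield case $\LargeRee{q_0}$ and in the local cases, we would use \Cref{lem:fixpts} and \Cref{lem:cgtranss} with the classes $u_2,u_5,t_4$ of \Cref{lem:largereeccls} to count fixed points exactly, as $|\Centraliser{G}{g}:\Centraliser{H}{g}|$. \Cref{lem:suzsub} then makes the fixed substructure a generalised quadrangle, whose sizes must satisfy \Cref{lem:suzupts} or Higman's inequality; this should give a contradiction.
\item For the remaining pairs, where both indices are of the same shape with equal $q$-part, we would use the identities $|\cP|-1=s(st+t+1)$ and $|\cL|-1=t(st+s+1)$. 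These show that $d=\gcd(s,t)$ divides $\gcd(|\cP|-1,|\cL|-1)$, which \Cref{lem:boundgcd} bounds by $q$.
\item From there, \Cref{lem:moddivcond} or \Cref{lem:moddivcond2} gives $s+t \mid Ad^2$ for a small $A$. Combined with the coprime factorisations of \Cref{lem:coprime}, which pin down how the cyclotomic factors of $|\cP|$ and $|\cL|$ can be distributed among $s+1$, $t+1$ and $st+1$, this makes $s+t$ far smaller than \Cref{lem:modhigman}\,\ref{item:modhigman1} permits.
\end{enumerate}
Where a $2$-power divides $s+1$, we would use \Cref{lem:bound} to conclude.

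The main obstacle is the case analysis for non-parabolic pairs with the same index, i.e.\ $N_\alpha \cong N_L$ up to the choice among rows 1--3. Size bounds alone fail there, and we must use the gcd bound of \Cref{lem:boundgcd}, which needed computer verification, together with the coprimality table to force the factorisation of $(s+1)(st+1)$. We would try this first by assigning each coprime block of $|\cP|$ in \Cref{tab:coprimenumbers} wholly to $s+1$ or to $st+1$, and then eliminating each assignment with the divisibility condition $s+t\mid st(s+1)(t+1)$.
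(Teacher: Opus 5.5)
There is a genuine gap in the non-parabolic pairs with $|G_\alpha|\neq|G_L|$, which is where most of the work lies.

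\emph{How coprimality should be used.} Apply it to the reduced ratio $(s+1)/(t+1)=|G_L|/|G_\alpha|=x/y$ (\Cref{lem:coprime}). This gives $s+1=kx$ and $t+1=ky$. Assigning coprime blocks of $|\cP|$ wholly to $s+1$ or to $st+1$ is not justified, since $\gcd(s+1,st+1)=\gcd(s+1,t-1)$ need not be $1$.

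\emph{The missing lower bound on $k$.} In the subcase $k\equiv 2\pmod 4$, your tools only give \emph{upper} bounds on $k$: $ky=t+1\le s+t\le 4xyd^2$ from \Cref{lem:moddivcond2}. Moreover, $d$ is controlled only where \Cref{lem:boundgcd} applies, namely $\Sz{q}\wr C_2$ or $\SPn{4}{q}:C_2$ against $\LargeRee{q^{1/3}}$, and the bound there is $q_0$, not $q$. In several pairs the index itself already pins $k$ to a moderate size, so an upper bound is useless. For $\SUn{3}{q}:C_2$ against $\SPn{4}{q}:C_2$ one has $x=q(q^2+1)(q-1)$ and $y=q^2-q+1$. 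Then $|\cP|\approx k^3x^2y\approx q^{18}/2$ gives $k$ of order $q^{8/3}$, far below $4xd^2$ even for $d=1$.

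The paper obtains the needed lower bound $2$-adically. When $k\equiv2\pmod4$, the $2$-part of $s+1$ is small, so $st+1=k^2xy-k(x+y)+2$ must be divisible by a huge power of $2$ (e.g.\ $q^8/4$). Writing $k$ in base $q$ or $q_0$ and solving this congruence digit by digit (by computer when $r_0\in\{3,5,7\}$) forces a nonzero high digit. This gives $k\ge q^4/4$, $q_0^{8}$, $q_0^{17}$, $q_0^{24}$, $q_0^{40}$ in the various cases, contradicting \Cref{tab:bounds}. Nothing in your plan closes $\SUn{3}{q}:C_2$ against $\Sz{q}\wr C_2$, $\SPn{4}{q}:C_2$ or $\LargeRee{q^{1/3}}$; there the true $k$ is far below any bound of the form $4xd^2$.

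\emph{Step (b).} This is also too optimistic. For $t_4$ and $u_2$ (subfield against a local subgroup), $|\cL_g|$ is not a prime power, so \Cref{lem:suzupts} is unavailable. Higman's inequality on $(\cP_g,\cL_g)$ only excludes $r_0=3$. The paper has to rerun the ratio and $2$-adic argument on the fixed subquadrangle.

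You have also misplaced the difficulty.
\begin{itemize}
\item Equal indices (e.g.\ $\SUn{3}{q}:C_2$ against $\PGUn{3}{q}:C_2$) give $|\cP|=|\cL|$, hence $s=t$. Then \Cref{lem:bound}\,(i), applied with the $2$-part of the index, ends the case at once.
\item $\Sz{q}\wr C_2$ against $\SPn{4}{q}:C_2$ falls because $s\equiv t\pmod 4$ forces $st+1\equiv2\pmod4$. No gcd bound is needed in either of these cases.
\item In the parabolic case, your sample option $s(t+1)=n_1+n_2+n_3$, $s^2t=n_4$ is already excluded by \Cref{lem:s2t}\,(ii). The options the paper actually has to treat for $P_1$ are $s(t+1)\in\{n_2,n_1+n_2\}$, where $s^2t$ is not a power of $2$, so your power-of-$2$ argument does not reach them. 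The paper uses that $a^2-4b$ must be a square; the upper bound $s^2t\le(s(t+1))^{5/3}$ also kills them, so this part is repairable.
\item The case $\PGLn{2}{13}$ at $q=8$ and the residual $q=8$ cases need explicit checks.
\item The absence of novelties comes from Malle's corollary, not from \Cref{thm:largereemaxs}.
\end{itemize}
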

We shall now prove this theorem via the following lemmata.  
\begin{lemma}[Reduction to Simple Group]
\label{cor:reductionf4}
Let $G$ be an almost simple group with socle isomorphic to
$\LargeRee{q}$ and let $\Gamma = (\cP,\cL,\cI)$ be a generalised quadrangle of
order $(s,t)$ where $s,t \geqslant 2$. Suppose $G$ acts primitively on $\cP$
and $\cL$. Then $\soc{G}$ acts primitively on $\cP$ and $\cL$. 
\end{lemma}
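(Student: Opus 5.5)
The plan is to apply \Cref{cor:reduction} with $N = \soc{G} \cong \LargeRee{q}$. Two hypotheses must be checked: that $\Out(N)$ is cyclic, and that $G$ has no novelty subgroups. Once both hold, \Cref{cor:reduction} immediately gives that $N$ acts primitively on $\cP$ and $\cL$, since $s,t \geqslant 2$.

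\textbf{Step 1: $\Out(N)$ is cyclic.} For $N = \LargeRee{q}$ with $q = 2^{2m+1}$, there are no diagonal automorphisms and no graph automorphisms beyond the twisting one already used to define the group. The outer automorphism group is therefore generated by the field automorphisms, giving $\Out(N) \cong C_{2m+1}$, which is cyclic. I would cite the standard description of automorphisms of groups of Lie type for this.

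\textbf{Step 2: no novelty maximal subgroups.} This is the substantive step. Since $\Out(N)$ is cyclic of odd order generated by a field automorphism $\phi$, any $G$ with $N \nsubgp G \subgp \Aut(N)$ has the form $N{:}\ang{\phi^k}$. I would appeal to Malle's classification \cite{mallef4}, as corrected by Craven \cite{newf4}, which treats maximal subgroups of all almost simple groups with socle $\LargeRee{q}$. The point to extract is that every maximal subgroup $M$ of $G$ not containing $N$ satisfies $M \cap N$ equal to one of the maximal subgroups listed in \Cref{thm:largereemaxs}.

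This is plausible because each class in \Cref{thm:largereemaxs} can be chosen stable under the field automorphism group, so its normaliser in $G$ is a maximal subgroup meeting $N$ in that class. The three $\PGLn{2}{13}$ classes for $q=8$ need separate care: $\Out(N) \cong C_3$ there, and these classes are permuted by the field automorphism. A $\PGLn{2}{13}$ would then not extend to $G = N{:}C_3$. However, its normaliser in $G$ is still a non-maximal overgroup, not a novelty, and no new maximal subgroup arises whose intersection with $N$ is non-maximal. I expect the main obstacle is precisely justifying this absence of novelties from the literature, particularly the $q=8$ case, since Craven's correction postdates Malle's list. If a cleaner route is needed, I would fall back on the following general fact. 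By Lemma~\ref{lem:soctrans}, $N$ is transitive on $\cP$ and on $\cL$, so $G = N G_\alpha$. It then suffices to show $G_\alpha \cap N$ is maximal in $N$, which I would check directly against the list using the known overgroup structure in $N$.

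\textbf{Step 3: conclude.} Apply \Cref{cor:reduction}. Equivalently, via \Cref{lem:reductiongen}, $|\cP| = (s+1)(st+1)$ and $|\cL| = (t+1)(st+1)$ are composite, so $N \not\subgp G_\alpha$. \Cref{lem:soctrans} gives transitivity of $N$, and the absence of novelties gives primitivity of $N$ on both $\cP$ and $\cL$.
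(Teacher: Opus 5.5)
Your proposal follows the paper's proof: the paper cites \cite[Theorem 7.1]{reef4} for $\Out(\LargeRee{q})$ being cyclic and \cite[Corollary]{mallef4} for the absence of novelty maximal subgroups, and then applies \Cref{cor:reduction}. One caution: your ``plausibility'' remark in Step 2 runs in the wrong direction, since showing that maximal subgroups of $N$ extend to maximal subgroups of $G$ does not rule out novelties, and your fallback still requires knowing the maximal subgroups of $G$. So, as in the paper, the cited classification is what actually carries that step.
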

\begin{proof}
First, note that the outer automorphism group of $\LargeRee{q}$ is cyclic 
\cite[Theorem 7.1]{reef4}. Second, 
$G$ has no novelty maximal subgroups \cite[Corollary]{mallef4}. Thus, by 
\Cref{cor:reduction}, $\soc{G}$ acts primitively on both $\cP$ and $\cL$. 
\end{proof}
In view of \Cref{cor:reductionf4}, we take $G = \LargeRee{q}$ and consider $G$
acting primitively on the set of points and the set of lines of a generalised
quadrangle $\Gamma = (\cP,\cL,\cI)$ with order $(s,t)$ where $s,t \geqslant
2$. We take a point $\alpha \in \cP$ and a line in $L \in \cL$. Their 
respective stabilisers, $G_{\alpha}$ and $G_L$ are maximal subgroups of $G$. 
Also, note that the dual of $\Gamma$ is a generalised quadrangle 
of order $(t,s)$. Thus, we only need to consider the cases for $G_{\alpha}$ 
and $G_L$ up to duality. In other words, once we have shown that $G_{\alpha}$
cannot be isomorphic to a maximal subgroup, say $H$, we do not need to
consider the case where $G_L \cong H$. With this in mind, we work through the 
list of maximal subgroups from \Cref{thm:largereemaxs}.
\begin{lemma}
Let $q = 8$. 
The subgroup $G_{\alpha}$ (and dually $G_L$) cannot be isomorphic to 
$\PGLn{2}{13}$.
\end{lemma}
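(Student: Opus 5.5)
Since $q = 8$ is a single fixed value, my plan is to reduce everything to a finite arithmetic check on the orders $(s,t)$. Here $|G| = q^{12}(q^6+1)(q^4-1)(q^3+1)(q-1)$ and $|\PGLn{2}{13}| = 2184$. Suppose $G_\alpha \cong \PGLn{2}{13}$. Then
\[
|\cP| = (s+1)(st+1) = N \defi |G|/2184
\]
is an explicit integer. By \Cref{thm:largereemaxs} with $q = 8$, the line stabiliser $G_L$ is one of finitely many groups. These are $P_1$, $P_2$, $\SUn{3}{8}:C_2$, $\PGUn{3}{8}:C_2$, $\Sz{8}\wr C_2$, $\SPn{4}{8}:C_2$, $C_9^2:\GLn{2}{3}$, $C_{13}^2:4S_4$, $C_5^2:4S_4$, $C_{109}:C_{12}$, $C_{37}:C_{12}$, or one of the three classes of $\PGLn{2}{13}$. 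Case (vii) does not occur: it would need $q_0 = 2$, which is excluded. Each candidate gives an explicit value $M = |G:G_L| = (t+1)(st+1)$.

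For each candidate I first apply \Cref{lem:modhigman}\ref{item:modhighman2}, which requires $N^4 < M^5$ and $M^4 < N^5$, to discard pairs whose indices are too far apart. I expect this to remove the parabolics $P_1$ and $P_2$, which have small index, and possibly some of the torus normalisers. For the pairs that survive, I use the fact that $st+1$ divides $\gcd(N,M)$. For each divisor $d$ of $\gcd(N,M)$ I set
\[
s = N/d - 1, \qquad t = M/d - 1,
\]
and test whether $st + 1 = d$ and $s, t \geqslant 2$. I would also check Higman's inequality and the condition $s+t \mid st(s+1)(t+1)$ from \Cref{lem:condsonst}. This is a finite Mathematica computation of the kind used elsewhere in the paper.

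The case where $G_L$ is also isomorphic to $\PGLn{2}{13}$ (any of the three classes) gives $|\cL| = |\cP|$, so $s = t$ and $N = (s+1)(s^2+1)$. Since $s \mapsto (s+1)(s^2+1)$ is strictly increasing, at most one integer $s$ near $N^{1/3}$ can work, and I would check it directly. A $2$-adic argument via \Cref{lem:bound}\ref{item:s} also applies here: $N$ is divisible by $2^{36}/8 = 2^{33}$, which forces $s+1$ to be divisible by $2^{32}$. Then $(s+1)(s^2+1) > 2^{96}$, which is far larger than $N \approx 2^{69}$, giving a contradiction. The same bound, or \Cref{lem:bound}\ref{item:st}, should dispose of many of the other candidates quickly, because $N$ carries a large power of $2$ while $st+1$ divides $\gcd(N,M)$.

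The main obstacle is that some divisor check might unexpectedly return an integer solution $(s,t)$. If that happens, my first step would be to consider an element $g$ of order $13$ or an involution. Using \Cref{lem:fixpts} I would count its fixed points on $\cP$ and on $\cL$, and then use \Cref{lem:suzsub} and \Cref{lem:s2t} to reach a contradiction. I expect the pure arithmetic to suffice, however, since the $2$-part of $N$ is so large relative to $N$ itself.
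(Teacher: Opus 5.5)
Your argument is sound, but it takes a longer road than the paper's.

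\textbf{What the paper does.} The paper never looks at $G_L$. It takes $N=|G:\PGLn{2}{13}|=2^{33}\cdot 3^4\cdot 5^2\cdot 7\cdot 13\cdot 19\cdot 37\cdot 109$ on its own and runs over the divisors $k=s+1$ of $N$, setting $t=(N/k-1)/s$. It then checks Higman's inequality and $s+t\mid st(s+1)(t+1)$ by computer, and no pair survives. So the lemma uses neither line-primitivity nor the list of maximal subgroups, and there is no case split.

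\textbf{How your route compares.} Pairing $N$ with each $M=|G:G_L|$ is a strictly finer filter. Any pair your $\gcd$-divisor search returns also satisfies $(s+1)(st+1)=N$, Higman and the divisibility condition, so the paper's search would already have found it. Your search is therefore guaranteed to come back empty, and the ``main obstacle'' and the fixed-point fallback never arise. What your route buys is that some cases go by hand. \Cref{lem:modhigman}\ref{item:modhighman2} removes $P_1$, $P_2$, $\SUn{3}{8}:C_2$, $\PGUn{3}{8}:C_2$, $\Sz{8}\wr C_2$ and $\SPn{4}{8}:C_2$, though none of the torus normalisers. Your $2$-adic argument settles $s=t$. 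The remaining cases still need a computer, and one search over the divisors of $N$ is cheaper than several over divisors of $\gcd(N,M)$. There are minor numerical slips: $N\approx 2^{66.7}$, and $2^{32}\mid s+1$ gives $(s+1)(s^2+1)>2^{95}$, not $>2^{96}$. Neither affects the argument.

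\textbf{The real cost: completeness of the candidate list.} Your route depends on having every possible $G_L$, and at $q=8$ the list you use is incomplete. You discard case (vii) because \Cref{thm:largereemaxs} requires $q_0>2$. However, the subfield subgroup $\LargeRee{2}\le\LargeRee{8}$ has order $2^{12}\cdot 3^3\cdot 5^2\cdot 13$, and this divides the order of none of the groups you enumerate.
\begin{itemize}
\item $P_1$ and $\Sz{8}\wr C_2$ have no factor $3$.
\item $P_2$, $\SUn{3}{8}:C_2$ and $\PGUn{3}{8}:C_2$ have no factor $5$.
\item $\SPn{4}{8}:C_2$ has only $5^1$.
\item The remaining groups are too small.
\end{itemize}
So $\LargeRee{2}$ lies in a maximal subgroup missing from your enumeration, and that case must be treated. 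It is easy: such a subgroup has index at most $|G:\LargeRee{2}|\approx 2^{52.7}$, so \Cref{lem:modhigman}\ref{item:modhighman2} excludes it. The paper's argument from $|\cP|$ alone is immune to this issue, which is a good reason to prefer it here.
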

\begin{proof}
Suppose for a contradiction that $G_{\alpha} \cong \PGLn{2}{13}$. Then 
$|\cP| = (s+1)(st+1) = |G:G_{\alpha}|$. It can be verified via Mathematica
(see \Cref{sec:finite} \Cref{lis:finite} lines 
\ref{line:pgl231}--\ref{line:pgl232}) that there are no solutions for $s$ 
and $t$. Therefore, $G_{\alpha} \ncong \PGLn{2}{13}$. 
\end{proof}

\begin{lemma}
The subgroup $G_{\alpha}$ (and dually $G_L$) cannot be isomorphic to 
$[q^{10}] : (\Sz{q} \times C_{q-1})$.
\end{lemma}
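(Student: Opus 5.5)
The plan is to use the rank-$5$ structure of the action given by \Cref{lem:largereesubdegrees}. Suppose $G_{\alpha} \cong P_1$. Since $G$ preserves collinearity, the set of points collinear with $\alpha$ is $G_{\alpha}$-invariant, and so is its complement among the points other than $\alpha$. Each is therefore a union of $G_{\alpha}$-suborbits. So $s(t+1)$ is the sum $\sum_{i \in I} n_i$ for some nonempty $I \subseteq \{1,2,3,4\}$, and $s^2t$ is the sum over the complement of $I$, by \Cref{cor:numcol}.

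\emph{Step 1: $n_4$ is not a collinear suborbit.} By \Cref{lem:s2t}\ref{lem:s2ti} we have $s(t+1) < s^2t$. \Cref{lem:largereesubdegrees} gives $n_4 \geqslant n_1+n_2+n_3$. Hence if $4 \in I$, then $s(t+1) \geqslant n_4 \geqslant s^2t$, a contradiction. So $I \subseteq \{1,2,3\}$ and $n_4 = q^{10}$ is among the non-collinear suborbits, giving $s^2t \geqslant q^{10}$.

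\emph{Step 2: $I \subseteq \{1,2\}$ is impossible.} In that case $s(t+1) \leqslant n_1+n_2 = (q+q^4)(q^2+1)$. We compare with the upper bound $s^2t \leqslant (s(t+1))^{5/3}$ of \Cref{lem:s2t}\ref{lem:s2tii}.
\begin{itemize}
\item If $I=\{1\}$, the right-hand side is about $q^{5}$, far below $q^{10}$.
\item If $I=\{2\}$ or $\{1,2\}$, the right-hand side is roughly $q^{10}(1+q^{-2}+q^{-3})^{5/3} \leqslant q^{10}+2q^{8}$. The left-hand side is at least $n_4+n_3 \geqslant q^{10}+q^9$.
\end{itemize}
Both cases give a contradiction. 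The only care needed is the elementary estimate $(1+x)^{5/3} < 1+2x$ for small $x$, which holds since $q \geqslant 8$.

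\emph{Step 3: $3 \in I$ contradicts Higman.} Here $s(t+1) \geqslant n_3 \geqslant q^9$. Also $s^2t \leqslant n_4+n_1+n_2 \leqslant 2q^{10}$, since $n_1 + n_2 < q^{10}$. Dividing gives $st/(t+1) \leqslant 2q$, and since $t/(t+1) \geqslant 2/3$ we get $s \leqslant 3q$. Then $t+1 \geqslant q^9/s \geqslant q^8/3$, so $t > 9q^2 \geqslant s^2$ for $q \geqslant 8$. This contradicts Higman's inequality (\Cref{lem:condsonst}\ref{item:higman}).

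Hence $G_{\alpha} \not\cong P_1$, and dually $G_L \not\cong P_1$. I expect the main obstacle to be the borderline case $I=\{2\}$ (or $\{1,2\}$) in Step 2. There the crude exponent comparison only gives $q^{10}$ against $q^{10}$, so the lower-order terms must be tracked. If the $5/3$-estimate proved too tight there, my fallback is to solve directly: from $s(t+1)=n_2$ (or $n_1+n_2$) and $s^2t = n_1+n_3+n_4$ (or $n_3+n_4$), the quotient $st/(t+1)$ is about $q^4$. That forces $s \approx q^4$ and $t+1 \approx q^2$, violating $s \leqslant t^2$ only marginally, so one would instead use the exact divisibility $s \mid n_2$ together with \Cref{lem:condsonst}\ref{item:higman2}.
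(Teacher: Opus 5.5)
Your proof is correct. It departs from the paper's in the two cases $s(t+1)=n_2$ and $s(t+1)=n_1+n_2$, and more cosmetically in the case $3\in I$.

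\textbf{The two cases $s(t+1)\in\{n_2,\ n_1+n_2\}$.} The paper treats these arithmetically. With $a=s(t+1)$ and $b=s^2t$ it derives $bt^2+(2b-a^2)t+b=0$, so $a^2-4b$ must be a perfect square. For $a=n_2$ the $2$-part of $a^2-4b$ is $2^{2m+3}$, an odd power of $2$. For $a=n_1+n_2$ it solves explicitly to get $t=q^5/(1+q^2+q^3)$, which is not an integer.

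You instead keep the second-order term in $s^2t\le (s(t+1))^{5/3}$, and this is enough. Since $(1+x)^{5/3}\le 1+2x$ for $0\le x\le 0.3$, you get $(s(t+1))^{5/3}\le q^{10}+2q^8+2q^7+2q^5$. This is below $n_3+n_4=q^{10}+q^9+q^7$ for $q\geqslant 8$. For $I=\{1,2\}$ you should keep the $q^{-5}$ term coming from $n_1=q^3+q$, but it is harmless.

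Your worry that the estimate is borderline is unfounded, so the fallback is not needed. In the $n_1+n_2$ case the real solution is $s=q^4+q^3+q$ and $t\approx q^2-q$. Then $s/t^2\approx 1+3/q$, so Higman's inequality fails by a relative margin of order $1/q$. Replacing $st$ by $s(t+1)$ costs only a factor $1+O(q^{-2})$.

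\textbf{The case $3\in I$.} Your direct derivation of $s\le 3q$ and then $t>s^2$ replaces the paper's use of the lower bound $s^2t\geqslant (2/3)^{4/3}(s(t+1))^{4/3}$. Both are just Higman's inequality in different packaging.

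\textbf{What each route buys.} Yours is uniform and elementary: every case reduces to Higman's inequality, with no discriminant and no symbolic computation. The paper's arithmetic route proves something stronger, namely that these suborbit splittings admit no integral $t$ at all, independently of Higman. The price is an explicit computation, which the paper carries out in Mathematica.
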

\begin{proof}
Suppose for a contradiction that $G_{\alpha} \cong 
[q^{10}] : (\Sz{q} \times C_{q-1})$. 
Consider the action of $G_{\alpha}$ on the generalised quadrangle. 
Note that there are $s(t+1)$ points collinear to $\alpha$. Therefore, 
by \Cref{cor:numcol}, there 
are $|\cP| - s(t+1) -1 = s^2 t$ points that are 
not collinear to $\alpha$. Since the action of $G_{\alpha}$ preserves the 
incidence relation, the set of points collinear to $\alpha$ must be a union of
$G_{\alpha}$-orbits. By \Cref{lem:largereesubdegrees}, we can write 
$a \defi s(t+1) = a_1 n_1 + a_2 n_2 + a_3 n_3 + a_4 n_4$, where $a_i \in
\{0,1\}$ and the values of $n_i$ are given by: $n_1 = q(q^2+1)$, 
$n_2 = q^4(q^2+1)$, $n_3 = q^7(q^2+1)$ and $n_4 = q^{10}$. Similarly,
the set of points not collinear to $\alpha$ must also be a union of
$G_{\alpha}$-orbits: $b \defi s^2 t = b_1 n_1 + b_2 n_2 + b_3 n_3 + b_4 n_4$,
where $b_i = 1-a_i$. \\ 

We prove that $a_4 = 0$ and $a_3 = 0$. Indeed, if $a_4 = 1$, then $s(t+1)
\geqslant n_4 \geqslant n_1 + n_2 + n_3 \geqslant s^2 t$, which contradicts 
\Cref{lem:s2t} \ref{lem:s2ti}. 
Now, suppose $a_3 = 1$. Then $s(t+1) \geqslant n_3 =
q^7(q^2+1) \geqslant q^9$. Hence, by \Cref{lem:s2t} \ref{lem:s2tii},  
\[
s^2t \geqslant \paren{\frac{2}{3}}^{4/3} \paren{s(t+1)}^{4/3} \geqslant
\paren{\frac{2}{3}}^{4/3} q^{9 \cdot 4/3} = \paren{\frac{2}{3}}^{4/3} q^{12} =
\paren{\frac{2}{3}}^{4/3} q q^{11} \geqslant q^{11},
\]
since $q = 2^{2m+1} \geqslant 8 > 9/4 = (3/2)^2 > (3/2)^{4/3}$. However, 
$s^2 t \leqslant n_1 + n_2 + n_4 \leqslant 2n_4 = 2q^{10} < q^{11}$, which is
a contradiction. Therefore, $s(t+1) = a_1 n_1 + a_2 n_2$. We prove that
$s(t+1) \neq n_1$. For a contradiction, suppose $s(t+1) = n_1$. Then $s(t+1) =
q(q^2 + 1) \leqslant q(2q^2) \leqslant q^4$. Hence, by \Cref{lem:s2t} 
\ref{lem:s2tii}, 
\[
s^2 t \leqslant \paren{s(t+1)}^{5/3} \leqslant q^{4 \cdot 5/3} = q^{20/3} <
q^7,
\] 
which is a contradiction because $s^2 t \geqslant q^{10}$. \\ 

Next, we focus on 
eliminating the remaining two cases: $s(t+1) = n_2$ and $s(t+1) = n_1 +
n_2$. To this end, we write $a \defi s(t+1)$ and $b \defi s^2t$ as before and
find:  
\[
\frac{s^2(t+1)^2}{s^2 t} = \frac{a^2}{b}.
\]
Hence, $b(t+1)^2 = a^2t$. Expanding and simplifying, we obtain
$bt^2+(2b-a^2)t+b = 0$. Since $t$ is an integer zero to the quadratic:
$bx^2 + (2b-a^2)x+b$, its discriminant is a
square. Computing the discriminant, 
\begin{equation}
\Delta = (2b-a^2)^2 - 4b^2 = 4b^2-4a^2b+a^4-4b^2 = a^2(a^2-4b),
\label{eqn:disc} 
\end{equation}
we deduce that $a^2-4b$ is a square. Now, for the sake of obtaining a
contradiction, suppose $a = s(t+1) = n_2$. Then $b = n_1+n_3+n_4$. Thus, 
\begin{align*}
a^2-4b ={}& n_2^2 - 4(n_1+n_3+n_4) \\
={}& (q^4(q^2+1))^2 - 4(q(q^2+1)+q^7(q^2+1)+q^{10}) \\
={}& q^8(q^4+2q^2+1) - 4(q^3+q+q^9+q^7+q^{10}) \\
={}& 4q\paren{\frac{q^{11}}{4} + \frac{q^9}{2} + \frac{q^7}{4} - q^2 - 1 - q^8
- q^6 - q^9}.
\end{align*}
From which, we see that the $2$-part\footnote{Recall that the 
$2$-part of a number is the highest power of $2$ that divides that 
number.} of $a^2-4b$ is
$4q$, which is $2^2 2^{2m+1} = 2^{2m+3}$. This is an odd power of $2$ and so,
$a^2-4b$ cannot be a square, a contradiction. Thus, $s(t+1) = n_1+n_2$.
Again, we suppose $a = s(t+1) = n_1+n_2$ and so, $b = n_3+n_4$. Computing
$a^2-4b$, 
\begin{align*}
a^2-4b ={}& (n_1+n_2)^2 - 4(n_3+n_4)^2 \\
={}& (q(q^2+1)+q^4(q^2+1))^2 - 4(q^7(q^2+1)+q^{10}) \\
={}& q^2 (q^5-q^3-q^2-1)^2.
\end{align*}
Since $bt^2+(2b-a^2)t+b = 0$ and $t \geqslant 2$, we use the quadratic 
formula and \Cref{eqn:disc} to find that 
\[
t = \frac{a^2-2b + a\sqrt{a^2-4b}}{2b}. 
\]
After some algebraic manipulation (see \Cref{sec:algman}), we obtain the 
following expression for $t$: 
\[
t = \frac{q^5}{1+q^2+q^3},
\]
which is not an integer since $q$ is a power of $2$, a contradiction. 
Thus, we have eliminated all the possibilities for $a$ and
$b$. Hence, $G_{\alpha} \ncong [q^{10}] : (\Sz{q} \times C_{q-1})$ (and
dually, $G_L \ncong [q^{10}] : (\Sz{q} \times C_{q-1})$).
\end{proof}

\begin{lemma}
The subgroup $G_{\alpha}$ (and dually $G_L$) cannot be isomorphic to 
$[q^{11}] : \GLn{2}{q}$.
\end{lemma}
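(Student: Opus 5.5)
We follow the strategy of the preceding lemma for $P_1$, now using the subdegrees $n_1 = q^2(q+1)$, $n_2 = q^5(q+1)$, $n_3 = q^8(q+1)$ and $n_4 = q^{11}$ from \Cref{lem:largereesubdegrees}. Suppose $G_{\alpha} \cong [q^{11}]:\GLn{2}{q}$. The set of points collinear with $\alpha$ and the set of points not collinear with $\alpha$ are both unions of $G_{\alpha}$-orbits. Hence $a \defi s(t+1) = \sum a_i n_i$ and $b \defi s^2t = \sum (1-a_i) n_i$ with $a_i \in \{0,1\}$.

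First we eliminate the large orbits. If $a_4 = 1$, then $a \geqslant n_4 \geqslant n_1+n_2+n_3 \geqslant b$, which contradicts \Cref{lem:s2t} \ref{lem:s2ti}. If $a_3 = 1$, then $a \geqslant n_3 > q^9$. \Cref{lem:s2t} \ref{lem:s2tii} then gives $b \geqslant (2/3)^{4/3} q^{12} > 2q^{11}$, using $q \geqslant 8$. On the other hand $b \leqslant n_1+n_2+n_4 \leqslant 2q^{11}$, a contradiction. If $a = n_1 = q^2(q+1) \leqslant q^4$, then $b \leqslant a^{5/3} \leqslant q^{20/3} < q^{11} \leqslant b$, again a contradiction. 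This leaves the two cases $a = n_2$ and $a = n_1+n_2$.

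In both remaining cases, $t$ is an integer root of $bx^2+(2b-a^2)x+b$. By \eqref{eqn:disc}, $a^2-4b$ must therefore be a perfect square.

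For $a = n_2$ we have $b = q^2(q+1)+q^8(q+1)+q^{11}$. Write
\[
a^2-4b = 4q^2 X, \qquad X = \frac{q^8(q+1)^2}{4} - (q+1) - q^6(q+1) - q^9.
\]
Since $8 \mid q$, the terms $q^8(q+1)^2/4$, $q^6(q+1)$ and $q^9$ are all divisible by $4$. Hence $X \equiv -(q+1) \equiv -1 \equiv 3 \pmod 4$, so $X$ is not a square, and neither is $a^2-4b$, a contradiction. (Unlike the $P_1$ case, the $2$-part alone does not suffice here, because it is an even power; this is why we use the residue of $X$ mod $4$.)

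For $a = n_1+n_2 = q^2(q+1)(q^3+1)$ we have $b = q^8(q^3+q+1)$. A direct expansion shows
\[
a^2-4b = q^4\paren{(q+1)^2(q^3+1)^2 - 4q^4(q^3+q+1)} = q^4(q^4-q^3-q-1)^2,
\]
so the discriminant is a square and we must work harder. Using the quadratic formula, both candidate roots $t = \bigl(a^2-2b \pm a q^2(q^4-q^3-q-1)\bigr)/(2b)$ simplify to explicit rational functions of $q$. We expect these to be of the shape $q^{k}/(\text{odd polynomial in } q)$, or its reciprocal, analogous to $q^5/(1+q^2+q^3)$ in the previous lemma. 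Since the two roots have product $1$, it suffices to show that the larger root is not an integer, for instance because its denominator is an odd integer exceeding $1$ that cannot divide a power of $2$. The main obstacle is carrying out this simplification cleanly, which we would do symbolically as in \Cref{sec:algman}. With this, every possibility for $a$ is excluded, so $G_{\alpha} \ncong [q^{11}]:\GLn{2}{q}$, and dually $G_L \ncong [q^{11}]:\GLn{2}{q}$.
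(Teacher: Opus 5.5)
Your case analysis is correct, and both discriminant computations check out. For $a=n_2$ one indeed has $a^2-4b=(2q)^2X$ with $X\equiv 3 \pmod 4$. For $a=n_1+n_2$ one indeed has $a^2-4b=q^4(q^4-q^3-q-1)^2$.

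The only unfinished step is the last one, where you say what you \emph{expect} the roots to be rather than computing them; it does go through. Put $P \defi (q+1)(q^3+1)$ and $R \defi q^4-q^3-q-1$. Then $a=q^2P$, $\sqrt{a^2-4b}=q^2R$ and $4b=q^4(P^2-R^2)$. Hence $a^2-2b\pm a\sqrt{a^2-4b}=\frac{1}{2}q^4(P\pm R)^2$ and $2b=\frac{1}{2}q^4(P-R)(P+R)$, so the roots are $(P+R)/(P-R)$ and its reciprocal. Since $P+R=2q^4$ and $P-R=2(q^3+q+1)$, the root exceeding $1$ is $q^4/(q^3+q+1)$. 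This is not an integer because $q^3+q+1>1$ is odd.

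The paper, however, never needs the discriminant for $P_2$, and you already had the tool it uses. Once $a_4=0$, the orbit of length $n_4=q^{11}$ lies on the non-collinear side in every remaining case, so $b\geqslant q^{11}$. Meanwhile $a\leqslant n_1+n_2=q^6+q^5+q^3+q^2\leqslant 2q^6$, and \Cref{lem:s2t} \ref{lem:s2tii} then gives $b\leqslant(2q^6)^{5/3}=2^{5/3}q^{10}<q^{11}$. This is exactly your argument for $a=n_1$ with $n_1$ replaced by $n_1+n_2$, and it eliminates $a=n_2$ and $a=n_1+n_2$ at once.

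The discriminant method was needed for $P_1$ only because there $n_4=q^{10}$ and $(n_1+n_2)^{5/3}$ also exceeds $q^{10}$, so the size bound is inconclusive. For $P_2$ the extra factor of $q$ in $n_4$ makes it decisive. Your route is valid, just longer than necessary.
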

\begin{proof}
Suppose for a contradiction, that $G_{\alpha} \cong 
[q^{11}] : \GLn{2}{q}$. 
By the same argument as before, using 
\Cref{lem:largereesubdegrees}, we can write $a \defi s(t+1) = a_1 n_1 + a_2
n_2 + a_3 n_3 + a_4 n_4$ and $b \defi s^2 t = b_1 n_1 + b_2 n_2 + b_3 n_3 +
b_4 n_4$, where $a_i \in \{0,1\}$ and $b_i = 1-a_i$. The only difference here
is that the values of $n_i$ are given by: $n_1 = q^2(q+1)$, $n_2 = q^5(q+1)$, 
$n_3 = q^8(q+1)$ and $n_4 = q^{11}$. We proceed to eliminate this case in a
similar fashion to the previous case. First, we prove that $a_4 = 0$ and $a_3
= 0$. Indeed, if $a_4 = 1$, then $s(t+1) \geqslant n_4 \geqslant n_1 + n_2 +
n_3 \geqslant s^2 t$, which contradicts \Cref{lem:s2t} \ref{lem:s2ti}. 
Hence, $a_4 = 0$. 
Now, if $a_3 = 1$, then $s(t+1) \geqslant n_3 = q^8(q+1) \geqslant q^9$.
Consequently, using \Cref{lem:s2t} \ref{lem:s2tii},
\[
s^2 \geqslant \paren{\frac{2}{3}}^{4/3} (s(t+1)^{4/3}) \geqslant
\paren{\frac{2}{3}}^{4/3} q^{9 \cdot 4/3} = \paren{\frac{2}{3}}^{4/3} 
q^{12} \geqslant \paren{\paren{\frac{2}{3}}^{4/3} q} q^{11} \geqslant 4q^{11},
\] 
since $q \geqslant 8$. However, $s^2 t \leqslant n_1 + n_2 + n_3 + n_4
\leqslant 2n_4 = 2q^{11}$, which is a contradiction. Hence, $a_3 = 0$.
Therefore, $s(t+1) = a_1 n_1 + a_2 n_2$. We prove that $s(t+1) > n_1 + n_2$.
To this end, suppose $s(t+1) \leqslant n_1+n_2$. Then
\[
s(t+1) \leqslant n_1+n_2 = q^2(q+1) + q^5(q+1) = q^3 + q^2 + q^6 + q^5
\leqslant q^6 + 3q^5 \leqslant 2q^6,
\]
as $q \geqslant 8$. 
Hence, by \Cref{lem:s2t} \ref{lem:s2tii}, 
\[
s^2 t \leqslant (s(t+1))^{5/3} \leqslant (2q^6)^{5/3} = 2^{5/3} q^{6 \cdot
5/3} = 2^{5/3} q^{10} < q^{11} = n_4.
\]
However, $s^2t \geqslant n_4$ and we have a contradiction. Now, we
have proven that $s(t+1) = a_1 n_1 + a_2 n_2$ and $s(t+1) > n_1+n_2$, both of
which cannot hold true. Therefore, $G_{\alpha} \ncong [q^{11}] : \GLn{2}{q}$
(and dually, $G_L \ncong [q^{11}] : \GLn{2}{q}$).
\end{proof}

\begin{lemma}
The subgroups $G_{\alpha}$ and $G_L$ have different orders. In other words,
the number of points and the number of lines are different. 
\end{lemma}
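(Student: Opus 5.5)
We argue by contradiction and reduce to an arithmetic statement about the indices in \Cref{tab:bounds}. Suppose $|G_\alpha| = |G_L|$. Then $|\cP| = |\cL|$, so $(s+1)(st+1) = (t+1)(st+1)$, hence $s = t$ and
\[
|\cP| = |G:G_\alpha| = (s+1)(s^2+1).
\]
By the preceding lemmata, $G_\alpha$ is neither a parabolic subgroup $P_1, P_2$ nor $\PGLn{2}{13}$. So $G_\alpha$ is one of the subgroups in rows $1$--$7$ of \Cref{tab:bounds}.

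Equal orders alone are not enough to identify the two stabilisers, because $\SUn{3}{q}:C_2$ and $\PGUn{3}{q}:C_2$ have the same order (note $3 \mid q+1$). I therefore do not try to match $G_\alpha$ with $G_L$. Instead I aim to show directly that no index $|G:H|$ in rows $1$--$7$ can equal $(s+1)(s^2+1)$ for an integer $s \geqslant 2$. The tool is \Cref{lem:bound}~\ref{item:s}: if $2^a$ exactly divides $|G:H|$ with $a > 1$, then $|G:H| > 2^{a-1}(2^{2a-4}+1) > 2^{3a-5}$. This should contradict the upper bounds in \Cref{tab:bounds}, since each index has a large $2$-part, roughly the power $q^{12}$ or $q^9$ sitting in front.

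The steps, row by row, writing $q = 2^n$ with $n = 2m+1 \geqslant 3$:
\begin{itemize}
\item[(1)] For $\SUn{3}{q}:C_2$ and $\PGUn{3}{q}:C_2$, the remaining factors are odd, so $a = 9n-1$. Then $3a-5 = 27n-8 > 18n$, which contradicts $|G:H| \leqslant q^{18}/2$.
\item[(2)] For $\Sz{q}\wr C_2$ and $\SPn{4}{q}:C_2$, we get $a = 8n-1$ and $3a-5 = 24n-8$. This exceeds the upper bound exponent $16n$ since $n \geqslant 3$.
\item[(3)] For rows $5$--$7$, the $2$-parts are $q^{12}/16$, $q^{12}/32$ and $q^{12}/4$. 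Here I would check that factors such as $(q-1)^2$ and $(q - \epsilon\sqrt{2q}+1)^2$ are odd. Then $a \geqslant 12n-5$, and $2^{3a-5} \geqslant 2^{36n-20}$ far exceeds $q^{24}/6$.
\item[(4)] For $\LargeRee{q_0}$, the odd factors contribute nothing, so $2^a = (q/q_0)^{12}$. Put $Q = q/q_0 = q_0^{r_0-1} \geqslant 8$. Then $2^{3a-5} = Q^{36}/32 > 2Q^{26}$, contradicting the upper bound in row $4$.
\end{itemize}
In every case we get a contradiction, so $s \neq t$ and the two stabilisers have different orders.

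The main obstacle is checking carefully that the exact $2$-part is as claimed in each row. The $1/48$, $1/96$ and $1/12$ factors remove some powers of $2$, and I must confirm that the remaining cofactors are odd, which is true because $q$ is even. I also need $a > 1$ in order to apply \Cref{lem:bound}, which is clear. If some row turns out tight for small $m$ (e.g.\ $m = 1$), I would fall back on a direct Mathematica check of $(s+1)(s^2+1) = |G:H|$ for $q = 8$, as was done for $\PGLn{2}{13}$.
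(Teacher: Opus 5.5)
Your proposal is correct and follows the paper's proof essentially step for step. You deduce $s=t$, apply \Cref{lem:bound}~\ref{item:s} with the exact $2$-part $2^a$ of each remaining index to get $|\cP|>2^{3a-5}$, and then compare row by row with the upper bounds in \Cref{tab:bounds}. Your $2$-parts and exponent inequalities agree with the paper's \Cref{tab:sist}, so the fallback computer check for small $q$ is never needed.
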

\begin{proof}
Suppose that $|G_{\alpha}| = |G_L|$ for a contradiction. Then 
$|\cP| = |\cL|$ and so, $s = t$,
thereby, $|\cP| = (s+1)(st+1) = (s+1)(s^2+1)$. Let $2^a$ be the highest power
of $2$ dividing $|\cP|$. Then by \Cref{lem:bound} \ref{item:s}, we find that
$|G:G_{\alpha}| = |\cP| > 2^{a-1} (2^{2a-4}+1) > 2^{3a-5}$. On the other hand,
we have an upper bound from \Cref{tab:bounds} of the form: 
$|G:G_{\alpha}| \leqslant f(q)$. Thus, $2^{3a-5} < f(q)$. We go through each
of the remaining cases for $G_{\alpha}$ and compute $2^{3a-5}$ and $f(q)$. The
results are in \Cref{tab:sist}.
\begin{longtable}{llll}
\caption{Lower and upper bounds for $|G:H|$ in the case $s = t$.}
\label{tab:sist}
\endfirsthead
\endhead
\toprule
\thead{$G_{\alpha}$} & \thead{$2^a$} & \thead{$2^{3a-5}$} & 
\thead{$f(q)$} \\ 
\midrule
\makecell{$\SUn{3}{q}:C_2$ \\ or $\PGUn{3}{q}:C_2$} & 
$\ds \frac{1}{2} q^9 = 2^{18m+8}$  
& $2^{54m+19}$  & $\ds \frac{q^{18}}{2} = 2^{36m+17}$ \\
\midrule
$\Sz{q} \wr C_2$ & $\ds \frac{1}{2} q^8 = 2^{16m+7}$  & $2^{48m+16}$ & 
$q^{16} = 2^{32m+16}$ \\
\midrule
$\SPn{4}{q} : C_2$ & $\ds \frac{1}{2} q^8 = 2^{16m+7}$ & $2^{48m+16}$ & $\ds
\frac{q^{16}}{2} = 2^{32m+15}$ \\
\midrule
$\LargeRee{q_0}$ & $\ds \paren{\frac{q}{q_0}}^{12} = 2^{12n_0(r_0-1)}$ 
& $2^{36n_0(r_0-1)-5}$ & $\ds 2\paren{\frac{q}{q_0}}^{26} = 
2^{26n_0(r_0-1)+1}$ \\
\midrule
$C_{q+1}^2:\GLn{2}{3}$ & $\ds \frac{1}{16}
q^{12} = 2^{24m+8}$ & $2^{72m+19}$  & 
$\ds \frac{q^{24}}{48} \leqslant \frac{q^{24}}{32} = 2^{48m+19}$ \\
\midrule
$C_{q+\epsilon\sqrt{2q}+1}^2 : 4S_4$ & $\ds \frac{1}{32} q^{12} = 2^{24m+7}$ 
& $2^{72m+16}$ & $\ds \frac{q^{24}}{48} \leqslant \frac{q^{24}}{32} = 
2^{48m+19}$ \\
\midrule
$C_{q^2+q+1+\epsilon\sqrt{2q}(q+1)} : C_{12}$ & $\ds \frac{1}{4} q^{12} =
2^{24m+10}$ & $2^{72m+25}$ & $\ds \frac{q^{24}}{6} \leqslant \frac{q^{24}}{4}
= 2^{48m+22}$ \\
\bottomrule
\end{longtable}
In each of these cases, we find that $2^{3a-5} > f(q)$, which contradicts
$2^{3a-5} < |\cP| \leqslant f(q)$. Therefore, $|G_{\alpha}| \neq |G_L|$. 
\end{proof}

We now consider the remaining cases where $|G_{\alpha}| \neq |G_L|$. \\

In Lemmata \ref{lem:su3first}--\ref{lem:su3last} we prove, 
by contradiction, that $G_{\alpha}$ cannot be 
isomorphic to $\SUn{3}{q} : C_2$ or $\PGUn{3}{q} : C_2$. We show this by
assuming $G_{\alpha}$ is isomorphic to $\SUn{3}{q} : C_2$ or $\PGUn{3}{q} :
C_2$ and study the various possibilities for $G_L$. 
\begin{lemma}
\label{lem:su3first}
Suppose $G_{\alpha} \cong \SUn{3}{q} : C_2$ or $\PGUn{3}{q} : C_2$. Then $G_L$
cannot be isomorphic to $\Sz{q} \wr C_2$. 
\end{lemma}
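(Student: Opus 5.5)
The plan is to turn the hypotheses into a single Diophantine equation in one unknown multiplier, and then kill it with $2$-adic and size arguments. From \Cref{tab:bounds},
\[
|\cP| = \tfrac{1}{2}q^9(q^6+1)(q^2+1)(q-1),
\qquad
|\cL| = \tfrac{1}{2}q^8(q^4-q^2+1)(q^3+1)(q+1).
\]
The quotient satisfies $|\cP|/|\cL| = (s+1)/(t+1)$. Using $q^6+1=(q^2+1)(q^4-q^2+1)$, the common factor $q^8(q^4-q^2+1)/2$ cancels, and the ratio becomes
\[
\frac{X}{Y}, \qquad X \defi q(q^2+1)^2(q-1), \quad Y \defi (q^3+1)(q+1).
\]

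\textbf{Step 1: the ratio is in lowest terms.} By \Cref{lem:coprime} (row $1$), $(q^2+1)(q-1)$ is coprime to $(q^3+1)(q+1)$. Since $Y$ is odd, $X$ and $Y$ are coprime. Hence there is a positive integer $\lambda$ with $s+1=\lambda X$ and $t+1=\lambda Y$, and then
\[
st+1 = \frac{|\cP|}{\lambda X} = \frac{q^8(q^4-q^2+1)}{2\lambda}.
\]
Substituting $s=\lambda X-1$ and $t=\lambda Y-1$ gives the key equation
\begin{equation*}
\lambda\bigl(\lambda^2XY-\lambda(X+Y)+2\bigr) = \tfrac{1}{2}q^8(q^4-q^2+1).
\end{equation*}

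\textbf{Step 2: bound $\lambda$.} Since $st+1 > \lambda^2XY/2 \approx \lambda^2 q^{10}/2$, the equation forces $\lambda^3 q^{10} \lesssim q^{12}$. So $\lambda$ is at most about $q^{2/3}$, and in particular $\lambda < q$. The same conclusion follows from \Cref{lem:modhigman}(i) or Higman's inequality \Cref{lem:condsonst}\ref{item:higman} if a cleaner constant is wanted.

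\textbf{Step 3: $2$-adic comparison.} Write $\lambda=2^e\mu$ with $\mu$ odd; by Step~2, $2^e<q$. The right-hand side has $2$-part exactly $q^8/2=2^{16m+7}$, because $q^4-q^2+1$ is odd. On the left, $XY$ is divisible by $q$ and $X+Y$ is odd (since $X$ is even and $Y$ is odd). Consider the bracket $\lambda^2XY-\lambda(X+Y)+2$:
\begin{enumerate}[(a)]
\item If $e=0$, the bracket is odd, so the left-hand side is odd. This contradicts the right-hand side being even.
\item If $e=1$, the bracket is $\equiv -2(X+Y)+2 \pmod 4$, which is divisible by $4$ at most modestly. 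I would compute its $2$-adic valuation exactly; it is controlled by $v_2(X+Y-1)$, and $X+Y-1 \equiv Y-1 \pmod q$, where $Y-1=q^4+q^3+q$ has $2$-part $q$. So the total $2$-part of the left-hand side is about $2\cdot q$, far below $2^{16m+7}$. Contradiction.
\item If $e\ge 2$, the bracket is $\equiv 2 \pmod 4$, so the left-hand side has $2$-part $2^{e+1}\le 2q$. This is again far smaller than $q^8/2$. Contradiction.
\end{enumerate}
So no $\lambda$ exists, and $G_L\not\cong \Sz{q}\wr C_2$.

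The main obstacle is getting the $2$-adic valuations in Step~3 exactly right, particularly the case $e=1$, where cancellation in the bracket has to be controlled; I would handle it by reducing modulo $q$ as indicated. If that turned out messy, my fallback is to use Step~2 together with the odd part instead. The odd part $\mu$ must divide $q^4-q^2+1$, while $\lambda^2XY-\lambda(X+Y)+2$ must absorb the full factor $q^8/2$; an odd $\mu$-part of size below $q$ cannot supply this, and the divisibility $s+t \mid st(s+1)(t+1)$ from \Cref{lem:condsonst}\ref{item:higman2}, via \Cref{lem:moddivcond}, serves as a final check.
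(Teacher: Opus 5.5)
Steps 1 and 2 and cases 3(a) and 3(c) are sound. Together they are the paper's argument with the two estimates in the opposite order: you bound $\lambda$ above by size and then contradict the $2$-part, while the paper bounds $k$ below by the $2$-part and then contradicts size. The gap is case (b), $\lambda=2\mu$ with $\mu$ odd. This is exactly the delicate case, the paper's $k\equiv 2 \pmod 4$, and your proposed mechanism there is wrong for two reasons.

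\emph{First, $\mu$ has been dropped.} The bracket equals $2\bigl(1-\mu(X+Y)+2\mu^2XY\bigr)$. Since $X\equiv 0$ and $Y\equiv 1 \pmod q$, the inner factor is $\equiv 1-\mu \pmod q$. So its $2$-adic valuation is governed by $\mu-1$, not by $X+Y-1$; your sketch silently sets $\mu=1$.

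\emph{Second, even for $\mu=1$ your $2$-part of $X+Y-1$ is wrong.} A congruence modulo $q$ between two multiples of $q$ says nothing about their $2$-parts. In fact $X\equiv -q$ and $Y-1\equiv q \pmod{q^2}$ cancel, giving $X+Y-1\equiv q^2 \pmod{q^3}$. So the $2$-part of $X+Y-1$ is $q^2$, not $q$.

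\emph{What is needed.} The left-hand side is $4\mu$ times the inner factor, so the right-hand side forces the inner factor to have $2$-part $q^8/8\geqslant q$. Hence $\mu\equiv 1 \pmod q$. Since $2\mu=\lambda<q$ by Step 2, this gives $\mu=1$, i.e.\ $\lambda=2$. This is the paper's key observation $st+1\equiv 2-k \pmod q$, whence $k=2+aq$.

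The value $\lambda=2$ must then be excluded separately; either of two arguments works.
\begin{itemize}
\item \emph{$2$-adically.} Write $1-(X+Y)+2XY=2XY-(X+Y-1)$. The term $2XY$ has $2$-part $2q$, smaller than the $2$-part $q^2$ of $X+Y-1$, so the inner factor has $2$-part $2q$. The left-hand side then has $2$-part $8q\neq q^8/2$.
\item \emph{By size.} With $\lambda=2$ the key equation gives $\tfrac12 q^8(q^4-q^2+1)<8XY$. This fails for $q\geqslant 8$, since the left side is about $q^{12}/2$ and the right side about $8q^{10}$. This is the paper's clash between $|\cP|<16q^{16}$ and $|\cP|\geqslant q^{18}/4$.
\end{itemize}

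Your fallback paragraph does not supply either argument. With the repaired case (b), the proof is complete and essentially coincides with the paper's.
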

\begin{proof}
Suppose for a contradiction, $G_L \cong \Sz{q} \wr C_2$. Then 
we have 
\[
|\cP| = (s+1)(st+1) = \frac{1}{2} q^9 (q^6+1)(q^2+1)(q-1) \ \textrm{and} \
|\cL| = (t+1)(st+1) = \frac{1}{2} q^8 (q^4-q^2+1)(q^3+1)(q+1).
\]
Hence,
\[
\frac{|\cP|}{|\cL|} = \frac{s+1}{t+1} = \frac{q(q^2+1)^2(q-1)}{(q^3+1)(q+1)},
\]
since $q^6+1 = (q^2+1)(q^4-q^2+1)$. Cross multiplying, we obtain 
\[
(s+1)(q^3+1)(q+1) = (t+1)q(q^2+1)^2(q-1).
\]
By \Cref{lem:coprime} ($i=1$), 
we deduce that $q(q^2+1)^2(q-1)$ divides $s+1$ and
$(q^3+1)(q+1)$ divides $t+1$. Hence, there exists an integer $k$ such that 
\[
s+1 = kq(q^2+1)^2(q-1) \ \textrm{and} \ t+1 = k(q^3+1)(q+1).
\]
First, notice that $k$ is even. For if $k$ were to be odd, then $t$ is even.
Thus, $|\cL| = (t+1)(st+1)$ is odd. However, this is not the case as $q$
divides $|\cL|$. Thus, $k \equiv 0$ or $2 \pmod 4$. Suppose first that 
$k \equiv 
0 \pmod 4$. Then $s \equiv -1 \pmod 4$ and $t \equiv -1 \pmod 4$. Whence, 
$st+1 \equiv 2 \pmod 4$. Thus, the $2$-part of $st+1$ is $2$ 
and consequently, the $2$-part of $s+1$ is $q^9/4$. 
Since $s+1 = kq(q^2+1)^2(q-1)$, the $2$-part of $k$ is $q^8/4$. Therefore,
\[
s+1 = kq(q^2+1)^2(q-1) = kq(q^5-q^4+2q^3-2q^2+q-1) > k \cdot q \cdot 
\frac{q^5}{2} \geqslant
\frac{q^8}{4} \cdot q \cdot \frac{q^5}{2} = \frac{q^{14}}{8} 
\]
and 
\[
t+1 = k(q^3+1)(q+1) = k(q^4+q^3+q+1) > kq^4 \geqslant \frac{q^8}{4} \cdot 
q^4 = \frac{q^{12}}{4}. 
\]
Applying \Cref{lem:modhigman} \ref{item:modhigman1}, we find that 
\[
|\cP| > \frac{(s+1)^2(t+1)}{2} > \frac{1}{2} \paren{\frac{q^{14}}{8}}^2
\frac{q^{12}}{4} = \frac{q^{40}}{2^9}.
\]
However, this contradicts the upper bound from \Cref{tab:bounds}, namely that 
$|\cP| \leqslant q^{18}/2$. 
So, in fact, $k \equiv 2 \pmod 4$. In this case, the
$2$-part of $s+1$ is $2q$ and so, the $2$-part of $st+1$ must be $q^8/4$. Note
that modulo $q$, we have $s \equiv -1 \pmod q$ and $t \equiv k-1 \pmod q$. 
From this, we see that  
\[
st+1 \equiv -1 \cdot (k-1) + 1 \equiv -k+2 \equiv 0 \pmod q.
\]
Hence, $k = 2+aq$ for some non-negative integer $a$. Now we have the following
two cases: $a = 0$ or $a > 0$. Suppose $a = 0$. Then $k = 2$ and so, 
\[
s+1 = 2q(q^2+1)^2(q+1) = 2(q^6-q^5+2q^4-2q^3+q^2-q) < 2q^6
\]
and 
\[
t+1 = 2(q^3+1)(q+1) = 2(q^4+q^3+q+1) < 4q^4.
\]
Thus, 
\[
|\cP| = (s+1)(st+1) < (s+1)(s(t+1)) < 2q^6 \cdot 2q^6 \cdot 4q^4 = 16q^{16}.
\]
However, from \Cref{tab:bounds}, we see that  
\[
\frac{q^{18}}{4} \leqslant |\cP| < 16q^{16}.
\]
This implies that $q^2 < 32$, which contradicts the assumption that $q
\geqslant 8$. Therefore, $a \neq 0$. So suppose that $a > 0$. In this
scenario, we have $k = 2+aq > q+1$. Therefore, 
\[
s+1 = kq(q^2+1)^2(q-1) > (q+1)q(q^2+1)^2(q-1) = q^7+q^5-q^3-q > q^7
\] 
and  
\[
t+1 = k(q^3+1)(q+1) > (q+1)(q^3+1)(q+1) = q^5+2q^4+q^3+q^2+2q+1 >  q^5.
\]

Applying \Cref{lem:modhigman} \ref{item:modhigman1},  
\[
|\cP| > \frac{(s+1)^2(t+1)}{2} > \frac{q^{14} q^5}{2} = \frac{q^{19}}{2}. 
\]
However, from \Cref{tab:bounds}, we have 
\[
\frac{q^{19}}{2} < |\cP| \leqslant \frac{q^{18}}{2},
\]
which is a contradiction. Thus, $G_L \ncong \Sz{q} \wr C_2$. 
\end{proof}

\begin{lemma}
Suppose $G_{\alpha} \cong \SUn{3}{q} : C_2$ or $\PGUn{3}{q} : C_2$. Then $G_L$
cannot be isomorphic to $\SPn{4}{q} : C_2$. 
\end{lemma}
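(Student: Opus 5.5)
Since $\SPn{4}{q}:C_2$ and $\SUn{3}{q}:C_2$ (or $\PGUn{3}{q}:C_2$) have different indices, I would follow the template of \Cref{lem:su3first}. Read off from \Cref{tab:bounds}
\[
|\cP| = \tfrac{1}{2}q^9(q^6+1)(q^2+1)(q-1), \qquad |\cL| = \tfrac{1}{2}q^8(q^6+1)(q^2-q+1).
\]
Their quotient is
\[
\frac{s+1}{t+1} = \frac{q(q^2+1)(q-1)}{q^2-q+1}.
\]
By \Cref{lem:coprime} ($i=2$), and since $q$ is coprime to the odd number $q^2-q+1$, there is an integer $k$ with
\[
s+1 = kq(q^2+1)(q-1) \quad\textrm{and}\quad t+1 = k(q^2-q+1).
\]
Because $q^2-q+1$ is odd and $|\cL|$ is even, $k$ must be even. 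Indeed, if $k$ were odd, then $t$ would be even, so $t+1$ and $st+1$ would both be odd.

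Next I would split into the cases $k \equiv 0$ and $k\equiv 2 \pmod 4$. If $4 \mid k$, then $s \equiv t \equiv -1 \pmod 4$, so $st+1 \equiv 2 \pmod 4$. Hence the $2$-part of $s+1$ is $q^9/4$, and therefore the $2$-part of $k$ is $q^8/4$. This gives $s+1 > q^{12}/8$ and $t+1 > q^{10}/8$. Then \Cref{lem:modhigman} \ref{item:modhigman1} yields $|\cP| > q^{34}/2^{10}$, contradicting $|\cP| \leqslant q^{18}/2$. So $k \equiv 2 \pmod 4$. In this case the $2$-part of $s+1$ is $2q$, so the $2$-part of $st+1$ is $q^8/4$. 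Reducing modulo $q$ gives $s\equiv -1$ and $t \equiv k-1$, hence $st+1\equiv 2-k \equiv 0 \pmod q$, so $k = 2+aq$ with $a \geqslant 0$.

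If $a=0$, then $s+1<2q^4$ and $t+1<2q^2$. Hence $|\cP| < (s+1)\,s(t+1) < 8q^{10}$, which contradicts $|\cP|\geqslant q^{18}/4$.

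The case $a>0$ is the main obstacle. Unlike in \Cref{lem:su3first}, crude size bounds do not suffice here, because $|\cP|$ grows like $k^3q^6$ and so the natural size is $k\approx q^{8/3}$. Instead I would exploit exact arithmetic. Dividing $|\cP|$ by $s+1$ gives
\[
st+1 = \frac{q^8(q^6+1)}{2k}.
\]
Consequently $k=2k'$, where $k'$ is an odd divisor of $q^6+1 = (q^2+1)(q^4-q^2+1)$ with $k' \equiv 1 \pmod{q/2}$. Substituting the expressions for $s$ and $t$ turns this into the cubic
\[
2k^3 q(q^2+1)(q-1)(q^2-q+1) - 2k^2\bigl(q(q^2+1)(q-1)+q^2-q+1\bigr) + 4k = q^8(q^6+1).
\]

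I would first bound $k$ tightly between two explicit multiples of $q^{8/3}$. Then I would compare $2$-adic valuations on both sides after writing $k = 2+aq$. The left side is congruent to $8k'(1-k'B)$ modulo high powers of $2$, where $B = q(q^2+1)(q-1)+q^2-q+1$, and this must have $2$-part exactly $q^8$. That forces $k' \equiv B^{-1}$ modulo roughly $q^8/8$, so $k'$ is pinned to a residue class whose modulus vastly exceeds $q^{8/3}$. This should leave at most one candidate, which I would rule out by checking the cubic directly. As a fallback, I would apply \Cref{lem:moddivcond2} with $\gcd(s+1,t+1)=k \equiv 2 \pmod 4$, $x=q(q^2+1)(q-1)$ and $y = q^2-q+1$. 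This gives that $s+t$ divides $4xy\gcd(s,t)^2$, and I would try to bound $\gcd(s,t)$ to obtain a contradiction with $s+t \approx kq^4$.
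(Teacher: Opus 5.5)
The setup, the elimination of $k\equiv 0 \pmod 4$, and the subcase $k=2$ are fine and agree with the paper. The gap is the case $k=2+aq$ with $a>0$. You correctly identify this as the real difficulty: $k^3q^{10}/16<|\cP|\leqslant q^{18}/2$ only gives $k<2q^{8/3}$. (You wrote $k^3q^6$, but your exponent $8/3$ is right.) However, your proposal never actually handles this case.

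Your $2$-adic step is wrong as stated. Write $X=q(q^2+1)(q-1)$, $Y=q^2-q+1$, $B=X+Y$ and $k=2k'$. Then the left side of your cubic is $16k'^3XY+8k'(1-k'B)$. Since $k'$, $X/q$ and $Y$ are all odd, the term $16k'^3XY$ has $2$-adic valuation exactly that of $16q$. So the congruence with $8k'(1-k'B)$ holds only modulo $16q$, and nothing like $k'\equiv B^{-1}$ modulo $q^8/8$ follows. The correct tool is Hensel's lemma for $st+1=XYk^2-Bk+2\equiv 0 \pmod{q^8/4}$, whose derivative $2XYk-B$ is odd. This does determine $k$ modulo $q^8/4$, but only tells you there is at most one candidate. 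You cannot ``check the cubic directly'' for that candidate uniformly in $m$ without knowing the residue class explicitly. Computing it is exactly the missing content.

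The paper supplies this by working digit by digit in base $q$. Since the $2$-part of $st+1$ is $q^8/4\geqslant q^4$, it solves $st+1\equiv 0$ successively modulo $q,q^2,q^3,q^4$. This gives $k\equiv 2+4q^2-18q^3 \pmod{q^4}$, i.e.\ digits $2,0,4,q-18$ (the last being $6$ when $q=8$). Hence $k\geqslant q^4/4$, so $s+1>q^8/8$ and $t+1>q^6/8$. Then \Cref{lem:modhigman} gives $|\cP|>q^{22}/2^{10}>q^{18}/2$, a contradiction. Stopping at $q^3$ would not suffice, because $2+4q^2<2q^{8/3}$; the fourth digit is what pushes $k$ past the size bound.

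Your fallback via \Cref{lem:moddivcond2} does not close the gap either. It gives roughly $k\leqslant 4q^2\gcd(s,t)^2$, to be played against $k>q^{8/3}/2$. You would therefore need $\gcd(s,t)$ below about $q^{1/3}/3$, and nothing available provides that; the analogous estimate in \Cref{lem:boundgcd} is only linear in the field size.
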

\begin{proof}
Suppose for a contradiction, $G_L \cong \SPn{4}{q} : C_2$. Then 
we have 
\[
|\cP| = (s+1)(st+1) = \frac{1}{2}q^9(q^6+1)(q^2+1)(q-1) \ \textrm{and} \ |\cL|
= (t+1)(st+1) = \frac{1}{2} q^8(q^6+1)(q^2-q+1).
\]
Hence, 
\[
\frac{|\cP|}{|\cL|} = \frac{s+1}{t+1} = \frac{q(q^2+1)(q-1)}{q^2-q+1}. 
\]
Cross multiplying yields
\[
(s+1)(q^2-q+1) = (t+1)q(q^2+1)(q-1).
\]
By \Cref{lem:coprime} ($i=2$), $q^2-q+1$ is coprime to
$q(q^2+1)(q-1)$. Thus, there exists an integer $k$ such that 
\[
s+1 = kq(q^2+1)(q-1) \ \textrm{and} \ t+1 = k(q^2-q+1).
\]
As before, we deduce that $k$ is even. For if $k$ were to be odd, then $t$ is
even, which means that $|\cL| = (t+1)(st+1)$ is odd. This is not the case as
$q$ divides $|\cL|$. Therefore, $k \equiv 0$ or $2 \pmod 4$. Suppose first
that $k \equiv 0
\pmod 4$. Then $s \equiv -1 \pmod 4$ and $t \equiv -1 \pmod 4$. Whence, $st+1
\equiv 2 \pmod 4$. Thus, the $2$-part of $st+1$ is $2$ and consequently, the
$2$-part of $s+1$ is $q^9/4$. Since $s+1 = kq(q^2-q+1)$, the $2$-part of $k$
is $q^8/4$. Therefore, 
\[
s+1 = kq(q^2+1)(q-1) = kq(q^3-q^2+q-1) > \frac{q^8}{4} \cdot q \cdot
\frac{q^3}{2} = \frac{q^{12}}{8} 
\]
and 
\[
t+1 = k(q^2-q+1) > \frac{q^8}{4} \cdot \frac{q^2}{2} = \frac{q^{10}}{8}. 
\]
Applying \Cref{lem:modhigman} \ref{item:modhigman1}, we find that 
\[
|\cP| > \frac{(s+1)^2(t+1)}{2} > \frac{1}{2} \paren{\frac{q^{12}}{8}}^2
\frac{q^{10}}{8} = \frac{q^{34}}{2^{10}}.
\]
However, this contradicts the upper bound from \Cref{tab:bounds}, namely that 
$|\cP|
\leqslant q^{18}/2$. So, in fact, $k \equiv 2 \pmod 4$. In this case, the
$2$-part of $s+1$ is $2q$ and so, the $2$-part of $st+1$ must be $q^8/4$. 
Now, we write $k$ in $q$-ary, \emph{i.e.}, $k = a_0 + a_1 q + \cdots + 
a_n q^n$ where $a_0,a_1,\dots,a_n$ are integers such that 
$0 \leqslant a_0,a_1,\dots,a_n < q$. 
Next, we compute $st+1$ modulo powers of $q$. First, $s$ and $t$ are given by:
\[
s = (a_0+a_1q+\cdots+a_nq^n)(q^4-q^3+q^2-q) - 1
\]
and 
\[
t = (a_0+a_1q+\cdots+a_nq^n)(q^2-q+1) - 1.
\]
Plugging these expressions into $st+1$, we find that $st+1 \equiv 2-a_0 \pmod
q$. Hence, $a_0 = 2$. Next, we find that $st+1 \equiv -a_1 q 
\equiv 0 \pmod {q^2}$. Hence, $a_1 = 0$. Next, $st+1 \equiv (4-a_2)q^2 \equiv 
0 \pmod {q^3}$. Hence, $a_2 = 4$. Finally, $st+1 \equiv (-18-a_3)q^3 \equiv 0
\pmod {q^4}$. Thus, 
\[
a_3 = \begin{cases} 
q-18 & \textrm{if $q \geqslant 32$}, \\
q-2 & \textrm{if $q = 8$}. 
\end{cases}
\]
Therefore, \[
k = a_0 + a_1 q + \cdots + a_n q^n = \begin{cases}
2+4q^2+(q-18)q^3  & \textrm{if $q \geqslant 32$}, \\
3330 & \textrm{if $q = 8$}. 
\end{cases}
\]
In both cases, we have $k \geqslant q^4/4$. Hence, 
\[
s+1 = kq(q^2+1)(q-1) = kq(q^3-q^2+q-1) > \frac{q^4}{4} \cdot q \cdot
\frac{q^3}{2} = \frac{q^{8}}{8}
\]
and 
\[
t+1 = k(q^2-q+1) > \frac{q^4}{4} \cdot \frac{q^2}{2} = \frac{q^6}{8}.
\]
Applying \Cref{lem:modhigman} \ref{item:modhigman1},  
\[
|\cP| > \frac{(s+1)^2(t+1)}{2} > \frac{1}{2} \paren{\frac{q^8}{8}}^2
\frac{q^6}{8} = \frac{q^{22}}{2^9}.
\]
However, from \Cref{tab:bounds}, we have 
\[
\frac{q^{22}}{2^9} < |\cP| \leqslant \frac{q^{18}}{2},
\]
from which, we find that $q < 4$ and this is a contradiction since 
$q \geqslant 8$. Thus, $G_L \ncong \SPn{4}{q} : C_2$.  
\end{proof}

\begin{lemma}
Suppose $G_{\alpha} \cong \SUn{3}{q} : C_2$ or $\PGUn{3}{q} : C_2$. Then $G_L$
cannot be isomorphic to $\LargeRee{q_0}$, where $q_0 = 2^{n_0} > 2$ and $q =
q_0^{r_0}$ for some prime $r_0$. 
\end{lemma}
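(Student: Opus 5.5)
Suppose for a contradiction that $G_L \cong \LargeRee{q_0}$ with $q = q_0^{r_0}$. The cheapest route is probably the 2-part argument combined with size bounds, as in the previous two lemmata. I will carry it out in four steps.

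First, write
\[
|\cP| = \tfrac12 q^9(q^6+1)(q^2+1)(q-1), \qquad |\cL| = \frac{q^{12}(q^6+1)(q^4-1)(q^3+1)(q-1)}{q_0^{12}(q_0^6+1)(q_0^4-1)(q_0^3+1)(q_0-1)}.
\]
The 2-parts are $q^9/2$ and $(q/q_0)^{12}$. Since $st+1$ divides both $|\cP|$ and $|\cL|$, its 2-part is at most $\min(q^9/2,(q/q_0)^{12})$.

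Second, I would try to rule out the case by index sizes alone, using \Cref{lem:modhigman} \ref{item:modhighman2}. By \Cref{tab:bounds} we have $|\cL| \geqslant \frac12 (q/q_0)^{26}$ and $|\cP| \leqslant q^{18}/2$. For $r_0 \geqslant 3$ we get $(q/q_0)^{26} \geqslant q^{52/3}$, so $|\cL|$ and $|\cP|$ are of comparable size and no contradiction follows. I therefore split into cases.

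If $r_0 = 2$, then $q_0 = \sqrt q$, and $q = 2^{2m+1}$ forces $2m+1$ even, which is impossible. If $r_0$ is an odd prime, then $|\cL| \geqslant \frac12 q^{26(1-1/r_0)}$. For $r_0 \geqslant 5$ this is at least $\frac12 q^{20.8}$, and $|\cL|^4 < |\cP|^5$ then fails, since $4 \cdot 20.8 > 5 \cdot 18$ with the constants absorbed for $q \geqslant 8$. This leaves only $r_0 = 3$, where $|\cL| \approx q^{52/3}$ and $|\cP| \approx q^{18}$.

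Third, for $r_0 = 3$ I would use a parity/2-part argument in the style of \Cref{lem:su3first}. Here $q_0^3 = q$, so the 2-part of $|\cL|$ is $q^8$ and that of $|\cP|$ is $q^9/2$. The ratio $(s+1)/(t+1) = |\cP|/|\cL|$ is an explicit rational function. After cancelling with the coprimality statements of \Cref{lem:coprime}, in particular the factorisations of $q^6+1$ over $q_0$ and entry $i = 9$ with $r = 3$, I expect $s+1 = kX$ and $t+1 = kY$ with $X, Y$ explicit. Then exactly one of two things happens:
\begin{itemize}
\item either $k \equiv 0 \pmod 4$, so $st+1 \equiv 2 \pmod 4$, the 2-part of $s+1$ is huge, and \Cref{lem:modhigman} \ref{item:modhigman1} overshoots $q^{18}/2$;
\item or $k \equiv 2 \pmod 4$, so $st+1$ has 2-part roughly $q^8/\gcd$, and solving $st+1 \equiv 0$ modulo successive powers of $q$ (as in the $\SPn{4}{q}:C_2$ lemma) forces $k$ to be large, again contradicting the upper bound.
\end{itemize}

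Fourth, if the cancellation is messy, I would fall back on $\gcd(s,t)$ via \Cref{lem:moddivcond2} together with \Cref{lem:condsonst} \ref{item:higman2}. Failing that, for the smallest case $q = 512$, $q_0 = 8$ I would use a direct Mathematica check of integer solutions as in the $\PGLn{2}{13}$ lemma.

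The main obstacle is the $r_0 = 3$ case. There $|\cL|/|\cP|$ involves $q_0$-cyclotomic factors, for example $(q_0^{18}+1)/(q_0^6+1)$, that do not cancel neatly against $q$-expressions. So the coprime decomposition $s+1 = kX$, $t+1 = kY$ needs careful gcd bookkeeping, possibly with factors of $3$ handled as in entry $9$ of \Cref{lem:coprime}.
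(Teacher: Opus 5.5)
Your second step rests on an arithmetic error, and it discards exactly the hardest cases. For $r_0 = 5$ you need $|\cL|^4 < |\cP|^5$ to fail, but $4\cdot 20.8 = 83.2$ is \emph{smaller} than $5\cdot 18 = 90$. So you get no contradiction there.

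In general, feeding $|\cL| \geqslant \frac12 (q/q_0)^{26}$ and $|\cP| \leqslant q^{18}/2$ into \Cref{lem:modhigman} \ref{item:modhighman2} only yields $q^{14} < q_0^{104}/2$, that is $14r_0 < 104$. This leaves $r_0 \in \{3,5,7\}$, and no adjustment of constants helps. For $r_0 = 7$ the exponent comparison is $4\cdot 26\cdot\tfrac{6}{7} \approx 89.1 < 90$. A rough size count confirms that nothing cheap works here. For $r_0 = 5$, from $s^2t \approx q^{18}$ and $st^2 \approx q^{20.8}$ one gets $s \approx q^{5.1}$ and $t \approx q^{7.9}$, which is compatible with Higman's inequality.

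Your plan therefore covers only $r_0 = 3$. There your outline does match the paper's argument, with two corrections.
\begin{enumerate}[(a)]
\item The coprimality you need is \Cref{lem:coprime} with $i = 3$. Entry $i=9$ is stated for primes $r > 3$ and is used in a different lemma. With $i=3$ you get $s+1 = k\frac{q_0^3}{2}(q_0^6+1)(q_0^2+1)(q_0-1)$ and $t+1 = k(q_0^6-q_0^3+1)(q_0^4+q_0^2+1)$.
\item You should work modulo powers of $q_0$, not $q$. This gives $k \geqslant q_0^8$ and hence $|\cP| > q_0^{58}/8$, against $|\cP| \leqslant q_0^{54}/2$.
\end{enumerate}

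The cases $r_0 = 5$ and $r_0 = 7$ must be treated separately with the same machinery.
\begin{enumerate}[(a)]
\item The coprimality inputs are \Cref{lem:coprime} with $i=4$ and $i=5$. For $r_0 = 7$ you may need to split off a factor $\xi_{q_0} = 7$ when $q_0 \equiv 1 \pmod 7$.
\item In these cases the large factor, containing $2q_0^3$ (resp.\ $2q_0^9$), lands in $t+1$. So in the $k \equiv 2 \pmod 4$ case, $s+1$ has $2$-part $2$ and $st+1$ must have $2$-part $q^9/4$.
\item To exploit this, you have to solve $st+1 \equiv 0$ modulo $q_0^{18}$, resp.\ $\xi_{q_0}(st+1) \equiv 0$ modulo $q_0^{25}$. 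This needs separate finite checks for small $q_0$.
\item The outcome is $k \geqslant q_0^{17}$, resp.\ $k \geqslant q_0^{24}$. Together with \Cref{lem:modhigman} \ref{item:modhigman1}, this pushes $|\cP|$ above $q^{18}/2$.
\end{enumerate}
These are the most computation-heavy parts of the proof, and your proposal currently omits them entirely.
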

\begin{proof}
Suppose for a contradiction, $G_L \cong \LargeRee{q_0}$. Then by
\Cref{tab:bounds}, we have 
\[
|\cP| = \frac{1}{2} q^9 (q^6+1)(q^2+1)(q-1) \leqslant \frac{q^{18}}{2} \ 
\textrm{and}
\ |\cL| = \frac{q^{12}(q^6+1)(q^4-1)(q^3+1)(q-1)}{q_0^{12} 
(q_0^6+1)(q_0^4-1)(q_0^3+1)(q_0-1)} \geqslant 
\frac{1}{2} \paren{\frac{q}{q_0}}^{26}.
\]
By \Cref{lem:modhigman} \ref{item:modhighman2}, we have 
\[
\paren{\frac{1}{2} \paren{\frac{q}{q_0}}^{26}}^4 \leqslant |\cL|^4 < |\cP|^5 
\leqslant \paren{\frac{q^{18}}{2}}^5.
\]
Hence,
\[
\frac{1}{2^4} \paren{\frac{q}{q_0}}^{104} < \frac{q^{90}}{2^5}.
\]
Simplifying further, we obtain $q^{14} < q_0^{104}/2$. Therefore, $14n_0 r_0 <
104 n_0-1 < 104n_0$. 
Hence, $r_0 < 104/14 = 52/7 < 8$. Since $r_0$ is an odd prime, it
follows that $r_0 \in \{3,5,7\}$. \\

\underline{Suppose $r_0 = 3$.} Then 
\begin{align*}
\frac{s+1}{t+1} ={}&
\frac{q_0^{12}(q_0^6+1)(q_0^4-1)(q_0^3+1)(q_0-1)}{2q^3(q^2-1)(q^3+1)} \\
={}& 
\frac{q_0^{12}(q_0^6+1)(q_0^4-1)(q_0^3+1)(q_0-1)}{2q_0^9(q_0^6-1)(q_0^9+1)} \\
={}&
\frac{q_0^3(q_0^6+1)(q_0^2+1)(q_0-1)}{2(q_0^6-q_0^3+1)(q_0^4+q_0^2+1)}.
\end{align*}
Cross multiplying,
\[
(q_0^6-q_0^3+1)(q_0^4+q_0^2+1)(s+1) = \frac{q_0^3}{2}
(q_0^6+1)(q_0^2+1)(q_0-1) (t+1). 
\]

By \Cref{lem:coprime} ($i=3$), 
we find that $q_0^3(q_0^6+1)(q_0^2+1)(q_0-1)/2$ divides
$s+1$ and $(q_0^6-q_0^3+1)(q_0^4+q_0^2+1)$ divides $t+1$. Hence, 
\[
s+1 = k \frac{q_0^3}{2} (q_0^6+1)(q_0^2+1)(q_0-1) \ \textrm{and} \ t+1 =
k(q_0^6-q_0^3+1)(q_0^4+q_0^2+1), 
\]
where $k$ is some positive integer. 
As before, we deduce that $k$ is even. For if $k$ were to be odd, then $t$ is
even, which means that $|\cL| = (t+1)(st+1)$ is odd. This is not the case as
$q/q_0$ divides $|\cL|$. 
Therefore, $k \equiv 0$ or $2 \pmod 4$. \\ 

Suppose first that $k \equiv 0
\pmod 4$. Then $s \equiv -1 \pmod 4$ and $t \equiv -1 \pmod 4$. Whence, $st+1
\equiv 2 \pmod 4$. Thus, the $2$-part of $st+1$ is $2$ and consequently, the
$2$-part of $s+1$ is $q^9/4$. 
Since $s+1 = k \frac{q_0^3}{2} (q_0^6+1)(q_0^2+1)(q_0-1)$, the $2$-part of $k$
is $q^9/(2q_0^3) = q_0^{24}/2$. Therefore, 
\[
s+1 = k \frac{q_0^3}{2} (q_0^6+1)(q_0^2+1)(q_0-1) > k\frac{q_0^3}{2} \cdot
q_0^9 \geqslant \frac{q_0^{24}}{2} \cdot \frac{q_0^3}{2} \cdot q_0^9 =
\frac{q_0^{36}}{4}
\]
and 
\[
t+1 = k(q_0^6-q_0^3+1)(q_0^4+q_0^2+1) > k \cdot q_0^{10} \geqslant 
\frac{q_0^{24}}{2} \cdot q_0^{10} = \frac{q_0^{34}}{2}. 
\]
Applying \Cref{lem:modhigman} \ref{item:modhigman1}, we find that 
\[
|\cP| > \frac{(s+1)^2(t+1)}{2} > \frac{1}{2} \paren{\frac{q_0^{36}}{4}}^2
\frac{q_0^{34}}{2} = \frac{q_0^{106}}{2^6}.
\]
However, this contradicts the upper bound from \Cref{tab:bounds}, namely that 
$|\cP| \leqslant q^{18}/2 = q_0^{54}/2$. 
Thus, $k \equiv 2 \pmod 4$. \\ 

When $k \equiv 2 \pmod 4$, the
$2$-part of $s+1$ is $q_0^3$ and so, the $2$-part of $st+1$ must be
$q_0^{24}/2$. 
Now, we write $k$ in $q_0$-ary, \emph{i.e.}, $k = a_0 + a_1 q_0 + \cdots + 
a_n q_0^n$, where $a_0,a_1,\dots,a_n$ are integers such that 
$0 \leqslant a_0,a_1,\dots,a_n < q_0$. Using this expression for $k$, we 
compute $st+1$ modulo powers of $q_0$ and solve for $a_0,\dots,a_n$. This
computation is performed using Mathematica (see \Cref{sec:solving} 
\Cref{lis:stplus1r3}) and it 
is summarised in \Cref{tab:stplus1r3}. 
\begin{longtable}{lll}
\caption{$st+1$ modulo powers of $q_0$ for $G_{\alpha} \cong
\SUn{3}{q} : C_2$ or $\PGUn{3}{q} : C_2$ and $G_L \cong \LargeRee{q_0}$, where
$q = q_0^3$. \label{tab:stplus1r3}} \\
\toprule
\thead{$c$} & \thead{$st+1 \pmod {q_0^c}$} & \thead{Solution} \\
\midrule
$1$ & $2-a_0$ & $a_0 = 2$ \\
\midrule
$2$ & $-a_1q_0$ & $a_1 = 0$ \\
\midrule
$3$ & $(-2-a_2)q_0^2$ & $a_2 = q_0-2$ \\
\midrule
$4$ & $-a_3 q_0^3$ & $a_3 = 0$ \\
\midrule
$5$ & $(1-a_4)q_0^4$ & $a_4 = 1$ \\
\midrule
$6$ & $(-1-a_5+q_0/2)q_0^5$ & $a_5 = q_0/2-1$ \\
\midrule
$7$ & $-a_6 q_0^6$ & $a_6 = 0$ \\
\midrule
$8$ & $-a_7 q_0^7$ & $a_7 = 0$ \\
\midrule
$9$ & $(-1-a_8-9q_0/4)q_0^8$ & $a_8 = 3q_0/4-1$ \\
\bottomrule
\end{longtable}

Therefore, $k \geqslant q_0^8$. Hence,
\[
s+1 = k \frac{q_0^3}{2}(q_0^6+1)(q_0^2+1)(q_0-1) > k \frac{q_0^{12}}{2}
\geqslant \frac{q_0^{20}}{2} 
\]
and 
\[
t+1 = k(q_0^6-q_0^3+1)(q_0^4+q_0^2+1) > k \cdot q_0^{10} \geqslant q_0^{18}.
\]
Applying \Cref{lem:modhigman} \ref{item:modhigman1}, we find that 
\[
|\cP| > \frac{(s+1)^2(t+1)}{2} > \frac{1}{2} \paren{\frac{q_0^{20}}{2}}^2
q_0^{18} = \frac{q_0^{58}}{2^3}.
\]
However, this contradicts the upper bound from \Cref{tab:bounds}, namely that 
$|\cP| \leqslant q^{18}/2 = q_0^{54}/2$. Therefore, $r_0 \neq 3$. \\ 

\underline{Suppose that $r_0 = 5$.} Then 
\begin{align*}
\frac{s+1}{t+1} ={}&
\frac{q_0^{12}(q_0^6+1)(q_0^4-1)(q_0^3+1)(q_0-1)}{2q^3(q^2-1)(q^3+1)} \\
={}& 
\frac{q_0^{12}(q_0^6+1)(q_0^4-1)(q_0^3+1)(q_0-1)}{2q_0^{15}(q_0^{10}-1)
(q_0^{15}+1)} \\
={}&
\frac{(q_0-1)(q_0^2+1)(q_0^6+1)}{2q_0^3(q_0^4-q_0^3+q_0^2-q_0+1)^2 
(q_0^4+q_0^3+q_0^2+q_0+1)(q_0^8+q_0^7-q_0^5-q_0^4-q_0^3+q_0+1)}.
\end{align*}

Cross multiplying, we obtain
\[
2q_0^3(q_0^4-q_0^3+q_0^2-q_0+1)^2 
(q_0^4+q_0^3+q_0^2+q_0+1)(q_0^8+q_0^7-q_0^5-q_0^4-q_0^3+q_0+1)(s+1) 
= (q_0-1)(q_0^2+1)(q_0^6+1)(t+1).
\]

By \Cref{lem:coprime} ($i=4$), we find that $(q_0-1)(q_0^2+1)(q_0^6+1)$ 
divides $s+1$ and $2q_0^3(q_0^4-q_0^3+q_0^2-q_0+1)^2 
(q_0^4+q_0^3+q_0^2+q_0+1)(q_0^8+q_0^7-q_0^5-q_0^4-q_0^3+q_0+1)$ 
divides $t+1$. Hence, 
\[
s+1 = k (q_0-1)(q_0^2+1)(q_0^6+1)
\]
and 
\[
t+1 = 2kq_0^3(q_0^4-q_0^3+q_0^2-q_0+1)^2 
(q_0^4+q_0^3+q_0^2+q_0+1)(q_0^8+q_0^7-q_0^5-q_0^4-q_0^3+q_0+1),
\]
where $k$ is some positive integer. 
As before, we deduce that $k$ is even. For if $k$ were to be odd, then $s$ is
even, which means that $|\cP| = (s+1)(st+1)$ is odd. This is not the case as
$q$ divides $|\cP|$. 
Therefore, $k \equiv 0$ or $2 \pmod 4$. \\ 

Suppose first that $k \equiv 0
\pmod 4$. Then $s \equiv -1 \pmod 4$ and $t \equiv -1 \pmod 4$. Whence, $st+1
\equiv 2 \pmod 4$. Thus, the $2$-part of $st+1$ is $2$ and consequently, the
$2$-part of $s+1$ is $q^9/4$. Since $s+1 = k(q_0-1)(q_0^2+1)(q_0^6+1)$, the
$2$-part of $k$ is $q^9/4$.
From this, we obtain the following bounds: 
\[
s+1 = k (q_0-1)(q_0^2+1)(q_0^6+1) > k \frac{q_0^9}{2} \geqslant 
\frac{q^9}{2} \cdot \frac{q_0^9}{2} = \frac{q_0^{54}}{4} 
\]
and 
\begin{align*}
t+1 ={}& 2kq_0^3(q_0^4-q_0^3+q_0^2-q_0+1)^2 
(q_0^4+q_0^3+q_0^2+q_0+1)(q_0^8+q_0^7-q_0^5-q_0^4-q_0^3+q_0+1) \\ 
>{}& 2kq_0^3 \cdot
q_0^{20} \geqslant 2 \cdot \frac{q^9}{4} \cdot q_0^3 \cdot q_0^{20} =
\frac{q_0^{68}}{2}.
\end{align*}

Applying \Cref{lem:modhigman} \ref{item:modhigman1}, we find that 
\[
|\cP| > \frac{(s+1)^2(t+1)}{2} > \frac{1}{2} \paren{\frac{q_0^{54}}{4}}^2
\frac{q_0^{68}}{2} = \frac{q_0^{176}}{2^6}.
\]
However, this contradicts the upper bound from \Cref{tab:bounds}, namely that 
$|\cP| \leqslant q^{18}/2 = q_0^{90}/2$. 
So, in fact, $k \equiv 2 \pmod 4$. In this case, the
$2$-part of $s+1$ is $2$ and the $2$-part of $st+1$ must be
$q^9/4 = q_0^{45}/4$. 
Now, we write $k$ in $q_0$-ary, \emph{i.e.}, $k = a_0 + a_1 q_0 + \cdots + 
a_n q_0^n$, where $a_0,a_1,\dots,a_n$ are integers such that 
$0 \leqslant a_0,a_1,\dots,a_n < q_0$. Using this expression for $k$, we 
compute $st+1$ modulo powers of $q_0$ and solve for $a_0,\dots,a_n$. 
This computation is performed using Mathematica (see
\Cref{sec:solving} \Cref{lis:stplus1r5} for $q_0 > 174776$ and 
\Cref{lis:finiteq} lines \ref{line:stplus1r51}--\ref{line:stplus1r52} 
for $q_0 \leqslant 174776$). This data is summarised in
\Cref{tab:stplus1r5} for $q_0 > 174776$ and in \Cref{tab:stplus1r5finite} 
for $q_0 \leqslant 174776$. 

\begin{longtable}{lll}
\caption{$st+1$ modulo powers of $q_0$ for $G_{\alpha} \cong
\SUn{3}{q} : C_2$ or $\PGUn{3}{q} : C_2$ and $G_L \cong \LargeRee{q_0}$, where
$q = q_0^5$.}
\label{tab:stplus1r5}
\endfirsthead
\endhead
\toprule
\thead{$c$} & \thead{$st+1 \pmod {q_0^c}$} & \thead{Solution} \\
\midrule
$1$ & $2+a_0$ & $a_0 = q_0-2$ \\
\midrule
$2$ & $(3+a_1)q_0$ & $a_1 = q_0-3$ \\
\midrule
$3$ & $(1+a_2)q_0^2$ & $a_2 = q_0-1$ \\
\midrule
$4$ & $(-3+a_3)q_0^3$ & $a_3 = 3$ \\
\midrule
$5$ & $(-6+a_4)q_0^4$ & $a_4 = 6$ \\
\midrule
$6$ & $(-6+a_5)q_0^5$ & $a_5 = 6$ \\
\midrule
$7$ & $(18+a_6)q_0^6$ & $a_6 = q_0-18$ \\
\midrule
$8$ & $(63+a_7)q_0^7$ & $a_7 = q_0-63$ \\ 
\midrule
$9$ & $(107+a_8)q_0^8$ & $a_8 = q_0-107$ \\
\midrule
$10$ & $(-65+a_9)q_0^9$ & $a_9 = 65$ \\
\midrule
$11$ & $(-562+a_{10})q_0^{10}$ & $a_{10} = 562$ \\
\midrule
$12$ & $(-1350+a_{11})q_0^{11}$ & $a_{11} = 1350$ \\
\midrule
$13$ & $(-528+a_{12})q_0^{12}$ & $a_{12} = 528$ \\
\midrule
$14$ & $(4488+a_{13})q_0^{13}$ & $a_{13} = q_0-4488$ \\
\midrule
$15$ & $(16011+a_{14})q_0^{14}$ & $a_{14} = q_0-16011$ \\
\midrule
$16$ & $(18351+a_{15})q_0^{15}$ & $a_{15} = q_0-18351$ \\
\midrule
$17$ & $(-26175+a_{16})q_0^{16}$ & $a_{16} = 26175$ \\
\midrule
$18$ & $(-174776+a_{17})q_0^{17}$ & $a_{17} = 174776$ \\
\bottomrule
\end{longtable}

\begin{longtable}{ll}
\caption{$a_{17}$ for $8 < q_0 \leqslant 174776$.
\label{tab:stplus1r5finite}} \\
\toprule
\thead{$q_0$} & \thead{$a_{17}$} \\ 
\midrule
32 & 23 \\
\midrule
128 & 3 \\
\midrule
512 & 235 \\
\midrule
2048 & 708 \\
\midrule
8192 & 2747 \\
\midrule
32768 & 10936 \\
\midrule
131072 & 43704 \\
\bottomrule
\end{longtable}

For $q_0 = 8$, we have that $k \geqslant q_0^{20}$ and 
from \Cref{tab:stplus1r5,tab:stplus1r5finite}, we see that 
$k \geqslant q_0^{17}$ for all $q_0 > 8$. So, $k \geqslant q_0^{17}$ for every
$q_0 \geqslant 8$. Thus, we obtain the bounds: 
\[
s+1 = k (q_0-1)(q_0^2+1)(q_0^6+1) > k \frac{q_0^9}{2} \geqslant 
q_0^{17} \cdot \frac{q_0^9}{2} = \frac{q_0^{26}}{2}
\]
and 
\begin{align*}
t+1 ={}& 2kq_0^3(q_0^4-q_0^3+q_0^2-q_0+1)^2 
(q_0^4+q_0^3+q_0^2+q_0+1)(q_0^8+q_0^7-q_0^5-q_0^4-q_0^3+q_0+1) \\ 
>{}& 2kq_0^3 \cdot
q_0^{20} \geqslant 
2 \cdot q_0^{17} \cdot q_0^3 \cdot q_0^{20} = 2 q_0^{40}.
\end{align*}

Applying \Cref{lem:modhigman} \ref{item:modhigman1}, we find that 
\[
|\cP| > \frac{(s+1)^2(t+1)}{2} > \frac{1}{2} \paren{\frac{q_0^{26}}{2}}^2
\cdot 2q_0^{40} > \frac{q_0^{92}}{4}.
\]
However, this contradicts the upper bound from \Cref{tab:bounds}, namely that 
$|\cP| \leqslant q^{18}/2 = q_0^{90}/2$. Therefore, $r_0 \neq 5$. \\

\underline{Finally, suppose that $r_0 = 7$.} We have 
\begin{align*}
\frac{s+1}{t+1} ={}&
\frac{q_0^{12}(q_0^6+1)(q_0^4-1)(q_0^3+1)(q_0-1)}{2q^3(q^2-1)(q^3+1)} \\
={}& 
\frac{q_0^{12}(q_0^6+1)(q_0^4-1)(q_0^3+1)(q_0-1)}{2q_0^{21}(q_0^{14}-1)
(q_0^{21}+1)} \\
={}&
\frac{(q_0-1)(q_0^2+1)(q_0^6+1)}{2q_0^9
(q_0^6-q_0^5+q_0^4-q_0^3+q_0^2-q_0+1)^2 
(q_0^6+q_0^5+q_0^4+q_0^3+q_0^2+q_0+1)} \\
& \quad \ \ 
\frac{1}{(q_0^{12}+q_0^{11}-q_0^9-q_0^8+q_0^6-q_0^4-q_0^3+q_0+1)}.
\end{align*}

Cross multiplying, we obtain
\begin{align*}
2q_0^9
(q_0^6-q_0^5+q_0^4-q_0^3+q_0^2-q_0+1)^2 
(q_0^6+q_0^5+q_0^4+q_0^3+q_0^2+q_0+1) & \\
(q_0^{12}+q_0^{11}-q_0^9-q_0^8+q_0^6-q_0^4-q_0^3+q_0+1)(s+1) 
={}& (q_0-1)(q_0^2+1)(q_0^6+1)(t+1).
\end{align*}

By \Cref{lem:coprime} ($i=5$), 
we find that $(q_0-1)(q_0^2+1)(q_0^6+1)/\xi_{q_0}$ divides $s+1$ and 
$2q_0^9
(q_0^6-q_0^5+q_0^4-q_0^3+q_0^2-q_0+1)^2 (q_0^6+q_0^5+q_0^4+q_0^3+q_0^2+q_0+1)
(q_0^{12}+q_0^{11}-q_0^9-q_0^8+q_0^6-q_0^4-q_0^3+q_0+1)/\xi_{q_0}$ 
divides $t+1$, where 
\[
\xi_{q_0} \defi \begin{cases}
7 & \textrm{if $q_0 \equiv 1 \pmod 7$}, \\ 
1 & \textrm{if $q_0 \nequiv 1 \pmod 7$}. 
\end{cases}
\]
Hence, 
\[
s+1 = k (q_0-1)(q_0^2+1)(q_0^6+1)/\xi_{q_0}
\]
and 
\[
t+1 = 
2kq_0^9
(q_0^6-q_0^5+q_0^4-q_0^3+q_0^2-q_0+1)^2 (q_0^6+q_0^5+q_0^4+q_0^3+q_0^2+q_0+1)
(q_0^{12}+q_0^{11}-q_0^9-q_0^8+q_0^6-q_0^4-q_0^3+q_0+1)/\xi_{q_0}, 
\]
where $k$ is some positive integer. 
As before, we deduce that $k$ is even. For if $k$ were to be odd, then $s$ is
even, which means that $|\cP| = (s+1)(st+1)$ is odd. This is not the case as
$q$ divides $|\cP|$. Therefore, $k \equiv 0$ or $2 \pmod 4$. \\ 

Suppose first
that $k \equiv 0
\pmod 4$. Then $s \equiv -1 \pmod 4$ and $t \equiv -1 \pmod 4$. Whence, $st+1
\equiv 2 \pmod 4$. Thus, the $2$-part of $st+1$ is $2$ and consequently, the
$2$-part of $s+1$ is $q^9/4$. Since 
$s+1 = k(q_0-1)(q_0^2+1)(q_0^6+1)/\xi_{q_0}$, the $2$-part of $k$ is $q^9/4$.
From this, we obtain the following bounds: 
\[
s+1 = k (q_0-1)(q_0^2+1)(q_0^6+1)/\xi_{q_0} > k \frac{q_0^9}{2\xi_{q_0}} 
\geqslant 
\frac{q^9}{2} \cdot \frac{q_0^9}{2\xi_{q_0}} =
\frac{q_0^{72}}{4\xi_{q_0}} 
\]
and 
\begin{align*}
t+1 ={}& 2kq_0^9
(q_0^6-q_0^5+q_0^4-q_0^3+q_0^2-q_0+1)^2 (q_0^6+q_0^5+q_0^4+q_0^3+q_0^2+q_0+1)
\\
& \qquad (q_0^{12}+q_0^{11}-q_0^9-q_0^8+q_0^6-q_0^4-q_0^3+q_0+1)/\xi_{q_0} \\
>{}& 2k q_0^9 \cdot q_0^{30}/\xi_{q_0} \geqslant 
2 \cdot \frac{q^9}{4} \cdot q_0^3 \cdot q_0^{30}/\xi_{q_0} = 
\frac{q_0^{96}}{2\xi_{q_0}}.
\end{align*}
Applying \Cref{lem:modhigman} \ref{item:modhigman1}, we find that 
\[
|\cP| > \frac{(s+1)^2(t+1)}{2} > \frac{1}{2} \paren{\frac{q_0^{72}}{4
\xi_{q_0}}}^2
\frac{q_0^{96}}{2\xi_{q_0}} = \frac{q_0^{240}}{2^6\xi_{q_0}^3}.
\]
However, this contradicts the upper bound from \Cref{tab:bounds}, namely that 
$|\cP| \leqslant q^{18}/2 = q_0^{126}/2$. 
Thus, $k \equiv 2 \pmod 4$. \\ 

When $k \equiv 2 \pmod 4$, the
$2$-part of $s+1$ is $2$ and the $2$-part of $st+1$ must be
$q^9/4 = q_0^{63}/4$. 
Now, we write $k$ in $q_0$-ary, \emph{i.e.}, $k = a_0 + a_1 q_0 + \cdots + 
a_n q_0^n$, where $a_0,a_1,\dots,a_n$ are integers such that 
$0 \leqslant a_0,a_1,\dots,a_n < q_0$. Using this expression for $k$, we 
compute $\xi_{q_0}(st+1)$ modulo powers of $q_0$ and solve for 
$a_0,\dots,a_n$.\footnote{The reason for considering $\xi_{q_0}(st+1)$ instead
of simply $st+1$ is so that we don't run across conditions where we
have to use the multiplicative inverse of $\xi_{q_0}$ modulo powers of $q_0$.
This problem occurs because $s+1$ and $t+1$ contain $\xi_{q_0}$ in the
denominator.} This 
computation is performed using Mathematica (see
\Cref{sec:solving} \Cref{lis:stplus1r7}) and it is summarised in
\Cref{tab:stplus1r7}. For the purposes of solving for some 
of the $a_i$ in \Cref{tab:stplus1r7}, 
we shall assume that $q_0 > 2451 = 350 \cdot 7 + 1 
\geqslant 350\xi_{q_0}+1$. 
For $q_0 \leqslant 2451$, we find that $q_0 \in \{8,32,128,512,2048\}$. We
compute $a_{24}$ for these values of $q_0$ separately using Mathematica, see
\Cref{sec:solving} \Cref{lis:finiteq} lines
\ref{line:stplus1r71}--\ref{line:stplus1r72}. These values are recorded in
\Cref{tab:stplus1r7finite}. 

\afterpage{
\begin{longtable}{lll}
\caption{$st+1$ modulo powers of $q_0$ for $G_{\alpha} \cong \SUn{3}{q} : C_2$
or $\PGUn{3}{q} : C_2$ and $G_L \cong \LargeRee{q_0}$ where $q = q_0^7$.
\label{tab:stplus1r7}} \\
\toprule
\thead{$c$} & \thead{$st+1 \pmod {q_0^c}$} & \thead{Solution} \\
\midrule
$1$ & $2\xi_{q_0}+a_0$ & $a_0 = q_0-2\xi_{q_0}$ \\
\midrule
$2$ & $(1+2\xi_{q_0}+a_1)q_0$ & $a_1 = q_0-2\xi_{q_0}-1$ \\
\midrule
$3$ & $(1+a_2)q_0^2$ & $a_2 = q_0-1$ \\
\midrule
$4$ & $(1+a_3)q_0^3$ & $a_3 = q_0-1$ \\
\midrule
$5$ & $(1+2\xi_{q_0}+a_4)q_0^4$ & $a_4 = q_0-2\xi_{q_0}-1$ \\
\midrule
$6$ & $(1+2\xi_{q_0}+a_5)q_0^5$ & $a_5 = q_0-2\xi_{q_0}-1$ \\
\midrule
$7$ & $(1-2\xi_{q_0}+a_6)q_0^6$ & $a_6 = 2\xi_{q_0}-1$ \\
\midrule
$8$ & $(-2\xi_{q_0}+a_7)q_0^7$ & $a_7 = 2\xi_{q_0}$ \\
\midrule
$9$ & $(2\xi_{q_0}+a_8)q_0^8$ & $a_8 = q_0-2\xi_{q_0}$ \\
\midrule
$10$ & $(1-2\xi_{q_0}+a_9)q_0^9$ & $a_9 = 2\xi_{q_0}-1$ \\
\midrule
$11$ & $(-10\xi_{q_0}+a_{10})q_0^{10}$ & $a_{10} = 10\xi_{q_0}$ \\
\midrule
$12$ & $(-10\xi_{q_0}+a_{11})q_0^{11}$ & $a_{11} = 10\xi_{q_0}$ \\
\midrule
$13$ & $a_{13}q_0^{12}$ & $a_{12} = 0$ \\
\midrule
$14$ & $(-4\xi_{q_0}+a_{13})q_0^{13}$ & $a_{13} = 4\xi_{q_0}$ \\
\midrule
$15$ & $(-18\xi_{q_0}+a_{14})q_0^{14}$ & $a_{14} = 18\xi_{q_0}$ \\
\midrule
$16$ & $(-14\xi_{q_0}+a_{15})q_0^{15}$ & $a_{15} = 14\xi_{q_0}$ \\
\midrule
$17$ & $(4\xi_{q_0}+a_{16})q_0^{16}$ & $a_{16} = q_0-4\xi_{q_0}$ \\
\midrule
$18$ & $(1+a_{17})q_0^{17}$ & $a_{17} = q_0-1$ \\
\midrule
$19$ & $(1+a_{18})q_0^{18}$ & $a_{18} = q_0-1$ \\
\midrule
$20$ & $(1+52\xi_{q_0}+a_{19})q_0^{19}$ & $a_{19} = q_0-52\xi_{q_0}-1$ \\
\midrule
$21$ & $(1+128\xi_{q_0}+a_{20})q_0^{20}$ & $a_{20} = q_0-128\xi_{q_0}-1$ \\
\midrule
$22$ & $(1+116\xi_{q_0}+a_{21})q_0^{21}$ & $a_{21} = q_0-116\xi_{q_0}-1$ \\
\midrule
$23$ & $(1+100\xi_{q_0}+a_{22})q_0^{22}$ & $a_{22} = q_0-100\xi_{q_0}-1$ \\
\midrule
$24$ & $(1+204\xi_{q_0}+a_{23})q_0^{23}$ & $a_{23} = q_0-204\xi_{q_0}-1$ \\
\midrule
$25$ & $(1+350\xi_{q_0}+a_{24})q_0^{24}$ & $a_{24} = q_0-350\xi_{q_0}-1$ \\
\bottomrule
\end{longtable}
}
\begin{longtable}{lll}
\caption{$a_{24}$ for $8 \leqslant q_0 \leqslant 2451$. 
\label{tab:stplus1r7finite}} \\ 
\toprule
\thead{$q_0$} & \thead{$a_{17}$} \\ 
\midrule
8 & 6 \\
\midrule
32 & 27 \\
\midrule
128 & 32 \\
\midrule
512 & 161 \\
\midrule
2048 & 1697 \\
\bottomrule
\end{longtable}
From \Cref{tab:stplus1r7,tab:stplus1r7finite}, we see that $k \geqslant
a_{24} q_0^{24} \geqslant q_0^{24}$. From which, we obtain the bounds: 
\[
s+1 = k (q_0-1)(q_0^2+1)(q_0^6+1)/\xi_{q_0} > k \frac{q_0^9}{2\xi_{q_0}} 
\geqslant q_0^{24} \cdot \frac{q_0^9}{2\xi_{q_0}} =
\frac{q_0^{33}}{2\xi_{q_0}}
\]
and 
\begin{align*}
t+1 ={}& 2kq_0^9
(q_0^6-q_0^5+q_0^4-q_0^3+q_0^2-q_0+1)^2 (q_0^6+q_0^5+q_0^4+q_0^3+q_0^2+q_0+1)
\\ 
& \quad (q_0^{12}+q_0^{11}-q_0^9-q_0^8+q_0^6-q_0^4-q_0^3+q_0+1)/\xi_{q_0} \\
>{}& 2kq_0^9 \cdot q_0^{30}/\xi_{q_0} \geqslant 2 \cdot q_0^{24} \cdot 
q_0^{9} \cdot q_0^{30}/\xi_{q_0} = 2 q_0^{63}/\xi_{q_0}.
\end{align*}

Applying \Cref{lem:modhigman} \ref{item:modhigman1}, we find that 
\[
|\cP| > \frac{(s+1)^2(t+1)}{2} > \frac{1}{2} \paren{q_0^{33}/\xi_{q_0}}^2
\cdot 2 q_0^{63}/\xi_{q_0} \geqslant q_0^{129}/\xi_{q_0}^2.
\]
However, this contradicts the upper bound from \Cref{tab:bounds}, namely that 
$|\cP| 
\leqslant q^{18}/2 = q_0^{126}/2$. Thus, $r_0 \neq 7$ and so $G_L \ncong
\LargeRee{q_0}$. 
\end{proof}

\begin{lemma}
\label{lem:su3last}
Suppose $G_{\alpha} \cong \SUn{3}{q} : C_2$ or $\PGUn{3}{q} : C_2$. Then $G_L$
cannot be isomorphic to $C_{q+1}:\GLn{2}{3}$, $C_{q+\epsilon\sqrt{2q}+1}:
4S_4$ or $C_{q^2+q+1+\epsilon\sqrt{2q}(q+1)}:C_{12}$, where $\epsilon \in
\{1,-1\}$. 
\end{lemma}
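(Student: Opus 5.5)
The plan is to kill all three cases at once with the size inequality of \Cref{lem:modhigman} \ref{item:modhighman2}, namely $|\cL|^4 < |\cP|^5$, plus a direct finite check for the smallest value $q = 8$. Here $G_L$ ranges over $C_{q+1}^2:\GLn{2}{3}$, $C_{q+\epsilon\sqrt{2q}+1}^2:4S_4$ and $C_{q^2+q+1+\epsilon\sqrt{2q}(q+1)}:C_{12}$. The point is that these three maximal subgroups are tiny: their indices have order $q^{24}$, whereas $|\cP| = |G:G_\alpha|$ has order only $q^{18}$.

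\emph{Bounds from \Cref{tab:bounds}.} Since $G_\alpha \cong \SUn{3}{q}:C_2$ or $\PGUn{3}{q}:C_2$, we have $|\cP| \leqslant q^{18}/2$. Over the three candidates for $G_L$, the weakest lower bound in the table is $|\cL| \geqslant q^{24}/384$ (case $i=6$); the cases $i = 5$ and $i = 7$ give even better bounds. Substituting into $|\cL|^4 < |\cP|^5$ gives
\[
\frac{q^{96}}{384^4} < \frac{q^{90}}{2^5},
\qquad\text{that is,}\qquad
q^6 < \frac{384^4}{32} = 2^{23}3^4 < 2^{30}.
\]
This forces $q < 32$. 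Since $q = 2^{2m+1} \geqslant 8$, we get a contradiction for every $q \geqslant 32$. For cases $i = 5$ and $i = 7$ the same computation is only easier.

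\emph{The case $q = 8$.} This is the main obstacle, because the crude table bounds are too weak there. I would first replace the table bounds by the exact indices $|G:G_\alpha|$ and $|G:G_L|$ at $q = 8$ and retest $|\cL|^4 < |\cP|^5$. The true indices are larger than the table's lower bounds by constant factors, and this may already suffice.

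If it does not, I would solve the system
\[
(s+1)(st+1) = |G:G_\alpha| \quad\text{and}\quad (t+1)(st+1) = |G:G_L|
\]
directly, in the same way as for $\PGLn{2}{13}$ with Mathematica. There are only finitely many divisors $s+1$ of the fixed integer $|\cP|$, and each one determines $st+1$ and hence $t$. One then checks whether $(t+1)(st+1)$ equals the required number of lines; I expect no solutions.

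As a lighter alternative, one could use \Cref{lem:groupdivcond}, which gives $|\cL| \equiv 1 \pmod{N}$ for the cyclic-part order $N$ of $G_L$. Combined with the ratio $(s+1)/(t+1) = |\cP|/|\cL|$ and the coprimality in \Cref{lem:coprime}, this would force $s+1$ and $t+1$ to be multiples of explicit polynomials in $q$, as in the previous lemmas. The resulting growth would then contradict $|\cP| \leqslant q^{18}/2$ via \Cref{lem:modhigman} \ref{item:modhigman1}. I would only fall back on this route if the finite computation at $q = 8$ turned out to be awkward.
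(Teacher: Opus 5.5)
Your proposal is correct and is essentially the paper's proof. The same table bounds $|\cP|\leqslant q^{18}/2$ and $|\cL|\geqslant q^{24}/384$, inserted into $|\cL|^4<|\cP|^5$, leave only $q=8$. The paper then settles $q=8$ by a single computer search for $(s,t)$ with $(s+1)(st+1)=|\cP|$ satisfying Higman's inequality and the divisibility condition; this covers every candidate for $G_L$ at once, whereas your version matches both counts for each candidate.

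One caution: at $q=8$ the exact indices still satisfy $|\cL|^4<|\cP|^5$ for $G_L\cong C_9^2:\GLn{2}{3}$ and $G_L\cong C_{13}^2:4S_4$. So your exact-index retest does not suffice there, and the direct-solving fallback is genuinely needed.
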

\begin{proof}
Suppose for a contradiction that $G_L$ is isomorphic to 
$C_{q+1}:\GLn{2}{3}$, $C_{q+\epsilon\sqrt{2q}+1}:4S_4$ or 
$C_{q^2+q+1\epsilon\sqrt{2q}(q+1)}:C_{12}$. Then 
from \Cref{tab:bounds}, we have 
\[
|\cP| = \frac{1}{2}q^9(q^6+1)(q^2+1)(q-1) \leqslant \frac{q^{18}}{2}
\]
and 
\[
|\cL| \geqslant \frac{q^{24}}{384}.
\]
Applying \Cref{lem:modhigman} \ref{item:modhighman2}, we find that 
\[
\paren{\frac{q^{24}}{384}}^4 \leqslant |\cL|^4 < |\cP|^5 \leqslant
\paren{\frac{q^{18}}{2}}^5,
\]
whence, 
\[
\frac{q^{96}}{384^4} < \frac{q^{90}}{2^5}.
\] 
From this, we see that $q^6 < 384^4/2^5$, that is, $q < 29$. Furthermore, 
it can be verified by computer that there are no solutions for $s$ and $t$
when $q < 29$ (see \Cref{sec:finite} \Cref{lis:finite} lines
\ref{line:small1}--\ref{line:small2}). Therefore, $G_L$
cannot be isomorphic to $C_{q+1}:\GLn{2}{3}$, $C_{q+\epsilon\sqrt{2q}+1}:
4S_4$ or $C_{q^2+q+1+\epsilon\sqrt{2q}(q+1)}:C_{12}$.
\end{proof}

We have now established that $G_{\alpha}$ is not isomorphic to 
$\SUn{3}{q} : C_2$ or $\PGUn{3}{q} : C_2$ and as mentioned before, by duality,
$G_L$ is not isomorphic to $\SUn{3}{q} : C_2$ or $\PGUn{3}{q}$. 
Next, we show in Lemmata \ref{lem:szfirst}--\ref{lem:szlast}, 
that $G_{\alpha}$ cannot be isomorphic to $\Sz{q} \wr C_2$. As before, we 
show this by assuming
$G_{\alpha} \cong \Sz{q} \wr C_2$ and study the various possibilities for
$G_L$. 

\begin{lemma}
\label{lem:szfirst}
Suppose $G_{\alpha} \cong \Sz{q} \wr C_2$. Then $G_L$ cannot be isomorphic to
$\SPn{4}{q} : C_2$. 
\end{lemma}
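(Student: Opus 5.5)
We first reduce to a relation between $s$ and $t$. Since $G_{\alpha} \cong \Sz{q} \wr C_2$ and $G_L \cong \SPn{4}{q} : C_2$, \Cref{tab:bounds} gives
\[
|\cP| = (s+1)(st+1) = \frac{1}{2} q^8 (q^4-q^2+1)(q^3+1)(q+1)
\quad\textrm{and}\quad
|\cL| = (t+1)(st+1) = \frac{1}{2} q^8 (q^6+1)(q^2-q+1).
\]
Dividing, and using $q^3+1 = (q+1)(q^2-q+1)$ and $q^6+1 = (q^2+1)(q^4-q^2+1)$, the ratio collapses to
\[
\frac{s+1}{t+1} = \frac{(q+1)^2}{q^2+1}.
\]
Cross-multiplying gives $(s+1)(q^2+1) = (t+1)(q+1)^2$. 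By \Cref{lem:coprime} ($i=6$), $q+1$ and $q^2+1$ are coprime, hence so are $(q+1)^2$ and $q^2+1$. Therefore there is a positive integer $k$ with
\[
s+1 = k(q+1)^2 \quad\textrm{and}\quad t+1 = k(q^2+1).
\]

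Next we pin down the $2$-adic structure. First, $k$ must be even. Otherwise $s+1$ is odd, so $s$ is even, so $st+1$ is odd, and then $|\cP|$ is odd; this contradicts the fact that $q^8/2$ divides $|\cP|$. So $s$ and $t$ are both odd.

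Since $q \geqslant 8$, we have $(q+1)^2 \equiv 1 \pmod 8$ and $q^2+1 \equiv 1 \pmod 8$. Hence $s \equiv t \equiv k-1 \pmod 8$, and $k-1$ is odd. It follows that
\[
st+1 \equiv (k-1)^2 + 1 \equiv 2 \pmod 8,
\]
so the $2$-part of $st+1$ is exactly $2$. Consequently the $2$-part of $s+1$ is $q^8/4$. Since $(q+1)^2$ is odd, the $2$-part of $k$ is also $q^8/4$, and in particular $k \geqslant q^8/4$. (This single step replaces the split into $k \equiv 0$ and $k \equiv 2 \pmod 4$ used in earlier lemmata: the case $k \equiv 2 \pmod 4$ is impossible outright.)

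Finally we derive a size contradiction. From $k \geqslant q^8/4$ we get $s+1 > q^{10}/4$ and $t+1 > q^{10}/4$. Then \Cref{lem:modhigman} \ref{item:modhigman1} gives
\[
|\cP| > \frac{(s+1)^2(t+1)}{2} > \frac{q^{30}}{2^7},
\]
while \Cref{tab:bounds} gives $|\cP| \leqslant q^{16}$. These are incompatible for $q \geqslant 8$, since they would force $q^{14} < 2^7$.

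The only step requiring care is the congruence computation modulo $8$ that fixes the $2$-part of $st+1$. Everything else mirrors \Cref{lem:su3first} and is routine.
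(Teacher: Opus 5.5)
Your proposal is correct and follows the paper's proof essentially step for step. You use the same ratio $(s+1)/(t+1)=(q+1)^2/(q^2+1)$, the same parametrisation via \Cref{lem:coprime} ($i=6$), the same observation that $st+1$ has $2$-part exactly $2$ (forcing $k \geqslant q^8/4$), and the same size contradiction via \Cref{lem:modhigman} \ref{item:modhigman1}. Working modulo $8$ instead of modulo $4$ is harmless but unnecessary, and the paper also treats $k\equiv 0$ and $k\equiv 2 \pmod 4$ in one stroke rather than splitting them.
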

\begin{proof}
Suppose for a contradiction, $G_L \cong \SPn{4}{q} : C_2$. Then 
\[
|\cP| = (s+1)(st+1) = \frac{1}{2} q^8(q^4-q^2+1)(q^3+1)(q+1)
\ \textrm{and} \ |\cL| = \frac{1}{2} q^8 (q^6+1)(q^2-q+1).
\]
Hence,
\[
\frac{|\cP|}{|\cL|} = \frac{s+1}{t+1} = \frac{(q+1)^2}{q^2+1}. 
\] 
Cross multiplying, we obtain 
\[
(s+1)(q^2+1) = (t+1)(q+1)^2.
\]
By \Cref{lem:coprime} ($i=6$), we deduce that $(q+1)^2$ divides $s+1$ and
$q^2+1$ divides $t+1$. Hence, there exists an integer $k$ such that 
\[
s+1 = k(q+1)^2 \ \textrm{and} \ t+1 = k(q^2+1).
\]
First, notice that $k$ is even. For if $k$ were to be odd, then $s$ is even.
Thus, $|\cP| = (s+1)(st+1)$ is odd. However, this is not the case as $q$
divides $|\cP|$. Thus, $k \equiv 0$ or $2 \pmod 4$. In either of these cases,
we claim that $st+1 \nequiv 0 \pmod 4$. Indeed, note that $s \equiv k-1 \pmod 
4$ and $t \equiv k-1 \pmod 4$. Thus, 
\[
st+1 \equiv (k-1)^2+1 \pmod 4.
\]
Plugging in $k \equiv 0$ and $k \equiv 2 \pmod 4$, we verify that $st+1 \equiv
2 \pmod 4$. Hence, the $2$-part of $st+1$ is $2$ and consequently, the 
$2$-part of $s+1$ is $q^8/4$. 
Since $s+1 = k(q+1)^2$, the $2$-part of $k$ is $q^8/4$. Therefore,
\[
s+1 = k(q+1)^2 > kq^2 \geqslant \frac{q^8}{4} \cdot q^2 = \frac{q^{10}}{4}
\]
and 
\[
t+1 = k(q^2+1) > kq^2 \geqslant \frac{q^8}{4} \cdot q^2 = \frac{q^{10}}{4}.
\]
Applying \Cref{lem:modhigman} \ref{item:modhigman1}, we find that 
\[
|\cP| > \frac{(s+1)^2(t+1)}{2} > \frac{1}{2} \paren{\frac{q^{10}}{4}}^2
\frac{q^{10}}{4} = \frac{q^{30}}{2^7}.
\]
However, this contradicts the upper bound from \Cref{tab:bounds}, namely that 
$|\cP| \leqslant q^{18}/2$. Thus, $G_L \ncong \SPn{4}{q} : C_2$. 
\end{proof}

\begin{lemma}
Suppose $G_{\alpha} \cong \Sz{q} \wr C_2$. Then $G_L$ cannot be isomorphic to
$\LargeRee{q_0}$ where $q_0 = 2^{n_0} > 2$ and $q = q_0^{r_0}$ for some prime
$r_0$. 
\end{lemma}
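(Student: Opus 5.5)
We follow the template of the corresponding $\SUn{3}{q}$ lemma. Suppose for a contradiction that $G_L \cong \LargeRee{q_0}$. From \Cref{tab:bounds}, $|\cP| \leqslant q^{16}$ and $|\cL| \geqslant \frac{1}{2}(q/q_0)^{26}$. Then \Cref{lem:modhigman} \ref{item:modhighman2} gives
\[
\frac{1}{2^4}\paren{\frac{q}{q_0}}^{104} < q^{80},
\]
so $q^{24} < 16\, q_0^{104}$. Hence $24 n_0 r_0 < 104 n_0 + 4$, which forces $r_0 < 5$. Since $q = 2^{2m+1}$ is an odd power of $2$, $r_0$ is odd, and so $r_0 = 3$. (If small $n_0$ leaves an extra value of $r_0$, I would dispose of it by a finite computer check, as in \Cref{lis:finite}.)

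For $r_0 = 3$, I would write $q = q_0^3$ and compute
\[
\frac{s+1}{t+1} = \frac{|\cP|}{|\cL|} = \frac{q_0^{12}(q_0^6+1)(q_0^4-1)(q_0^3+1)(q_0-1)}{q_0^{16}\cdot(\text{stuff})}\cdot\frac{(q^4-q^2+1)(q^3+1)(q+1)}{(q^6+1)(q^4-1)(q^3+1)(q-1)},
\]
simplified using $q^6+1=(q^2+1)(q^4-q^2+1)$. The $q$-factors reduce to $\frac{(q+1)}{(q^2+1)(q^2-1)(q-1)}=\frac{1}{(q^2+1)(q-1)^2}$. Rewriting everything in $q_0$ and cancelling the common cyclotomic factors gives a reduced fraction $X(q_0)/Y(q_0)$. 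The $2$-parts are $q^8/2$ for $|\cP|$ and $q^{12}/q_0^{12} = q_0^{24}$ for $|\cL|$, so the powers of $2$ sit in $Y$. I would then establish coprimality of the numerator and denominator, in the style of \Cref{lem:coprime}, using \Cref{lem:amsans} together with a Euclidean algorithm computation. This yields $s+1 = kX$ and $t+1 = kY$ for a positive integer $k$.

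Next I would do the parity and $2$-adic analysis. Here $k$ must be even: otherwise $s$ or $t$ is even and one of $|\cP|,|\cL|$ is odd. If $k\equiv 0 \pmod 4$, then $st+1\equiv 2 \pmod 4$, so almost the whole $2$-part lands in $k$. \Cref{lem:modhigman} \ref{item:modhigman1} then gives $|\cP| > (s+1)^2(t+1)/2$, which is far larger than $q^{16} = q_0^{48}$, a contradiction. If $k \equiv 2\pmod 4$, the $2$-part of $st+1$ is large, so $st+1 \equiv 0$ modulo a high power of $q_0$. I would expand $k$ in $q_0$-ary and solve for the digits $a_0,a_1,\dots$ successively by Mathematica, as in \Cref{tab:stplus1r3}, until $k \geqslant q_0^{c}$ for $c$ large enough that the \Cref{lem:modhigman} bound exceeds $q_0^{48}$. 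Small $q_0$ would be treated numerically.

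I expect the main obstacle to be this last digit-solving step. The congruence must be pushed far enough to force $k$ large, and some digits may depend on $q_0$ (for example, a term like $q_0 - c$ is negative for small $q_0$), so finite exceptional cases must be handled separately. If the $2$-part of $st+1$ is too small for the digit argument to reach the needed power, I would fall back on \Cref{lem:moddivcond2} with $\gcd(s,t)$ bounded, as in \Cref{lem:boundgcd}, to bound $s+t$ instead.
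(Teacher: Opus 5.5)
Your reduction to $r_0=3$ matches the paper, as do the splitting $s+1=kx$, $t+1=ky$ with $x=(q_0+1)^2(q_0^2-q_0+1)$ odd and the factor $2$ in $y=2(q_0^2+q_0+1)^2(q_0^4-q_0^2+1)$, the parity argument, and the case $k\equiv 0\pmod 4$. Two small corrections: the $q$-factors reduce to $1/\bigl((q^2+1)^2(q-1)^2\bigr)$, since both $q^6+1$ and $q^4-1$ contribute a factor $q^2+1$. Also, no stray value of $r_0$ can arise, because $n_0\geqslant 3$ already forces $r_0\leqslant 4$.

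The gap is in the case $k\equiv 2\pmod 4$. Your primary plan, solving for the $q_0$-ary digits of $k$ as in \Cref{tab:stplus1r3}, breaks down here.
\begin{itemize}
\item Since $x\equiv 1$ and $y\equiv 2\pmod{q_0}$, the first condition is $st+1\equiv 2a_0^2-3a_0+2\equiv 0\pmod{q_0}$. This is a quadratic of discriminant $-7$, irreducible over the rationals.
\item Its solution is the truncation modulo $q_0$ of the $2$-adic root $(3+\sqrt{-7})/4$; for instance $a_0=6,14,46$ for $q_0=8,32,128$. Every later digit inherits this, so there are no closed-form digits in $q_0$.
\item It would suffice to show $k\geqslant q_0^{11}$, since then \Cref{lem:modhigman} beats $|\cP|\leqslant q_0^{48}$. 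Without closed-form digits, this would need a separate check for each of infinitely many $q_0$.
\end{itemize}
Where the paper does use digit-solving, one of $x,y$ is divisible by $q_0$, so the congruence modulo $q_0$ is linear and explicit formulas appear. The obstruction is not the size of the $2$-part of $st+1$: that $2$-part is $q_0^{24}/4$, ample in principle. So your stated trigger for switching to the fallback is the wrong one.

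The fallback you mention is exactly the paper's argument, and it should be your main one.
\begin{itemize}
\item Since $\gcd(x,y)=1$, we have $k=\gcd(s+1,t+1)\equiv 2\pmod 4$, so \Cref{lem:moddivcond2} gives $s+t\mid 4xyd^2$ with $d=\gcd(s,t)$.
\item Because $s\mid|\cP|-1$ and $t\mid|\cL|-1$, $d$ divides $\gcd(F_1(q_0),G(q_0))$, which is at most $q_0$ by \Cref{lem:boundgcd}.
\item Hence $ky=t+1\leqslant s+t\leqslant 4xyq_0^2$, so $k\leqslant 4xq_0^2\leqslant 8q_0^6$.
\item Then $|\cP|<(s+1)^2(t+1)=k^3x^2y\leqslant 2^{14}q_0^{34}$, which contradicts $|\cP|\geqslant q^{16}/2=q_0^{48}/2$.
\end{itemize}
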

\begin{proof}
Suppose for a contradiction that $G_L \cong \LargeRee{q_0}$. 
Then by \Cref{tab:bounds}, we have  
\[
|\cP| = \frac{1}{2} q^8 (q^4-q^2+1)(q^3+1)(q+1) \leqslant q^{16} \ 
\textrm{and}
\ |\cL| = \frac{q^{12}(q^6+1)(q^4-1)(q^3+1)(q-1)}{q_0^{12} 
(q_0^6+1)(q_0^4-1)(q_0^3+1)(q_0-1)} \geqslant 
\frac{1}{2} \paren{\frac{q}{q_0}}^{26}.
\]
By \Cref{lem:modhigman} \ref{item:modhighman2}, we have 
\[
\paren{\frac{1}{2} \paren{\frac{q}{q_0}}^{26}}^4 \leqslant |\cL|^4 < |\cP|^5 
\leqslant \paren{q^{16}}^5.
\]
Hence,
\[
\paren{\frac{q}{q_0}}^{104} < 2^4 q^{80}
\]
Simplifying further, we obtain $q^{24} < 2^4 q_0^{104}$. 
Therefore, $24n_0 r_0 < 104 n_0+4$. Hence, $(24r_0-104) n_0 < 4$. Since $n_0
\geqslant 3$, 
it follows that $3(24r_0-104) < 4$. Thus, $24r_0-104 < 4/3 < 2$ and
because $24r_0-104$ is an integer, we have $24r_0-104 \leqslant 1$. Solving
for $r_0$ yields $r_0 \leqslant 105/24 < 5$. Since $r_0$ is an odd prime, it
follows that $r_0 = 3$. In which case, we have 
\begin{align*}
\frac{s+1}{t+1} ={}&
\frac{q_0^{12}(q_0^6+1)(q_0^4-1)(q_0^3+1)(q_0-1)}{2q^4(q^2+1)^2(q-1)^2} \\
={}& \frac{q_0^{12}(q_0^6+1)(q_0^4-1)(q_0^3+1)(q_0-1)}{2q_0^{12}
(q_0^6+1)^2(q_0^3-1)^2} \\
={}& \frac{(q_0+1)^2(q_0^2-q_0+1)}{2(q_0^2+q_0+1)^2(q_0^4-q_0^2+1)}. 
\end{align*}
Cross multiplying,
\[
2(q_0^2+q_0+1)^2(q_0^4-q_0^2+1)(s+1)
= (q_0+1)^2(q_0^2-q_0+1)(t+1). 
\]

By \Cref{lem:coprime} ($i=7$), 
we find that $(q_0+1)^2(q_0^2-q_0+1)$ divides $s+1$ and 
$2(q_0^2+q_0+1)^2(q_0^4-q_0^2+1)$ divides $t+1$. Hence, 
\[
s+1 = k (q_0+1)^2(q_0^2-q_0+1) \ \textrm{and} \ t+1 =
2k(q_0^2+q_0+1)^2(q_0^4-q_0^2+1), 
\]
where $k$ is some positive integer. 
As before, we deduce that $k$ is even. For if $k$ were to be odd, then $s$ is
even, which means that $|\cP| = (s+1)(st+1)$ is odd. This is not the case as
$q$ divides $|\cP|$. Therefore, $k \equiv 0$ or $2 \pmod 4$. Suppose first
that $k \equiv 0
\pmod 4$. Then $s \equiv -1 \pmod 4$ and $t \equiv -1 \pmod 4$. Whence, $st+1
\equiv 2 \pmod 4$. Thus, the $2$-part of $st+1$ is $2$ and consequently, the
$2$-part of $s+1$ is $q^8/4$. Since $s+1 = k(q_0+1)^2(q_0^2-q_0+1)$, the
$2$-part of $k$ is $q^8/4$. From this, we obtain the following bounds: 
\[
s+1 = k (q_0+1)^2(q_0^2-q_0+1) > k q_0^4 \geqslant \frac{q^8}{4} \cdot q_0^4 
= \frac{q_0^{24}}{4} \cdot q_0^4 = \frac{q_0^{28}}{4}
\]
and 
\begin{align*}
t+1 ={}& 2k(q_0^2+q_0+1)^2(q_0^4-q_0^2+1) \geqslant 2k q_0^8 \geqslant 2
\frac{q^8}{4} \cdot q_0^8 = \frac{q_0^{24}}{2} q_0^8 = \frac{q_0^{32}}{2}. 
\end{align*}
Applying \Cref{lem:modhigman} \ref{item:modhigman1}, we find that 
\[
|\cP| > \frac{(s+1)^2(t+1)}{2} > \frac{1}{2} \paren{\frac{q_0^{28}}{4}}^2
\frac{q_0^{32}}{2} = \frac{q_0^{88}}{2^5}.
\]
However, this contradicts the upper bound from \Cref{tab:bounds}, namely that 
$|\cP| \leqslant q^{16} = q_0^{48}$. So, in fact, $k \equiv 2 \pmod 4$. \\

Now, define 
\[
x \defi (q_0+1)^2 (q_0^2-q_0+1) \ \textrm{and} \ y \defi
2(q_0^2+q_0+1)^2(q_0^4-q_0^2+1). 
\]
Then, $s+1 = kx$ and $t+1 = ky$ where $k = \gcd(s+1,t+1)$. By
\Cref{lem:moddivcond2}, $s+t$ divides $4xy d^2$, where $d = \gcd(s,t)$. 
Note that by \Cref{cor:numcol}, $s$ divides $|\cP|-1$ and $t$
divides $|\cL|-1$. Thus, $d$ divides $\gcd(|\cP|-1,|\cL|-1)$. Finally, we can
apply \Cref{lem:boundgcd} to obtain $\gcd(|\cP|-1,|\cL|-1) =
\gcd(F_1(q_0),G(q_0)) \leqslant q_0$. Therefore, $d \leqslant q_0$. 
Consequently,
$s+t \leqslant 4xyd^2 \leqslant 4xy q_0^2$. Putting all this together, 
we have the following chain of inequalities: 
\[
ky = t+1 \leqslant s+t \leqslant 4xy q_0^2 = 4 (q_0+1)^2 (q_0^2-q_0+1) y 
q_0^2 \leqslant 4 \cdot 2q_0^2 q_0^2 y q_0^2 = 8y q_0^6.
\]  
Simplifying, we obtain 
\[
k \leqslant 8q_0^6.
\]
From this, we see that 
\[
|\cP| < (s+1)^2(t+1) = k^3 x^2 y \leqslant
2^{9} q_0^{18} x^2 \cdot y \leqslant 2^9 q_0^{18} \cdot ((2q_0^2)(q_0^2))^2 
\cdot 2(4q_0^4)(q_0^4) = 2^{14} q_0^{34}.
\]
However, this contradicts the lower bound from \Cref{tab:bounds}, namely that
$|\cP| \geqslant q^{16}/2 = q_0^{48}/2$. Therefore, $G_L \ncong
\LargeRee{q_0}$. 
\end{proof}
\begin{lemma}
\label{lem:szlast}
Suppose $G_{\alpha} \cong \Sz{q} \wr C_2$. Then $G_L$ cannot be isomorphic to 
$C_{q+1}^2 : \GLn{2}{3}$, $C_{q+\epsilon\sqrt{2q}+1}^2 : 4S_4$ or 
$C_{q^2+q+1+\epsilon\sqrt{2q}(q+1)} : C_{12}$ where $\epsilon \in \{1,-1\}$. 
\end{lemma}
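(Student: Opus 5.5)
The plan is to imitate the proof of \Cref{lem:su3last}: compare the two indices with \Cref{lem:modhigman} \ref{item:modhighman2}. In this case the inequality should already rule out every $q \geqslant 8$, so no separate computer check for small $q$ ought to be needed.

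Suppose, for a contradiction, that $G_L$ is isomorphic to one of $C_{q+1}^2 : \GLn{2}{3}$, $C_{q+\epsilon\sqrt{2q}+1}^2 : 4S_4$ or $C_{q^2+q+1+\epsilon\sqrt{2q}(q+1)} : C_{12}$. By \Cref{tab:bounds} (row $i=2$), $|\cP| = |G:G_{\alpha}| \leqslant q^{16}$. In rows $i \in \{5,6,7\}$ the smallest lower bound is $q^{24}/384$, so in all three cases $|\cL| = |G:G_L| \geqslant q^{24}/384$. Since the parameters satisfy $s,t \geqslant 2$, \Cref{lem:modhigman} \ref{item:modhighman2} gives $|\cL|^4 < |\cP|^5$. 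Hence
\[
\frac{q^{96}}{384^4} \leqslant |\cL|^4 < |\cP|^5 \leqslant q^{80},
\]
that is, $q^{16} < 384^4$.

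To finish, I use $384 = 2^7 \cdot 3$, so $384^4 = 2^{28}\cdot 81 < 2^{35}$. On the other hand $q = 2^{2m+1} \geqslant 8$ gives $q^{16} \geqslant 2^{48}$. Thus $2^{48} \leqslant q^{16} < 384^4 < 2^{35}$, which is impossible, and so $G_L$ is none of the three listed subgroups.

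I do not expect a serious obstacle here; the only step needing care is getting the table bounds right. One must take the weakest lower bound among rows $5$–$7$, namely $q^{24}/384$, together with the upper bound $q^{16}$ for $\Sz{q}\wr C_2$. In \Cref{lem:su3last} this margin left a residual range $q<29$ to check by computer. Here the gap in exponents is $96-80=16$ rather than $6$, which is why the inequality alone should cover all admissible $q$. If some bound turned out tighter than expected, the fallback would be a finite check of $(s+1)(st+1)=|G:G_\alpha|$, $(t+1)(st+1)=|G:G_L|$ for the few remaining small $q$, as in \Cref{sec:finite}.
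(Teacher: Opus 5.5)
Your proposal is correct and is essentially the paper's proof. The paper likewise combines $|\cP| \leqslant q^{16}$ and $|\cL| \geqslant q^{24}/384$ from \Cref{tab:bounds} with \Cref{lem:modhigman} \ref{item:modhighman2} to obtain $q^{16} < 384^4$, i.e.\ $q < 5$, which contradicts $q \geqslant 8$ without any computer check.
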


\begin{proof}
Suppose for a contradiction that 
$G_L \cong C_{q+1}^2 : \GLn{2}{3}$, $C_{q+\epsilon\sqrt{2q}+1}^2 : 4S_4$ or
$C_{q^2+q+1+\epsilon\sqrt{2q}(q+1)} : C_{12}$. 
Then from \Cref{tab:bounds}, we have 
\[
|\cP| = \frac{1}{2}q^8 (q^4-q^2+1)(q^3+1)(q+1) \leqslant q^{16}
\]
and 
\[
|\cL| \geqslant \frac{q^{24}}{384}.
\]
Applying \Cref{lem:modhigman} \ref{item:modhighman2}, we find that 
\[
\paren{\frac{q^{24}}{384}}^4 \leqslant |\cL|^4 < |\cP|^5 \leqslant
\paren{q^{16}}^5.
\]
From this, we see that $q^{16} < 384^4$, \emph{i.e.}, $q < 5$. 
This is a contradiction as $q \geqslant 8$. Therefore, $G_L$
cannot be isomorphic to $C_{q+1}:\GLn{2}{3}$, 
$C_{q+\epsilon\sqrt{2q}+1}:4S_4$ or 
$C_{q^2+q+1+\epsilon\sqrt{2q}(q+1)}:C_{12}$.
\end{proof}

We have now established that $G_{\alpha}$ is not isomorphic to 
$\Sz{q} \wr C_2$. Next, we show in Lemmata 
\ref{lem:spnfirst}--\ref{lem:spnlast}, that $G_{\alpha}$
cannot be isomorphic to $\SPn{4}{q} : C_2$. As before, we show this by 
assuming $G_{\alpha} \cong \SPn{4}{q} : C_2$ and study the various 
possibilities for $G_L$. 
\begin{lemma}
\label{lem:spnfirst}
Suppose $G_{\alpha} \cong \SPn{4}{q} : C_2$. Then $G_L$ cannot be isomorphic 
to $\LargeRee{q_0}$ where $q_0 = 2^{n_0} > 2$ and $q = q_0^{r_0}$ for some
prime $r_0$. 
\end{lemma}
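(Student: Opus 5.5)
Suppose for a contradiction that $G_L \cong \LargeRee{q_0}$. The plan follows the template of the two analogous lemmas (for $\SUn{3}{q}:C_2$ and $\Sz{q}\wr C_2$): first bound $r_0$, then use the ratio $|\cP|/|\cL|$ to pin down the shape of $s+1$ and $t+1$.

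\textbf{Bounding $r_0$.} From \Cref{tab:bounds} we have
\[
|\cP| = \tfrac12 q^8(q^6+1)(q^2-q+1) \leqslant \tfrac{q^{16}}{2}
\quad\textrm{and}\quad
|\cL| \geqslant \tfrac12 (q/q_0)^{26}.
\]
Feeding these into \Cref{lem:modhigman} \ref{item:modhighman2}, i.e.\ $|\cL|^4<|\cP|^5$, gives $(q/q_0)^{104} < 2^{-1} q^{80}$. Hence $q^{24} < q_0^{104}/2$, so $24 r_0 < 104$ and $r_0 \leqslant 4$. Since $q = 2^{2m+1}$ is an odd power of $2$, $r_0$ is odd, and therefore $r_0 = 3$.

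\textbf{Shape of $s+1$ and $t+1$ when $q = q_0^3$.} Compute
\[
\frac{s+1}{t+1} = \frac{|\cP|}{|\cL|} = \frac{q_0^{12}(q_0^6+1)(q_0^4-1)(q_0^3+1)(q_0-1)}{2 q^4 (q^2+1)(q^4-1)(q^3-1)\cdots}
\]
directly from $|G_\alpha| = 2q^4(q^4-1)(q^2-1)$ and $|G_L| = |\LargeRee{q_0}|$. Writing $q = q_0^3$ and cancelling common cyclotomic factors, I expect a reduced fraction $x/y$. The numerator $x$ should involve only $q_0\pm1$, $q_0^2+1$, $q_0^2-q_0+1$ and $q_0^6+1$-type factors. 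The denominator $y$ should be $2$ times a product of factors such as $q_0^2+q_0+1$, $q_0^4+q_0^2+1$, $q_0^6-q_0^3+1$ and $q_0^{12}-q_0^6+1$. Coprimality of $x$ and $y$ then follows from \Cref{lem:coprime} (entries $8$, $10$, $11$, $12$ or an analogous polynomial Euclidean computation), giving $s+1 = kx$ and $t+1 = ky$ with $k = \gcd(s+1,t+1)$.

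\textbf{Forcing $k \equiv 2 \pmod 4$.} Parity of $|\cP|$ forces $k$ even. If $k \equiv 0 \pmod 4$, then $st+1 \equiv 2 \pmod 4$. So the $2$-part of $s+1$ is essentially $q^8/2$, forcing $k \geqslant q^8/4$. \Cref{lem:modhigman} \ref{item:modhigman1} then gives $|\cP| > (s+1)^2(t+1)/2$, which is far beyond $q^{16}/2$, a contradiction. Hence $k \equiv 2 \pmod 4$.

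\textbf{Main obstacle: bounding $k$ from above.} Here I would follow the $\Sz{q}\wr C_2$ argument. By \Cref{lem:moddivcond2}, $s+t$ divides $4xyd^2$ with $d=\gcd(s,t)$. Since $s \mid |\cP|-1$ and $t \mid |\cL|-1$, we get $d \mid \gcd(|\cP|-1,|\cL|-1)$. With $q=q_0^3$, I expect $|\cP|-1$ to be exactly $F_2(q_0)$, since $\tfrac12 q_0^{24}(q_0^{18}+1)(q_0^6-q_0^3+1)$ matches $\tfrac12 q^8(q^6+1)(q^2-q+1)$. Similarly $|\cL|-1$ should be $G(q_0)$. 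Then \Cref{lem:boundgcd} gives $d \leqslant q_0$, so
\[
ky = t+1 \leqslant s+t \leqslant 4xyq_0^2, \qquad\textrm{hence}\qquad k \leqslant 4xq_0^2.
\]
Finally, $|\cP| < k^3x^2y \leqslant 64x^5 y q_0^6$ gives a polynomial upper bound in $q_0$ of lower degree than $q^{16}/4 = q_0^{48}/4$, the lower bound from \Cref{tab:bounds}, which is a contradiction. I expect this degree comparison to be the delicate step. If $x$ turns out too large for the naive count to work, I would first try a $q_0$-ary digit analysis of $st+1 \equiv 0 \pmod{q^8/4}$, as in \Cref{tab:stplus1r3}, to obtain a lower bound on $k$ and instead contradict the upper bound $q_0^{48}/2$.
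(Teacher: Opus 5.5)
Your proposal follows the paper's proof step for step: $|\cL|^4<|\cP|^5$ forces $r_0=3$, coprimality gives $s+1=kx$ and $t+1=ky$, the case $k\equiv 0 \pmod 4$ is excluded by the $2$-part argument, and \Cref{lem:moddivcond2} together with \Cref{lem:boundgcd} (with $|\cP|-1=F_2(q_0)$ and $|\cL|-1=G(q_0)$, as you correctly identified) gives $k\leqslant 4xq_0^2$ and hence a contradiction with $|\cP|\geqslant q_0^{48}/4$. The one thing you left undone is the ratio itself, and your displayed denominator is garbled; it reduces to $x=q_0^2+1$ and $y=2(q_0^2+q_0+1)(q_0^4+q_0^2+1)$, which are coprime by case $i=8$ of \Cref{lem:coprime}, so $64x^5yq_0^6\leqslant 2^{14}q_0^{22}$, the final degree comparison is easy, and the digit-analysis fallback is not needed.
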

\begin{proof}
Let $G_{\alpha} \cong \SPn{4}{q} : C_2$ and $G_L \cong \LargeRee{q_0}$ where 
$q_0 = 2^{n_0} > 2$ and $q = q_0^{r_0}$ for some prime $r_0$. 
Then 
\[
|\cP| = \frac{1}{2} q^8 (q^6+1)(q^2-q+1) \leqslant \frac{q^{16}}{2} \ 
\textrm{and}
\ |\cL| = \frac{q^{12}(q^6+1)(q^4-1)(q^3+1)(q-1)}{q_0^{12} 
(q_0^6+1)(q_0^4-1)(q_0^3+1)(q_0-1)} \geqslant 
\frac{1}{2} \paren{\frac{q}{q_0}}^{26}.
\]
By \Cref{lem:modhigman} \ref{item:modhighman2}, we have 
\[
\paren{\frac{1}{2} \paren{\frac{q}{q_0}}^{26}}^4 \leqslant |\cL|^4 < |\cP|^5 
\leqslant \paren{\frac{q^{16}}{2}}^5.
\]
Simplifying further, we obtain $q^{24} < q_0^{104}$. 
Therefore, $24n_0 r_0 < 104 n_0$ and so, $24 r_0 < 104$. From which, we have
$r_0 < 104/24 < 5$. Since $r_0$ is an odd prime, it
follows that $r_0 = 3$. In which case, we have 

\begin{align*}
\frac{s+1}{t+1} ={}&
\frac{q_0^{12}(q_0^6+1)(q_0^4-1)(q_0^3+1)(q_0-1)}{2q^4(q^4-1)(q+1)(q-1)} \\
={}& 
\frac{q_0^{12}(q_0^6+1)(q_0^4-1)(q_0^3+1)(q_0-1)}{2q_0^{12}(q_0^{12}-1)
(q_0^3+1)(q_0^3-1)} \\
={}& \frac{q_0^2+1}{2(q_0^2+q_0+1)(q_0^4+q_0^2+1)}. 
\end{align*}
Cross multiplying,
\[
2(q_0^2+q_0+1)(q_0^4+q_0^2+1)(s+1)
= (q_0^2+1)(t+1). 
\]
By \Cref{lem:coprime} ($i=8$), we find that $q_0^2+1$ divides $s+1$ and 
$2(q_0^2+q_0+1)(q_0^4+q_0^2+1)$ divides $t+1$. Hence, 
\[
s+1 = k (q_0^2+1) \ \textrm{and} \ t+1 =
2k(q_0^2+q_0+1)(q_0^4+q_0^2+1), 
\]
where $k$ is some positive integer. 
As before, we deduce that $k$ is even. For if $k$ were to be odd, then $s$ is
even, which means that $|\cP| = (s+1)(st+1)$ is odd. This is not the case as
$q$ divides $|\cP|$. Therefore, $k \equiv 0$ or $2 \pmod 4$. \\ 

Suppose first
that $k \equiv 0
\pmod 4$. Then $s \equiv -1 \pmod 4$ and $t \equiv -1 \pmod 4$. Whence, $st+1
\equiv 2 \pmod 4$. Thus, the $2$-part of $st+1$ is $2$ and consequently, the
$2$-part of $s+1$ is $q^8/4$. Since $s+1 = k(q_0^2+1)$, the
$2$-part of $k$ is $q^8/4$. From this, we obtain the following bounds: 
\[
s+1 = k (q_0^2+1) > k q_0^2 \geqslant \frac{q^8}{4} \cdot q_0^2
= \frac{q_0^{24}}{4} \cdot q_0^2 = \frac{q_0^{26}}{4}
\]
and 
\begin{align*}
t+1 ={}& 2k(q_0^2+q_0+1)(q_0^4+q_0^2+1) \geqslant 2k q_0^6 \geqslant 2
\frac{q^8}{4} \cdot q_0^6 = \frac{q_0^{24}}{2} q_0^6 = \frac{q_0^{30}}{2}. 
\end{align*}
Applying \Cref{lem:modhigman} \ref{item:modhigman1}, we find that 
\[
|\cP| > \frac{(s+1)^2(t+1)}{2} > \frac{1}{2} \paren{\frac{q_0^{26}}{4}}^2
\frac{q_0^{30}}{2} = \frac{q_0^{82}}{2^5}.
\]
However, this contradicts the upper bound from \Cref{tab:bounds}, namely that 
$|\cP|
\leqslant q^{16}/2 = q_0^{48}/2$. Thus, $k \equiv 2 \pmod 4$. \\

Suppose $k \equiv 2 \pmod 4$. Now, define 
\[
x \defi (q_0^2+1) \ \textrm{and} \ y \defi
2(q_0^2+q_0+1)(q_0^4+q_0^2+1). 
\]
Then, $s+1 = kx$ and $t+1 = ky$ where $k = \gcd(s+1,t+1)$. By
\Cref{lem:moddivcond2}, $s+t$ divides $4xy d^2$, where $d = \gcd(s,t)$. 
Note that by \Cref{cor:numcol}, $s$ divides $|\cP|-1$ and $t$
divides $|\cL|-1$. Thus, $d$ divides $\gcd(|\cP|-1,|\cL|-1)$. Finally, we can
apply \Cref{lem:boundgcd} to obtain $\gcd(|\cP|-1,|\cL|-1) =
\gcd(F_2(q_0),G(q_0)) \leqslant q_0$. Therefore, $d \leqslant q_0$. 
Consequently,
$s+t \leqslant 4xyd^2 \leqslant 4xy q_0^2$. Putting all this together, 
we have the following chain of inequalities:  
\[
ky = t+1 \leqslant s+t \leqslant 4xy q_0^2 = 4 (q_0^2+1)  y 
q_0^2 \leqslant 4 \cdot 2q_0^2 y q_0^2 = 8y q_0^4.
\]  
Simplifying, we obtain $k \leqslant 8q_0^4$. From this, we see that 
\[
|\cP| < (s+1)^2(t+1) = k^3 x^2 y \leqslant
2^{9} q_0^{12} x^2 \cdot y \leqslant 2^9 q_0^{18} \cdot (2q_0^2)^2 
\cdot 2(2q_0^2)(2q_0^4) = 2^{14} q_0^{28}.
\]
However, this contradicts the lower bound from \Cref{tab:bounds}, namely that
$|\cP| \geqslant q^{16}/4 = q_0^{48}/4$. Therefore, $G_L \ncong
\LargeRee{q_0}$. 
\end{proof}

\begin{lemma}
\label{lem:spnlast}
Suppose $G_{\alpha} \cong \SPn{4}{q} : C_2$. 
Then $G_L$ cannot be isomorphic to 
$C_{q+1}^2 : \GLn{2}{3}$, $C_{q+\epsilon\sqrt{2q}+1}^2 : 4S_4$ or 
$C_{q^2+q+1+\epsilon\sqrt{2q}(q+1)} : C_{12}$, where $\epsilon \in \{1,-1\}$.
\end{lemma}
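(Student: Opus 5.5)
We follow the strategy of \Cref{lem:szlast}: the indices of the three candidate subgroups for $G_L$ are far larger than the index of $\SPn{4}{q}:C_2$, so the size comparison between points and lines in \Cref{lem:modhigman} \ref{item:modhighman2} should fail outright. We suppose, for a contradiction, that $G_L$ is isomorphic to $C_{q+1}^2:\GLn{2}{3}$, $C_{q+\epsilon\sqrt{2q}+1}^2:4S_4$ or $C_{q^2+q+1+\epsilon\sqrt{2q}(q+1)}:C_{12}$.

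The first step is to read off the relevant bounds from \Cref{tab:bounds}. Since $G_{\alpha}\cong \SPn{4}{q}:C_2$, row $3$ gives
\[
|\cP| = \frac{1}{2}q^8(q^6+1)(q^2-q+1) \leqslant \frac{q^{16}}{2}.
\]
For $G_L$ we take the smallest of the lower bounds in rows $5$, $6$ and $7$, which gives $|\cL| \geqslant q^{24}/384$ uniformly over the three cases and both values of $\epsilon$.

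The second step is to apply \Cref{lem:modhigman} \ref{item:modhighman2}, namely $|\cL|^4<|\cP|^5$. This yields
\[
\frac{q^{96}}{384^4} \leqslant |\cL|^4 < |\cP|^5 \leqslant \frac{q^{80}}{2^5},
\]
and hence
\[
q^{16} < \frac{384^4}{2^5} = 2^{23}\cdot 3^4 < 2^{30}.
\]
It follows that $q<2^{30/16}<4$, which contradicts $q=2^{2m+1}\geqslant 8$.

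No step here is a genuine obstacle; the only care needed is to use the weakest lower bound $q^{24}/384$ so that all three subgroup types, and both signs of $\epsilon$, are handled at once. Unlike \Cref{lem:su3last}, the inequality already excludes every admissible $q$, so no finite computer check should be required. If the constant had instead left a few small values of $q$, I would dispose of them by the direct search for integer solutions $(s,t)$ of $(s+1)(st+1)=|\cP|$, as in \Cref{sec:finite}.
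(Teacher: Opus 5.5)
Your proposal is correct and is essentially the paper's own proof. Both take $|\cP|\leqslant q^{16}/2$ and the uniform lower bound $|\cL|\geqslant q^{24}/384$ from \Cref{tab:bounds}, apply $|\cL|^4<|\cP|^5$ from \Cref{lem:modhigman}, and obtain $q^{16}<2^{23}\cdot 3^4$ (the paper writes this as $192^4/2$), which forces $q<4$ with no finite check needed.
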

\begin{proof}
Suppose for a contradiction that 
$G_L \cong C_{q+1}^2 : \GLn{2}{3}$, $C_{q+\epsilon\sqrt{2q}+1}^2 : 4S_4$ or
$C_{q^2+q+1+\epsilon\sqrt{2q}(q+1)} : C_{12}$. 
Then from \Cref{tab:bounds}, we have 
\[
|\cP| = \frac{1}{2}q^8 (q^6+1)(q^2-q+1) \leqslant \frac{q^{16}}{2}
\]
and 
\[
|\cL| \geqslant \frac{q^{24}}{384}.
\]
Applying \Cref{lem:modhigman} \ref{item:modhighman2}, we find that 
\[
\paren{\frac{q^{24}}{384}}^4 \leqslant |\cL|^4 < |\cP|^5 \leqslant
\paren{\frac{q^{16}}{2}}^5. 
\]
From this, we see that $q^{16} < 192^4/2$, \emph{i.e.}, $q < 4$. 
This is a contradiction as $q \geqslant 8$. Therefore, $G_L$ cannot be 
isomorphic 
to $C_{q+1}^2 : \GLn{2}{3}$, $C_{q+\epsilon\sqrt{2q}+1}^2 : 4S_4$ or
$C_{q^2+q+1+\epsilon\sqrt{2q}(q+1)} : C_{12}$.  
\end{proof}

We have now established that $G_{\alpha}$ is not isomorphic to 
$\SPn{4}{q} : C_2$. Next, we show in Lemmata 
\ref{lem:q0first}--\ref{lem:q0last} that $G_{\alpha}$
cannot be isomorphic to $\LargeRee{q_0}$, where $q_0 = 2^{n_0} > 2$ and $q =
q_0^{r_0}$ for some prime $r_0$. As before, we show this by 
assuming $G_{\alpha} \cong \LargeRee{q_0}$ and study the various 
possibilities for $G_L$. 

\begin{lemma}
\label{lem:q0first}
Suppose $G_{\alpha} \cong \LargeRee{q_0}$ where $q_0 = 2^{n_0} > 2$ and $q =
q_0^{r_0}$ for some prime $r_0$. Then $G_L$ cannot be isomorphic to
$\LargeRee{q_1}$ where $q_1 = 2^{n_1} > 2$ and $q = q_0^{r_1}$ for some prime
$r_1 \neq r_0$. 
\end{lemma}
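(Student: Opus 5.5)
The plan follows the template of the preceding lemmata: a size comparison first, then a cross-multiplication argument with a $2$-adic analysis of $k$.

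\emph{Set-up.} Since $q=2^{2m+1}=q_0^{r_0}=q_1^{r_1}$ with $r_0\neq r_1$ primes, both $r_0$ and $r_1$ are odd and $r_0r_1$ divides $2m+1$. Put $q_2\defi 2^{(2m+1)/(r_0r_1)}$, so that $q_0=q_2^{r_1}$ and $q_1=q_2^{r_0}$. By \Cref{tab:bounds} ($i=4$),
\[
\tfrac12 (q/q_0)^{26}\leqslant |\cP|\leqslant 2(q/q_0)^{26}
\quad\textrm{and}\quad
\tfrac12 (q/q_1)^{26}\leqslant |\cL|\leqslant 2(q/q_1)^{26}.
\]
The $2$-parts are exactly $(q/q_0)^{12}$ and $(q/q_1)^{12}$; these differ because $q_0\neq q_1$, which also recovers $|\cP|\neq|\cL|$. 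I would first apply \Cref{lem:modhigman} \ref{item:modhighman2} in both directions. My expectation is that this does not remove any pair $(r_0,r_1)$ outright (e.g.\ $(3,5)$ survives), so it only serves as a sanity restriction. By duality we may assume $r_0<r_1$, i.e.\ $q_0>q_1$ and $|\cP|<|\cL|$.

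\emph{Cross-multiplication.} Next I would form
\[
\frac{s+1}{t+1}=\frac{|\cP|}{|\cL|}=\frac{|\LargeRee{q_1}|}{|\LargeRee{q_0}|},
\]
written as a quotient of $q_2$-powers and cyclotomic-type factors in $q_2$. After cancelling a common factor (the order of $\LargeRee{q_2}$-type factors that appear in both), I would write $s+1=kx$ and $t+1=ky$ with $x,y$ coprime, using \Cref{lem:amsans} in place of a fixed row of \Cref{lem:coprime}, since $r_0,r_1$ are now general. Here $y$ contains the factor $q_2^{12(r_0-r_1)}$ coming from $(q_0/q_1)^{12}$ (up to the correct orientation), and $x$ is odd. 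The usual parity argument then applies: since $q$ divides both $|\cP|$ and $|\cL|$, the integer $k$ is even.

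\emph{The case $k\equiv 0\pmod 4$.} Then $st+1\equiv 2\pmod 4$, so $s+1$ carries the whole $2$-part $(q/q_0)^{12}/2$ and $t+1$ carries $(q/q_1)^{12}/2$. \Cref{lem:modhigman} \ref{item:modhigman1} then gives
\[
|\cP|>\tfrac12 (s+1)^2(t+1)\gtrsim (q/q_0)^{24}(q/q_1)^{12}.
\]
This exceeds $2(q/q_0)^{26}$ because $12(1-1/r_1)>2(1-1/r_0)$.

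\emph{The case $k\equiv 2\pmod 4$ (the main obstacle).} Now the $2$-part of $s+1$ is $2$, and $st+1$ must carry a $2$-part of order $(q/q_0)^{12}/4$, which is at least $2^{N}$ with $N$ large. Writing
\[
st+1=k^2xy-k(x+y)+2\equiv 0 \pmod{2^N},
\]
I would solve for $k$ modulo $2^{N-1}$. This is the $2$-adic analogue of the $q$-ary digit computations in the preceding lemmata; the invertibility of $x$ modulo $2$ is what makes the digits determined. The aim is to show that this residue is not tiny, so that $k\geqslant 2^{N-c}$ for a small constant $c$. That lower bound on $k$ would again make $(s+1)^2(t+1)/2$ exceed the upper bound for $|\cP|$.

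If the residue turns out to be small, I would instead follow \Cref{lem:moddivcond2}: bound $d=\gcd(s,t)$ by $\gcd(|\cP|-1,|\cL|-1)$ via a resultant/Bezout computation as in \Cref{lem:boundgcd}. This is uniform in $q_2$ once $r_0,r_1$ are fixed, but requires first bounding $r_0,r_1$. That step is the one I expect to be hardest, since neither the size estimates nor a fixed polynomial identity obviously handle all pairs of primes at once.
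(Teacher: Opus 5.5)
Your proposal has a genuine gap, at exactly the step you flag, and the tools you list cannot close it. When $k\equiv 2\pmod 4$, the congruence $k^2xy-k(x+y)+2\equiv 0\pmod{2^{12n_0(r_0-1)-1}}$ has a unique root modulo that power of $2$, since the derivative $2kxy-(x+y)$ is odd. But nothing you propose forces its least positive residue to be large. Moreover, $x$ and $y$ change with the pair $(r_0,r_1)$.

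The preceding lemmata manage with $q_0$-ary digit computations, or with a Bezout identity as in \Cref{lem:boundgcd}, only because $r_0$ is fixed there (or confined to $\{3,5,7\}$) and $q_0$ is the polynomial variable. Here the size test confines nothing. With $r_0<r_1$, the binding inequality $|\cL|^4<|\cP|^5$ reads roughly $4(1-1/r_1)<5(1-1/r_0)$. This holds for every $r_0\geqslant 5$ and every $r_1>r_0$, so infinitely many pairs survive, and your case $k\equiv 2\pmod 4$ stays open for all of them. Your case $k\equiv 0\pmod 4$ is fine. Two minor corrections: the size test does eliminate $(3,r_1)$ for $r_1\geqslant 7$, and the $2$-part of $st+1$ is exactly $(q/q_0)^{12}/2$, not $(q/q_0)^{12}/4$.

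The missing idea is to abandon the arithmetic of $s,t$ and pass to a fixed substructure.

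\begin{enumerate}[(1)]
\item Take $g$ in the unipotent class $u_5$. Then $|\Centraliser{G}{g}|=q^8$, $|\Centraliser{G_\alpha}{g}|=q_0^8$ and $|\Centraliser{G_L}{g}|=q_1^8$, and $u_5^G$ meets both subfield subgroups without splitting (\Cref{lem:largereeccls}).
\item Conjugate $L$ so that $g\in G_\alpha\cap G_L$. By \Cref{lem:cgtranss}, $\Centraliser{G}{g}$ is transitive on $\cP_g$ and $\cL_g$, with $|\cP_g|=(q/q_0)^8=2^a$ and $|\cL_g|=(q/q_1)^8=2^b$.
\item By \Cref{lem:suzsub}, the fixed structure is a generalised quadrangle of order $(s',t')$. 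Then \Cref{lem:suzupts} forces $b=a/2+1$ or $a=b/2+1$.
\item Both are impossible. Since $a=8n_0(r_0-1)$ and $b=8n_1(r_1-1)$ are divisible by $8$, the numbers $a/2+1$ and $b/2+1$ are odd while $a$ and $b$ are even.
\end{enumerate}

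This argument is uniform in $(r_0,r_1)$ and needs no size estimates at all.
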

\begin{proof}
Suppose for a contradiction that $G_L \cong \LargeRee{q_1}$
where $q_1 = 2^{n_1} > 2$ and $q = q_1^{r_1}$ for some prime $r_1 \neq r_0$. 
Consider the element $u_5 \in G$ as per \Cref{tab:ccls}. 
Then by \Cref{lem:largereeccls}, 
$u_5^G \cap G_{\alpha} \neq \varnothing$ and 
$u_5^G \cap G_L \neq \varnothing$. Hence, we can take 
$g \in u_5^G \cap G_{\alpha}$ and $h \in u_5^G
\cap G_L$. Since $g$ and $h$ are conjugate in $G$, there exists an $x \in G$
such that $g = h^x$. Thus, $g \in G_L^x$. Since $G_L^x = G_{L^\prime}$ where
$L^{\prime} = L^x \in \cL$, we can take $L$ to be $L^{\prime}$ and assume that
we have $g \in G_{\alpha} \cap G_L$. Let $\cP_g$ and $\cL_g$ be 
the set of points and the set of lines fixed by $g$, respectively. 
Using \Cref{tab:ccls}, we find that $|\Centraliser{G}{g}| =
q^{8}$, $|\Centraliser{G_{\alpha}}{g}| = q_0^8$ and 
$|\Centraliser{G_L}{g}| = q_1^8$. \\ 

Since the conjugacy class of $g$ does not split in $G_{\alpha}$ 
and $G_L$ by \Cref{lem:largereeccls}, 
we can apply \Cref{lem:cgtranss} to conclude that
$\Centraliser{G}{g}$ acts transitively on $\cP_g$ and $\cL_g$. Next, the 
number of fixed points and lines are 
\[
|\cP_g| = \paren{\frac{q}{q_0}}^8 = 2^{a} \ \textrm{and} \ |\cL_g| =
\paren{\frac{q}{q_1}}^8 = 2^{b}, 
\]
where $a = 8n_0(r_0-1)$ and $b = 8n_1(r_1-1)$. 
By \Cref{lem:suzsub}, the
fixed substructure, $(\cP_g,\cL_g,\cI_g)$ is a generalised quadrangle of
order $(s^{\prime},t^{\prime})$ for some positive integers $s^{\prime}$ and
$t^{\prime}$. Therefore, 
\[
|\cP_g| = (s^{\prime}+1)(s^{\prime}t^{\prime}+1) = 2^a \ \textrm{and} \ 
|\cL_g| = (t^{\prime}+1)(s^{\prime}t^{\prime}+1) = 2^b.
\]
By \Cref{lem:suzupts}, we have that $b = a/2+1$ or dually, $a = b/2+1$. Let us
suppose that $b = a/2+1$. Thus, 
\[
8n_1(r_1-1) = 4n_0(r_0-1)+1.
\]
However, this is a contradiction as the left hand side is even and the right
hand side is odd. We reach a similar contradiction if we consider the dual
case: $a = b/2+1$. Therefore, $G_L \ncong \LargeRee{q_1}$.
\end{proof}

We continue our next case with $G_{\alpha} \cong \LargeRee{q_0}$ 
where $q_0 = 2^{n_0} > 2$ and $q = q_0^{r_0}$ for some prime $r_0$ and 
show that $G_L$ cannot be isomorphic to $C_{q+1}^2 : \GLn{2}{3}$. 
We split this into cases: $r_0 > 3$ and $r_0 = 3$. 
\begin{lemma}
Suppose $G_{\alpha} \cong \LargeRee{q_0}$ where $q_0 = 2^{n_0} > 2$ and $q =
q_0^{r_0}$ for some prime $r_0 > 3$. Then $G_L$ cannot be isomorphic to
$C_{q+1}^2 : \GLn{2}{3}$. 
\end{lemma}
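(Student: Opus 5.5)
The plan is to combine the arithmetic of the two indices with the divisibility information in \Cref{lem:groupdivcond}~(i). We have
\[
|\cP| = \frac{q^{12}(q^6+1)(q^4-1)(q^3+1)(q-1)}{q_0^{12}(q_0^6+1)(q_0^4-1)(q_0^3+1)(q_0-1)}
\quad\text{and}\quad
|\cL| = \frac{1}{48}q^{12}(q^6+1)(q^2+1)(q-1)^2(q^2-q+1).
\]
A first check with \Cref{lem:modhigman}~\ref{item:modhighman2} and \Cref{tab:bounds} shows that $|\cP| \approx q^{26(1-1/r_0)}$ and $|\cL| \approx q^{24}$ are compatible for every prime $r_0 \geqslant 5$. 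So, unlike in the earlier lemmas, size alone does not finish the argument, and a genuine divisibility argument is needed.

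\emph{Step 1: coprimality with $q+1$.} By \Cref{cor:numcol}, $|\cL|-1 = t(s+1) + t^2 s = t(st+s+1)$, so $t \mid |\cL|-1$; dually $s \mid |\cP|-1$. By \Cref{lem:groupdivcond}~(i), $q+1$ divides $|\cL|-1$. Both $t+1$ and $st+1$ divide $|\cL|$, and $\gcd(|\cL|,|\cL|-1)=1$, so each of them is coprime to $q+1$. Hence $st+1$ divides
\[
D := \gcd(|\cP|,|\cL|)
\]
with every prime divisor of $q+1$ removed. In particular the factor $q+1 \mid q^3+1$ of $|\cP|$, and the factor $q+1 \mid q^4-1$, contribute nothing to $st+1$.

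\emph{Step 2: bound $D$ and the $2$-part.} Next I would bound this $D$ explicitly. Its $2$-part is at most $(q/q_0)^{12}$. Since $r_0>3$ is odd, \Cref{lem:amsans} shows that the odd part of $D$ lies inside
\[
\frac{(q^6+1)(q^2+1)(q-1)\,(q^2-q+1)/3}{(q_0^6+1)(q_0^2+1)(q_0-1)}.
\]
Then I would control the $2$-part of $st+1$ by the mod-$4$ argument used repeatedly above. Exactly one of $s+1$, $t+1$, $st+1$ carries almost all of the $2$-power; $t+1$ is coprime to $|\cL|-1$, so it is forced to be odd or to have small $2$-part. Running through these cases pins down whether the large power $q^{12}/q_0^{12}$ sits in $s+1$ or in $st+1$.

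\emph{Step 3: contradiction with Higman's inequality.} In each case I would compare with Higman's inequality. Since $s \leqslant t^2$, we get $s+1 \lesssim (st+1)^{2/3}$, hence
\[
st+1 \gtrsim |\cP|^{3/5} \approx q^{15.6(1-1/r_0)}.
\]
Symmetrically, $t+1 \lesssim (st+1)^{2/3}$ gives $st+1 \gtrsim |\cL|^{3/5} \approx q^{14.4}$. The aim is to show that the admissible part of $D$ (odd part, with the $q+1$ factors and the $2$-part handled as above) is too small. This should be done together with the exact factorisations in \Cref{lem:coprime} (cases $i=9,10,11$ are visibly tailored to $(q^{r}+1)/3$ versus $(q^6+1)(q^2+1)(q-1)(q^2-q+1)/3$). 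Where size is not quite enough, I would instead write $s+1 = kx$ and $t+1 = ky$ and use \Cref{lem:moddivcond2} with a gcd bound, as in the $\Sz{q}\wr C_2$ case.

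\emph{Main obstacle and fallback.} The main obstacle is Step 3: the common part of $|\cP|$ and $|\cL|$ is large (of order $q^{9}$ times a $2$-power), so the inequality may be tight. If it fails, my fallback is a fixed-point argument. I would take the unique class of elements of order $3$ (represented by $t_4$), which meets both stabilisers without splitting, and compute $|\cP_{t_4}|$ and $|\cL_{t_4}|$ via \Cref{lem:cgtranss}. Then I would apply \Cref{lem:suzsub} and \Cref{lem:condsonst} to the fixed subquadrangle. This requires first computing $|\Centraliser{G_L}{t_4}|$ inside $C_{q+1}^2:\GLn{2}{3}$, which is the delicate point I have not verified.
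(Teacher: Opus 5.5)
There is a genuine gap. None of Steps 1--3 produces a contradiction, and the fallback does not work either.

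By \Cref{lem:coprime} ($i=9$) you have $s+1=kx$ and $t+1=ky$, where $x=((q+1)/(q_0+1))^2$ and $y=q_0^{12}(q_0^6+1)(q_0^2+1)(q_0-1)^2(q_0^2-q_0+1)/48$. In fact $\gcd(|\cP|,|\cL|)=|\cP|/x$. Its odd part therefore involves $((q-1)/(q_0-1))^2$, not $(q-1)/(q_0-1)$ as you state, and the gcd is of order $(q/q_0)^{24}=q^{24(1-1/r_0)}$. For $r_0=5$ this is about $q^{19.2}$, well above your lower bound $st+1\gtrsim q^{14.4}$.

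Indeed, the real solution of $kx\bigl((kx-1)(ky-1)+1\bigr)=|\cP|$ has $k$ of order $q_0^{64/3}$. The resulting $s,t$ satisfy Higman's inequality and every size constraint you list, so no comparison of sizes can finish the argument. The route through \Cref{lem:moddivcond2} needs a bound on $\gcd(s,t)$ that you do not have. \Cref{lem:boundgcd} covers only two specific pairs with $r_0=3$, whereas here such a bound would have to hold uniformly in $r_0$.

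Your 2-part argument is also a non sequitur. The fact that ``$t+1$ is coprime to $|\cL|-1$'' says nothing about the $2$-part of $t+1$, because $|\cL|-1$ is odd. In fact $t+1=ky$ has $2$-part at least $q_0^{12}/8$.

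The fixed-point fallback fails because $3\mid q+1$. Then $G_L$ contains elements of order $3$ inside the normal torus $C_{q+1}^2$ and elements of order $3$ outside it. These are not $G_L$-conjugate, yet all are $G$-conjugate. So $t_4^G\cap G_L\neq t_4^{G_L}$, and by \Cref{lem:cgtranss} $\Centraliser{G}{t_4}$ is not transitive on $\cL_{t_4}$. Hence \Cref{lem:suzsub} cannot be invoked. The $C^2_{q+\epsilon\sqrt{2q}+1}:4S_4$ case works precisely because $3\nmid q+\epsilon\sqrt{2q}+1$.

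You are missing the two ingredients the paper actually uses.

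\emph{First ingredient.} The number $(q+1)/(q_0+1)$ divides both $s+1$ and $|\cL|-1=t^2s+t(s+1)$ by \Cref{lem:groupdivcond}, hence it divides $t^2$. So $s+t\equiv -1$ modulo each of its primes. After excluding $k\equiv 0\pmod 4$ by size, write $k=2\ell$ with $\ell$ odd; then $s+t\equiv -2$ modulo each prime of $\ell$. Therefore $s+t\mid 4sty$. \Cref{lem:moddivcond} then gives $s+t\mid 4y(x-y)^2$, with no $\gcd(s,t)$ needed. This bounds $\ell$ and eliminates every $r_0\geqslant 11$.

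\emph{Second ingredient.} For $r_0\in\{5,7\}$ even this bound is not enough. The decisive input is the exact $2$-adic condition that $(q/q_0)^{12}/2$ divides $st+1$. Solving this condition digit by digit in base $q_0$ forces $k\geqslant q_0^{23}$ (resp.\ $k\geqslant q_0^{40}$). This contradicts $|\cP|\leqslant 2(q/q_0)^{26}$. Your plan contains no lower bound on $k$ of this kind, and without one the cases $r_0=5,7$ remain open.
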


\begin{proof}
Suppose for a contradiction that $G_L \cong C_{q+1}^2 : \GLn{2}{3}$. Then 
\begin{equation}
|\cP| = 
\frac{q^{12}(q^6+1)(q^4-1)(q^3+1)(q-1)}{q_0^{12}(q_0^6+1)(q_0^4-1)(q_0^3+1) 
(q_0-1)} \ \textrm{and} \ |\cL| = \frac{1}{48} q^{12}
(q^6+1)(q^2+1)(q-1)^2(q^2-q+1). \label{eqn:star}
\end{equation}
Thus,
\[
\frac{s+1}{t+1} =
\frac{48(q+1)^2}{q_0^{12}(q_0^6+1)(q_0^4-1)(q_0^3+1)(q_0-1)}.
\]
Cross multiplying, we obtain 
\[
\frac{1}{48}q_0^{12}(q_0^6+1)(q_0^2+1)(q_0-1)^2(q_0^2-q_0+1)(s+1) =
\paren{\frac{q+1}{q_0+1}}^2 (t+1).
\]
By \Cref{lem:coprime} ($i=9$), 
we find that $(q+1)^2/(q_0+1)^2$ divides $s+1$ and 
$q_0^{12}(q_0^6+1)(q_0^2+1)(q_0-1)^2(q_0^2-q_0+1)/48$ 
divides $t+1$. Hence, 
\[
s+1 = k\paren{\frac{q+1}{q_0+1}}^2 \ \textrm{and} \ t+1 = 
k\frac{q_0^{12}(q_0^6+1)(q_0^2+1)(q_0-1)^2(q_0^2-q_0+1)}{48}, 
\]
where $k$ is some positive integer. 
As before, we deduce that $k$ is even. For if $k$ were to be odd, then $s$ is
even, which means that $|\cP| = (s+1)(st+1)$ is odd. This is not the case as
$q/q_0$ divides $|\cP|$. Therefore, $k \equiv 0$ or $2 \pmod 4$. \\ 

Suppose first that $k \equiv 0
\pmod 4$, then $s \equiv -1 \pmod 4$ and $t \equiv -1 \pmod 4$. Whence, $st+1
\equiv 2 \pmod 4$. Thus, the $2$-part of $st+1$ is $2$ and consequently, the
$2$-part of $s+1$ is $(q/q_0)^{12}/2$. 
Since $s+1 = k(q+1)^2/(q_0+1)^2$, the $2$-part of $k$
is $(q/q_0)^{12}/2$. Therefore, 
\begin{equation}
s+1 = k\paren{\frac{q+1}{q_0+1}}^2 > k \cdot \frac{q^2}{4q_0^2} \geqslant 
\frac{1}{2} \paren{\frac{q}{q_0}}^{12} \cdot \frac{q^2}{4q_0^2} =
\frac{1}{2^3} \paren{\frac{q}{q_0}}^{14} \label{eqn:plus}
\end{equation}
and 
\begin{equation}
t+1 = k\frac{q_0^{12}(q_0^6+1)(q_0^2+1)(q_0-1)^2(q_0^2-q_0+1)}{48} \geqslant k
\cdot \frac{q_0^{24}}{96} \geqslant \frac{1}{2} \paren{\frac{q}{q_0}}^{12}
\cdot \frac{q_0^{24}}{96} = \frac{1}{192} (q q_0)^{12}. \label{eqn:plusplus}
\end{equation}

Applying \Cref{lem:modhigman} \ref{item:modhigman1}, we find that 
\[
|\cP| > \frac{(s+1)^2(t+1)}{2} > \frac{1}{2} 
\paren{\frac{(q/q_0)^{14}}{2^3}}^2 \frac{(qq_0)^{12}}{192} = 
\frac{1}{2^{7} \cdot 192} \frac{q^{40}}{q_0^{16}}.
\]
However, this contradicts the upper bound from \Cref{tab:bounds}, namely 
that $|\cP|
\leqslant 2 (q/q_0)^{26}$. So, in fact, $k \equiv 2 \pmod 4$. \\ 

Since $k \equiv 2 \pmod 4$, 
the $2$-part of $s+1$ is $2$. Since $|\cP| = (s+1)(st+1)$, the expression 
for $|\cP|$ in \eqref{eqn:star} implies that the $2$-part of $st+1$ must be
$(q/q_0)^{12}/2$. Now, write $k = 2\ell$ where $\ell$ is odd. 
By \Cref{lem:groupdivcond}, we find that $q+1$ divides $|\cL|-1$. 
Consequently, $(q+1)/(q_0+1)$ divides $|\cL|-1 = (t+1)(st+1)-1 = t^2s +
t(s+1)$. Since $(q+1)/(q_0+1)$ divides $s+1$, we can deduce that
$(q+1)/(q_0+1)$ divides $t^2s$. Moreover, $s$ and $s+1$ are coprime and so, 
$(q+1)/(q_0+1)$ divides $t^2$. Therefore, $s+t$ is coprime to $(q+1)/(q_0+1)$.
To see why, consider a prime $p$ dividing $(q+1)/(q_0+1)$. Then $s \equiv -1
\pmod p$ and $t \equiv 0 \pmod p$, whence, $s+t \equiv -1 \nequiv 0 \pmod p$. 
Furthermore, observe that $\ell$
is also coprime to $s+t$ since $s+t \equiv -2 \pmod p$ for any prime $p$
dividing $\ell$. 
Since $\ell$ is odd, $p$ is also odd and hence, $s+t \nequiv 0
\pmod p$. \\ 

Define 
\[
x \defi \paren{\frac{q+1}{q_0+1}}^2 \ \textrm{and} \ y \defi \frac{q_0^{12}(q_0^6+1)(q_0^2+1)(q_0-1)^2(q_0^2-q_0+1)}{48}
\]
so that $s+1 = 2\ell x$ and $t+1 = 2\ell y$. By \Cref{lem:condsonst}, $s+t$ divides 
\[
st(s+1)(t+1) = st \cdot 2\ell x \cdot 
2\ell 
y.
\]
Since $s+t$ is coprime to $\ell$ and $(q+1)/(q_0+1)$, we conclude that 
$s+t$ divides $4sty$. 
Applying \Cref{lem:moddivcond} with $A = 4y$, we can deduce that $s+t$ divides $y(s-t)^2 = y(s+1-(t+1))^2 = y(2\ell x - 2 \ell y)^2 = 4\ell^2 y(x-y)^2$. 
Since $s+t$ is coprime to $\ell$, it follows that $s+t$ divides $4y(x-y)^2$. 
Therefore, 
\begin{equation}
s+t \leqslant 4y(x-y)^2 = \paren{\paren{\frac{q+1}{q_0+1}}^2 - 
\frac{q_0^{12}(q_0^6+1)(q_0^2+1)(q_0-1)^2(q_0^2-q_0+1)}{48}}^2.
\label{eqn:splust}
\end{equation} \\

Suppose first that $r_0 \geqslant 13$. We prove that $y \leqslant x$. Using \Cref{tab:bounds} ($i=5$), we obtain 
\[
y = \frac{q_0^{12}(q_0^6+1)(q_0^2+1)(q_0-1)^2(q_0^2-q_0+1)}{48} \leqslant
\frac{q_0^{24}}{48} \leqslant \frac{q_0^{24}}{4} \leqslant
\frac{q_0^{2(r_0-1)}}{4} = \paren{\frac{q}{2q_0}}^2 \leqslant
\paren{\frac{q+1}{q_0+1}}^2 = x
\]
and so, $s+t \leqslant 4y (x-y)^2 \leqslant 4yx^2$.
In addition to this, we also have $s+t > s+1 = 2\ell x$. Thus, we have $2 \ell x = s+1 < s+t \leqslant 4yx^2$ and so, $\ell < 2xy$. Writing this out explicitly, we have 
\[
\ell < 2 \paren{\frac{q+1}{q_0+1}}^2 y \leqslant 2 \cdot \paren{\frac{2q}{q_0}}^2 \cdot \frac{q_0^{24}}{48}.
\]

Simplifying, we find  
\[
\ell < \frac{q_0^{24}}{6} \cdot q_0^{2(r_0-1)} < q_0^{2r_0+22}.
\]

Hence,  
\[
s+1 = 2\ell \paren{\frac{q+1}{q_0+1}}^2 < 2 \cdot q_0^{2r_0+22} \cdot
\paren{\frac{2q}{q_0}}^2 = 2 q_0^{2 r_0+22} \cdot 4q_0^{2(r_0-1)} = 2^3
q_0^{4r_0+20}  
\]
and 
\[
t+1 = 2\ell \frac{q_0^{12}(q_0^6+1)(q_0^2+1)(q_0-1)^2(q_0^2-q_0+1)}{48} < 2 
q_0^{2 r_0+22} \cdot \frac{q_0^{24}}{48} = \frac{q_0^{2r_0+46}}{24}.
\]
As a consequence, 
\[
|\cP| = (s+1)(st+1) < (s+1)(s+1)(t+1) = (s+1)^2(t+1) < \paren{2^3
q_0^{4r_0+20}}^2 \frac{q_0^{2r_0+46}}{24} = \frac{8q_0^{10r_0+86}}{3}.
\]
Using the lower bound from \Cref{tab:bounds}, namely, $|\cP| \geqslant
(q/q_0)^{26}/2$, we obtain 
\[
\frac{1}{2} q_0^{26(r_0-1)} \leqslant |\cP| < \frac{8}{3} q_0^{10r_0+86} <
4q_0^{10r_0+86}. 
\]
Simplifying further, we find $q_0^{16r_0} < 8q_0^{112} \leqslant q_0^{113}$, 
and so, $r_0 \leqslant 8$. This is a contradiction as we assumed $r_0
\geqslant 13$. Thus, $r_0 < 13$. \\

Since, $r_0$ is a prime, we have $r_0 \leqslant 11$. 
We show that $x \leqslant y$. Indeed, by \Cref{tab:bounds} 
($i=5$), we have $q_0^{24}/96 \leqslant y$ and so, we find 
{\small
\[
x = \paren{\frac{q+1}{q_0+1}}^2 \leqslant \paren{\frac{2q}{q_0}}^2 =
4q_0^{2(r_0-1)} \leqslant 
4q_0^{20} \leqslant
\frac{q_0^{24}}{96} \leqslant
\frac{q_0^{12}(q_0^6+1)(q_0^2+1)(q_0-1)^2(q_0^2-q_0+1)}{48} = y,
\]
}
In addition to this, we also have $s+t > t+1 = 2\ell y$. 
Combining this with inequality (\ref{eqn:splust})), we obtain 
$2\ell y < s+t < 4y(x-y)^2 \leqslant 4y y^2$. 
Simplifying and using $y \leqslant q_0^{24}/48$ from \Cref{tab:bounds} ($i=5$), 
we find 
\[
\ell < 2y^2 = 2 \cdot \paren{\frac{q_0^{24}}{48}}^2 = \frac{q_0^{48}}{1152} < \frac{q_0^{48}}{2^{10}}. 
\]
Hence, 
\[
s+1 = 2\ell \paren{\frac{q+1}{q_0+1}}^2 < 2 \cdot \frac{q_0^{48}}{2^{10}} \cdot
\paren{\frac{2q}{q_0}}^2 =  \frac{q_0^{48} \cdot q_0^{2(r_0-1)}}{2^7} = 
\frac{q_0^{2r_0+46}}{2^7}  
\]
and 
\[
t+1 = 2\ell \frac{q_0^{12}(q_0^6+1)(q_0^2+1)(q_0-1)^2(q_0^2-q_0+1)}{48} < 2
\cdot \frac{q_0^{48}}{2^{10}} \cdot \frac{q_0^{24}}{48} = 
\frac{q_0^{72}}{2^9 \cdot 48}.
\]
As a consequence, 
\[
|\cP| = (s+1)(st+1) < (s+1)((s+1)(t+1)) = (s+1)^2(t+1) < 
\paren{\frac{q_0^{2r_0+46}}{2^7}}^2 
\frac{q_0^{72}}{2^9 \cdot 48} = \frac{q_0^{4r_0+164}}{2^{23} \cdot 48}.
\]
Using the lower bound from \Cref{tab:bounds}, namely, $|\cP| \geqslant
(q/q_0)^{26}/2$, we obtain 
\[
\frac{1}{2} q_0^{26(r_0-1)} \leqslant |\cP| < 
\frac{q_0^{4r_0+164}}{2^{23} \cdot 48} < \frac{q_0^{4r_0+164}}{2^{28}}.
\]
Simplifying further, we find $q_0^{22r_0} < q_0^{190}/2^{27}$, 
from which, we see that $22r_0 < 190$, \emph{i.e.}, $r_0 \leqslant 8$. Since $r_0$ is a prime greater than $3$ (by hypothesis), we have that $r_0 \in \{5,7\}$. \\

\underline{Suppose now $r_0 = 5$.} Then by \eqref{eqn:plus}, 
\[
s+1 = k\paren{\frac{q+1}{q_0+1}}^2 = k(q_0^4-q_0^3+q_0^2-q_0+1)^2  
\]
and by \eqref{eqn:plusplus}, 
\[
t+1 = 
k\frac{q_0^{12}(q_0^6+1)(q_0^2+1)(q_0-1)^2(q_0^2-q_0+1)}{48}, 
\]
where $k$ is some positive integer whose $2$-part is $2$. 
Now, we write $k$ in $q_0$-ary, \emph{i.e.}, $k = a_0 + a_1 q_0 + \cdots + 
a_n q_0^n$, where $a_0,a_1,\dots,a_n$ are integers such that 
$0 \leqslant a_0,a_1,\dots,a_n < q_0$. Using this expression for $k$, we 
compute $3(st+1)$ modulo powers of $q_0$ and solve for $a_0,\dots,a_n$.\footnote{The
reason for considering $3(st+1)$ instead of simply $st+1$ is so that we don't 
run across modular conditions where we have to divide by $3$. This problem
occurs because $t+1$ is an expression that is divided by $48$.} This
computation is performed using Mathematica (see \Cref{sec:solving}
\Cref{lis:gl23stplus1r5} for $q_0 > 128$ and \Cref{lis:finiteq} lines \ref{line:stplus1r5small1}--\ref{line:stplus1r5small2} for $q_0 \leqslant 128$). 
This data is summarised in \Cref{tab:stplus1r5small1} for $q_0 > 128$ and in \Cref{tab:stplus1r5small1cases} for $q_0 \leqslant 128$. 



From \Cref{tab:stplus1r5small1,tab:stplus1r5small1cases}, we see that 
$k \geqslant
a_{23}q_0^{23} \geqslant q_0^{23}$ for all values of $q_0$. 
From which, we obtain the bounds: 
\[
s+1 = k \paren{\frac{q+1}{q_0+1}}^2 > k \paren{\frac{q_0^5}{2q_0}}^2 \geqslant 
q_0^{23} \cdot \frac{q_0^8}{2^2} = \frac{q_0^{31}}{2^2} 
\]
and 
\[
t+1 = k\frac{q_0^{12}(q_0^6+1)(q_0^2+1)(q_0-1)^2(q_0^2-q_0+1)}{48} > k 
\frac{q_0^{24}}{48}  \geqslant 
q_0^{23} \frac{q_0^{24}}{48} \geqslant \frac{q_0^{47}}{2^6}.
\]
Applying \Cref{lem:modhigman} \ref{item:modhigman1}, we find that 
\[
|\cP| > \frac{(s+1)^2(t+1)}{2} > \frac{1}{2} \paren{\frac{q_0^{31}}{2^2}}^2
\cdot \frac{q_0^{47}}{2^6} > \frac{q_0^{109}}{2^{11}}.
\]
From \Cref{tab:bounds}, we find $|\cP| \leqslant 2(q/q_0)^{26} = 2q_0^{104}$. 
Thus, 
\[
\frac{q_0^{109}}{2^{11}} < |\cP| \leqslant 2q_0^{104},
\]
from which, we obtain $q_0^{5} < 2^{12}$, \emph{i.e.}, $q_0 < 6$. 
This is a contradiction as $q_0 \geqslant 8$. 
Therefore, $r_0 \neq 5$ and so, $r_0 = 7$. 

\begin{longtable}{lll}
\caption{$st+1$ modulo powers of $q_0$ for $G_{\alpha} \cong \LargeRee{q_0}$
where $q = q_0^5$ and $G_L \cong C_{q+1}^2 : \GLn{2}{3}$.\label{tab:stplus1r5small1}}\\
\toprule
\thead{$c$} & \thead{$3(st+1) \pmod {q_0^c}$} & \thead{Solution} \\
\midrule
$1$ & $3(2-a_0)$ & $a_0 = 2$ \\
\midrule
$2$ & $3(4-a_1)q_0$ & $a_1 = 4$ \\
\midrule
$3$ & $3(2-a_2)q_0^2$ & $a_2 = 2$ \\
\midrule
$4$ & $3(-a_3)q_0^3$ & $a_3 = 0$ \\
\midrule
$5$ & $3(-a_4)q_0^4$ & $a_4 = 0$ \\
\midrule
$6$ & $3(-4-a_5)q_0^5$ & $a_5 = q_0-4$ \\
\midrule
$7$ & $3(-9-a_6)q_0^6$ & $a_6 = q_0-9$ \\
\midrule
$8$ & $3(-5-a_7)q_0^7$ & $a_7 = q_0-5$ \\
\midrule
$9$ & $3(-1-a_8)q_0^8$ & $a_8 = q_0-1$ \\
\midrule
$10$ & $3(-1-a_9)q_0^9$ & $a_9 = q_0-1$ \\
\midrule
$11$ & $3(5-a_{10})q_0^{10}$ & $a_{10} = 5$ \\
\midrule
$12$ & $\ds\paren{36-3a_{11}-\frac{431q_0}{8}}q_0^{11}$ & 
$a_{11} = 3q_0/8+12$ \\
\midrule
$13$ & $\ds\paren{17-3a_{12}-\frac{271q_0}{8}}q_0^{12}$ & 
$a_{12} = (q/8+17)/3$ \\
\midrule
$14$ & $\ds \paren{-3a_{13} - \frac{q_0}{8}}q_0^{13}$ & $a_{13} = 5q/8$ \\
\midrule
$15$ & $\ds\paren{-2-3a_{14}}q_0^{14}$ & $a_{14} = (q-2)/3$ \\
\midrule
$16$ & $\ds\paren{-25-3a_{15}}q_0^{15}$ & $a_{15} = (2q-25)/3$ \\
\midrule
$17$ & $\ds\paren{-50-3a_{16}+\frac{301q_0}{4}}q_0^{16}$ & 
$a_{16} = (q/4-50)/3$ \\
\midrule
$18$ & $\ds\paren{-25-3a_{17}+\frac{397q_0}{8}}q_0^{17}$ & 
$a_{17} = (13q/8-25)/3$ \\
\midrule
$19$ & $\ds\paren{-2-3a_{18}+\frac{37q_0}{8}}q_0^{18}$ & 
$a_{18} = (5q/8-2)/3$ \\
\midrule
$20$ & $\ds\paren{-3a_{19}+\frac{119q_0}{4}}q_0^{19}$ & $a_{19} = q/4$ \\
\midrule
$21$ & $\ds\paren{29-3a_{20}+\frac{17q_0}{8}}q_0^{20}$ & 
$a_{20} = (q/8+29)/3$ \\
\midrule
$22$ & $\ds\paren{60-3a_{21}-\frac{701q_0}{8}}q_0^{21}$ & 
$a_{21} = q/8+20$ \\
\midrule
$23$ & $\ds\paren{32-3a_{22}-\frac{127q_0}{2}}q_0^{22}$ & 
$a_{22} = (q/2+32)/3$ \\
\midrule
$24$ & $\ds\paren{-3a_{23}-\frac{223q_0}{128}}q_0^{23}$ & 
$a_{23} = 11q_0/128$ \\
\bottomrule
\end{longtable}

\pagebreak

\begin{longtable}{ll}
\caption{$a_{23}$ for $8 \leqslant q_0 \leqslant 128$.\label{tab:stplus1r5small1cases}}\\
\toprule
\thead{$q_0$} & \thead{$a_{23}$} \\
\midrule
8 & 2\\
\midrule
32 & 3\\
\midrule
128 & 11\\
\bottomrule
\end{longtable}

\underline{Finally, suppose $r_0 = 7$.} Then by \eqref{eqn:plus}, 
\[
s+1 = k\paren{\frac{q+1}{q_0+1}}^2 = k(q_0^6-q_0^5+q_0^4-q_0^3+q_0^2-q_0+1)^2  
\]
and by \eqref{eqn:plusplus}, 
\[
t+1 = 
k\frac{q_0^{12}(q_0^6+1)(q_0^2+1)(q_0-1)^2(q_0^2-q_0+1)}{48}, 
\]
where $k$ is some positive integer whose $2$-part is $2$. 
Now, we write $k = a_0 + a_1 q_0 + \cdots + a_{n} q_0^{n}$, where
$0 \leqslant a_0,\dots,a_{n} < q_0$. Using this expression for $k$, we 
compute $27(st+1)$ modulo powers of $q_0$ and solve for
$a_0,\dots,a_{n}$. 
This computation is performed using Mathematica (see \Cref{sec:solving}
\Cref{lis:gl23stplus1r7} for $q_0 > 2048$ and \Cref{lis:finiteq} lines 
\ref{line:stplus1r7small1}--\ref{line:stplus1r7small2} for $q_0 \leqslant 2048$). 
This data is summarised in \Cref{tab:stplus1r7small1} for 
$q_0 > 2048$ and in \Cref{tab:stplus1r7small1cases} for $q_0 \leqslant 2048$. 

{\afterpage
{\small
\begin{longtable}{lll}	
\caption{$st+1$ modulo powers of $q_0$ for $G_{\alpha} \cong \LargeRee{q_0}$
where $q = q_0^7$ and $G_L \cong C_{q+1}^2 : \GLn{2}{3}$.
\label{tab:stplus1r7small1}} \\
\toprule
\thead{$c$} & \thead{$3(st+1) \pmod {q_0^c}$} & \thead{Solution} \\
\midrule
$1$ & $27(2-a_0)$ & $a_0 = 2$ \\
\midrule
$2$ & $27(4-a_1)q_0$ & $a_1 = 4$ \\
\midrule
$3$ & $27(2-a_2)q_0^2$ & $a_2 = 2$ \\
\midrule
$4$ & $27(-a_3)q_0^3$ & $a_3 = 0$ \\
\midrule
$5$ & $27(-a_4)q_0^4$ & $a_4 = 0$ \\
\midrule
$6$ & $27(-a_5)q_0^5$ & $a_5 = 0$ \\
\midrule
$7$ & $27(-a_6)q_0^6$ & $a_6 = 0$ \\
\midrule
$8$ & $27(-4-a_7)q_0^7$ & $a_7 = q_0-4$ \\
\midrule
$9$ & $27(-9-a_8)q_0^8$ & $a_8 = q_0-9$ \\
\midrule
$10$ & $27(-5-a_9)q_0^9$ & $a_9 = q_0-5$ \\
\midrule
$11$ & $27(-1-a_{10})q_0^{10}$ & $a_{10} = q_0-1$ \\
\midrule
$12$ & $\ds 3\paren{-3-3a_{11}+\frac{49q_0}{8}}q_0^{11}$ & 
$a_{11} = 3q_0/8-1$ \\
\midrule
$13$ & $\ds 3 \paren{-1-3a_{12}+\frac{17q_0}{8}}q_0^{12}$ & 
$a_{12} = (q_0/8-1)/3$ \\
\midrule
$14$ & $\ds 3\paren{-3a_{13}+\frac{143q_0}{8}}q_0^{13}$ & $a_{13} = 5q_0/8$ \\
\midrule
$15$ & $3(16-3a_{14})q_0^{14}$ & $a_{14} = (q_0+16)/3$ \\
\midrule
$16$ & $3(35-3a_{15})q_0^{15}$ & $a_{15} = (2q_0+35)/3$ \\
\midrule
$17$ & $\ds 3 \paren{16-3a_{16}-\frac{129q_0}{4}}q_0^{16}$ & 
$a_{16} = (7q_0/4+16)/3$ \\
\midrule
$18$ & $\ds 3\paren{-2-3a_{17}+\frac{33q_0}{8}}q_0^{17}$ & 
$a_{17} = (17q_0/8-2)/3$ \\
\midrule
$19$ & $\ds 3\paren{-2-3a_{18}+\frac{39q_0}{8}}q_0^{18}$ & 
$a_{18} = (5q_0/8-2)/3$ \\
\midrule
$20$ & $\ds 3\paren{-1-3a_{19}+\frac{5q_0}{4}}q_0^{19}$ & 
$a_{19} = (5q_0/4-1)/3$ \\
\midrule
$\vdots$ & $\vdots$ & $\vdots$ \\
\midrule
$41$ & $\ds \paren{-4-b_{39}-27a_{40}+\frac{55349q_0}{512}}q_0^{40}$ & 
$a_{40} = ((53q + 512b_{40}q)/512 - 4 - b_{39})/27$, \\
& & where $0 \leqslant b_{40} < 27$. \\ 
\bottomrule
\end{longtable}}
}
\begin{longtable}{ll}
\caption{$a_{40}$ for $8 \leqslant q_0 \leqslant 2048$.
\label{tab:stplus1r7small1cases}} \\
\toprule
\thead{$q_0$} & \thead{$a_{40}$} \\
\midrule
8 & 5\\
\midrule
32 & 16\\
\midrule
128 & 120\\
\midrule
512 & 504\\
\midrule
2048 & 993\\
\bottomrule
\end{longtable}

In \Cref{tab:stplus1r7small1}, note that $a_{40} \neq 0$ since 
$b_{39},b_{40} \in \{0,\dots,26\}$. Thus, from \Cref{tab:stplus1r7small1,tab:stplus1r7small1cases}, 
we see that $k \geqslant
a_{40}q_0^{40} \geqslant q_0^{40}$ for all values of $q_0$. 
From which, we obtain the bounds: 
\[
s+1 = k \paren{\frac{q+1}{q_0+1}}^2 > k \paren{\frac{q_0^7}{2q_0}}^2 \geqslant 
q_0^{40} \cdot \frac{q_0^8}{2^2} = \frac{q_0^{48}}{2^2} 
\]
and 
\[
t+1 = k\frac{q_0^{12}(q_0^6+1)(q_0^2+1)(q_0-1)^2(q_0^2-q_0+1)}{48} > k 
\frac{q_0^{24}}{48}  \geqslant 
q_0^{40} \frac{q_0^{24}}{48} \geqslant \frac{q_0^{64}}{2^6}.
\]
Applying \Cref{lem:modhigman} \ref{item:modhigman1}, we find that 
\[
|\cP| > \frac{(s+1)^2(t+1)}{2} > \frac{1}{2} \paren{\frac{q_0^{48}}{2^2}}^2
\cdot \frac{q_0^{64}}{2^6} > \frac{q_0^{160}}{2^{11}}.
\]
From \Cref{tab:bounds}, we find that 
$|\cP| \leqslant 2(q/q_0)^{26} = 2q_0^{156}$. Thus, 
\[
\frac{q_0^{160}}{2^{11}} < |\cP| \leqslant 2q_0^{156},
\]
from which, we obtain $q_0^4 < 2^{12}$, \emph{i.e.}, 
$q_0 < 2^{12/4} = 2^3 = 8$. 
This is a contradiction as $q_0 \geqslant 8$. Therefore, $G_L \ncong 
C_{q+1}^2 : \GLn{2}{3}$.
\end{proof}

Now we consider the case where $r_0 = 3$.
\begin{lemma}
Suppose $G_{\alpha} \cong \LargeRee{q_0}$ where $q_0 = 2^{n_0} > 2$ and $q =
q_0^{3}$. Then $G_L$ cannot be isomorphic to 
$C_{q+1}^2 : \GLn{2}{3}$. 
\end{lemma}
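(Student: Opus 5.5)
We argue as in the preceding lemma, except that for $r_0 = 3$ the factor $q+1$ mostly cancels against $|G_\alpha|$, and we replace the long $q_0$-ary digit computation by a single congruence argument. Suppose for a contradiction that $G_L \cong C_{q+1}^2 : \GLn{2}{3}$. Substituting $q = q_0^3$ into \eqref{eqn:star} and using $q+1 = (q_0+1)(q_0^2-q_0+1)$ gives
\[
\frac{s+1}{t+1} = \frac{48(q_0^2-q_0+1)}{q_0^{12}(q_0^6+1)(q_0^2+1)(q_0-1)^2}.
\]
Since $q_0 \equiv -1 \pmod 3$, we have $q_0^2-q_0+1 = (q_0+1)^2 - 3q_0 \equiv -3q_0 \pmod 9$, so $3$ divides $q_0^2-q_0+1$ exactly once. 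Hence the fraction reduces to $x/y$ with
\[
x \defi \frac{q_0^2-q_0+1}{3}, \qquad y \defi \frac{q_0^{12}(q_0^6+1)(q_0^2+1)(q_0-1)^2}{16}.
\]
By \Cref{lem:coprime} ($i=10$, applied with $q_0$ in place of $q$), together with the observation that $x$ is odd, $x$ and $y$ are coprime. So $s+1 = kx$ and $t+1 = ky$ for some positive integer $k$.

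The usual parity arguments then apply. First, $k$ is even, since otherwise $s$ is even and $|\cP| = (s+1)(st+1)$ is odd, whereas $q/q_0$ divides $|\cP|$. Next, suppose $k \equiv 0 \pmod 4$. Then $s \equiv t \equiv -1 \pmod 4$, so $st+1 \equiv 2 \pmod 4$, and the $2$-part of $s+1$ is $(q/q_0)^{12}/2 = q_0^{24}/2$. Since $x$ is odd, this forces $k \geqslant q_0^{24}/2$. Then \Cref{lem:modhigman} \ref{item:modhigman1} gives a lower bound on $|\cP|$ far above the upper bound $2(q/q_0)^{26} = 2q_0^{52}$ from \Cref{tab:bounds}. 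Therefore $k \equiv 2 \pmod 4$, the $2$-part of $s+1$ is $2$, and so the $2$-part of $st+1$ is $q_0^{24}/2$.

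The key step is to turn this into a lower bound on $k$. Put $M \defi q_0^{12}/8$. Since $k$ is even and $q_0^{12}/16$ divides $y$, we get $t+1 = ky \equiv 0 \pmod M$, that is, $t \equiv -1 \pmod M$. Hence $st+1 \equiv 1-s \pmod M$. Because $M$ divides $q_0^{24}/2$, which in turn divides $st+1$, it follows that $s \equiv 1 \pmod M$, and so $kx = s+1 \equiv 2 \pmod M$. We cannot have $kx = 2$, because $k \geqslant 2$ and $x > 1$. Therefore $kx \geqslant M+2$, giving
\[
k > \frac{M}{x} \geqslant \frac{q_0^{12}/8}{q_0^2} = \frac{q_0^{10}}{8}.
\]
Finally, \Cref{lem:modhigman} \ref{item:modhigman1} yields
\[
|\cP| > \frac{(s+1)^2(t+1)}{2} = \frac{k^3x^2y}{2}.
\]
With $k > q_0^{10}/8$, $x > q_0^2/6$ and $y \geqslant q_0^{24}/32$, the right-hand side is of order $q_0^{58}$ up to a constant. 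This exceeds $2q_0^{52}$ for all $q_0 \geqslant 8$, which is the required contradiction. If the constants turn out tight for $q_0 = 8$, that single case can be checked by computer, as in \Cref{lis:finite}.

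The main obstacle is the congruence step. One has to confirm carefully that $q_0^{12}/16$ really divides $y$: its $2$-part is exactly $q_0^{12}/16$, since $(q_0^6+1)(q_0^2+1)(q_0-1)^2$ is odd. One also needs the factor $2$ from $k$ to upgrade divisibility of $t+1$ to the modulus $M = q_0^{12}/8$. A weaker modulus would still work, since $k \gtrsim q_0^{8}$ already suffices. If this approach fails, the fallback is the $q_0$-ary digit computation of $st+1$ used in the $r_0 = 5, 7$ cases.
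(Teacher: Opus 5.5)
\textbf{There is a genuine gap: as written, the argument does not close the case $q_0=8$.} Your congruence step itself is sound, and it is the same mechanism the paper uses. Since $t\equiv -1 \pmod{q_0^{11}}$, the rows of \Cref{tab:stplus1r3small1} are just $s+1\equiv 2 \pmod{q_0^{11}}$ solved digit by digit. The problem is the bookkeeping around it.

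First, your reduction is off by a factor of $9$. In $48(q_0^2-q_0+1)=16\cdot 3(q_0^2-q_0+1)$, the denominator is prime to $3$, so nothing can cancel the $3$'s. Hence $(s+1)/(t+1)=3(q_0^2-q_0+1)/y$, and \Cref{lem:coprime} ($i=10$) gives $s+1=3k(q_0^2-q_0+1)$ and $t+1=ky$, as in the paper. Your pair $s+1=kx$, $t+1=ky$ contradicts the ratio. Second, $y$ has degree $22$, not $24$.

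Your parity and congruence steps go through verbatim with $X\defi 3(q_0^2-q_0+1)$ in place of $x$. However, the most they give is $s+1\geqslant q_0^{12}/8+2$, and hence
\[
|\cP| > \frac{(s+1)^3}{2}\cdot\frac{t+1}{s+1} \geqslant \frac{1}{2}\paren{\frac{q_0^{12}}{8}}^3 \frac{q_0^{12}(q_0^6+1)(q_0^2+1)(q_0-1)^2}{48(q_0^2-q_0+1)}.
\]
This is roughly $q_0^{56}/(5\cdot 10^4)$, not of order $q_0^{58}$.

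For $q_0\geqslant 32$ this exceeds $2q_0^{52}$, so there your route is a valid shortcut. At $q_0=8$ the right-hand side is about $6.7\cdot 10^{45}$, whereas $|\cP|\approx 1.04\cdot 10^{47}$. Since $|\cP|<(s+1)^2(t+1)$ anyway, size considerations alone cannot exclude $s+1$ of order $2^{34}$ there. So $q=2^9$ is left open: your computer-check fallback is announced but not carried out. Your closing remark that $k\gtrsim q_0^{8}$ would suffice is also false once the constants are corrected.

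\textbf{The fix is to keep the residue rather than only using $kX\geqslant M+2$.} Since $X$ is odd, $kX\equiv 2 \pmod{q_0^{11}}$ determines $k$ modulo $q_0^{11}$ completely, and $q_0$-adically
\[
\frac{2}{3(q_0^2-q_0+1)}=\frac{2(q_0+1)}{3}\cdot\frac{1}{1+q_0^3}\equiv \frac{2(q_0+1)}{3}\paren{1-q_0^3+q_0^6-q_0^9} \pmod{q_0^{11}}.
\]
Its base-$q_0$ digits are $\frac{2(q_0+1)}{3},0,0,\frac{q_0-2}{3},q_0-1,q_0-1,\frac{2q_0-1}{3},0,0,\frac{q_0-2}{3},q_0-1$. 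Thus $k\geqslant (q_0-1)q_0^{10}$, which pushes $|\cP|$ above $2q_0^{52}$ for every $q_0\geqslant 8$ at once. This is exactly what \Cref{tab:stplus1r3small1} and the paper's concluding estimate do.
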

\begin{proof}
Suppose for a contradiction that $G_L \cong C_{q+1}^2 : \GLn{2}{3}$. 
Then we have 
\[
|\cP| = 
\frac{q^{12}(q^6+1)(q^4-1)(q^3+1)(q-1)}{q_0^{12}(q_0^6+1)(q_0^4-1)(q_0^3+1) 
(q_0-1)} \ \textrm{and} \ |\cL| = \frac{1}{48} q^{12}
(q^6+1)(q^2+1)(q-1)^2(q^2-q+1).
\]
Thus, 
\[
\frac{s+1}{t+1} = \frac{48(q+1)^2}{q_0^{12}(q_0^6+1)(q_0^4-1)(q_0^3+1)(q_0-1)}
= \frac{48(q_0^2-q_0+1)}{q_0^{12}(q_0^6+1)(q_0^2+1)(q_0-1)^2}.
\] 
Cross multiplying, we obtain
\[
\frac{q_0^{12}}{16} (q_0^6+1)(q_0^2+1)(q_0-1)^2 (s+1) = 3(q_0^2-q_0+1) (t+1). 
\]
By \Cref{lem:coprime} ($i=10$), 
we find that $3(q_0^2-q_0+1)$ divides $s+1$ and 
$q_0^{12}(q_0^6+1)(q_0^2+1)(q_0-1)^2/16$ divides $t+1$. Hence, 
\[
s+1 = 3k(q_0^2-q_0+1) \ \textrm{and} \ t+1 = 
k\frac{q_0^{12}}{16}(q_0^6+1)(q_0^2+1)(q_0-1)^2, 
\]
where $k$ is some positive integer. 
As before, we deduce that $k$ is even. For if $k$ were to be odd, then $s$ is
even, which means that $|\cP| = (s+1)(st+1)$ is odd. This is not the case as
$q/q_0$ divides $|\cP|$. 
Therefore, $k \equiv 0$ or $2 \pmod 4$. \\ 

Suppose first that $k \equiv 0
\pmod 4$, then $s \equiv -1 \pmod 4$ and $t \equiv -1 \pmod 4$. Whence, $st+1
\equiv 2 \pmod 4$. Thus, the $2$-part of $st+1$ is $2$ and consequently, the
$2$-part of $s+1$ is $(q/q_0)^{12}/2$. 
Since $s+1 = 3k(q_0^2-q_0+1)$, the $2$-part of $k$ is $(q/q_0)^{12}/2$.
From this, we obtain the following bounds: 
\[
s+1 = 3k (q_0^2-q_0+1) > 3k \frac{q_0^2}{2} \geqslant 
3\frac{q^{12}}{2q_0^{12}} \cdot \frac{q_0^2}{2} = \frac{3q_0^{26}}{4} >
\frac{q_0^{26}}{2} 
\]
and 
\[
t+1 = k \frac{q_0^{12}}{16} (q_0^6+1)(q_0^2+1)(q_0-1)^2 > k
\frac{q_0^{12}}{16} \cdot
\frac{q_0^{10}}{2} \geqslant \frac{q^{12}}{2q_0^{12}} \cdot
\frac{q_0^{12}}{16} \cdot \frac{q_0^{10}}{2} = \frac{q_0^{46}}{2^{6}}.
\]

Applying \Cref{lem:modhigman} \ref{item:modhigman1}, we find that 
\[
|\cP| > \frac{(s+1)^2(t+1)}{2} > \frac{1}{2} \paren{\frac{q_0^{26}}{2}}^2
\frac{q_0^{46}}{2^6} = \frac{q_0^{98}}{2^8}.
\]
However, this contradicts the upper bound from \Cref{tab:bounds}, namely that 
$|\cP| \leqslant 2(q/q_0)^{26} = 2q_0^{52}$. 
So, in fact, $k \equiv 2 \pmod 4$. \\

When $k \equiv 2 \pmod 4$, the
$2$-part of $s+1$ is $2$ and the $2$-part of $st+1$ must be
$(q/q_0)^{12}/2 = q_0^{24}/2$. 
Now, we write $k$ in $q_0$-ary, \emph{i.e.}, $k = a_0 + a_1 q_0 + \cdots + 
a_n q_0^n$, where $a_0,a_1,\dots,a_n$ are integers such that 
$0 \leqslant a_0,a_1,\dots,a_n < q_0$. Using this expression for $k$, we 
compute $st+1$ modulo powers of $q_0$ and solve for $a_0,\dots,a_n$. This
computation is performed using Mathematica (see \Cref{sec:solving}
\Cref{lis:gl23stplus1r3}) and it is summarised in \Cref{tab:stplus1r3small1}.

\afterpage{%
{\clearpage
\begin{longtable}{lll}
\caption{$st+1$ modulo powers of $q_0$ for $G_{\alpha} \cong \LargeRee{q_0}$
where $q = q_0^3$ and $G_L \cong C_{q+1}^2 : \GLn{2}{3}$. 
\label{tab:stplus1r3small1}}\\
\toprule
\thead{$c$} & \thead{$st+1 \pmod {q_0^c}$} & \thead{Solution} \\
\midrule
$1$ & $2-3a_0$ & $a_0 = 2(q_0+1)/3$ \\
\midrule
$2$ & $(-3a_1)q_0$ & $a_1 = 0$ \\
\midrule
$3$ & $(-3a_2)q_0^2$ & $a_2 = 0$ \\
\midrule
$4$ & $(-2-3a_3)q_0^3$ & $a_3 = (q_0-2)/3$ \\
\midrule
$5$ & $(-3-3a_4)q_0^4$ & $a_4 = q_0-1$ \\
\midrule
$6$ & $(-3-3a_5)q_0^5$ & $a_5 = q_0-1$ \\
\midrule
$7$ & $(-1-3a_6)q_0^6$ & $a_6 = (2q_0-1)/3$ \\
\midrule
$8$ & $(-3a_7)q_0^7$ & $a_7 = 0$ \\
\midrule
$9$ & $(-3a_8)q_0^8$ & $a_8 = 0$ \\
\midrule
$10$ & $(-2-3a_9)q_0^9$ & $a_9 = (q_0-2)/3$ \\
\midrule
$11$ & $(-3-3a_{10})q_0^{10}$ & $a_{10} = q_0-1$ \\
\bottomrule
\end{longtable}}}

Examining the last row of \Cref{tab:stplus1r3small1}, we see that $k \geqslant
(q_0-1)q_0^{10} \geqslant q_0^{11}/2$. From which, we obtain the bounds: 
\[
s+1 = 3k (q_0^2-q_0+1) > 2k \frac{q_0^2}{2} \geqslant \frac{q_0^{11}}{2} 
\cdot q_0^2 = \frac{q_0^{13}}{2} 
\]
and 
\[
t+1 = k\frac{q_0^{12}}{16}(q_0^6+1)(q_0^2+1)(q_0-1)^2 > k \frac{q_0^{12}}{16} 
\cdot \frac{q_0^{10}}{2} \geqslant \frac{q_0^{11}}{2} \cdot
\frac{q_0^{22}}{2^5} = \frac{q_0^{33}}{2^6}.
\]
Applying \Cref{lem:modhigman} \ref{item:modhigman1}, we find that 
\[
|\cP| > \frac{(s+1)^2(t+1)}{2} > \frac{1}{2} \paren{\frac{q_0^{13}}{2}}^2
\cdot \frac{q_0^{33}}{2^6} > \frac{q_0^{59}}{2^8}.
\]
However, this contradicts the upper bound from \Cref{tab:bounds}, namely that 
$|\cP| \leqslant 2(q/q_0)^{26} = 2q_0^{52}$. Therefore, $G_L \ncong C_{q+1}^2 : \GLn{2}{3}$. 
\end{proof}

\begin{lemma}
Suppose $G_{\alpha} \cong \LargeRee{q_0}$ where $q_0 = 2^{n_0} > 2$ and $q =
q_0^{r_0}$ for some prime $r_0$. Then $G_L$ cannot be isomorphic to
$C_{q+\epsilon\sqrt{2q}+1}^2 : 4S_4$, where $\epsilon \in \{1,-1\}$.  
\end{lemma}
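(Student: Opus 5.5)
The plan is to follow the template of the two preceding lemmata for $C_{q+1}^2:\GLn{2}{3}$: first restrict $r_0$ by size, then pin down $s+1$ and $t+1$ up to a common factor $k$, and finally force $k$ to be so large that $|\cP|$ exceeds its upper bound in \Cref{tab:bounds}. Suppose for a contradiction that $G_L \cong C_{q+\epsilon\sqrt{2q}+1}^2 : 4S_4$. By \Cref{tab:bounds} we have $|\cP| \leqslant 2(q/q_0)^{26}$ and $|\cL| \geqslant q^{24}/384$.

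\textbf{Step 1 (restricting $r_0$).} Applying \Cref{lem:modhigman} \ref{item:modhighman2} in the form $|\cL|^4 < |\cP|^5$ gives
\[
\frac{q^{96}}{384^4} < 2^5\, \frac{q^{130}}{q_0^{130}},
\]
that is, $q_0^{130} < 2^5 \cdot 384^4 \, q^{34}$. When $r_0 = 3$ this reads $q_0^{28} < 2^5 \cdot 384^4$, which is false for $q_0 \geqslant 8$. So only $r_0 \geqslant 5$ survives.

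\textbf{Step 2 (a fixed-substructure test for $r_0 \geqslant 5$).} Before the heavier argument, I would try a cheap test using the element $t_4$ of order $3$. By \Cref{lem:largereeccls}, its class meets both $G_\alpha$ and $G_L$ without splitting. As in \Cref{lem:q0first}, we may conjugate so that $t_4 \in G_\alpha \cap G_L$. Then \Cref{lem:cgtranss} and \Cref{tab:ccls} give
\[
|\cP_{t_4}| = \frac{q^3(q^2-1)(q^3+1)}{q_0^3(q_0^2-1)(q_0^3+1)} \quad \text{and} \quad |\cL_{t_4}| = \frac{q^3(q^2-1)(q^3+1)}{12}.
\]
By \Cref{lem:suzsub}, the fixed substructure is a generalised quadrangle of some order $(s',t')$. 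Comparing $|\cP_{t_4}| = (s'+1)(s't'+1)$ and $|\cL_{t_4}| = (t'+1)(s't'+1)$ yields the ratio $(s'+1)/(t'+1) = 12/(q_0^3(q_0^2-1)(q_0^3+1))$. I would combine this with the parity of $q$ (so $(s'+1)(s't'+1)$ has large $2$-part while $t'+1$ must absorb the odd cofactor) and with \Cref{lem:modhigman} for $(s',t')$. The hope is a contradiction; failing that, at least an explicit small form for $s'$ and $t'$.

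\textbf{Step 3 (main route, mirroring the $\GLn{2}{3}$ case).} Compute
\[
\frac{s+1}{t+1} = \frac{|\cP|}{|\cL|}, \qquad \text{whose numerator carries } (q+\epsilon\sqrt{2q}+1)^2 \text{ up to the } r_0\text{-dependent cyclotomic factors of } q_0.
\]
Using coprimality facts in the style of \Cref{lem:coprime}, write $s+1 = kx$ and $t+1 = ky$ with explicit $x,y$. Next:
\begin{enumerate}[(a)]
\item Show $k$ is even, and exclude $k \equiv 0 \pmod 4$ by the usual $2$-part argument together with \Cref{lem:modhigman} \ref{item:modhigman1}.
\item With $k = 2\ell$ and $\ell$ odd, use \Cref{lem:groupdivcond} (ii): $q+\epsilon\sqrt{2q}+1$ divides $|\cL|-1$. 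Since the relevant factor of $q+\epsilon\sqrt{2q}+1$ divides $s+1$, it must divide $t^2$, so it is coprime to $s+t$.
\item Note that $\ell$ is coprime to $s+t$, since $s+t \equiv -2$ modulo any odd prime dividing $\ell$.
\item Apply \Cref{lem:condsonst} \ref{item:higman2} and \Cref{lem:moddivcond} to get $s+t \mid 4y(x-y)^2$.
\item Compare with $s+t > \max(s+1,t+1)$ to bound $\ell$ polynomially in $q_0$. Set this against the lower bound $|\cP| \geqslant (q/q_0)^{26}/2$ to obtain $r_0 \leqslant$ a small constant, leaving only $r_0 \in \{5,7\}$.
\end{enumerate}

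\textbf{Step 4 (finishing $r_0 \in \{5,7\}$) and the main obstacle.} For each remaining $r_0$, expand $k$ in base $q_0$. Since the $2$-part of $st+1$ must be $(q/q_0)^{12}/2$, solving $st+1 \equiv 0$ modulo successive powers of $q_0$ with computer algebra forces many digits of $k$. The target is some $a_j \neq 0$ with $j$ large enough that $k \geqslant q_0^{j}$ contradicts $|\cP| \leqslant 2(q/q_0)^{26}$; small $q_0$ would be checked directly. The main obstacle is that $\sqrt{2q} = 2^{(n_0 r_0+1)/2}$ is not a polynomial in $q_0$. I would therefore substitute $u = \sqrt{2q}$ and work with $q_0$-adic digits where $u$ is a fixed power of $2$ relative to $q_0$, treating $\epsilon = \pm 1$ separately. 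If that becomes unwieldy, I would fall back on Step 2 to eliminate these cases.
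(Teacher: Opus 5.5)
Your Step 1 is correct and useful. With $|\cP|\leqslant 2(q/q_0)^{26}$ and $|\cL|\geqslant q^{24}/384$, the inequality $|\cL|^4<|\cP|^5$ becomes $q_0^{130-34r_0}<2^{33}\cdot 81$, which rules out $r_0=3$.

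Step 3 can probably be pushed through to $r_0\in\{5,7\}$, but only once you prove the coprimality statement it rests on. That statement is that $(q+\epsilon\sqrt{2q}+1)^2$ shares with $q_0^{12}(q_0^6+1)(q_0^4-1)(q_0^3+1)(q_0-1)/96$ exactly the square of the appropriate factor $q_0\pm\sqrt{2q_0}+1$ of $q_0^2+1$.

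The real gap is that the argument stops exactly where the contradiction must be produced. Step 4 only names a target. To beat $|\cP|\leqslant 2(q/q_0)^{26}$ via $k^3x^2y/2$, you would need the forced base-$q_0$ digits of $k$ to be nonzero at depth roughly $22$ when $r_0=5$ and roughly $37$ when $r_0=7$. Nothing you have written shows this. It is a long computation, made messier by $\sqrt{2q_0}$-terms and carries, and its outcome is not known in advance. Step 2 is likewise left as a hope, so as it stands the lemma is not proved.

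The fix is that Step 2 is not a cheap side test but the whole proof, and it is exactly the route the paper takes. Because $|\Centraliser{G_L}{t_4}|=12$ for both signs of $\epsilon$, the fixed structure has $|\cP_{t_4}|=q^3(q^2-1)(q^3+1)/(q_0^3(q_0^2-1)(q_0^3+1))$ and $|\cL_{t_4}|=q^3(q^2-1)(q^3+1)/12$. So $\sqrt{2q}$ disappears entirely. Moreover $t'+1=a(s'+1)$ with the integer $a=q_0^3(q_0^2-1)(q_0^3+1)/12$, so no coprimality lemma is needed. You then run your Step 3 machinery on $(s',t')$:
\begin{enumerate}[(1)]
\item $k=s'+1$ is even.
\item $k\equiv 0\pmod 4$ is impossible by size against $|\cP_{t_4}|\leqslant 2(q/q_0)^8$.
\item For $k=2\ell$ with $\ell$ odd, $s'+t'$ is prime to $\ell$. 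So \Cref{lem:moddivcond} gives $s'+t'\mid 4a(a-1)^2$, hence $\ell\leqslant 2(a-1)^2$. Comparing with $|\cP_{t_4}|>\frac12(q/q_0)^8$ leaves $r_0\in\{3,5,7\}$.
\item The $2$-part of $s't'+1$ is $q_0^{3(r_0-1)}/2$. Solving $s't'+1\equiv 0$ modulo powers of $q_0$ is a purely polynomial computation in $q_0$. It forces a nonzero digit $a_4$, $a_{11}$, $a_{15}$ respectively, i.e.\ $k\geqslant q_0^4,q_0^{11},q_0^{15}$. Each of these contradicts $|\cP_{t_4}|\leqslant 2(q/q_0)^8$.
\end{enumerate}
The required $q_0$-adic depth is far smaller than on your route, and your Step 1 even spares you the case $r_0=3$.
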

\begin{proof}
Suppose for a contradiction that $G_L \cong 
C_{q+\epsilon\sqrt{2q}+1}^2 : 4S_4$. 
Consider the element $t_4 \in G$ as per \Cref{tab:ccls}. 
Then by \Cref{lem:largereeccls}, 
$t_4^G \cap G_{\alpha} \neq \varnothing$ and 
$t_4^G \cap G_L \neq \varnothing$. Hence, we can take 
$g \in t_4^G \cap G_{\alpha}$ and $h \in t_4^G
\cap G_L$. Since $g$ and $h$ are conjugate in $G$, there exists an $x \in G$
such that $g = h^x$. Thus, $g \in G_L^x$. Since $G_L^x = G_{L^\prime}$ where
$L^{\prime} = L^x \in \cL$, we can take $L$ to be $L^{\prime}$ and assume that
we have $g \in G_{\alpha} \cap G_L$. Let $\cP_g$ and $\cL_g$ be 
the set of points and the set of lines fixed by $g$, respectively. 
Using \Cref{tab:ccls}, we find that 
\[
|\Centraliser{G}{g}| =
q^3(q^2-1)(q^3+1), \ |\Centraliser{G_{\alpha}}{g}| = 
q_0^3(q_0^2-1)(q_0^3+1) \ \textrm{and} \ 
|\Centraliser{G_L}{g}| = 12. 
\]

Since the conjugacy class of $g$ does not split in $G_{\alpha}$ 
and $G_L$ by \Cref{lem:largereeccls}, 
we can apply \Cref{lem:cgtranss} to conclude that
$\Centraliser{G}{g}$ acts transitively on $\cP_g$ and $\cL_g$. Next, the 
number of fixed points and lines are 

\[
|\cP_g| = \frac{q^3(q^2-1)(q^3+1)}{q_0^3(q_0^2-1)(q_0^3+1)} 
\ \textrm{and} \ |\cL_g| = \frac{q^3(q^2-1)(q^3+1)}{12}. 
\]
By \Cref{lem:suzsub}, the
fixed substructure, $(\cP_g,\cL_g,\cI_g)$ is a generalised quadrangle of
order $(s^{\prime},t^{\prime})$ for some positive integers $s^{\prime}$ and
$t^{\prime}$. Therefore, 
\[
|\cP_g| = (s^{\prime}+1)(s^{\prime}t^{\prime}+1) = 
\frac{q^3(q^2-1)(q^3+1)}{q_0^3(q_0^2-1)(q_0^3+1)} \ \textrm{and} \ 
|\cL_g| = (t^{\prime}+1)(s^{\prime}t^{\prime}+1) = 
\frac{q^3(q^2-1)(q^3+1)}{12}.
\]
Hence,
\[
\frac{s^{\prime}+1}{t^{\prime}+1} = \frac{12}{q_0^3(q_0^2-1)(q_0^3+1)}.
\]
Since $q_0^3(q_0^2-1)(q_0^3+1)$ is divisible by $12$, we have 
\[
t^{\prime}+1 = \frac{q_0^3(q_0^2-1)(q_0^3+1)}{12}(s^{\prime}+1).
\]
Write $s^{\prime}+1 = k$. 
As before, we deduce that $k$ is even. For if $k$ were to be odd, then 
$s^{\prime}$ is
even, which means that $|\cP_g| = (s^{\prime}+1)(s^{\prime}t^{\prime}+1)$ is 
odd. This is not the case as
$q/q_0$ divides $|\cP_g|$. Therefore, $k \equiv 0$ or $2 \pmod 4$. \\

Suppose first that $k \equiv 0
\pmod 4$. Then $s^{\prime} \equiv -1 \pmod 4$ and $t^{\prime} \equiv 
-1 \pmod 4$. Whence, $s^{\prime}t^{\prime}+1
\equiv 2 \pmod 4$. Thus, the $2$-part of $s^{\prime}t^{\prime}+1$ is $2$ and 
consequently, the
$2$-part of $s^{\prime}+1$ is $(q/q_0)^{3}/2$. Therefore, 
\[
s^{\prime}+1 = k \geqslant \frac{1}{2} \paren{\frac{q}{q_0}}^{3} 
\]
and 
\[
t^{\prime}+1 = k\frac{q_0^3(q_0^2-1)(q_0^3+1)}{12} 
\geqslant k \frac{q_0^8}{24} \geqslant
\frac{1}{2} \paren{\frac{q}{q_0}}^{3} \cdot \frac{q_0^{8}}{24} =
\frac{1}{48} q^{3} q_0^{5}.
\]

Applying \Cref{lem:modhigman} \ref{item:modhigman1}, we find that 
\[
|\cP_g| > \frac{(s^{\prime}+1)^2(t^{\prime}+1)}{2} > 
\frac{1}{2} 
\paren{\frac{(q/q_0)^{3}}{2}}^2 \frac{q^{3}q_0^5}{48} = 
\frac{1}{384} \frac{q^{9}}{q_0} = \frac{1}{384} q q_0^7 
\paren{\frac{q}{q_0}}^{8}. 
\]
However, this contradicts the following upper bound, namely,  
\[ 
|\cP_g| = \frac{q^3(q^2-1)(q^3+1)}{q_0^3(q_0^2-1)(q_0^3+1)} 
\leqslant 2\paren{\frac{q}{q_0}}^{8}. 
\]
Thus, $k \equiv 2 \pmod 4$. \\ 

Suppose now that $k \equiv 2 \pmod 4$. Then 
the $2$-part of $s^{\prime}+1$ is $2$ and so, the $2$-part of 
$s^{\prime}t^{\prime}+1$ must be
$(q/q_0)^{3}/2$. Write $k \defi 2\ell$ where $\ell$ is odd. 
Define 
\[
a \defi \frac{q_0^3(q_0^2-1)(q_0^3+1)}{12}.
\]
Then by \Cref{lem:condsonst} \ref{item:higman2}, 
$s^{\prime}+t^{\prime}$ divides 
\[
s^{\prime}t^{\prime}(s^{\prime}+1)(t^{\prime}+1) 
= s^{\prime}t^{\prime} \cdot 2\ell \cdot 2\ell a.
\]
Since $s^{\prime}+t^{\prime} \equiv -1-1 \equiv -2 \pmod \ell$, 
it follows that 
$s^{\prime}+t^{\prime}$ is coprime to $\ell$. 
Thus, $s^{\prime}+t^{\prime}$ divides $4as^{\prime}t^{\prime}$. By
\Cref{lem:moddivcond}, we find that $s^{\prime}+t^{\prime}$ divides 
$a(s^{\prime}-t^{\prime})^2 = a((s^{\prime}+1)-(t^{\prime}+1))^2 =
a(2\ell(1-a))^2 = 4a\ell^2 (a-1)^2$. 
Since $s^{\prime}+t^{\prime}$ is coprime to $\ell$, it
follows that $s^{\prime}+t^{\prime}$ divides $4a(a-1)^2$. 
Since $s^{\prime}+t^{\prime} = (s^{\prime}+1)+(t^{\prime}+1)-2 =
2\ell+2\ell a - 2 = 2(\ell(a+1)-1)$, we see that $\ell(a+1)-1$ divides
$2a(a-1)^2$. Thus, $\ell(a+1)-1 \leqslant 2a(a-1)^2$. This implies that $\ell
a \leqslant \ell a + \ell - 1 \leqslant 2a(a-1)^2$. Hence, 
\[
\ell \leqslant 2(a-1)^2 \leqslant 2a^2 = 2
\paren{\frac{q_0^3(q_0^2-1)(q_0^3+1)}{12}}^2 
< 2\paren{\frac{q_0^8}{8}}^2 = \frac{q_0^{16}}{2^5}.
\]
From which, $s+1 = 2\ell < q_0^{16}/2^4$ and 
$t+1 = 2\ell a < q_0^{16}/2^4 \cdot q_0^{8}/2^3 = q_0^{24}/2^7$. 
This gives us an upper bound on the number of points:
\[
|\cP_g| = (s^{\prime}+1)(s^{\prime}t^{\prime}+1) < 
(s^{\prime}+1)(s^{\prime}+1)(t^{\prime}+1) 
= (s^{\prime}+1)^2(t^{\prime}+1) < 
\frac{q_0^{32}}{2^{10}} \cdot \frac{q_0^{24}}{2^7} = \frac{q_0^{56}}{2^{17}}. 
\]
But we also have 
\begin{equation}
|\cP_g| = \frac{q^3(q^2-1)(q^3+1)}{q_0^3(q_0^2-1)(q_0^3+1)} > \frac{1}{2}
\paren{\frac{q}{q_0}}^8. \label{ineqn:pm}
\end{equation}
Combining the two inequalities and solving for $q$, we find that $q^8 < 
q_0^{64}/2^{16} < q_0^{64}$. Since $q =
q_0^{r_0}$, it follows that $8r_0 < 64$, \emph{i.e.}, $r_0 \in \{3,5,7\}$. \\

We now consider the cases where $r_0 \in \{3,5,7\}$.
First, recall that we have
\[
s^{\prime}+1 = k \ \textrm{and} \ 
t^{\prime}+1 = k\frac{q_0^3(q_0^2-1)(q_0^3+1)}{12},
\]
where $k$ is some positive integer whose $2$-part is $2$. 
Moreover, the $2$-part of $st+1$ is $(q/q_0)^3/2 = q_0^{3(r_0-1)}/2$. 
Now, we write $k$ in $q_0$-ary, \emph{i.e.}, $k = a_0 + a_1 q_0 + \cdots + 
a_n q_0^n$, where $a_0,a_1,\dots,a_n$ are integers such that 
$0 \leqslant a_0,a_1,\dots,a_n < q_0$. Using this expression for $k$, we 
compute $3^5(st+1)$ modulo powers of $q_0$ and solve for $a_0,\dots,a_n$
by setting $3^5(st+1) \equiv 0$ modulo powers of $q_0$. 
This computation is performed using Mathematica (see \Cref{sec:solving}
\Cref{lis:pmstplus1r7} for $q_0 > 2048$ and \Cref{lis:finiteq} lines 
\ref{line:pmstplus1r7small1}--\ref{line:pmstplus1r7small2} for 
$q_0 \leqslant 2048$). 
This data is summarised in \Cref{tab:pmstplus1r7small1} for 
$q_0 > 2048$ and in \Cref{tab:pmstplus1r7small1cases} 
for $q_0 \leqslant 2048$. 
\afterpage{
{\small
\begin{longtable}{lll}	
\caption{$s^{\prime}t^{\prime}+1$ modulo powers of 
$q_0$ for $G_{\alpha} \cong \LargeRee{q_0}$
where $q = q_0$ and $G_L \cong C_{q+\epsilon\sqrt{2q}+1}^2 : 4S_4$.
\label{tab:pmstplus1r7small1}} \\
\toprule
\thead{$c$} & \thead{$243(s^{\prime}t^{\prime}+1) \pmod {q_0^c}$} & 
\thead{Solution} \\
\midrule
$1$ & $243(2-a_0)$ & $a_0 = 2$ \\
\midrule
$2$ & $243(-a_1)q_0$ & $a_1 = 0$ \\
\midrule
$3$ & $\ds 81\paren{-3a_2-\frac{q_0}{2}}q_0^2$ & $a_2 = q_0/2$ \\
\midrule
$4$ & $81(-2-3a_3)q_0^3$ & $a_3 = (q_0-2)/3$ \\
\midrule
$5$ & $\ds 81\paren{-1-3a_4+\frac{q_0}{2}}q_0^4$ & $a_4 = (q_0/2-1)/3$ \\
\midrule
$6$ & $\ds 81\paren{-3a_5-\frac{3q_0}{8}}q_0^5$ & $a_5 = 7q_0/8$ \\
\midrule
$7$ & $81(-3-3a_6)q_0^6$ & $a_6 = q_0-1$ \\
\midrule
$8$ & $\ds 81\paren{-3-3a_7+\frac{q_0}{4}}q_0^7$ & $a_7 = 3q_0/4-1$ \\
\midrule
$9$ & $\ds 9\paren{-18-27a_8+\frac{61q_0}{32}}q_0^8$ & 
$a_8 = (-18+q_0+29q_0/32+b_8 q_0)/27$, where $b_8 \in \{0,\dots,26\}$ \\
\midrule 
$10$ & $\ds 9\paren{-b_8-27a_9-\frac{9q_0}{8}}q_0^9$ & 
$a_9 = (q_0/8 - b_8 + b_{9}q_0)/27$, where $b_9 \in \{0,\dots,26\}$ \\ \\
\midrule
$11$ & $\ds 9\paren{1-b_9-27a_{10}-\frac{111q_0}{32}}q_0^{10}$ & 
$a_{10} = (1+17q_0/32 - b_{9} + b_{10}q_0)/27$, 
where $b_{10} \in \{0,\dots,26\}$ \\ \\
\midrule
$12$ & $\ds 9\paren{-4-b_{10}-27a_{11}+\frac{27q_0}{128}}q_0^{11}$ & 
$a_{11} = (27q_0/128 - 4 - b_{10} + b_{11}q_0)/27$, 
where $b_{11} \in \{0,\dots,26\}$ \\ \\
\midrule
$13$ & $\ds 9 \paren{-b_{11}-27a_{12}+\frac{39q_0}{32}}q_0^{12}$ & 
$a_{12} = (7q_0/32 - b_{11} + b_{12}q_0)/27$, 
where $b_{12} \in \{0,\dots,26\}$ \\ \\
\midrule
$14$ & $\ds 9\paren{1-b_{12}-27a_{13}+\frac{3q_0}{8}}q_0^{13}$ & 
$a_{13} = (3q_0/8 + 1 - b_{12} + b_{13}q_0)/27$, 
where $b_{13} \in \{0,\dots,26\}$ \\ \\
\midrule
$15$ & $\ds \paren{-9b_{13}-243a_{14}-\frac{1205q_0}{512}}q_0^{14}$ & 
$a_{14} = (331q_0/512 - 9b_{13} + b_{14}q_0)/243$, 
where $b_{14} \in \{0,\dots,242\}$ \\ \\
\midrule
$16$ & $\ds \paren{-3-b_{14}-243a_{15}-\frac{729q_0}{64}} q_0^{15}$ & 
$a_{15} = (39q_0/64 - 3 - b_{14} + b_{15}q_0)/243$, 
where $b_{15} \in \{0,\dots,242\}$ \\ \\
\bottomrule
\end{longtable}}

\pagebreak

\begin{longtable}{llll}
\caption{$a_4, a_{10}$ and $a_{15}$ for $8 \leqslant q_0 \leqslant 2048$. \label{tab:pmstplus1r7small1cases}}\\
\toprule
\thead{$q_0$} & \thead{$a_4$} & \thead{$a_{10}$} & \thead{$a_{15}$} \\
\midrule
8 & 1 & 7 & 2\\
\midrule
32 & 5 & 8 & 5\\
\midrule
128 & 21 & 59 & 64\\
\midrule
512 & 85 & 123 & 163\\
\midrule
2048 & 341 & 723 & 1968\\
\bottomrule
\end{longtable}
}

\underline{Suppose $r_0 = 3$.} Since the $2$-part of $st+1$ 
is $(q/q_0)^{3}/2 = q_0^{6}/2$, we can use the solution in \Cref{tab:pmstplus1r7small1,tab:pmstplus1r7small1cases} for 
$a_4$. Note that $a_4 \neq 0$. Thus, $k \geqslant q_0^{4}$. 
From this, we obtain the bounds: 
\[
s^{\prime}+1 = k \geqslant q_0^{4}
\]
and 
\[
t^{\prime}+1 = 
k\frac{q_0^{12}(q_0^2-1)(q_0^3+1)}{12} \geqslant q_0^{4}  \cdot
\frac{q_0^8}{24} > \frac{q_0^{12}}{2^5}.
\]
Applying \Cref{lem:modhigman} \ref{item:modhigman1}, we find that 
\[
|\cP| > \frac{(s^{\prime}+1)^2(t^{\prime}+1)}{2} > \frac{1}{2} 
\paren{q_0^{4}}^2
\cdot \frac{q_0^{12}}{2^5} > \frac{q_0^{20}}{2^6}.
\]
This contradicts the bound in \eqref{ineqn:pm}, namely, 
$|\cP| \leqslant 2(q/q_0)^{8} = 2q_0^{16}$. \\

\underline{Suppose $r_0 = 5$.} Since the $2$-part of $st+1$ 
is $(q/q_0)^{3}/2 = q_0^{12}/2$, we can use the solution in \Cref{tab:pmstplus1r7small1,tab:pmstplus1r7small1cases} for 
$a_{11}$. Note that $a_{11} \neq 0$. 
Thus, $k \geqslant q_0^{11}$. 
From this, we obtain the bounds: 
\[
s^{\prime}+1 = k \geqslant q_0^{11}
\]
and 
\[
t^{\prime}+1 = 
k\frac{q_0^{12}(q_0^2-1)(q_0^3+1)}{12} \geqslant q_0^{11}  \cdot
\frac{q_0^8}{24} > \frac{q_0^{19}}{2^5}.
\]
Applying \Cref{lem:modhigman} \ref{item:modhigman1}, we find that 
\[
|\cP| > \frac{(s^{\prime}+1)^2(t^{\prime}+1)}{2} > \frac{1}{2} 
\paren{q_0^{11}}^2
\cdot \frac{q_0^{19}}{2^5} > \frac{q_0^{41}}{2^6}.
\]
This contradicts the bound in \eqref{ineqn:pm}, namely, 
$|\cP| \leqslant 2(q/q_0)^{8} = 2q_0^{32}$. \\

\underline{Suppose $r_0 = 7$.} Since the $2$-part of $st+1$ 
is $(q/q_0)^{3}/2 = q_0^{18}/2$, we can use the solution in \Cref{tab:pmstplus1r7small1,tab:pmstplus1r7small1cases} for 
$a_{15}$. 
Note that $a_{15} \neq 0$ for $q_0 > 2048$ 
since $b_{14}, b_{15} \in \{0,\dots,242\}$. 
From \Cref{tab:pmstplus1r7small1cases}, $a_{15} \neq 0$ for 
$q_0 \leqslant 2048$. Thus, $k \geqslant q_0^{15}$. From this, 
we obtain the bounds: 
\[
s^{\prime}+1 = k \geqslant q_0^{15}
\]
and 
\[
t^{\prime}+1 = 
k\frac{q_0^{12}(q_0^2-1)(q_0^3+1)}{12} \geqslant q_0^{15}  \cdot
\frac{q_0^8}{24} > \frac{q_0^{23}}{2^5}.
\]
Applying \Cref{lem:modhigman} \ref{item:modhigman1}, we find that 
\[
|\cP| > \frac{(s^{\prime}+1)^2(t^{\prime}+1)}{2} > \frac{1}{2} 
\paren{q_0^{15}}^2
\cdot \frac{q_0^{23}}{2^5} > \frac{q_0^{53}}{2^6}.
\]
This contradicts the bound in \eqref{ineqn:pm}, namely, 
$|\cP| \leqslant 2(q/q_0)^{8} = 2q_0^{48}$. 
This concludes all the cases for $r_0$ and so, $G_L \ncong 
C_{q+\epsilon\sqrt{2q}+1}^2 : 4S_4$.
\end{proof}

\begin{lemma}
\label{lem:q0last}
Suppose $G_{\alpha} \cong \LargeRee{q_0}$ where $q_0 = 2^{n_0} > 2$ and $q =
q_0^{r_0}$ for some prime $r_0$. Then $G_L$ cannot be isomorphic to
$C_{q^2+q+1+\epsilon\sqrt{2q}(q+1)} : C_{12}$ where $\epsilon \in \{1,-1\}$. 
\end{lemma}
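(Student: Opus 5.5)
The plan is to mirror the argument for $C_{q+\epsilon\sqrt{2q}+1}^2 : 4S_4$, using the involution $u_2$ instead of $t_4$. By \Cref{lem:largereeccls}, $u_2^G$ meets both $G_\alpha \cong \LargeRee{q_0}$ and $G_L \cong C_{q^2+q+1+\epsilon\sqrt{2q}(q+1)} : C_{12}$, and it does not split in either. After conjugating $L$, we may take $g \in G_\alpha \cap G_L$ of class $u_2$. \Cref{lem:cgtranss} then shows that $\Centraliser{G}{g}$ is transitive on $\cP_g$ and on $\cL_g$, with
\[
|\cP_g| = \frac{q^{10}(q^2-1)}{q_0^{10}(q_0^2-1)} \quad\textrm{and}\quad |\cL_g| = \frac{q^{10}(q^2-1)}{12}.
\]
Both sets have size at least $2$, and $\Centraliser{G}{g}$ acts on the fixed substructure transitively on points and lines. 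So \Cref{lem:suzsub} makes $\Gamma_g$ a generalised quadrangle of order $(s',t')$. If that lemma allows $s'=1$ or $t'=1$, I would handle these cases directly: $|\cP_g| = (s'+1)^2$ or $2(s'+1)$ compared with $|\cL_g|$ is impossible by the $3$-part or the $2$-part. For example, $|\cL_g|/|\cP_g| = q_0^{10}(q_0^2-1)/12$ is not of the form $(s'+1)/2$ or $2/(s'+1)$ compatible with the counts.

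Next I would take the ratio
\[
\frac{t'+1}{s'+1} = a \defi \frac{q_0^{10}(q_0^2-1)}{12},
\]
which is an integer since $3 \mid q_0^2-1$ and $4 \mid q_0^{10}$. Setting $s'+1 = k$, $t'+1 = ka$, the parity argument used before shows that $k$ is even, since $q/q_0$ divides $|\cP_g|$.

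If $k \equiv 0 \pmod 4$, then $s't'+1 \equiv 2 \pmod 4$. Hence the $2$-part of $k$ is $(q/q_0)^{10}/2$, which forces $k \geq (q/q_0)^{10}/2$. Then \Cref{lem:modhigman}\ref{item:modhigman1} gives $|\cP_g| > k^3a/2$, far exceeding $|\cP_g| \leq 2(q/q_0)^{12}$, a contradiction.

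So $k = 2\ell$ with $\ell$ odd. Since $s'+t' \equiv -2 \pmod{\ell}$, we get $\gcd(s'+t',\ell) = 1$. Then \Cref{lem:condsonst}\ref{item:higman2} and \Cref{lem:moddivcond} (with $A = 4a$) give $s'+t' \mid 4a\ell^2(a-1)^2$, hence $\ell(a+1)-1 \mid 2a(a-1)^2$. This yields $\ell \leq 2a^2 < q_0^{24}/2^6$, so
\[
|\cP_g| < (s'+1)^2(t'+1) = 8\ell^3 a \lesssim q_0^{84}.
\]
Comparing with $|\cP_g| > \tfrac12 (q/q_0)^{12} = \tfrac12 q_0^{12(r_0-1)}$ forces $r_0 \leq 7$, i.e.\ $r_0 \in \{3,5,7\}$.

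The main obstacle is eliminating these three residual values of $r_0$. Here I would repeat the $q_0$-adic digit computation. The $2$-part of $s't'+1$ must be $(q/q_0)^{10}/2 = q_0^{10(r_0-1)}/2$, which is at least $q_0^{20}/2$. So I would write $k = \sum a_i q_0^i$ in base $q_0$ and solve $c(s't'+1) \equiv 0 \pmod{q_0^c}$ successively with a suitable multiplier $c$ (a power of $3$, to avoid division by $3$). This would be done by Mathematica for generic large $q_0$, with the small $q_0$ checked numerically. The hope is that some digit $a_j$ with $j$ around $10$ or more is forced to be nonzero. Then $k \geq q_0^{j}$, and \Cref{lem:modhigman}\ref{item:modhigman1} gives
\[
|\cP_g| > \tfrac12 k^3 a \geq q_0^{3j+12}/48,
\]
which exceeds $2q_0^{12(r_0-1)}$ as long as $3j+12 > 12r_0-12$. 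For $r_0=7$ this needs $j \geq 17$. If the digits cannot be pushed far enough, the fallback is the full divisibility constraint $\ell(a+1)-1 \mid 2a(a-1)^2$, intersected with the exact value of $|\cP_g|$, which for fixed $r_0$ is a polynomial identity in $q_0$ and may be ruled out by degree or size comparison.
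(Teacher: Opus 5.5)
Your setup matches the paper's: the involution $u_2$, the fixed subquadrangle, $t'+1 = a(s'+1)$ with $a = q_0^{10}(q_0^2-1)/12$, $k$ even, and the $k \equiv 0 \pmod 4$ case. Your divisibility bound $\ell \leqslant 2a^2$ is also correct and does cut $r_0$ down to $\{3,5,7\}$. But the proposal stops there. You leave the elimination of $r_0 \in \{3,5,7\}$ to a base-$q_0$ digit computation whose outcome you explicitly do not know (``the hope is that some digit $a_j$ \dots\ is forced to be nonzero''), with an unspecified fallback. As written, the three remaining cases are not handled, so the lemma is not proved.

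The missing idea is that no digit computation is needed. You already noted that for $k \equiv 2 \pmod 4$ the $2$-part of $s't'+1$ is $(q/q_0)^{10}/2$. Since $s't'+1>0$, this immediately gives the size bound $s't'+1 \geqslant (q/q_0)^{10}/2$.

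Combine this with $s't'+1 < (s'+1)(t'+1) = k^2 a \leqslant k^2 q_0^{12}/12$ to get $k > \sqrt{6}\,q^5/q_0^{11}$. Then \Cref{lem:modhigman}\ref{item:modhigman1} gives $|\cP_g| > k^3 a/2 > \tfrac{\sqrt{6}}{8}\, q_0^{15r_0-21}$. Against $|\cP_g| \leqslant 2(q/q_0)^{12} = 2q_0^{12r_0-12}$, this forces $q_0^{3r_0-9} < 16/\sqrt{6} < q_0$, hence $r_0 = 3$. This replaces your divisibility step entirely and is sharper, giving $r_0=3$ rather than $r_0 \leqslant 7$.

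For $r_0 = 3$ you have $|\cP_g| = q_0^{20}(q_0^4+q_0^2+1) \leqslant 2q_0^{24}$ and $|\cL_g| = q_0^{30}(q_0^6-1)/12 \geqslant q_0^{36}/24$. These contradict $|\cL_g|^4 < |\cP_g|^5$ from \Cref{lem:modhigman}\ref{item:modhighman2}, since that would require $q_0^{24} < 2^5\cdot 24^4$. This is exactly how the paper finishes.

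Your precaution about thin cases is worthwhile, since that last step uses Higman's inequality for $\Gamma_g$, which the paper does not check. They are easily excluded. If $t'=1$, then $2 = ak \geqslant 2a$, which is impossible. If $s'=1$, then $|\cP_g| = 4a < q_0^{12}$, contradicting $|\cP_g| > (q/q_0)^{12}/2 \geqslant q_0^{24}/2$.
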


\begin{proof}
Suppose for a contradiction that $G_L \cong 
C_{q^2+q+1+\epsilon\sqrt{2q}(q+1)} : C_{12}$. 
Consider the element $u_2 \in G$ as per \Cref{tab:ccls}. 
Then by \Cref{lem:largereeccls}, 
$u_2^G \cap G_{\alpha} \neq \varnothing$ and 
$u_2^G \cap G_L \neq \varnothing$. Hence, we can take 
$g \in u_2^G \cap G_{\alpha}$ and $h \in u_2^G
\cap G_L$. Since $g$ and $h$ are conjugate in $G$, there exists an $x \in G$
such that $g = h^x$. Thus, $g \in G_L^x$. Since $G_L^x = G_{L^\prime}$ where
$L^{\prime} = L^x \in \cL$, we can take $L$ to be $L^{\prime}$ and assume that
we have $g \in G_{\alpha} \cap G_L$. Let $\cP_g$ and $\cL_g$ be 
the set of points and the set of lines fixed by $g$, respectively. 
Using \Cref{tab:ccls}, we find that 
\[
|\Centraliser{G}{g}| =
q^3(q^2-1)(q^3+1), \ |\Centraliser{G_{\alpha}}{g}| = 
q_0^3(q_0^2-1)(q_0^3+1) \ \textrm{and} \ 
|\Centraliser{G_L}{g}| = 12. 
\]

Since the conjugacy class of $g$ does not split in $G_{\alpha}$ 
and $G_L$ by \Cref{lem:largereeccls}, 
we can apply \Cref{lem:cgtranss} to conclude that
$\Centraliser{G}{g}$ acts transitively on $\cP_g$ and $\cL_g$. Next, the 
number of fixed points and lines are  

\[
|\cP_g| = \frac{q^{10}(q^2-1)}{q_0^{10}(q_0^2-1)} \textrm{and} \ |\cL_g| = 
\frac{q^{10}(q^2-1)}{12}. 
\]
By \Cref{lem:suzsub}, the
fixed substructure, $(\cP_g,\cL_g,\cI_g)$ is a generalised quadrangle of
order $(s^{\prime},t^{\prime})$ for some positive integers $s^{\prime}$ and
$t^{\prime}$. Therefore, 
\[
|\cP_g| = (s^{\prime}+1)(s^{\prime}t^{\prime}+1) = 
\frac{q^{10}(q^2-1)}{q_0^{10}(q_0^2-1)}\ \textrm{and} \ 
|\cL_g| = (t^{\prime}+1)(s^{\prime}t^{\prime}+1) = 
\frac{q^{10}(q^2-1)}{12}.
\]
Hence,
\[
\frac{s^{\prime}+1}{t^{\prime}+1} = \frac{12}{q_0^{10}(q_0^2-1)}.
\]
Since $q_0^{10}{(q_0^2-1)}$ is divisible by $12$, we have 
\[
t^{\prime}+1 = \frac{q_0^{10}(q_0^2-1)}{12}(s+1).
\]
Write $s^{\prime}+1 = k$. 
As before, we deduce that $k$ is even. For if $k$ were to be odd, then $s$ is
even, which means that $|\cP_g| = (s^{\prime}+1)(s^{\prime}t^{\prime}+1)$ is 
odd. This is not the case as
$q/q_0$ divides $|\cP_g|$. Therefore, $k \equiv 0$ or $2 \pmod 4$. \\ 

Suppose first that $k \equiv 0
\pmod 4$, then $s^{\prime} \equiv -1 \pmod 4$ and $t^{\prime} \equiv 
-1 \pmod 4$. Whence, $s^{\prime}t^{\prime}+1
\equiv 2 \pmod 4$. Thus, the $2$-part of $s^{\prime}t^{\prime}+1$ is $2$ and 
consequently, the
$2$-part of $s^{\prime}+1$ is $(q/q_0)^{10}/2$. Therefore, 
\[
s^{\prime}+1 = k \geqslant \frac{1}{2} \paren{\frac{q}{q_0}}^{10} 
\]
and 
\[
t^{\prime}+1 = k\frac{q_0^{10}(q_0^2-1)}{12} 
\geqslant k \frac{q_0^{12}}{24} \geqslant
\frac{1}{2} \paren{\frac{q}{q_0}}^{10} \cdot \frac{q_0^{12}}{24} =
\frac{1}{48} q^{10} q_0^{2}.
\]

Applying \Cref{lem:modhigman} \ref{item:modhigman1}, we find that 
\[
|\cP_g| > \frac{(s^{\prime}+1)^2(t^{\prime}+1)}{2} > 
\frac{1}{2} 
\paren{\frac{(q/q_0)^{10}}{2}}^2 \frac{q^{10}q_0^2}{48} = 
\frac{1}{384} \frac{q^{30}}{q_0^{18}} = \frac{1}{384} q^{12} 
\paren{\frac{q}{q_0}}^{18}. 
\]
However, this contradicts the following upper bound, namely that 
\[ 
|\cP_g| = \frac{q^{10}(q^2-1)}{q_0^{10}(q_0^2-1)} \leqslant
2\paren{\frac{q}{q_0}}^{12}. 
\]
Thus, $k \equiv 2 \pmod 4$. \\ 

When $k \equiv 2 \pmod 4$, 
the $2$-part of $s^{\prime}+1$ is $2$ and so, the $2$-part of 
$s^{\prime}t^{\prime}+1$ must be
$(q/q_0)^{10}/2$. Hence, $st+1 \geqslant (q/q_0)^{10}/2$. Consequently, 
\[
k^2 \frac{q_0^{12}}{12} \geqslant k^2 \frac{q_0^{10}(q_0^2-1)}{12} = 
(s+1)(t+1) > s^{\prime}t^{\prime}+1 
\geqslant \frac{1}{2} \paren{\frac{q}{q_0}}^{10}.
\]
Therefore,
\[
k > \frac{\sqrt{6} q^{5}}{q_0^{11}}.
\]
Using \Cref{lem:modhigman} \ref{item:modhigman1}, 
we obtain the following lower bound for $|\cP_g|$, namely,
\[
|\cP_g| > \frac{(s^{\prime}+1)^2(t^{\prime}+1)}{2} 
= \frac{k^3 q_0^{10}(q_0^2-1)}{24} 
> \frac{6\sqrt{6} q^{15}}{q_0^{33}} \cdot \frac{q_0^{12}}{48} 
> \frac{\sqrt{6}}{8} \frac{q^{15}}{q_0^{21}} 
= \frac{\sqrt{6}}{8} q_0^{15r_0-21}.
\]
However, we also have, 
\[
|\cP_g| = \frac{q^{10}(q^2-1)}{q_0^{10}(q_0^2-1)} \leqslant 2
\paren{\frac{q}{q_0}}^{12} = 2q_0^{12r_0-12}.
\]
Hence, 
\[
\frac{\sqrt{6}}{8} q_0^{15r_0-21} < 2 q_0^{12r_0-12}.
\]
Rearranging, we find
\[
q_0^{3r_0} < \frac{16}{\sqrt{6}} q_0^{9} < q_0^{10}.
\]
Therefore, $3r_0 < 10$, and so, $r_0 = 3$. 
In which case, we have 
\[
|\cP_g| = \frac{q_0^{30}(q_0^6-1)}{q_0^{10}(q_0^2-1)} =
q_0^{20}(q_0^4+q_0^2+1) \leqslant 2q_0^{24}
\]
and 
\[
|\cL_g| = \frac{q_0^{30}(q_0^6-1)}{12} \geqslant \frac{q_0^{36}}{24}.
\]
Using \Cref{lem:modhigman} \ref{item:modhighman2}, we arrive at the following
inequality:
\[
\paren{\frac{q_0^{36}}{24}}^4 \leqslant |\cL_g|^4 < |\cP_g|^5 \leqslant (2
q_0^{24})^5.
\]
Rearranging, we have 
\[
q_0^{24} < 2^5 \cdot 24^4 < 3^{24}.
\]
This is a contradiction as $q_0 >
8$. Therefore, $G_L \ncong C_{q^2+q+1+\epsilon\sqrt{2q}(q+1)} : C_{12}$. 
\end{proof}

We have now established that $G_{\alpha}$ is not isomorphic to 
$\LargeRee{q_0}$, where $q_0 = 2^{n_0} > 2$ and $q =
q_0^{r_0}$ for some prime $r_0$. Next, we show in Lemmata 
\ref{lem:cq1first}--\ref{lem:cq1last} that $G_{\alpha}$
cannot be isomorphic to $C_{q+1}^2 : \GLn{2}{3}$. 
As before, we show this by 
assuming $G_{\alpha} \cong C_{q+1}^2 : \GLn{2}{3}$ and 
study the various possibilities for $G_L$.

\begin{lemma}
\label{lem:cq1first}
Suppose $G_{\alpha} \cong C_{q+1}^2 : \GLn{2}{3}$. 
Then $G_L$ cannot be isomorphic to $C_{q+\epsilon\sqrt{2q}+1}^2 : 4S_4$, 
where $\epsilon \in \{1,-1\}$.
\end{lemma}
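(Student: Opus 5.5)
The plan is to combine the index ratio with the divisibility conditions of \Cref{lem:groupdivcond}, following the template used for $G_L \cong C_{q+1}^2:\GLn{2}{3}$ when $r_0>3$.

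\emph{Solving for $s$ and $t$.} Suppose for a contradiction that $G_L \cong C_{q+\epsilon\sqrt{2q}+1}^2 : 4S_4$. From \Cref{tab:bounds} ($i=5,6$) I read off $|\cP|$ and $|\cL|$. I then use the factorisations
\begin{itemize}
\item $q^6+1=(q^2+1)(q^4-q^2+1)$,
\item $(q+\sqrt{2q}+1)(q-\sqrt{2q}+1)=q^2+1$,
\item $(q^2-1)(q^3+1)(q-1)=(q-1)^2(q+1)^2(q^2-q+1)$.
\end{itemize}
With these, the ratio should simplify to
\[
\frac{s+1}{t+1}=\frac{|\cP|}{|\cL|}=\frac{2(q+\epsilon\sqrt{2q}+1)^2}{(q+1)^2}.
\]
The numbers $q+1$ and $q^2+1$ are coprime by \Cref{lem:coprime} ($i=6$), and $q+\epsilon\sqrt{2q}+1$ divides $q^2+1$. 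So there is a positive integer $k$ with
\[
s+1=2kx \quad\text{and}\quad t+1=ky, \qquad x\defi(q+\epsilon\sqrt{2q}+1)^2,\ y\defi(q+1)^2.
\]

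\emph{Restricting $k$.} If $k$ were odd, then $t$ would be even and $|\cL|$ would be odd, which is false. If $k\equiv 0\pmod 4$, then $s\equiv t\equiv -1 \pmod 4$, so $st+1\equiv 2\pmod 4$. The whole $2$-part of $|\cP|$ (namely $q^{12}/16$) then lies in $s+1$, hence in $k$. This forces $|\cP|>(s+1)^2(t+1)/2$ to be of order $q^{36}$ or more, contradicting $|\cP|\leqslant q^{24}/48$. So $k=2\ell$ with $\ell$ odd.

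\emph{Coprimality and the bound on $\ell$.} By \Cref{lem:groupdivcond}(i), $q+1$ divides $|\cP|-1=s(st+t+1)$. Reducing modulo $q+1$, where $t\equiv -1$, gives $q+1 \mid s^2$. By (ii), $q+\epsilon\sqrt{2q}+1$ divides $|\cL|-1=t(st+s+1)$, which similarly gives $q+\epsilon\sqrt{2q}+1 \mid t^2$. Now take a prime $p$ dividing $(q+1)(q+\epsilon\sqrt{2q}+1)$. It divides one of $s,t$, while the other is $\equiv -1 \pmod p$, so $s+t\not\equiv 0\pmod p$. Any prime $p\mid \ell$ is odd and gives $s+t\equiv -2\pmod p$. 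Hence $s+t$ is coprime to $\ell$, to $(q+1)$ and to $(q+\epsilon\sqrt{2q}+1)$.

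By \Cref{lem:condsonst}\ref{item:higman2}, $s+t$ divides $st(s+1)(t+1)=8\ell^2xy\,st$, and therefore divides $8st$. Applying \Cref{lem:moddivcond} with $A=8$ (so $a=2$), $s+t$ divides $2(s-t)^2=2\bigl(2\ell(2x-y)\bigr)^2$. Removing the factor $\ell^2$ gives
\[
s+t \mid 8(2x-y)^2, \quad\text{so}\quad s+t\leqslant 8(2x-y)^2\lesssim 8q^4.
\]
Since $2\ell y=t+1<s+t$ and $y\geqslant q^2$, this gives $\ell=O(q^2)$.

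\emph{Final estimate.} Then
\[
|\cP|<(s+1)^2(t+1)=8\ell^3x^2y=O(q^{6}\cdot q^4\cdot q^2)=O(q^{12}),
\]
which contradicts $|\cP|\geqslant q^{24}/96$ from \Cref{tab:bounds} for all $q\geqslant 8$, after checking constants. Any small $q$ where the constants are tight would be handled by the same finite computer check as in \Cref{lis:finite}.

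\emph{Main obstacle.} I expect the delicate step to be the modular reductions from \Cref{lem:groupdivcond}: one has to verify that the residues of $s$ and $t$ really give $q+1\mid s^2$ and $q+\epsilon\sqrt{2q}+1\mid t^2$, and not something weaker because of the extra factor $2k$. A fallback if a common factor survives is to use the parity of the $2$-part of $st+1$ ($q^{12}/64$) to pin down the low $q$-ary digits of $k$, as in the earlier lemmas.
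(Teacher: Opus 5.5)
Your proposal is correct and follows the paper's proof step for step. It has the same $s+1=2k(q+\epsilon\sqrt{2q}+1)^2$ and $t+1=k(q+1)^2$, the same elimination of $k$ odd and $k\equiv 0\pmod 4$, and the same coprimality of $s+t$ with $\ell$, $q+1$ and $q+\epsilon\sqrt{2q}+1$ via \Cref{lem:groupdivcond}. You also reach the same $s+t\mid 8(2x-y)^2$, but directly from the $A=4a$ clause of \Cref{lem:moddivcond} rather than the paper's $2$-part argument, so your ``main obstacle'' is not one and no fallback is needed.

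Only cosmetic fixes are required:
\begin{enumerate}
\item When $k\equiv 0\pmod 4$, the $2$-part of $s+1$ is $q^{12}/32$, so that of $k$ is $q^{12}/64$.
\item $(s+1)^2(t+1)=32\ell^3x^2y$, not $8\ell^3x^2y$.
\item For $\epsilon=1$ one only has $|2x-y|\leqslant 3q^2+8q+1\leqslant 5q^2$, because the $4q\sqrt{2q}$ term is not negligible for $q\in\{8,32\}$. This still gives $|\cP|<2^{27}q^{12}<q^{24}/96$ for every $q\geqslant 8$, so no computer check is needed.
\end{enumerate}
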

\begin{proof}
Let $G_{\alpha} \cong C_{q+1}^2 : \GLn{2}{3}$. Suppose that $G_L \cong
C_{q+\epsilon\sqrt{2q}+1}^2 : 4S_4$. Then 
\[
|\cP| = (s+1)(st+1) = \frac{1}{48} q^{12}(q^6+1)(q^2+1)(q-1)^2(q^2-q+1)
\]
and 
\[
|\cL| = (t+1)(st+1) = \frac{1}{96}
q^{12}(q-\epsilon\sqrt{2q}+1)^2(q^4-q^2+1)(q^2-1)(q^3+1)(q-1).
\]
Then 
\[
\frac{s+1}{t+1} = \frac{2(q+\epsilon\sqrt{2q}+1)^2}{(q+1)^2}.
\]
Cross multiplying, we obtain
\[
(q+1)^2(s+1) = 2(q+\epsilon\sqrt{2q}+1)^2(t+1).
\]
By \Cref{lem:coprime} ($i = 6$), 
we find that $2(q+\epsilon\sqrt{2q}+1)^2$ divides $s+1$ and
$(q+1)^2$ divides $t+1$. Hence, 
\[
s+1 = 2k(q+\epsilon\sqrt{2q}+1)^2 \ \textrm{and} \ t+1 =
k(q+1)^2, 
\]
where $k$ is some positive integer. 
As before, we deduce that $k$ is even. For if $k$ were to be odd, then $t$ is
even, which means that $|\cL| = (t+1)(st+1)$ is odd. This is not the case as
$q$ divides $|\cL|$. Therefore, $k \equiv 0$ or $2 \pmod 4$. \\ 

Suppose first that $k \equiv 0
\pmod 4$, then $s \equiv -1 \pmod 4$ and $t \equiv -1 \pmod 4$. Whence, $st+1
\equiv 2 \pmod 4$. Thus, the $2$-part of $st+1$ is $2$ and consequently, the
$2$-part of $s+1$ is $q^{12}/32$.
Since $s+1 = 2k(q+\epsilon\sqrt{2q}+1)^2$, the $2$-part of $k$
is $q^{12}/64$. Therefore, 
\[
s+1 = 2k(q+\epsilon\sqrt{2q}+1)^2 > 2k \cdot \frac{q^2}{4} \geqslant 2 \cdot
\frac{q^{12}}{64} \cdot \frac{q^2}{4} = \frac{q^{14}}{2^7}
\]
and 
\[
t+1 = k(q+1)^2 > k \cdot q^2 \geqslant \frac{q^{12}}{64} \cdot q^2 =
\frac{q^{14}}{2^6}. 
\]
Applying \Cref{lem:modhigman} \ref{item:modhigman1}, we find that 
\[
|\cP| > \frac{(s+1)^2(t+1)}{2} > \frac{1}{2} \paren{\frac{q^{14}}{2^7}}^2
\frac{q^{14}}{2^6} = \frac{q^{42}}{2^{20}}.
\]
However, this contradicts the upper bound from \Cref{tab:bounds}, namely that 
$|\cP|
\leqslant q^{24}/48$. Thus, $k \equiv 2 \pmod 4$. \\

When $k \equiv 2 \pmod 4$, the
$2$-part of $s+1$ is $4$ and so, the $2$-part of $st+1$ must be
$q^{12}/64$. Now, write $k = 2\ell$ where $\ell$ is odd. 
By \Cref{lem:groupdivcond}, we find that $q+1$ divides $|\cP|-1 =
(s+1)(st+1)-1 = s^2t + s(t+1)$. Since $q+1$ divides $t+1$, we can deduce that
$q+1$ divides $s^2t$. Moreover, $t$ and $t+1$ are coprime and so, $q+1$
divides $s^2$. Similarly, by \Cref{lem:groupdivcond}, we have that 
$q+\epsilon\sqrt{2q}+1$ divides $|\cL|-1 = (t+1)(st+1) - 1 = t^2s + t(s+1)$. Since
$q+\epsilon\sqrt{2q}+1$ divides $s+1$, it follows that 
$q+\epsilon\sqrt{2q}+1$ divides
$t^2 s$ and whence, it divides $t^2$. Therefore, $s+t$ is coprime to $q+1$ and
$q+\epsilon\sqrt{2q}+1$. Indeed, if we take any prime $p$ that divides $q+1$ or
$q+\epsilon\sqrt{2q}+1$, then $s+t \equiv -1 \pmod p$. Furthermore, observe that 
$\ell$
is also coprime to $s+t$ since $s+t \equiv -2 \pmod p$ for any prime $p$
dividing $\ell$. Since $\ell$ is odd, $p$ is also odd and hence, $s+t \nequiv 0
\pmod p$. Finally, recall that $s+t$ divides
$st(s+1)(t+1) = st \cdot 
8\ell^2 (q+\epsilon\sqrt{2q}+1)^2(q+1)^2$ and based on the argument
above, we can conclude that $s+t$ divides $8st$. By \Cref{lem:moddivcond}, we
have that $s+t$ divides 
\[
8(s-t)^2 = 8(4\ell(q+\epsilon\sqrt{2q}+1)^2-2\ell(q+1)^2)^2 
= 32\ell^2(q^2+\epsilon4 
q \sqrt{2q} +\epsilon
4\sqrt{2q} + 6q+1).
\] 
Since $s+t$ is coprime to $\ell$ and the $2$-part of $s+t$ is at most $8$, it
follows that $s+t$ divides 
\[
8(q^2 +\epsilon 4q \sqrt{2q} +\epsilon 4\sqrt{2q} + 6q+1)^2.
\]
Therefore, 
\[
s+t \leqslant 8(q^2 +\epsilon 4q\sqrt{2q} 
+\epsilon 4 \sqrt{2q} + 6q + 1)^2 \leqslant 8
\cdot (2q^2)^2 = 32q^4. 
\]
Moreover, 
\[
s+t = 4\ell(q+\epsilon\sqrt{2q}+1)^2-1 + 2\ell (q+1)^2-1 \geqslant 
4\ell\paren{\frac{q}{2}}^2
+ 2\ell(q)^2 = 3\ell q^2.
\]
Hence, 
\[
2\ell q^2 < 3\ell q^2 \leqslant s+t \leqslant 32q^4,
\]
from which, it follows that $\ell \leqslant 16q^2$. Therefore, 
\[
s+1 = 4\ell(q+\epsilon\sqrt{2q}+1)^2 \leqslant 4 \ell \cdot 4q^2 \leqslant 4 \cdot 4q^2
\cdot 16q^2 = 2^{8} q^4
\]
and 
\[
t+1 = 2\ell(q+1)^2 \leqslant 2\ell \cdot 2q^2 \leqslant 2 \cdot 16 q^2 \cdot 2q^2 =
2^{6} q^4.
\]
As a consequence, 
\[
|\cP| = (s+1)(st+1) < (s+1)(s+1)(t+1) = (s+1)^2(t+1) \leqslant (2^{8}
q^4)^2(2^6 q^4) = 2^{22} q^{12}.
\]
Using the lower bound from \Cref{tab:bounds}, namely that $|\cP| \geqslant
q^{24}/96$, we obtain $q^{24}/96 \leqslant |\cP| < 2^{22} q^{12}$. Simplifying
further, we find $q^{12} < 96 \cdot 2^{22}$, \emph{i.e.}, $q < 6$. This is a
contradiction as $q \geqslant 8$. Therefore, $G_L \ncong 
C_{q+\epsilon\sqrt{2q}+1}^2 : 4S_4$.
\end{proof}

\begin{lemma}
\label{lem:cq1last}
Suppose $G_{\alpha} \cong C_{q+1}^2 : \GLn{2}{3}$. 
Then $G_L$ cannot be isomorphic to 
$C_{q^2+q+1+\epsilon\sqrt{2q}(q+1)} : C_{12}$, where $\epsilon \in \{1,-1\}$.
\end{lemma}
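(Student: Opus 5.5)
The plan is to follow the template of \Cref{lem:cq1first}, with a $2$-adic digit computation as in the $\LargeRee{q_0}$ cases. Suppose $G_L \cong C_{q^2+q+1+\epsilon\sqrt{2q}(q+1)} : C_{12}$ and write $\kappa \defi q^2+q+1+\epsilon\sqrt{2q}(q+1)$ and $\kappa' \defi q^2+q+1-\epsilon\sqrt{2q}(q+1)$. I will use the standard factorisation $q^4-q^2+1=\kappa\kappa'$, which is checked by expanding with $(\sqrt{2q})^2=2q$. Combining it with $q^6+1=(q^2+1)(q^4-q^2+1)$ and $(q^4-1)(q^3+1)=(q^2+1)(q-1)(q+1)^2(q^2-q+1)$, the two indices in \Cref{tab:bounds} (rows $i=5$ and $i=7$) give
\[
\frac{s+1}{t+1}=\frac{|\cP|}{|\cL|}=\frac{(q^2+1)\kappa}{4(q+1)^2}.
\]
By \Cref{lem:coprime} ($i=6,11$), $q+1$ is coprime to $q^2+1$ and to $q^4-q^2+1$, and hence to $\kappa$. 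Therefore
\[
s+1=c(q^2+1)\kappa \ \textrm{and} \ t+1=4c(q+1)^2
\]
for some positive integer $c$.

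Next comes the usual parity split. If $c$ were odd, then $s$ would be even and $|\cP|$ odd, which is false since $q$ divides $|\cP|$; so $c$ is even. If $c\equiv 0 \pmod 4$, then $s\equiv t\equiv -1 \pmod 4$, so $st+1\equiv 2\pmod 4$. Then the whole $2$-part $q^{12}/16$ of $|\cP|$ up to a factor $2$ lies in $s+1$, so $c\geqslant q^{12}/32$. Then \Cref{lem:modhigman} \ref{item:modhigman1} gives $|\cP|>(s+1)^2(t+1)/2$, which is of order $q^{46}$ and contradicts $|\cP|\leqslant q^{24}/48$. Hence $c=2\ell$ with $\ell$ odd. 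In this case the $2$-part of $s+1$ is $2$, that of $t+1$ is $8$, and the $2$-part of $st+1$ is exactly $q^{12}/32=2^{24m+7}$.

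I would also record the divisibility information, as in \Cref{lem:cq1first}. By \Cref{lem:groupdivcond}~(i), $q+1$ divides $|\cP|-1=s^2t+s(t+1)$, so $q+1$ divides $s^2$. By \Cref{lem:groupdivcond}~(iii), $\kappa$ divides $|\cL|-1$, so $\kappa$ divides $t^2$. Consequently $s+t$ is coprime to $q+1$, to $\kappa$ and to $\ell$. Feeding this into \Cref{lem:condsonst}~\ref{item:higman2} and \Cref{lem:moddivcond} with $A=8(q^2+1)$ shows that $s+t$ divides
\[
8(q^2+1)\bigl((q^2+1)\kappa-4(q+1)^2\bigr)^2 .
\]
This bounds $\ell$ by roughly $q^{10}$, which on its own is too weak, since it only gives $|\cP|$ of order $q^{40}$.

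The decisive step, and the main obstacle, is therefore to force $\ell$ to be large using the exact $2$-part of $st+1$. I would set $u=\sqrt{2q}$, write $s$ and $t$ as polynomials in $u$ and $\ell$, and expand $\ell$ in base $q$ (or base $u$). I would then impose $st+1\equiv 0$ modulo successive powers of $2$ up to $2^{24m+7}$ and solve digit by digit with Mathematica, as in \Cref{tab:stplus1r3small1}. Small $q$ would be handled separately by direct computation, as in \Cref{lis:finiteq}. Writing $\ell\geqslant q^{e}$ for the resulting lower bound, the estimate $|\cP|>(s+1)^2(t+1)/2\geqslant 2^{4}\ell^{3}q^{10}$ contradicts $|\cP|\leqslant q^{24}/48$ as soon as $e>14/3$. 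The hope is that the digit computation yields a nonzero digit at position at least $5$ in base $q$, which gives such an $e$. If it yields less, I would combine the lower bound on $\ell$ with the divisibility bound above and with Higman's inequality $t\leqslant s^2$ to close the remaining gap.
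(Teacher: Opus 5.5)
Your ratio $|\cP|/|\cL|$ is wrong, and the rest of the proposal is built on it. The factor $q^2+1$ appears twice in each index: in $|\cP|$ through $q^6+1$ and the explicit factor, and in $|\cL|$ through the explicit factor and $q^4-1$. So it cancels completely. The quickest check is $|\cP|/|\cL|=|G_L|/|G_\alpha|=12\kappa/\bigl(48(q+1)^2\bigr)$, which gives
\[
\frac{s+1}{t+1}=\frac{\kappa}{4(q+1)^2},\qquad s+1=k\kappa,\quad t+1=4k(q+1)^2 .
\]
Your parametrisation $s+1=c(q^2+1)\kappa$ is therefore false. The divisibility condition you derive, and any digit-by-digit computation on your $s,t$, concern equations the quadrangle does not satisfy.

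Independently, the step you call decisive is never carried out. You only \emph{hope} the $2$-adic computation yields a nonzero digit in a high enough position, and the fallback is vague. As written, no contradiction is reached.

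The spurious factor is also what made you discard the route that works. Take the correct parametrisation with $k=2\ell$, $\ell$ odd. Your own observations ($q+1\mid s^2$, $\kappa\mid t^2$, $\gcd(s+t,\ell)=1$) then give $s+t\mid 16st$. Since $s$ and $t$ are both odd, the $2$-part of $s+t$ is at most $16$. \Cref{lem:moddivcond} then gives
\[
s+t \mid 16\bigl(3q^2+7q+3-\epsilon\sqrt{2q}(q+1)\bigr)^2\leqslant 2^{10}q^4 .
\]
As $s+t\geqslant 8\ell q^2$, this forces $\ell\leqslant 2^7q^2$. Hence $s+1\leqslant 2^9q^4$ and $t+1\leqslant 2^{11}q^4$, so
\[
|\cP|<(s+1)^2(t+1)\leqslant 2^{29}q^{12},
\]
which contradicts $|\cP|\geqslant q^{24}/96$ for $q\geqslant 8$. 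This is exactly the paper's argument, and no $2$-adic digit computation is needed.
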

\begin{proof}
Suppose for a contradiction that $G_L \cong
C_{q^2+q+1+\epsilon\sqrt{2q}(q+1)} : C_{12}$. Then 
\[
|\cP| = (s+1)(st+1) = \frac{1}{48} q^{12}(q^6+1)(q^2+1)(q-1)^2(q^2-q+1)
\]
and 
\[
|\cL| = (t+1)(st+1) = \frac{1}{12}
q^{12}(q^2+1)(q^2+q+1-\epsilon\sqrt{2q}(q+1))(q^4-1)(q^3+1)(q-1).
\]
Then 
\[
\frac{s+1}{t+1} =
\frac{(q^6+1)(q^2+1)(q-1)^2(q^2-q+1)}{4(q^2+1)(q^2+q+1-\epsilon\sqrt{2q}(q+1))
(q^4-1)(q^3+1)(q-1)} = \frac{(q^2+q+1+\epsilon\sqrt{2q}(q+1))}{4(q+1)^2}.
\]
Cross multiplying, we obtain
\[
4(q+1)^2(s+1) = (q^2+q+1+\epsilon\sqrt{2q}(q+1))(t+1).
\]
By \Cref{lem:coprime} ($i = 11$), 
we find that $(q^2+q+1+\epsilon\sqrt{2q}(q+1))$ divides 
$s+1$ and $4(q+1)^2$ divides $t+1$. Hence, 
\[
s+1 = k(q^2+q+1+\epsilon\sqrt{2q}(q+1)) \ \textrm{and} \ t+1 =
4k(q+1)^2, 
\]
where $k$ is some positive integer. 
As before, we deduce that $k$ is even. For if $k$ were to be odd, then $s$ is
even, which means that $|\cP| = (s+1)(st+1)$ is odd. This is not the case as
$q$ divides $|\cP|$. Therefore, $k \equiv 0$ or $2 \pmod 4$. \\ 

Suppose first that $k \equiv 0
\pmod 4$, then $s \equiv -1 \pmod 4$ and $t \equiv -1 \pmod 4$. Whence, $st+1
\equiv 2 \pmod 4$. Thus, the $2$-part of $st+1$ is $2$ and consequently, the
$2$-part of $s+1$ is $q^{12}/32$.
Since $s+1 = k(q^2+q+1+\epsilon\sqrt{2q}(q+1))$, the $2$-part of $k$
is $q^{12}/32$. Therefore, 
\[
s+1 = k(q^2+q+1+\epsilon\sqrt{2q}(q+1)) > k\cdot \frac{q^2}{2} \geqslant
\frac{q^{12}}{32} \cdot \frac{q^2}{2} = \frac{q^{14}}{2^6}
\]
and 
\[
t+1 = 4k(q+1)^2 > 4k \cdot q^2 \geqslant 4\cdot \frac{q^{12}}{32} \cdot q^2 =
\frac{q^{14}}{2^3}. 
\]
Applying \Cref{lem:modhigman} \ref{item:modhigman1}, we find that 
\[
|\cP| > \frac{(s+1)^2(t+1)}{2} > \frac{1}{2} \paren{\frac{q^{14}}{2^6}}^2
\frac{q^{14}}{2^3} = \frac{q^{42}}{2^{15}}.
\]
However, this contradicts the upper bound from \Cref{tab:bounds}, namely that 
$|\cP|
\leqslant q^{24}/48$. Thus, $k \equiv 2 \pmod 4$. \\

When $k \equiv 2 \pmod 4$, the
$2$-part of $s+1$ is $2$ and so, the $2$-part of $st+1$ must be
$q^{12}/32$. Now, write $k = 2\ell$ where $\ell$ is odd. 
By \Cref{lem:groupdivcond}, we find that $q+1$ divides $|\cP|-1 =
(s+1)(st+1)-1 = s^2t + s(t+1)$. Since $q+1$ divides $t+1$, we can deduce that
$q+1$ divides $s^2t$. Moreover, $t$ and $t+1$ are coprime and so, $q+1$
divides $s^2$. Similarly, by \Cref{lem:groupdivcond}, we have that 
$q^2+q+1+\epsilon\sqrt{2q}(q+1)$ divides $|\cL|-1 = (t+1)(st+1) - 1 = t^2s + 
t(s+1)$. Since $q^2+q+1+\epsilon\sqrt{2q}(q+1)$ divides $s+1$, it follows that 
$q^2+q+1+\epsilon\sqrt{2q}(q+1)$ divides
$t^2 s$ and whence, it divides $t^2$. Therefore, $s+t$ is coprime to $q+1$ and
$q^2+q+1+\epsilon\sqrt{2q}(q+1)$. Indeed, if we take any prime $p$ that divides 
$q+1$ or $q^2+q+1+\epsilon\sqrt{2q}(q+1)$, then $s+t \equiv -1 \pmod p$. 
Furthermore, observe that $\ell$
is also coprime to $s+t$ since $s+t \equiv -2 \pmod p$ for any prime $p$
dividing $\ell$. Since $\ell$ is odd, $p$ is also odd and hence, $s+t \nequiv 0
\pmod p$. Finally, recall that $s+t$ divides
$st(s+1)(t+1) = st \cdot 
16\ell^2 (q^2+q+1+\epsilon\sqrt{2q}(q+1))(q+1)^2$ and based on the 
argument above, we can conclude that $s+t$ divides $16st$. By 
\Cref{lem:moddivcond}, we have that $s+t$ divides 
\begin{align*}
16(s-t)^2 ={}& 16(2\ell(q^2+q+1+\epsilon\sqrt{2q}(q+1))-8\ell(q+1)^2)^2 \\
={}& 64\ell^2(3q^2+7q+3-\epsilon\sqrt{2q}(q+1))^2.
\end{align*}
Since $s+t$ is coprime to $\ell$ and the $2$-part of $s+t$ is at most $16$, it
follows that $s+t$ divides 
\[
16(3q^2+7q+3-\epsilon\sqrt{2q}(q+1))^2.
\]
Therefore, 
\[
s+t \leqslant 16(3q^2+7q+3-\epsilon\sqrt{2q}(q+1))^2 \leqslant 16\cdot (8q^2)^2 =
2^{10} q^4.
\]
Moreover, 
\[
s+t = 2\ell(q^2+q+1+\epsilon\sqrt{2q}(q+1))-1 + 8\ell(q+1)^2-1 
\geqslant 8\ell q^2.
\]
Hence, 
\[
8\ell q^2 \leqslant s+t \leqslant 2^{10} q^4,
\]
from which, it follows that $\ell \leqslant 2^{7}q^2$. Therefore, 
\[
s+1 = 2\ell(q^2+q+1+\epsilon\sqrt{2q}(q+1)) \leqslant 2\ell \cdot 2q^2 \leqslant 2 \cdot 
2^7 q^2 \cdot 2q^2 = 2^{9} q^4
\]
and 
\[
t+1 = 8\ell(q+1)^2 \leqslant 8\ell \cdot 2q^2 \leqslant 8 \cdot 2^7 q^2 \cdot 2q^2 =
2^{11} q^4.
\]
As a consequence, 
\[
|\cP| = (s+1)(st+1) < (s+1)(s+1)(t+1) = (s+1)^2(t+1) \leqslant (2^{9}
q^4)^2(2^{11} q^4) = 2^{29} q^{12}.
\]
Using the lower bound from \Cref{tab:bounds}, namely, $|\cP| \geqslant
q^{24}/96$, we obtain $q^{24}/96 \leqslant |\cP| < 2^{29} q^{12}$. Simplifying
further, we find $q^{12} < 96 \cdot 2^{29}$, \emph{i.e.}, $q < 8$. This is a
contradiction as $q \geqslant 8$. Therefore, 
$G_L \ncong C_{q^2+q+1+\epsilon\sqrt{2q}(q+1)} : C_{12}$.
\end{proof}

We have now established that $G_{\alpha}$ is not isomorphic to 
$C_{q+1}^2 : \GLn{2}{3}$. Next, we show in Lemma \ref{lem:last} 
that $G_{\alpha}$ 
cannot be isomorphic to $C_{q+\epsilon\sqrt{2q}+1}^2 : 4S_4$, 
where $\epsilon \in \{1,-1\}$. 
As before, we show this by 
assuming $G_{\alpha} \cong C_{q+\epsilon\sqrt{2q}+1}^2 : 4S_4$ and 
study the various possibilities for $G_L$. 

\begin{lemma}
\label{lem:last}
Suppose $G_{\alpha} \cong C_{q+\epsilon\sqrt{2q}+1}^2 : 4S_4$ where $\epsilon \in \{1,-1\}$. 
Then $G_L$ cannot be isomorphic to \\
$C_{q^2+q+1+\delta\sqrt{2q}(q+1)} : C_{12}$ where $\delta \in \{1,-1\}$. 
\end{lemma}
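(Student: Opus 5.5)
The plan is to follow the template of \Cref{lem:cq1last}. First compute $(s+1)/(t+1) = |\cP|/|\cL|$ from \Cref{tab:bounds} ($i=6,7$). Set $u \defi \sqrt{2q}$, $B \defi q+\epsilon u+1$ and $A \defi q^2+q+1+\delta u(q+1)$. Two factorisations do the work:
\[
q^2+1=(q+u+1)(q-u+1), \qquad q^4-q^2+1=(q^2+q+1)^2-2q(q+1)^2=A\,(q^2+q+1-\delta u(q+1)).
\]
The second identity follows because $2q(q+1)^2 = u^2(q+1)^2$. With these, the ratio should simplify to
\[
\frac{s+1}{t+1}=\frac{A}{8B^2}.
\]
Since $A$ divides $q^4-q^2+1$ and $B$ divides $q^2+1$, \Cref{lem:coprime} ($i=12$) shows $\gcd(A,8B^2)=1$. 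Hence $s+1=kA$ and $t+1=8kB^2$ for some positive integer $k$.

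Next I run the usual $2$-adic dichotomy. The integer $k$ is even, since otherwise $s$ is even and $|\cP|$ is odd, yet $q$ divides $|\cP|$. If $k\equiv 0 \pmod 4$, then $st+1\equiv 2\pmod 4$, so $s+1$ carries essentially the whole $2$-part $q^{12}/2^5$ of $|\cP|$. This forces $s+1 \gtrsim q^{14}$ and $t+1\gtrsim q^{14}$. \Cref{lem:modhigman} \ref{item:modhigman1} then gives $|\cP|\gtrsim q^{42}$, contradicting $|\cP|\leqslant q^{24}/48$. So $k=2\ell$ with $\ell$ odd.

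Now I use \Cref{lem:groupdivcond}. Part (ii) gives $B \mid |\cP|-1 = s^2t+s(t+1)$; as $B\mid t+1$ and $\gcd(t,t+1)=1$, this yields $B\mid s^2$. Part (iii), with sign $\delta$, gives $A\mid |\cL|-1=t^2s+t(s+1)$; as $A \mid s+1$, this yields $A\mid t^2$. For any prime $p$ dividing $AB$ we then get $s+t\equiv -1\pmod p$, and for any prime $p\mid \ell$ we get $s+t\equiv -2\pmod p$ with $p$ odd. So $s+t$ is coprime to $A$, $B$ and $\ell$. Since $s+t$ divides $st(s+1)(t+1)=st\cdot 16\ell^2AB^2$ by \Cref{lem:condsonst}, it divides $16st$. \Cref{lem:moddivcond} with $A=16=4\cdot 4$ then gives
\[
s+t \mid 4(s-t)^2 = 16\ell^2(A-8B^2)^2,
\]
and hence $s+t \mid 16(8B^2-A)^2$.

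This yields $s+t\leqslant 16(8B^2-A)^2\leqslant c\,q^4$ for an explicit constant $c$ (roughly $16\cdot 9^2$ after bounding $8B^2-A\leqslant 9q^2$). Combined with $s+t> t+1=16\ell B^2\geqslant 4\ell q^2$, this gives $\ell=O(q^2)$, so $s+1=O(q^4)$ and $t+1=O(q^4)$. Then $|\cP|<(s+1)^2(t+1)=O(q^{12})$, against the lower bound $|\cP|\geqslant q^{24}/384$ from \Cref{tab:bounds}, forcing $q$ to be small. The main obstacle is purely quantitative: keeping the constants tight enough that the surviving range is only $q=8$ (or $q\leqslant 32$). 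Any leftover small values of $q$ I would dispatch by direct computer search for $(s,t)$, as in \Cref{sec:finite}.
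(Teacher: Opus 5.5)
Your proposal follows the paper's proof essentially step for step: the ratio $A/(8B^2)$, coprimality via \Cref{lem:coprime} ($i=12$), the $2$-adic split on $k$, \Cref{lem:groupdivcond} (ii)--(iii) to make $s+t$ coprime to $A$, $B$ and $\ell$, then \Cref{lem:moddivcond}, a size bound, and a computer check for the single surviving value $q=8$. There is one arithmetic slip and one faulty estimate: $(s+1)(t+1)=2\ell A\cdot 16\ell B^2=32\ell^2AB^2$, so you get $s+t\mid 32st$ and hence $s+t\mid 32(8B^2-A)^2$, while your estimate $8B^2-A\leqslant 9q^2$ fails for $q\leqslant 128$ (e.g.\ $q=8$, $\epsilon=1$, $\delta=-1$ gives $1315>576$); with the paper's bound $8B^2-A\leqslant 32q^2$ one still gets $q<28$, so the argument goes through unchanged.
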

\begin{proof}
Let $G_{\alpha} \cong C_{q+\epsilon\sqrt{2q}+1}^2 : 4S_4$ and $G_L \cong
C_{q^2+q+1+\delta\sqrt{2q}(q+1)} : C_{12}$. Then 
\[
|\cP| = (s+1)(st+1) = \frac{1}{96}
q^{12}(q-\epsilon\sqrt{2q}+1)^2(q^4-q^2+1)(q^2-1)(q^3+1)(q-1)
\]
and
\[
|\cL| = (t+1)(st+1) = \frac{1}{12}
q^{12}(q^2+1)(q^2+q+1-\delta\sqrt{2q}(q+1))(q^4-1)(q^3+1)(q-1).
\]
Then 
\[
\frac{s+1}{t+1} = \frac{(q^2+q+1+\delta\sqrt{2q}(q+1))}{8(q+\epsilon\sqrt{2q}+1)^2}.
\]
Cross multiplying, we obtain
\[
8(q+\epsilon\sqrt{2q}+1)^2(s+1) = (q^2+q+1+\delta\sqrt{2q}(q+1))(t+1).
\]
By \Cref{lem:coprime} ($i =12$), 
we find that $(q^2+q+1+\delta\sqrt{2q}(q+1))$ divides 
$s+1$ and $8(q+\epsilon\sqrt{2q}+1)^2$ divides $t+1$. Hence, 
\[
s+1 = k(q^2+q+1+\delta\sqrt{2q}(q+1)) \ \textrm{and} \ t+1 =
8k(q+\epsilon\sqrt{2q}+1)^2, 
\]
where $k$ is some positive integer. 
As before, we deduce that $k$ is even. For if $k$ were to be odd, then $s$ is
even, which means that $|\cP| = (s+1)(st+1)$ is odd. This is not the case as
$q$ divides $|\cP|$. Therefore, $k \equiv 0$ or $2 \pmod 4$. \\

Suppose first that $k \equiv 0
\pmod 4$, then $s \equiv -1 \pmod 4$ and $t \equiv -1 \pmod 4$. Whence, $st+1
\equiv 2 \pmod 4$. Thus, the $2$-part of $st+1$ is $2$ and consequently, the
$2$-part of $s+1$ is $q^{12}/64$.
Since $s+1 = k(q^2+q+1+\delta\sqrt{2q}(q+1))$, the $2$-part of $k$ is 
$q^{12}/64$. Therefore, 
\[
s+1 = k(q^2+q+1+\delta\sqrt{2q}(q+1)) > k \cdot \frac{q^2}{2} \geqslant
\frac{q^{12}}{64} \cdot \frac{q^2}{2} = \frac{q^{14}}{2^7} 
\]
and 
\[
t+1 = k(q+\epsilon\sqrt{2q}+1)^2 > k \cdot \frac{q^2}{2} \geqslant 
\frac{q^{12}}{64} \cdot \frac{q^2}{2} = \frac{q^{14}}{2^7}. 
\]
Applying \Cref{lem:modhigman} \ref{item:modhigman1}, we find that 
\[
|\cP| > \frac{(s+1)^2(t+1)}{2} > \frac{1}{2} \paren{\frac{q^{14}}{2^7}}^2
\frac{q^{14}}{2^7} = \frac{q^{42}}{2^{21}}.
\]
However, this contradicts the upper bound from \Cref{tab:bounds}, 
namely that $|\cP|
\leqslant q^{18}/2$. Thus, $k \equiv 2 \pmod 4$. \\

When $k \equiv 2 \pmod 4$, the
$2$-part of $s+1$ is $2$ and so, the $2$-part of $st+1$ must be
$q^{12}/32$. Now, write $k = 2\ell$ where $\ell$ is odd. 
By \Cref{lem:groupdivcond}, we find that $q+\epsilon\sqrt{2q}+1$ divides $|\cP|-1 =
(s+1)(st+1)-1 = s^2t + s(t+1)$. Since $q+\epsilon\sqrt{2q}+1$ divides $t+1$, we can 
deduce that $q+\epsilon\sqrt{2q}+1$ divides $s^2t$. Moreover, $t$ and $t+1$ are 
coprime and so, $q+\epsilon\sqrt{2q}+1$ divides $s^2$. 
Similarly, by \Cref{lem:groupdivcond}, we have that 
$q^2+q+1+\delta\sqrt{2q}(q+1)$ divides $|\cL|-1 = (t+1)(st+1) - 1 = t^2s + 
t(s+1)$. Since $q^2+q+1+\delta\sqrt{2q}(q+1)$ divides $s+1$, it follows that 
$q^2+q+1+\delta\sqrt{2q}(q+1)$ divides
$t^2 s$ and whence, it divides $t^2$. Therefore, $s+t$ is coprime to
$q+\epsilon\sqrt{2q}+1$ and
$q^2+q+1+\delta\sqrt{2q}(q+1)$. Indeed, if we take any prime $p$ that divides
$q+\epsilon\sqrt{2q}+1$ or $q^2+q+1+\delta\sqrt{2q}(q+1)$, then 
$s+t \equiv -1 \pmod p$. Furthermore, observe that $\ell$
is also coprime to $s+t$ since $s+t \equiv -2 \pmod p$ for any prime $p$
dividing $\ell$. Since $\ell$ is odd, $p$ is also odd and hence, $s+t \nequiv 0
\pmod p$. Finally, recall that $s+t$ divides
$st(s+1)(t+1) = st \cdot 
32\ell^2 (q^2+q+1+\delta\sqrt{2q}(q+1))(q+\epsilon\sqrt{2q}+1)^2$ and based on the argument
above, we can conclude that $s+t$ divides $32st$. By \Cref{lem:moddivcond}, we
have that $s+t$ divides 
\begin{align*}
32(s-t)^2 ={}& 32(2\ell(q^2+q+1+\delta\sqrt{2q}(q+1))-16\ell(q+\epsilon\sqrt{2q}+1)^2)^2 \\
={}& 128\ell^2(7q^2+31q+(16\epsilon-\delta)q\sqrt{2q}+
(16\epsilon-\delta) \sqrt{2q}+7)^2.
\end{align*} 
Since $s+t$ is coprime to $\ell$ and the $2$-part of $s+t$ is at most $32$, it
follows that $s+t$ divides 
\[
32(7q^2+31q+(16\epsilon-\delta)q\sqrt{2q}+
(16\epsilon-\delta) \sqrt{2q}+7)^2.
\]
Therefore, 
\[
s+t \leqslant 32(7q^2+31q+(16\epsilon-\delta)q\sqrt{2q}+
(16\epsilon-\delta) \sqrt{2q}+7)^2 \leqslant 32 \cdot
(32q^2)^2  = 2^{15} q^4. 
\]
Moreover, 
\[
s+t = 2\ell(q^2+q+1+\delta\sqrt{2q}(q+1))-1 + 16\ell (q+\epsilon\sqrt{2q}+1)^2-1 
\geqslant 4 \ell q^2.
\]
Hence, 
\[
4\ell q^2 \leqslant s+t \leqslant 2^{15} q^4,
\]
from which, it follows that $\ell \leqslant 2^{13}q^2$. Therefore, 
\[
s+1 = 2\ell(q^2+q+1+\delta\sqrt{2q}(q+1)) \leqslant 2\ell \cdot 2q^2 \leqslant 2 \cdot 
2^{13} q^2 \cdot 2 q^2 = 2^{15} q^4
\]
and 
\[
t+1 = 16\ell(q+\epsilon\sqrt{2q}+1)^2 \leqslant 16\ell \cdot 4q^2 \leqslant 16 \cdot
2^{13} q^2 \cdot 4q^2 = 2^{19} q^4.
\]
As a consequence, 
\[
|\cP| = (s+1)(st+1) < (s+1)(s+1)(t+1) = (s+1)^2(t+1) \leqslant (2^{15}
q^4)^2(2^{19} q^4) = 2^{49} q^{12}.
\]
Using the lower bound from \Cref{tab:bounds}, namely that 
$|\cP| \geqslant
q^{24}/384$, we obtain $q^{24}/384 \leqslant |\cP| < 2^{49} q^{12}$. 
Simplifying further, we find $q^{12} < 384 \cdot 2^{49}$, \emph{i.e.}, 
$q < 28$. It can be verified via Mathematica (\Cref{sec:finite} 
\Cref{lis:finite} lines \ref{line:small7}--\ref{line:small8}) 
that there are no solutions for $s$ and $t$ when $q < 28$. Therefore, 
$G_L \ncong C_{q^2+q+1+\delta\sqrt{2q}(q+1)} : C_{12}$.
\end{proof}

This concludes all the cases. 
\begin{proof}[Proof of \Cref{thm:largeree}]
Since the point-line 
dual of a generalised quadrangle is a generalised quadrangle, 
\Cref{thm:largeree} follows from 
Lemmata \ref{cor:reductionf4}--\ref{lem:last}. 
\end{proof}

\subsection*{Acknowledgements}
This research was supported by the Commonwealth through an Australian Government Research Training Program Scholarship [DOI: \href{https://doi.org/10.82133/C42F-K220}{https://doi.org/10.82133/C42F-K220}].


\defbibheading{bibintoc}{\phantomsection
\addcontentsline{toc}{chapter}{References}\section*{References}
\markboth{References}{}}

\printbibliography[heading=bibintoc, title={References}]

\pagebreak
\appendix

\section{Computational Procedures}
\label{app:cp}
In this appendix, we 
include the code in Mathematica used for bounding various quantities, 
manipulating algebraic expressions and solving algebraic equations.  

\subsection{Derivative}
\label{sec:derivative}
We prove that a function $f(x) > 0$ for all $x \geqslant 1$ by showing that 
$f$ is increasing and $f(1) > 0$. 
\begin{lstlisting}[language=Mathematica,caption={Derivative}]
f[x_] := (x^(3/2) + 1)*(x^(1/2) + 1)^5 - (x + 1)^4;
g[x_] = FullSimplify[f'[x]]
Reduce[g[x] > 0, x, Reals]
f[1]
\end{lstlisting}

\subsection{Bounds}
\label{sec:bounds}
Suppose $f(q)$ and $g(q)$ are polynomials with real coefficients
having the same degree, \emph{i.e.}, $\partial f = \partial g$. 
The following Mathematica code is used to find constants $a$ and $b$ such 
that $a g(q) \leqslant f(q) \leqslant b g(q)$ for all $q \geqslant 8$. 
\begin{lstlisting}[language=Mathematica,caption={Find Lower and Upper Bounds}]
FindBounds[f_, g_, q_, c_] := N[Reduce[ForAll[q, q >= c, a*g <= f <= b*g && a > 0 && b > 0], {a, b}]];

f := 1/2*q^9*(q^6 + 1)*(q^2 + 1)*(q - 1);
g := q^18;
FindBounds[f, g, q, 8]

f := 1/2*q^8*(q^4 - q^2 + 1)*(q^3 + 1)*(q + 1);
g := q^16;
FindBounds[f, g, q, 8]

f := 1/2*q^8*(q^6 + 1)*(q^2 - q + 1);
g := q^16;
FindBounds[f, g, q, 8]

f := q^12*(q^6 + 1)*(q^4 - 1)*(q^3 + 1)*(q - 1);
g := q^26;
FindBounds[f, g, q, 8]

f := 1/48*q^12*(q^6 + 1)*(q^2 + 1)*(q - 1)^2*(q^2 - q + 1);
g := q^24;
FindBounds[f, g, q, 8]

f := 1/96*q^12*(q + u + 1)^2*(q^4 - q^2 + 1)*(q^2 - 1)*(q^3 + 1)*(q - 1);
g := q^24;
q := u^2/2;
FindBounds[f, g, u, 4]

f := 1/12*q^12*(q^2 + 1)*(q^2 + q + 1 - u*(q + 1))*(q^4 - 1)*(q^3 + 1)*(q - 1);
g := q^24;
q := u^2/2;
FindBounds[f, g, u, 4]
\end{lstlisting}

\subsection{Finite Number of Cases}
\label{sec:finite}
In this subsection, we solve the following problem using Mathematica. Given a
positive integer $n$, we wish to find integers $s,t \geqslant 2$ such that $n
= (s+1)(st+1)$ satisfying Higman's inequality: $\sqrt{s} \leqslant t 
\leqslant s^2$ and divisibility condition: $s+t \divides st(st+1)$. Letting $k
= s+1$, we can rewrite Higman's inequality as: 
\begin{equation}
k((k-1)^{3/2}+1) \leqslant (s+1)(st+1) \leqslant k((k-1)^3+1).
\label{eqn:high}
\end{equation}
We use inequality \eqref{eqn:high} in line \ref{line:5} of \Cref{lis:finite}.
\lstset{escapeinside={(*@}{@*)}}
\begin{lstlisting}[language=Mathematica,caption={Finding parameters s and t
given the number of points}\label{lis:finite}]
ClearAll[s, t, q, k, a, n]
FindGQParameters[n_] := Module[{k, s, t},
   d := Divisors[n];
   For[i = 1, i <= Length[d], i++,
    k := d[[i]];
    If[k*(1 + (k - 1))^(3/2) <= n && n <= k*(1 + (k - 1)^3), (*@\label{line:5}@*)
     s := k - 1;
     If[Mod[n/k - 1, s] == 0,
      t := (n/k - 1)/s;
      If[Mod[s*t*(s*t + 1), s + t] == 0,
       Print["(s,t)=(", s, ",", t, ")"]
       ];
      ];
     ];
    ];
   ];

(* G_{\alpha} = PGL(2,3) *) (*@\label{line:pgl231}@*)
q = 8;
(* Order of PGL(2,13) is 2184 *)
n := q^12*(q^6 + 1)*(q^4 - 1)*(q^3 + 1)*(q - 1)/2184; 
FindGQParameters[n] (*@\label{line:pgl232}@*)


(* G_{\alpha} = SUn(3,q) : C_2 or PGU(3,q) : C_2 and G_L = C_x : [...] *) (*@\label{line:small1}@*)
q := 8; 
n := 1/2*q^9*(q^6 + 1)*(q^2 + 1)*(q - 1);
FindGQParameters[n] (*@\label{line:small2}@*)

(* G_{\alpha} = C_{q+-sqrt{2q}+1}^2 : [96] and G_L = C_{q^2+q+1...} : C_{12} *) (*@\label{line:small7}@*)
q := 8;
n := 1/96*
   q^12*(q - Sqrt[2*q] + 1)^2*(q^4 - q^2 + 1)*(q^2 - 1)*(q^3 + 
     1)*(q - 1);
FindGQParameters[n]
n := 1/96*
   q^12*(q + Sqrt[2*q] + 1)^2*(q^4 - q^2 + 1)*(q^2 - 1)*(q^3 + 
     1)*(q - 1);
FindGQParameters[n] (*@\label{line:small8}@*)
\end{lstlisting}

\pagebreak
\subsection{Computing Quotients}
\label{sec:quotient}

\begin{lstlisting}[language=Mathematica,caption={Computing
Quotients}\label{lis:quotient}]
f := q^12*(q^6 + 1)*(q^2 + 1)*(q - 1)^2*(q^2 - q + 1) - 48;
g := q + 1;
(* Check -1 is a zero of f *)
f /. q -> -1 (*@\label{line:fminus1}@*)
(* Polynomial Long Division *)
PolynomialQuotientRemainder[f, g, q] (*@\label{line:f2}@*)

f := q^12*(q - e*u + 1)^2*(q^4 - q^2 + 1)*(q^2 - 1)*(q^3 + 1)*(q - 1) - 96;
g := q + e*u + 1;
q := u^2/2;

(* Find zeros of g *)
Simplify[Solve[g == 0, u], Assumptions -> e^2 == 1] (*@\label{line:f3}@*)
(* Check zeros of g are zeros of f *)
Simplify[f /. u -> -e + I, Assumptions -> e^2 == 1] (*@\label{line:f4}@*)
(* Polynomial Long Division *)
Expand[Simplify[PolynomialQuotientRemainder[f, g, u], Assumptions -> e^2 == 1]] (*@\label{line:f5}@*)
\end{lstlisting}

\subsection{Euclidean Algorithm for Polynomials}
\label{sec:eucalg}

\begin{lstlisting}[language=Mathematica,caption={Euclidean Algorithm for
Polynomials}\label{lis:eucalg}]
f := x^6 + 1;
g := x^8 + x^7 - x^5 - x^4 - x^3 + x + 1;
(* a(x)f(x)+b(x)g(y)=d(x) *)
{d, {a, b}} = PolynomialExtendedGCD[f, g, x] (*@\label{line:poly1}@*)

f := x^6 + 1;
g := x^12 + x^11 - x^9 - x^8 + x^6 - x^4 - x^3 + x + 1;
{d, {a, b}} = PolynomialExtendedGCD[f, g, x] (*@\label{line:poly2}@*)
\end{lstlisting}

\pagebreak 
\begin{lstlisting}[language=Mathematica,caption={Bounding GCD using 
Euclidean Algorithm for Polynomials}\label{lis:eucalgbound}]
F1[q_] := 1/2*q^24*(q^12 - q^6 + 1)*(q^9 + 1)*(q^3 + 1) - 1;
F2[q_] := 1/2*q^24*(q^18 + 1)*(q^6 - q^3 + 1) - 1;
G[q_] := 
  Factor[Cancel[(q^36*(q^18 + 1)*(q^12 - 1)*(q^9 + 1)*(q^3 - 
          1))/(q^12*(q^6 + 1)*(q^4 - 1)*(q^3 + 1)*(q - 1))]] - 1;

{d, {u1, v1}} = PolynomialExtendedGCD[F1[q], G[q], q]; (*@\label{line:F1G}@*)
{d2, {u2, v2}} = PolynomialExtendedGCD[F2[q], G[q], q];(*@\label{line:F2G}@*)

a1 := Denominator[Cancel[u1]]
b1 := Denominator[Cancel[v1]] (*@\label{line:F1G1}@*)
FactorInteger[a1]
FactorInteger[b1] (*@\label{line:b1}@*)
a1
b1/2

a2 := Denominator[Cancel[u2]]
b2 := Denominator[Cancel[v2]] (*@\label{line:F2G1}@*)
FactorInteger[a2]
FactorInteger[b2] (*@\label{line:b2}@*)
a2
b2/32


(* Find q such that q >= b1/2 *)
N[Log[2, b1/2]]

m := 1; (*@\label{line:F1Gfinite}@*)
q := 2^(2*m + 1);
While[m <= 100,
 Print[GCD[F1[q], G[q]]];
 m := Evaluate[m + 1];
 q := 2^(2*m + 1);]

ClearAll[m, q]

m := 1;
q := 2^(2*m + 1);
While[m <= 100,
 Print[GCD[F2[q], G[q]]];
 m := Evaluate[m + 1];
 q := 2^(2*m + 1);] (*@\label{line:F1Gfiniteend}@*)
\end{lstlisting}

\subsection{Algebraic Manipulation}
\label{sec:algman}
\begin{lstlisting}[language=Mathematica,caption={Algebraic Manipulation}]
a := q*(q^2 + 1) + q^4*(q^2 + 1);
b := q^7*(q^2 + 1) + q^10;
d := FullSimplify[Sqrt[a^2 - 4*b], q >= 8];
Simplify[Expand[(a^2 - 2*b + a*d)/(2*b)]]
\end{lstlisting}

\subsection{Solving for $k$}
\label{sec:solving}
For the following programs, we iteratively compute $st+1$ modulo powers of
$q_0$ and fill in the solutions for $a_i$. The code shown is the final version
with all the values of $a_i$ filled in.  
\begin{lstlisting}[language=Mathematica,caption={$G_{\alpha} \cong \SUn{3}{q}
: C_2$ or $\PSUn{3}{q} : C_2$ and $G_L \cong \LargeRee{q_0}$ where $q =
q_0^3$.}\label{lis:stplus1r3}]
s := k*q^3/2*(q^6 + 1)*(q^2 + 1)*(q - 1) - 1;
t := k*(q^6 - q^3 + 1)*(q^4 + q^2 + 1) - 1;
n := 10;
Array[a, n];
k := Sum[a[i]*q^(i - 1), {i, 1, n + 1}];
(* Note that a_i = a[i+1] since the array indexing starts at 1. *)
a[1] := 2;
a[2] := 0;
a[3] := q - 2;
a[4] := 0;
a[5] := 1;
a[6] := q/2 - 1;
a[7] := 0;
a[8] := 0;
a[9] := 3*q/4 - 1;
PolynomialRemainder[s*t + 1, q^9, q]
\end{lstlisting}

\begin{lstlisting}[language=Mathematica,caption={$G_{\alpha} \cong \SUn{3}{q}
: C_2$ or $\PSUn{3}{q} : C_2$ and $G_L \cong \LargeRee{q_0}$ where $q =
q_0^5$.}\label{lis:stplus1r5}]
s := k*(q - 1)*(q^2 + 1)*(q^6 + 1) - 1;
t := 2*k*q^3*(q^4 - q^3 + q^2 - q + 1)^2*(q^4 + q^3 + q^2 + q + 1)*(q^8 + q^7 - q^5 - q^4 - q^3 + q + 1) - 1;
n := 20;
Array[a, n];
k := Sum[a[i]*q^(i - 1), {i, 1, n + 1}];
a[1] := q - 2;
a[2] := q - 3;
a[3] := q - 1;
a[4] := 3;
a[5] := 6;
a[6] := 6;
a[7] := q - 18;
a[8] := q - 63;
a[9] := q - 107;
a[10] := 65;
a[11] := 562;
a[12] := 1350;
a[13] := 528;
a[14] := q - 4488;
a[15] := q - 16011;
a[16] := q - 18351;
a[17] := 26175;
a[18] := 174776;
PolynomialRemainder[s*t + 1, q^18, q]
\end{lstlisting}

\pagebreak 
\begin{lstlisting}[language=Mathematica,caption={$G_{\alpha} \cong \SUn{3}{q}
: C_2$ or $\PSUn{3}{q} : C_2$ and $G_L \cong \LargeRee{q_0}$ where $q =
q_0^5$.}\label{lis:stplus1r7}]
s := k*(q - 1)*(q^2 + 1)*(q^6 + 1)/e - 1;
t := 2*k*q^9*(q^6 - q^5 + q^4 - q^3 + q^2 - q + 1)^2*(q^6 + q^5 + q^4 + q^3 + q^2 + q + 1)*(q^12 + q^11 - q^9 - q^8 + q^6 - q^4 - q^3 + q + 1)/e - 1;
n := 28;
Array[a, n];
ClearAll[a];
k := Sum[a[i]*q^(i - 1), {i, 1, n + 1}];
a[1] := q - 2*e;
a[2] := q - 2*e - 1;
a[3] := q - 1;
a[4] := q - 1;
a[5] := q - 2*e - 1;
a[6] := q - 2*e - 1;
a[7] := 2*e - 1;
a[8] := 2*e;
a[9] := q - 2*e;
a[10] := 2*e - 1;
a[11] := 10*e;
a[12] := 10*e;
a[13] := 0;
a[14] := 4*e;
a[15] := 18*e;
a[16] := 14*e;
a[17] := q - 4*e;
a[18] := q - 1;
a[19] := q - 1;
a[20] := q - 52*e - 1;
a[21] := q - 128*e - 1;
a[22] := q - 116*e - 1;
a[23] := q - 100*e - 1;
a[24] := q - 204*e - 1;
a[25] := q - 350*e - 1;
PolynomialRemainder[e*(s*t + 1), q^25, q]
\end{lstlisting}

\begin{lstlisting}[language=Mathematica,caption={Solving for $k$ for a
finite set of $q$.}\label{lis:finiteq}]
solveai[s0_, t0_, q_, n_] := Module[{a},
   Array[a, n];
   k := Sum[a[i]*q^(i - 1), {i, 1, n + 1}];
   s := k*s0 - 1;
   t := k*t0 - 1;
   Do[
    sol := 
     Solve[Expand[PolynomialMod[s*t + 1, q^i]/q^(i - 1)] == 0, 
      Modulus -> q];
    a[i] := Evaluate[a[i] /. sol[[1]]];
    (*Print["a[",i-1,"]:=",a[i]]*), {i, n}
    ];
   Print["a_", n - 1, ":=", a[n]];
   ];

(* G_{\alpha} = SU(3,q) : C_2 or PGU(3,q) : C_2 and G_L \cong 2F_4(q_0) where 
q = q_0^5. *) (*@\label{line:stplus1r51}@*)
q := 8;
Print["q=", q];
solveai[(q - 1)*(q^2 + 1)*(q^6 + 1), 2*q^3*(q^4 - q^3 + q^2 - q + 1)^2*(q^4 + q^3 + q^2 + q + 1)*(q^8 + q^7 - q^5 - q^4 - q^3 + q + 1), 8, 21]
While[q <= 174776,
 Print["q=", q];
 n := 18;
 s0 := (q - 1)*(q^2 + 1)*(q^6 + 1);
 t0 := 2*q^3*(q^4 - q^3 + q^2 - q + 1)^2*(q^4 + q^3 + q^2 + q + 1)*(q^8 + q^7 - q^5 - q^4 - q^3 + q + 1);
 solveai[s0, t0, q, n];
 q := Evaluate[q*4];] (*@\label{line:stplus1r52}@*)


(* G_{\alpha} = SU(3,q) : C_2 or PGU(3,q) : C_2 and G_L \cong 2F_4(q_0) where q = q_0^7. *) (*@\label{line:stplus1r71}@*)
q := 8;
While[q <= 350*7,
 Print["q=",q];
 n := 25;
 If[Mod[q, 7] == 1, e = 7, e = 1];
 s0 := (q - 1)*(q^2 + 1)*(q^6 + 1)/e;
 t0 := 2*q^9*(q^6 - q^5 + q^4 - q^3 + q^2 - q + 1)^2*(q^6 + q^5 + q^4 + q^3 + q^2 + q + 1)*(q^12 + q^11 - q^9 - q^8 + q^6 - q^4 - q^3 + q + 1)/e;
 solveai[s0, t0, q, n];
 q := Evaluate[q*4];] (*@\label{line:stplus1r72}@*)

(* G_{\alpha} = 2F_4(q_0) where q = q_0^5 and G_L = C_{q+1}^2 : GL(2,3) *) (*@\label{line:stplus1r5small1}@*)
q := 8;
While[q <= 129,
 Print["q=", q];
 n := 24;
 s0 := (q^4 - q^3 + q^2 - q + 1)^2;
 t0 := q^12*(q^6 + 1)*(q^2 + 1)*(q - 1)^2*(q^2 - q + 1)/48;
 solveai[s0, t0, q, n];
 q := Evaluate[q*4];] (*@\label{line:stplus1r5small2}@*)
 
(* G_{\alpha} = 2F_4(q_0) where q = q_0^7 and G_L = C_{q+1}^2 : GL(2,3) *) (*@\label{line:stplus1r7small1}@*)
q := 8;
While[q <= 2048,
 Print["q=", q];
 n := 41;
 s0 := (q^6 - q^5 + q^4 - q^3 + q^2 - q + 1)^2;
 t0 := q^12*(q^6 + 1)*(q^2 + 1)*(q - 1)^2*(q^2 - q + 1)/48;
 solveai[s0, t0, q, n];
 q := Evaluate[q*4];] (*@\label{line:stplus1r7small2}@*)
 
 (* G_{\alpha} = 2F_4(q_0) where q = q_0^r_0 and G_L = C_{q\pm\sqrt{2q}+1}^2 : [96] *) (*@\label{line:pmstplus1r7small1}@*)
 q := 8;
While[q <= 2048,
 Print["q=", q];
 (* n := 5;
 n := 11; *)
 n := 16;
 s0 := 1;
 t0 := q^3*(q^2 - 1)*(q^3 + 1)/12;
 solveai[s0, t0, q, n];
 q := Evaluate[q*4];] (*@\label{line:pmstplus1r7small2}@*)
\end{lstlisting}

\pagebreak 
\begin{lstlisting}[language=Mathematica,caption={$G_{\alpha} \cong
\LargeRee{q_0}$ where $q = q_0^3$ and $G_L \cong C_{q+1}^2 : 
\GLn{2}{3}$.}\label{lis:gl23stplus1r3}]
s := 3*k*(q^2 - q + 1) - 1;
t := k*q^12*(q^6 + 1)*(q^2 + 1)*(q - 1)^2/16 - 1;
n := 15;
Array[a, n];

k := Sum[a[i]*q^(i - 1), {i, 1, n + 1}];
a[1] := 2*(q + 1)/3;
a[2] := 0;
a[3] := 0;
a[4] := (q - 2)/3;
a[5] := q - 1;
a[6] := q - 1;
a[7] := (2*q - 1)/3;
a[8] := 0;
a[9] := 0;
a[10] := (q - 2)/3;
a[11] := q - 1;
PolynomialRemainder[(s*t + 1), q^11, q]
\end{lstlisting}

\begin{lstlisting}[language=Mathematica,caption={$G_{\alpha} \cong
\LargeRee{q_0}$ where $q = q_0^5$ and $G_L \cong C_{q+1}^2 : 
\GLn{2}{3}$.}\label{lis:gl23stplus1r5}]
s := k*(q^4 - q^3 + q^2 - q + 1)^2 - 1;
t := k*q^12*(q^6 + 1)*(q^2 + 1)*(q - 1)^2*(q^2 - q + 1)/48 - 1;
n := 28;
Array[a, n];

k := Sum[a[i]*q^(i - 1), {i, 1, n + 1}];
a[1] := 2;
a[2] := 4;
a[3] := 2;
a[4] := 0;
a[5] := 0;
a[6] := q - 4;
a[7] := q - 9;
a[8] := q - 5;
a[9] := q - 1;
a[10] := q - 1;
a[11] := 5;
a[12] := 12 + 3*q/8;
a[13] := (17 + q/8)/3;
a[14] := 5*q/8;
a[15] := (q - 2)/3;
a[16] := (2*q - 25)/3;
a[17] := (q/4 - 50)/3;
a[18] := (13*q/8 - 25)/3;
a[19] := (5*q/8 - 2)/3;
a[20] := q/4;
a[21] := (q/8 + 29)/3;
a[22] := q/8 + 20;
a[23] := (q/2 + 32)/3;
a[24] := 11 q/128;
PolynomialRemainder[3*(s*t + 1), q^25, q]
\end{lstlisting}

\pagebreak 

\begin{lstlisting}[language=Mathematica,caption={$G_{\alpha} \cong
\LargeRee{q_0}$ where $q = q_0^7$ and $G_L \cong C_{q+1}^2 : 
\GLn{2}{3}$.}\label{lis:gl23stplus1r7}]
s := k*(q^6 - q^5 + q^4 - q^3 + q^2 - q + 1)^2 - 1;
t := k*q^12*(q^6 + 1)*(q^2 + 1)*(q - 1)^2*(q^2 - q + 1)/48 - 1;
n := 50;
Array[a, n];
k := Sum[a[i]*q^(i - 1), {i, 1, n + 1}];
a[1] := 2;
a[2] := 4;
a[3] := 2;
a[4] := 0;
a[5] := 0;
a[6] := 0;
a[7] := 0;
a[8] := q - 4;
a[9] := q - 9;
a[10] := q - 5;
a[11] := q - 1;
a[12] := 3*q/8 - 1;
a[13] := (q/8 - 1)/3;
a[14] := 5*q/8;
a[15] := (q + 16)/3;
a[16] := (2*q + 35)/3;
a[17] := (7*q/4 + 16)/3;
a[18] := (17*q/8 - 2)/3;
a[19] := (5*q/8 - 2)/3;
a[20] := (5*q/4 - 1)/3;
a[21] := (5*q/8 - 2)/3;
a[22] := 3*q/8 - 8;
a[23] := (11*q/4 - 49)/3;
a[24] := 43*q/128 - 9;
a[25] := q/2;
a[26] := (103*q/64 - 2)/3;
a[27] := (89*q/64 - 1)/3;
a[28] := (47*q/128 + 1)/3;
a[29] := (47*q/32 + 28)/3;
a[30] := (263*q/128 + 58)/3;
a[31] := (23*q/16 + 29)/3;
a[32] := (25*q/128 - 4)/3;
a[33] := (7*q/16 + 1)/3;
a[34] := (175*q/128 - 1)/3;
a[35] := (7*q/32 - 7)/3;
a[36] := 81*q/2048 - 11;
a[37] := ((2005*q + 2048*b36*q)/2048 - 635)/27;
a[38] := ((693*q + 1024*b37*q)/1024 - 351 - b36)/27;
a[39] := ((1815*q + 2048*b38*q)/2048 + 46 - b37)/27;
a[40] := ((1945*q + 2048*b39*q)/2048 - 19 - b38)/27;
a[41] := ((53*q + 512*b40*q)/512 - 4 - b39)/27;
PolynomialRemainder[27*(s*t + 1), q^43, q]
\end{lstlisting}

\pagebreak 

\begin{lstlisting}[language=Mathematica,caption={$G_{\alpha} \cong
\LargeRee{q_0}$ where $q = q_0^{r_0}$ and $G_L 
\cong C_{q\pm\sqrt{2q}+1}^2 : [96]$.}\label{lis:pmstplus1r7}]
s := k - 1;
t := k*q^3*(q^2 - 1)*(q^3 + 1)/12 - 1;
n := 20;
Array[a, n];
k := Sum[a[i]*q^(i - 1), {i, 1, n + 1}];
a[1] := 2;
a[2] := 0;
a[3] := q/2;
a[4] := (q - 2)/3;
a[5] := (q/2 - 1)/3;
a[6] := 7*q/8;
a[7] := q - 1;
a[8] := 3*q/4 - 1;
a[9] := (-18 + q + 29*q/32 + b8*q)/27;
a[10] := (q/8 - b8 + b9*q)/27;
a[11] := (1 + 17*q/32 - b9 + b10*q)/27;
a[12] := (27*q/128 - 4 - b10 + b11*q)/27;
a[13] := (7*q/32 - b11 + b12*q)/27;
a[14] := (3*q/8 + 1 - b12 + b13*q)/27;
a[15] := (331*q/512 - 9*b13 + b14*q)/243;
a[16] := (39*q/64 - 3 - b14 + b15*q)/243;
PolynomialRemainder[243*(s*t + 1), q^18, q]
\end{lstlisting}

\end{document}